\title{Explicit evaluation of the $q$-Stokes matrices for certain  confluent hypergeometric $q$-difference equations}
\author{}
\newcommand\reallywidehat[1]{%
\savestack{\tmpbox}{\stretchto{%
  \scaleto{%
    \scalerel*[\widthof{\ensuremath{#1}}]{\kern-.6pt\bigwedge\kern-.6pt}%
    {\rule[-\textheight/2]{1ex}{\textheight}}
  }{\textheight}%
}{1.5ex}}%
\stackon[1pt]{#1}{\tmpbox}%
}
\newcommand\reallywidehatt[1]{\arraycolsep=0pt\relax%
\begin{array}{c}
\stretchto{
  \scaleto{
    \scalerel*[\widthof{\ensuremath{#1}}]{\kern-.5pt\bigwedge\kern-.5pt}
    {\rule[-\textheight/2]{1ex}{\textheight}} 
  }{\textheight} %
}{0.7ex}\\           
#1\\                 
\rule{-1ex}{0ex}
\end{array}
}
\definecolor{darkgreen}{rgb}{0.0, 0.5, 0.0}
\newtheorem{thm}{Theorem}[section]
\newtheorem{lemma}[thm]{Lemma}
\newtheorem{prop}[thm]{Proposition}
\newtheorem{defn}[thm]{Definition}
\newtheorem{rmk}[thm]{Remark}
\newtheorem{cor}[thm]{Corollary}
\author{Jinghong Lin, Yiming Ma and Xiaomeng Xu}
\date{}
\newcommand{\Addresses}{{
  \bigskip
  \footnotesize
\noindent \textsc{School of Mathematical Sciences, Peking University, Beijing 100871, China}\par\nopagebreak
  \textit{E-mail address}: \texttt{linjinghong@pku.edu.cn}
}\\
\\
\footnotesize
\noindent \textsc{School of Mathematical Sciences, Peking University, Beijing 100871, China}\par\nopagebreak
  \textit{E-mail address}: \texttt{ymma.pku@icloud.com}
\\
\\
\footnotesize
\noindent \textsc{
School of Mathematical Sciences \& Beijing International Center
for Mathematical Research, Peking University, Beijing 100871, China}\par\nopagebreak
  \textit{E-mail address}: \texttt{xxu@bicmr.pku.edu.cn}
}
\begin{document}

\maketitle

\abstract{We prove a connection formula for the  basic hypergeomtric function ${}_n\varphi_{n-1}\left(
			a_1,...,a_{n-1},0;
			b_1,...,b_{n-1}
		; q, z\right)$ by  using  the $q$-Borel resummation. As an application, we compute  $q$-Stokes matrices of a special  confluent hypergeometric $q$-difference  system with an irregular singularity. We show that by letting $q\rightarrow 1$, the $q$-Stokes matrices recover the known expressions of the Stokes matrices of the corresponding confluent hypergeometric differential system.}
\section{Introduction}
Set $q\in\mathbb{R}$ with $0<q<1$.    Take the Lie algebra ${\mathfrak {gl}_n}$ over the field of complex numbers with a positive integer $n$. Consider the confluent hypergeometric $q$-difference  system
\begin{equation}\label{u+A/z}
	D_qF_q(z):=\frac{\sigma_qF_q(z)-F_q(z)}{qz-z}=\left(u+\frac{A}{z}\right)F_q(z),
\end{equation}
where $\sigma_q F(z)=F(q z)$, $u=\mathrm{diag}\left(u_1,...,u_{n-1},u_n\right)$ and $A=(a_{ij})\in {\mathfrak {gl}_n}$.  The infinity $z=\infty$ is a  Fuchsian singularity  of    \eqref{u+A/z} if the diagonal matrix $u$ is invertible. One can use the method of isomonodromy deformation to study the connection problem of the equation \eqref{u+A/z} (see e.g., \cite{jimbo1996q, MR2652472, MR2642627, ohyama2021q, MR4245859}). It is parallel to the corresponding differential system case studied in \cite{MR688949, MR533348, TX, Xu}. In this method,    the monodromy data of the system \eqref{u+A/z} with $u=\mathrm{diag}(0,...,0,1)$  are needed.  

In this paper, we focus on the irregular      case of the form
\begin{equation}\label{qdes}
    D_qF_q(z)=\left(E_{n n}+\frac{A}{z}\right)F_q(z),
\end{equation}
with  $E_{n n}=\mathrm{diag}(0,...,0,1)$.    
Denote by $A^{\left(k\right)}$ the upper left $k\times k$ submatrix of $A=\left(a_{i j}\right)\in \mathfrak{gl}_n$, and by $\lambda^{\left(k\right)}_1,...,\lambda^{\left(k\right)}_{k}$ the $k$ eigenvalues of $A^{\left(k\right)}$.   Set  the complex numbers
    \begin{align*}
	&\mu_i:=\mathrm{log}_q\left(1-(1-q)\lambda_i^{(n)}\right),\quad 1\leq i\leq n,\\
&\nu_i:=\mathrm{log}_q\left(1-(1-q)\lambda_i^{(n-1)}\right),\quad 1\leq i\leq n-1,
	\end{align*}
where we take the principal branch of the logarithmic function.

Under the following assumption
\begin{equation}
    \nu_i- \nu_j\notin \mathbb{Z}, \ \text{for all} \ 1\leq i \neq j \leq n-1, \label{eq:vivj}
\end{equation}
 the system \eqref{qdes}  has   a unique formal fundamental solution $\hat{F}_q$ around $z=\infty$. The theory of $q$-Borel resummation states that for  $\lambda\in \mathbb{C}^{*}\backslash -\frac{1-q}{q^{\nu_j}}q^{\mathbb{Z}}$,  $1\leq j \leq n-1$,    there exists a meromorphic fundamental solution $F^{(\infty)}_q(z,\lambda)$ which is  asymptotic to  $\hat{F}_q$  of  $q$-Gevrey order  $1$ along the direction $\lambda$.  For different directions, these solutions are in general different
(which reflects the $q$-Stokes phenomenon), and the transition between them can be measured  by  a  $q$-Stokes
matrix (see e.g.,  \cite{marotte2000multisommabilite, ramis1992growth, ramis2013local, MR1952546, zhang1999developpements, zhang2002sommation, zhang2005remarks} for the detailed study of $q$-Borel resummation, $q$-Gevrey order and so on).   The main result of this paper is the following theorem, which gives the
$q$-Stokes matrix   of the system  \eqref{qdes}  explicitly. See Section \ref{sec:qdes} for a proof.

\begin{thm}\label{intro: qstokes}
   If the following condition holds for the matrix $A$:
\begin{equation*}
\begin{aligned}
    &   \mu_i-\mu_j\notin \mathbb{Z}, \  &&\text{for all}  \  1\leq i \neq j \leq  n,   \\
    & \nu_i-\nu_j\notin  \mathbb{Z},  \  &&\text{for all}  \  1\leq i\neq  j \leq  n-1,   \\
    &\lambda^{\left(n-1\right)}_j   \neq   \lambda^{\left(n-2\right)}_l,  \  &&\text{for all}  \  1\leq j \leq  n-1, 1\leq l \leq n-2,
\end{aligned}
\end{equation*}
and the following condition holds for   $z,\lambda,\mu$:
\begin{equation*}
     \lambda,\mu  \in \mathbb{C}^*,  \  \mu\notin \lambda q^{\mathbb{Z}},   \ \lambda,\mu  \notin -\frac{1-q}{q^{\nu_l}}q^{\mathbb{Z}}, \  \text{for} \   1\leq l \leq n-1,       \  \text{and} \  z\in \mathbb{C}^*, \  z\notin - \frac{1}{\mu}q^{\mathbb{Z}}, \  z\notin  - \frac{1}{\lambda}q^{\mathbb{Z}},
\end{equation*}
    then the $q$-Stokes matrix $S_q\left(z,\lambda,\mu;E_{nn}, A\right)$ of the system \eqref{qdes} takes the block matrix form
\begin{equation*}
S_q\left(z,\lambda,\mu;E_{nn}, A\right)=\left(\begin{array}{cc}
    \mathrm{Id}_{n-1} & 0  \\
    b_q & 1
  \end{array}\right),
  \end{equation*}
where $b_{q}=\left(\left(b_q\right)_1,...,\left(b_{q}\right)_{n-1}\right)$ with
\begin{align*}
\left(b_{q}\right)_k=&\sum_{j=1}^{n-1} \text{ \scriptsize $ \frac{(-1)^{n+k}\Delta_{1,...,n-1}^{1,..., n-2,n}\left(A-\lambda_j^{\left(n-1\right)}\cdot \mathrm{Id}_n\right) \Delta^{1,...,\hat{k},...,n-1}_{1,...,n-2}   \left(A-\lambda^{\left(n-1\right)}_j\cdot \mathrm{Id}_n\right)}{{\prod_{l=1,l\ne j}^{n-1}\left(\lambda^{\left(n-1\right)}_l-\lambda^{\left(n-1\right)}_j\right)\prod_{l=1}^{n-2}\left(\lambda^{\left(n-2\right)}_l-\lambda^{\left(n-1\right)}_j\right)} } $ }  \\
    &  \sum_{s=1}^n
    \frac{  \prod_{l=1}^{n-1} ( q^{1+\mu_{s}-\nu_{l}}  ;q)_\infty}{\prod_{l=1,l\neq   s}^{n}   (q^{1+\mu_{s}-\mu_{l}}   ;q)_\infty}   \frac{\prod_{l=1,l\neq s}^{n}\theta_q((1-q)q^{1-\mu_l}/\lambda)}{\prod_{l=1}^{n-1}\theta_q((1-q)q^{1-\nu_l}/\lambda)}   
 \frac{\theta_q\left(  q^{ \mu_s-\nu_n  }  \lambda z\right)}{\theta_q(\lambda  z )}     \\
  &\times   \frac{ \prod_{ l=1,l\neq  s  }^n  ( q^{1+\mu_l-{\nu}_j} ;q )_{\infty} \prod_{l=1,l\neq  j}^{n-1}  ( q^{ \nu_l-\mu_s  }  ;q)_{\infty} }{ \prod_{l=1,l\neq  j}^{n-1} ( q^{ 1+\nu_l-\nu_j  } ;q)_{\infty}\prod_{l=1,l\neq  s}^n ( q^{\mu_{ l }-\mu_s} ;q)_{\infty} }        \frac{ \theta_q\left( q^{ \nu_j-\mu_s-1  }    {  \mu z }  \right)  }{ \theta_q\left(  \mu z   \right)   }  \frac{ \theta_q \left(  {{ q^{ \mu_s       } \mu   }}/{(1-q)} \right)  }{ \theta_q\left(   {{  q^{    \nu_j  -1  } \mu  }}/({1-q})  \right)  }    \frac{z^{\nu_j}}{z^{\nu_n+1}} .
\end{align*} 
Here  the definitions of the $q$-Pochhammer symbol $(a;q)_{\infty}$  and the $q$-theta function $\theta_q(z)$ are given in Section \ref{notation}   and    $\Delta_{1,...,n-1}^{1,..., n-2,n}\left(A-\lambda_j^{\left(n-1\right)}\cdot \mathrm{Id}_n\right)$  is  {a}  minor of the matrix $A-\lambda^{\left(n-1\right)}_j\cdot \mathrm{Id}_n$  introduced in Section \ref{sec:diag}.
\end{thm}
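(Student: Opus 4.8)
The plan is to compute the $q$-Stokes matrix by tracking the $q$-Borel--Laplace resummation explicitly for the system \eqref{qdes}, reducing the rank-$n$ problem to a controlled sequence of rank-one and rank-two data via the flag of submatrices $A^{(k)}$. First I would exploit the block structure: since $E_{nn}$ has a single nonzero eigenvalue, the formal solution $\hat F_q$ differs from the convergent ``Fuchsian at $z=\infty$'' solution only through its last column (equivalently, only the $(n,1),\dots,(n,n-1)$ entries of the Stokes matrix can be nonzero), so the Stokes matrix is necessarily of the asserted lower-triangular block shape $\left(\begin{smallmatrix}\mathrm{Id}_{n-1}&0\\ b_q&1\end{smallmatrix}\right)$; the content is the vector $b_q$. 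To get $b_q$ I would write the relevant component of $\hat F_q$ as a ${}_n\varphi_{n-1}$ series with a vanishing upper parameter (the ``$0$'' in the statement of the abstract) whose parameters are $q^{\mu_i}$ and $q^{\nu_i}$ (that is where the hypotheses $\mu_i-\mu_j,\nu_i-\nu_j\notin\mathbb Z$ and $\nu$'s being the eigenvalues of $A^{(n-1)}$ enter), and then apply the connection formula for ${}_n\varphi_{n-1}(a_1,\dots,a_{n-1},0;b_1,\dots,b_{n-1};q,z)$ proved earlier in the paper via $q$-Borel resummation.

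The key steps, in order: (1) Normalize \eqref{qdes} and produce the unique formal fundamental solution $\hat F_q$ under \eqref{eq:vivj}; identify the entry (or the single ``interesting'' column) that carries a genuine divergent $q$-Gevrey-$1$ series, and express it in closed hypergeometric form in terms of the data $\mu_i,\nu_i$. (2) Reduce the matrix coefficients: the prefactor in $(b_q)_k$ built from the minors $\Delta^{1,\dots,n-2,n}_{1,\dots,n-1}(A-\lambda_j^{(n-1)}\mathrm{Id}_n)$ and the Vandermonde-type denominators $\prod_{l\ne j}(\lambda_l^{(n-1)}-\lambda_j^{(n-1)})$, $\prod_l(\lambda_l^{(n-2)}-\lambda_j^{(n-1)})$ comes from diagonalizing $A^{(n-1)}$ and expanding the relevant cofactor of $A$ along the flag --- this is where the nondegeneracy $\lambda_j^{(n-1)}\ne\lambda_l^{(n-2)}$ is used, and I would set this up with the explicit change-of-basis formulas from Section \ref{sec:diag}. (3) Apply the ${}_n\varphi_{n-1}$ connection formula to each scalar piece: the $q$-Borel transform turns the divergent series into a convergent one, the $q$-Laplace (Jackson-type) integral along the direction $\lambda$ (resp.\ $\mu$) reconstructs the two resummations $F_q^{(\infty)}(z,\lambda)$ and $F_q^{(\infty)}(z,\mu)$, and their ratio --- computed by the residue/theta-function bookkeeping of the kernel --- produces exactly the products of $q$-Pochhammer symbols $(q^{1+\mu_s-\nu_l};q)_\infty$ etc.\ together with the ratios of $q$-theta functions $\theta_q(\lambda z),\theta_q(\mu z),\theta_q(q^{\mu_s}\mu/(1-q))$ and the monomial $z^{\nu_j}/z^{\nu_n+1}$ seen in the statement. (4) Assemble: sum over $s$ (the $n$ eigendirections of $A^{(n)}$) and over $j$ (the $n-1$ eigendirections of $A^{(n-1)}$), multiply by the matrix prefactor from step (2), and check the constraints on $z,\lambda,\mu$ (avoiding the $q^{\mathbb Z}$-orbits of poles of the theta kernels and of $-\tfrac{1-q}{q^{\nu_l}}$) are exactly the conditions making every factor well-defined and the Laplace integrals convergent.

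The main obstacle I expect is step (3) combined with the bookkeeping in step (4): turning the abstract connection formula for ${}_n\varphi_{n-1}$ into the precise scalar Stokes coefficients requires matching the ``spectral'' variables of the hypergeometric identity ($q^{\mu_s}$, $q^{\nu_j}$) with the geometric data of the ODE, and then keeping track --- without sign or shift errors --- of (a) the $\theta_q$-quasi-periodicity factors that arise because the $q$-Laplace kernel is only quasi-periodic, which are responsible for all the $\theta_q(q^{a}\lambda z)/\theta_q(\lambda z)$ and $z^{\nu_j-\nu_n-1}$ terms, and (b) the double product of $q$-Pochhammer symbols, which must be reorganized (via $q$-Gamma reflection $(q^a;q)_\infty(q^{1-a};q)_\infty$-type identities) into the symmetric-looking form displayed. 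A secondary difficulty is verifying the $q\to1$ limit claimed in the abstract, i.e.\ that $b_q$ degenerates to the classical Stokes data of the confluent hypergeometric system of \cite{Xu}; this is a consistency check rather than part of the proof of Theorem \ref{intro: qstokes} proper, but I would want to carry it out to be confident the normalizations are right. Once the scalar connection formula and the diagonalization dictionary of Section \ref{sec:diag} are in hand, the remainder is a (long but mechanical) computation.
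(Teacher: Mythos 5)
Your proposal matches the paper's proof in all essentials: the paper diagonalizes $A^{(n-1)}$ by the explicit eigenvector matrix $P_{n-1}$ of Section \ref{sec:diag} (the source of the minor and Vandermonde prefactors in $(b_q)_k$), identifies each entry of the fundamental solutions at $0$ and $\infty$ with a solution of a scalar confluent hypergeometric $q$-difference equation, applies the connection formula of Theorem \ref{intro:our connection formula} (together with Lemma \ref{Adachi} and Corollary \ref{corfn} for the inverse direction) to evaluate $U_q(z,\lambda;E_{nn},A_{n-1})$ and $U_q(z,\mu;E_{nn},A_{n-1})^{-1}$ explicitly, and obtains $S_q$ as their product before conjugating back by $\operatorname{diag}(P_{n-1},1)$. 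The only slip is terminological: the divergent entries of $\hat F_q$ at infinity are ${}_n\varphi_{n-2}$ series, while the ${}_n\varphi_{n-1}(\ldots,0;\ldots)$ series with parameters built from $q^{\mu_i},q^{\nu_i}$ are the entries of the convergent solution at the origin.
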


Under the nonresonant condition
\begin{equation}\label{qnreso}
    \mu_i-\mu_j\notin \mathbb{Z}_{>0}, \ \text{for all} \ 1\leq i,j \leq n,
\end{equation}
the system \eqref{qdes}  has  a convergent fundamental solution $F^{(0)}_q(z)$  around $z=0$.  The transition between  $F^{(0)}_q(z)$  and  $F^{(\infty)}_q(z,\lambda)$   is measured by some  connection  matrix given   explicitly in Theorem  \ref{qstokes matrix}.  

The computation of the $q$-Stokes matrix and the  connection  matrix
  mainly relies on the following   connection formula of  the   confluent hypergeometric $q$-difference   equation of the form
\begin{equation}\label{intro:conflu hyper eq}
      z\prod_{l=1}^{n-1}(1-a_l \sigma_q)y(z)=(1-\sigma_q)\prod_{l=1}^{n-1}\left(1-\frac{b_l}{q}\sigma_q\right)y(z),
 \end{equation}
 which   expresses  
the solutions  of  \eqref{intro:conflu hyper eq}  around the origin      as a sum of the solutions  of  \eqref{intro:conflu hyper eq}  around infinity         with     pseudo-constant  coefficients (see Definition \ref{def:pseudo}). Here   $\sigma_q$ is the $q$-shift operator defined by $\sigma_q y(z)=y(q z)$.
See Section \ref{seclab conne formu eq} for a proof of the following theorem.

 \begin{thm}\label{intro:our connection formula}
For $a_1,a_2,...,a_{n-1},b_1,b_2,...,b_{n-1}$ such that         
\begin{equation*}
    a_i\neq 0, \ a_i/a_j\notin q^{\mathbb{Z}}, \ \text{for}\ 1\leq i\neq j \leq n-1, \  \text{and} \ b_l\notin \{0\}\cup q^{-\mathbb{N}}, \  \text{for}  \ 1\leq l \leq n-1,
\end{equation*}
and  for  $\lambda,z$  such that     
\begin{equation*}
    \lambda\in \mathbb{C}^*,\ \lambda\notin -a_i q^{\mathbb{Z}}, \  \text{for} \   1\leq i \leq n-1, \  \text{and} \  z\in \mathbb{C}^*, \  z\notin q^{\mathbb{Z}}, \  z\notin  -\frac{\prod_{l=1}^{n-1} b_l}{ \lambda \prod_{l=1}^{n-1} a_l}q^{\mathbb{Z}},
\end{equation*}
we have
\begin{align*}
    &{}_n\varphi_{n-1}\left(
			a_1,...,a_{n-1},0;
			b_1,...,b_{n-1}
		; q, z\right)\\
  =&\sum_{i=1}^{n-1}
		\frac{(a_1,...,\widehat{a_{{i}} },...,a_{n-1},b_1/a_i,...,b_{n-1}/a_i;q)_\infty}{(b_1,...,b_{n-1},a_1/a_i,...,\widehat{a_{{i}}/a_i},...,a_{n-1}/a_i;q)_\infty}\frac{\theta_q(-a_i z)}{ \theta_q(-z) } z^{\log_qa_i}  \cdot {f}^{(\infty)}_q(z,\lambda)_{i}\\
  &+\frac{(a_1,...,a_{n-1};q)_\infty}{(b_1,...,b_{n-1};q)_\infty}
		\frac{\prod_{s=1}^{n-1}\theta_q\big( {\prod_{l=1}^{n-1}a_l} \big / \big(  {\lambda \prod_{l=1,l\neq s}^{n-1}b_l}  \big)    \big)}{\prod_{s=1}^{n-1}\theta_q\left({a_s\prod_{l=1}^{n-1}a_l  }\big /\big({\lambda \prod_{l=1}^{n-1}b_l  } \big)  \right)}
		\frac{\theta_q\left( {\lambda z\prod_{l=1}^{n-1}b_l   }\big /{\prod_{l=1}^{n-1}a_l}\right)}{\theta_q\left( \lambda z \right)}
\frac{\prod_{l=1}^{n-1}z^{\log_qb_l}   }{ \prod_{l=1}^{n-1}z^{\log_qa_l}  }	{f}^{(\infty)}_q(z)_{n}	,
\end{align*}
 where the  basic hypergeometric series ${}_n\varphi_{n-1}\left(
			a_1,...,a_{n-1},0;
			b_1,...,b_{n-1}
		; q, z\right)$ (see Definition \ref{def:basic f})   is  a  solution  of \eqref{intro:conflu hyper eq} around the origin,   and  $
 {f}_q^{(\infty)}(z,\lambda)_{1} ,
...  ,
 {f}_q^{(\infty)}(z,\lambda)_{n-1}  ,
{f}_q^{(\infty)}(z)_{n}$ is a fundamental system  of meromorphic  solutions of \eqref{intro:conflu hyper eq}      around infinity   (see Proposition \ref{fqlambda}).  Here    $\widehat{a_{i}}$   and   $\widehat{a_{{i}}/a_i}$   mean the corresponding $i$-th terms  $a_i$  and  $a_{{i}}/a_i$  are skipped. 
\end{thm}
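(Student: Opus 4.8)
The plan is to follow the classical strategy for $q$-connection problems via the $q$-Borel/$q$-Laplace method, adapted to the $n$-th order equation \eqref{intro:conflu hyper eq}. First I would set up the two local solution bases. Around the origin, the series ${}_n\varphi_{n-1}(a_1,\dots,a_{n-1},0;b_1,\dots,b_{n-1};q,z)$ is a convergent solution of \eqref{intro:conflu hyper eq}; the hypotheses $b_l\notin\{0\}\cup q^{-\mathbb N}$ guarantee its coefficients are well-defined, and $a_i/a_j\notin q^{\mathbb Z}$ ensures the characteristic exponents at $z=0$ are distinct mod $q^{\mathbb Z}$. Around infinity, Proposition \ref{fqlambda} (which I am allowed to cite) supplies a fundamental system $f^{(\infty)}_q(z,\lambda)_1,\dots,f^{(\infty)}_q(z,\lambda)_{n-1},f^{(\infty)}_q(z)_n$ built from the $q$-Borel resummation along the direction $\lambda$, with prescribed leading behavior $z^{\log_q a_i}$ for the first $n-1$ solutions (coming from the ``regular'' part governed by $\prod(1-a_l\sigma_q)$) and $\prod z^{\log_q b_l}/\prod z^{\log_q a_l}$ for the exceptional $n$-th solution (coming from the irregular factor $(1-\sigma_q)$). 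Since both are bases of the same rank-$n$ solution space of the linear $q$-difference equation, there exist \emph{pseudo-constant} coefficients $c_1(z),\dots,c_n(z)$ (elliptic functions in the multiplicative variable, i.e.\ invariant under $\sigma_q$) with
\[
{}_n\varphi_{n-1}=\sum_{i=1}^{n-1} c_i(z)\, f^{(\infty)}_q(z,\lambda)_i + c_n(z)\, f^{(\infty)}_q(z)_n .
\]
The entire content of the theorem is the determination of these $c_i$.

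The key step is to pin down each $c_i(z)$ by combining two pieces of information: its transformation law under $\sigma_q$ forces it into a definite ratio of $q$-theta functions times a power of $z$, and its divisor / asymptotic normalization fixes the remaining multiplicative constant. Concretely, I would apply $\sigma_q$ to both sides: ${}_n\varphi_{n-1}$ and all basis solutions satisfy \eqref{intro:conflu hyper eq}, but the individual basis elements are \emph{not} $\sigma_q$-invariant — instead $f^{(\infty)}_q(z,\lambda)_i$ picks up a factor $a_i$-type (from $z^{\log_q a_i}$ and the theta-quotient in its definition), and $f^{(\infty)}_q(z)_n$ picks up the corresponding factor from $\prod z^{\log_q b_l}/\prod z^{\log_q a_l}$; meanwhile the left side is genuinely unaffected in the relevant sense. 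Matching, $c_i(z)$ must satisfy a first-order $q$-difference equation $c_i(qz) = (\text{explicit factor})\, c_i(z)$, whose solution space modulo pseudo-constants is one-dimensional and spanned by an explicit theta-quotient: for $1\le i\le n-1$ this is $\theta_q(-a_i z)/\theta_q(-z)\cdot z^{\log_q a_i}$, and for $i=n$ it is $\theta_q(\lambda z\prod b_l/\prod a_l)/\theta_q(\lambda z)\cdot\prod z^{\log_q b_l}/\prod z^{\log_q a_l}$. That the residual pseudo-constant is an honest constant (not a nonconstant elliptic function) follows from controlling the zeros/poles: the theta-quotients already account for all of them, so the ratio is a holomorphic, bounded, hence constant elliptic function — this is where the genericity hypotheses $\lambda\notin -a_i q^{\mathbb Z}$, $z\notin q^{\mathbb Z}$, $z\notin -\tfrac{\prod b_l}{\lambda\prod a_l}q^{\mathbb Z}$ enter, ensuring no spurious cancellations or poles.

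The remaining task is to evaluate the $n$ constants. For the ``regular'' coefficients $c_i$, $1\le i\le n-1$, I would extract the leading coefficient of the local expansion at $z=0$ of both sides along the sector determined by $\lambda$: on the left the normalization of ${}_n\varphi_{n-1}$ gives $1$, and on the right the $q$-Borel–Laplace transform of $f^{(\infty)}_q(z,\lambda)_i$ can be computed as an infinite product by the standard evaluation of a $q$-integral of Jacobi theta type (a $q$-analogue of the beta integral), yielding the ratio of $q$-Pochhammer symbols $(a_1,\dots,\widehat{a_i},\dots,a_{n-1},b_1/a_i,\dots,b_{n-1}/a_i;q)_\infty/(b_1,\dots,b_{n-1},a_1/a_i,\dots,\widehat{a_i/a_i},\dots,a_{n-1}/a_i;q)_\infty$. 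For the exceptional coefficient $c_n$, the constant is not visible from a naive limit (the solution $f^{(\infty)}_q(z)_n$ is ``flat'' along generic directions), so instead I would either (a) compare residues/connection coefficients across two neighboring directions using the explicit $q$-Borel kernel, or (b) use a determinantal (Wronskian) identity: the $q$-Wronskian of a solution basis of \eqref{intro:conflu hyper eq} is an explicit theta-quotient, and equating the $q$-Wronskians computed from the origin basis versus the infinity basis, together with the already-known $c_1,\dots,c_{n-1}$, solves for $c_n$ uniquely. This produces the product $\prod_s\theta_q(\prod a_l/(\lambda\prod_{l\ne s}b_l))/\prod_s\theta_q(a_s\prod a_l/(\lambda\prod b_l))$ times $(a_1,\dots,a_{n-1};q)_\infty/(b_1,\dots,b_{n-1};q)_\infty$. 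I expect the main obstacle to be precisely this last evaluation of $c_n$: reducing the multi-parameter $q$-Laplace integral (or the $q$-Wronskian bookkeeping) to the compact product of theta functions displayed above requires a careful induction on $n$ or an appeal to a known $q$-beta/$q$-Barnes integral evaluation, and tracking all the $\theta_q$ and $q$-Pochhammer factors — including their behavior under $a_i\leftrightarrow b_i$ and the shifts by $q$ — without sign or branch errors is the delicate part. The hypotheses $a_i/a_j\notin q^{\mathbb Z}$ and $b_l\notin q^{-\mathbb N}$ are exactly what make all these Pochhammer symbols finite and nonzero, so the final expression is well-defined.
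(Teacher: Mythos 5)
Your proposal takes a genuinely different route from the paper. The paper does not determine the pseudo-constant coefficients abstractly; it obtains the whole formula by \emph{confluence}: it starts from Thomae's connection formula for ${}_n\varphi_{n-1}(a_1,\dots,a_{n-1},-\lambda q^m;b_1,\dots,b_{n-1};q,z)$ (Lemma \ref{connection}, all parameters nonzero, everything convergent), lets $m\to+\infty$ along the $q$-spiral, and identifies the limit of each term: the first $n-1$ terms converge to the $q$-Borel sums ${}_nf_{n-2}$ by Proposition \ref{confluent} (whose proof matches the limit against Adachi's Lemma \ref{Adachi}), the theta/Pochhammer prefactors converge by Zhang's Lemma \ref{limit-theta}, and the last term is identified with $h_q(z)_n$ via the expansion \eqref{insert} and the uniqueness of the convergent solution at infinity (formulas \eqref{psifzero}--\eqref{pf:psi}). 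This bypasses entirely the need to control divisors of elliptic coefficients or to evaluate a $q$-Wronskian.

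As written, your sketch has two genuine gaps. First, the Liouville-type step: once you know $c_i(z)$ is pseudo-constant, concluding that it equals a \emph{specific} theta-quotient times an honest constant requires knowing the divisor of $c_i(z)$ on the elliptic curve $\mathbb{C}^*/q^{\mathbb{Z}}$, i.e.\ exactly where the expansion of ${}_n\varphi_{n-1}$ in the basis at infinity degenerates; you assert "the theta-quotients already account for all of them" but give no mechanism for locating these zeros and poles without already knowing the answer. Second, and more seriously, the evaluation of the constants for $1\le i\le n-1$ by "extracting the leading coefficient at $z=0$" of $f^{(\infty)}_q(z,\lambda)_i$ is not a standard $q$-beta integral one can cite in passing: the local behavior at the origin of the $[\lambda;q]$-sum ${}_nf_{n-2}$ is precisely the content of Adachi's connection formula (Lemma \ref{Adachi} in the paper), which is the single hardest external input and cannot be waved through. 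You yourself flag the evaluation of $c_n$ as the "main obstacle" and leave both proposed strategies (residue comparison across directions, $q$-Wronskian bookkeeping) unexecuted; in the paper this term is handled by a separate argument identifying the confluent limit of Thomae's last term with the unique normalized convergent solution $h_q(z)_n$. So the skeleton is plausible but the three computations that constitute the theorem are all left open.
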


At the end of this article, we show that Theorem \ref{intro: qstokes} and Theorem \ref{intro:our connection formula}  recover the   corresponding   Stokes matrices  and   connection formula   in the differential case  as $q\rightarrow 1$.  See Section \ref{qlimit}.

The connection problem of the basic hypergeometric series ${}_n\varphi_m(a_1, a_2, ..., a_n ;b_1, ..., b_m;q,z)$  (see Definition \ref{def:basic f})  can be traced back to Heine (see e.g., \cite{Heine1846, Heine1847, MR204726}), and after that
\begin{itemize}
    \item  Thomae (see \cite{thomae1869beitrage}) and  Watson (see \cite{watson1910continuation})     gave the connection formula  of ${}_2\varphi_1(a_1,a_2;b_1;q,z)$ if   $a_1,a_2,b_1\neq 0$.
    \item  The connection formula of ${}_n\varphi_{n-1}(a_1,...,a_n;b_1,...,b_{n-1};q,z)$ was shown by Thomae   (see  \cite{thomae1870series})  when $a_i,b_j\neq 0$, for $1\leq i \leq n$,  $1\leq j \leq n-1$,  and  equivalent versions can be found in \cite{gasper2004basic, MR201688}.
\end{itemize}
However, if one of the conditions: $m=n-1$  or  $a_i,b_j\neq 0$  for $1\leq i \leq n$,  $1\leq j \leq m$,  does not hold,  the connection problem involves  divergent series  and  the  $q$-Borel resummation is required.  In these cases,
\begin{itemize}
    \item     Zhang obtained the connection formula of  ${}_2f_0(a_1,a_2;-;\lambda;q,z)$ (see Definition \ref{def:nfn-2} for the notation)      in \cite{zhang2002sommation},    ${}_2\varphi_1(0,0;b_1;q,z)$,  ${}_0\varphi_1(-;b_1;q,z)$ in \cite{zhang2003fonctions},  ${}_1\varphi_1(a_1;b_1;q,z)$,       ${}_2\varphi_1(a_1,0;b_1;q,z)$  in       \cite{ zhang2005remarks}    and so on.
        \item Morita obtained the   connection formula of  ${}_1\varphi_1(0;b_1;q,z)$ in \cite{morita2011connection},  ${}_2\varphi_1(a_1,a_2;0;q,z)$ in \cite{morita2013connection},   ${}_2\varphi_0(0,0;-;q,z)  $, ${}_0\varphi_1(-;0;q,z)$ in \cite{morita2014stokes}, ${}_3\varphi_1(a_1,a_2,a_3;b_1;q,z)$ in \cite{morita2014stokes1} and so on.
        \item Ohyama obtained the connection formula of   ${}_1\varphi_1(0;b_1;q,z)$ in  \cite{ohyama2016q},    ${}_r\varphi_{r-1}(0,...,0;b_1,...,b_{r-1};q,z)$  in   \cite{ohyama2017connection},  ${}_n\varphi_m(a_1, a_2, ..., a_n ;b_1, ..., b_m;q,z)$  with   $a_l\neq 0$,  $1\leq l \leq n$, for $n\geq m+1$   in \cite{ohyama2021q} (with Zhang) and so on.
        \item Adachi obtained the connection formula of ${ }_n f_{m}(a_1,...,a_n ; b_1,...,b_m ; \lambda ; q, z)$  (see Definition \ref{def:nfn-2})   for $n\geq m+2$  in \cite{adachi2019q} and so on.

        \item 
This paper derives the connection formula of ${}_n\varphi_{n-1}(a_1,...,a_{n-1},0;b_1,...,b_{n-1};q,z)$ with $a_l,b_l\neq 0$, for $1\leq l \leq n-1$. For $n=2$, it recovers the connection formula of $_2\varphi_1(a_1,0;b_1;q,z)$ in Zhang \cite{ zhang2005remarks}. 
\end{itemize}
There may be other works missed in the above list. 

After 1990s, Ramis,  Sauloy  and Zhang   are constructing modern theory
on $q$-asymptotics (see \cite{ramis2013local})  and the application to global analysis on $q$-Painlev\'e equations  leads to   the Riemann-Hilbert correspondence in modern sense(see e.g., \cite{MR4245859, ohyama2021q}).

\vspace{2mm}
The organization of the paper is as follows. Section \ref{sec:equation case} recalls the basic knowledge of basic hypergeometric series,  the theory of $q$-Borel resummation and  derive the connection formula of the equation  \eqref{intro:conflu hyper eq}.  Section \ref{main section}  computes explicitly the $q$-Stokes matrix of the system \eqref{qdes}. Section \ref{qlimit} studies   the behavior of   the connection formula  and the $q$-Stokes matrix   as $q\rightarrow 1$.

\subsection*{Acknowledgments}
\noindent
We would like to thank Qian Tang for the discussion and valuable feedback on the paper. The authors are supported by  the  National Key Research and Development Program of China (No. 2020YFE0204200),  by   the National Key Research and Development Program of China (No. 2021YFA1002000) and by the National Natural Science Foundation of China (No. 12171006).

\section{Connection Formula of the Equation \eqref{intro:conflu hyper eq} }\label{sec:equation case}
This section derives explicitly the connection formula  of the equation  \eqref{intro:conflu hyper eq}. In particular, Section \ref{notation} gives the basic notations of $q$-calculus. Section \ref{hyper series} 
introduces the basic hypergeometric series. Section \ref{sec:q-borel sum} overviews the $q$-Borel resummation. Section \ref{seclab conne formu eq} then {gives} the explicit connection   formula  of the equation   \eqref{intro:conflu hyper eq} via the confluence method. 
\subsection{Basic Notations}\label{notation}
        We recall some basic notations of $q$-calculus with $0<q<1$ (referring to \cite{gasper2004basic} for more details): 
        \begin{itemize}
            \item The $q$-Pochhammer symbol is 
            \begin{equation*}
          (a ; q)_n:=  \left\{
          \begin{array}{lr}
             1,   & \text{if} \ \ n=0,       \\
\prod_{l=0}^{n-1} (1-aq^{l}),         & \text{if} \ \ n \geq 1.
             \end{array}
\right.
\end{equation*}
Moreover, $(a ; q)_{\infty}:=\prod_{l=0}^{\infty}\left(1-a q^l\right)$ and 
\begin{equation*}
    \left(a_1, a_2, \ldots, a_m ; q\right)_{\infty}:=\left(a_1 ; q\right)_{\infty}\left(a_2 ; q\right)_{\infty} \cdots\left(a_m ; q\right)_{\infty}.
\end{equation*}
\item  The $q$-theta function is
\begin{equation*}
    \theta_q(z):=\sum_{n \in \mathbb{Z}} q^{\frac{n(n-1)}{2}} z^n, \quad  z \in \mathbb{C}^*.
\end{equation*}
It satisfies
\begin{equation}
\label{theta(q z)}
\theta_q(q^n z)=z^{-n}q^{-\frac{n(n-1)}{2}}\theta_q(z)=\theta_q\left(\frac{1}{q^{n-1}z}\right),\quad  n\in \mathbb{Z},
\end{equation}
and the Jacobi's triple product identity
\begin{equation}
\label{Jacobi's triple product identity}
\theta_q(z)=(q,-z,-q / z ; q)_{\infty}.
\end{equation} 
\item  The $q$-exponential function is  
\begin{equation*}
    e_q(z):=\frac{1}{\left((1-q)z;q\right)_{\infty}},
\end{equation*}
with the Taylor expansion
\begin{equation}\label{eq:taylor}
    e_q(z)=\sum_{k=0}^{\infty} \frac{(1-q)^k}{(q;q)_k}z^k, \quad  0 \leq  | z | <1.
\end{equation}
\item The $q$-Gamma function $\Gamma_q(z)$  is
\begin{equation*}
    \Gamma_q(z):=\frac{(q ; q)_{\infty}}{\left(q^z ; q\right)_{\infty}}(1-q)^{1-z}, \quad z\in \mathbb{C}\backslash \{0,-1,-2,...\},
\end{equation*}
{where $q^z$ and $(1-q)^{1-z}$ take the principal values}.
        \end{itemize}

\subsection{Basic Hypergeometric Series}\label{hyper series}
\begin{defn}\label{def:basic f}
    A basic hypergeometric series associated to   $\boldsymbol{a}=(a_1,a_2,...,a_n)$ and  $\boldsymbol{b}=(b_1,b_2,...,b_m)$ is
\begin{equation*}
    {}_n\varphi_m\left(\begin{array}{c}
a_1, a_2, ..., a_n \\
b_1, ..., b_m
\end{array} ; q, z\right)
:=
\sum_{k=0}^{\infty} \frac{ \prod_{l=1}^n    \left(a_l ; q\right)_k}{(q ; q)_k\prod_{l=1}^m \left(b_l ; q\right)_k }\left[(-1)^k q^{\frac{k(k-1)}{2}}\right]^{1+m-n} z^k,
\end{equation*}
for  $b_1,...,b_m\notin q^{-\mathbb{N}  }$.
{ We also denote it by ${}_n\varphi_m(a_1, a_2, ..., a_n ;b_1, ..., b_m;q,z)$, or simply ${}_n\varphi_m(\boldsymbol{a};\boldsymbol{b};q,z)$.}
\end{defn}
 The  basic hypergeometric series   is a $q$-analog of the generalized hypergeometric series defined as follows. 
\begin{defn}
 A   generalized hypergeometric series  for    $\boldsymbol{\alpha}=(\alpha_1,...,\alpha_n)$ and  $\boldsymbol{\beta}=(\beta_1,...,\beta_m)$   is  defined as  
{\begin{equation*}
    {}_nF_m\left(\begin{array}{c}
\alpha_1, \alpha_2, ..., \alpha_n \\
\beta_1, ..., \beta_m
\end{array} ;  z\right)
:=
\sum_{k=0}^{\infty}\frac{\prod_{l=1}^n\left(\alpha_l\right)_k}{\prod_{l=1}^m\left(\beta_l\right)_k }\frac{z^k}{k!},
\end{equation*}}
where $\left(\alpha\right)_0=1$ and $\left(\alpha\right)_k=\alpha \left(\alpha+1\right) \cdots \left(\alpha+k-1\right)$, for $k\geq 1$  and  $\beta_l\notin {\mathbb{Z}_{\leq 0}}$,  for $1\leq l \leq m$. We also denote it by ${}_nF_m(\alpha_1, \alpha_2, ..., \alpha_n ;\beta_1, ..., \beta_m;z)$, or simply ${}_nF_m(\boldsymbol{\alpha};\boldsymbol{\beta};z)$. 
 \end{defn}

 \begin{prop}[see e.g., {\cite[Chapter 16, Chapter 17]{OLBC}}]
    The radius of convergence  { of}  ${}_n\varphi_m\left(
\boldsymbol{a};
 \boldsymbol{b}
 ; q, z\right)$ and ${}_{n}F_{m}\left(\boldsymbol{\alpha};\boldsymbol{\beta};z\right)$  is $0$, $1$ and $\infty$, {for} $n\ge m+2$, $n=m+1$ and $n\leq m$ respectively.  
\end{prop}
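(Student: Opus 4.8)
The statement to be proved is a Proposition about radii of convergence, so the proof plan below targets that.

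\medskip

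\textbf{Proof plan.} The plan is to treat the two families in parallel, since both are power series $\sum_{k\ge 0} c_k z^k$ whose coefficient ratio $c_{k+1}/c_k$ can be written down explicitly, and then apply the ratio test (d'Alembert). First I would record, for the basic hypergeometric series, that
\[
\frac{c_{k+1}}{c_k}=\frac{\prod_{l=1}^n\bigl(1-a_l q^{k}\bigr)}{\bigl(1-q^{k+1}\bigr)\prod_{l=1}^m\bigl(1-b_l q^{k}\bigr)}\Bigl[(-1)q^{k}\Bigr]^{1+m-n},
\]
using $(a;q)_{k+1}=(a;q)_k(1-aq^k)$ and $q^{\frac{(k+1)k}{2}}/q^{\frac{k(k-1)}{2}}=q^{k}$; and for the generalized hypergeometric series that
\[
\frac{c_{k+1}}{c_k}=\frac{\prod_{l=1}^n(\alpha_l+k)}{(k+1)\prod_{l=1}^m(\beta_l+k)}.
\]
These are the only computations needed and they are immediate from the definitions.

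\medskip

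Next I would take absolute values and let $k\to\infty$. For the $q$-series, since $0<q<1$ we have $q^k\to 0$, so $\prod_{l=1}^n(1-a_lq^k)\to 1$, $\prod_{l=1}^m(1-b_lq^k)\to 1$, and $1-q^{k+1}\to 1$; hence $|c_{k+1}/c_k|\sim q^{k(1+m-n)}$. When $n=m+1$ the exponent $1+m-n$ is $0$, so $|c_{k+1}/c_k|\to 1$ and the radius of convergence is $1$; when $n\ge m+2$ the exponent is negative, so $q^{k(1+m-n)}=q^{-k(n-m-1)}\to\infty$ and the radius is $0$; when $n\le m$ the exponent is positive, so $q^{k(1+m-n)}\to 0$ and the radius is $\infty$. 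For the classical series, $(\alpha_l+k)/k\to 1$ and $(\beta_l+k)/k\to 1$, so $|c_{k+1}/c_k|\sim k^{n-m-1}$, giving limit $1$ when $n=m+1$, limit $\infty$ when $n\ge m+2$, and limit $0$ when $n\le m$; the same trichotomy of radii follows. In each case the radius of convergence is $\lim_k |c_k/c_{k+1}|$ by the ratio test, which is legitimate because the ratio has a limit in $[0,\infty]$.

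\medskip

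The only mild subtlety—and the closest thing to an obstacle—is bookkeeping at the boundary: one must check that the coefficients $c_k$ are eventually nonzero so that the ratio test applies, and that the hypotheses $b_l\notin q^{-\mathbb N}$ (resp. $\beta_l\notin\mathbb Z_{\le 0}$) guarantee the series is well-defined. The nonvanishing is clear since $a_l,b_l$ and $\alpha_l,\beta_l$ are fixed and the factors $1-a_lq^k$, $\alpha_l+k$ etc. are zero for at most finitely many $k$; for large $k$ every factor in numerator and denominator is nonzero, so $c_k\ne 0$ eventually. I would also remark that the degenerate cases where some $a_l=0$ (as in ${}_n\varphi_{n-1}(a_1,\dots,a_{n-1},0;\dots)$, the series of interest later) are covered without change, since a zero parameter only forces $c_k=0$ for $k$ beyond the truncation point and otherwise does not affect the limiting ratio; indeed for that series $(0;q)_k=1$ for all $k$, so the computation above goes through verbatim with the corresponding factor replaced by $1$. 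This completes the argument.
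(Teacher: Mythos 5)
The paper offers no proof of this proposition at all---it is stated as a known fact with a citation to \cite{OLBC}---so there is nothing internal to compare against; your ratio-test argument is the standard proof of this standard fact, and its main line is correct. The coefficient-ratio computations, the limits $q^{k(1+m-n)}$ and $k^{n-m-1}$, and the resulting trichotomy are all right, and your observation that a parameter $a_l=0$ contributes $(0;q)_k=1$ and hence does not disturb the argument is exactly the point needed for the series ${}_n\varphi_{n-1}(a_1,\dots,a_{n-1},0;\dots)$ used later in the paper.

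One small correction to your ``mild subtlety'' paragraph. You argue that $c_k\neq 0$ eventually because the individual factors $1-a_lq^k$ and $\alpha_l+k$ vanish for only finitely many $k$. That conflates the individual factors with the cumulative Pochhammer products: if $1-a_lq^{j_0}=0$ for some $j_0$ (i.e.\ $a_l\in q^{-\mathbb{N}}$), then $(a_l;q)_k=\prod_{j=0}^{k-1}(1-a_lq^j)$ vanishes for \emph{all} $k>j_0$, so $c_k$ is eventually zero, not eventually nonzero; the series terminates, is a polynomial, and has radius $\infty$ regardless of $n$ and $m$ (and likewise for $\alpha_l\in\mathbb{Z}_{\leq 0}$ in the classical case). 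The trichotomy in the proposition is therefore implicitly a statement about the non-terminating case, and the correct dichotomy to record is: either some numerator parameter lies in $q^{-\mathbb{N}}$ (resp.\ $\mathbb{Z}_{\leq 0}$), in which case the series is a polynomial, or no such parameter does, in which case every $c_k$ is nonzero and the ratio test applies as you computed. Also note that your earlier phrase ``a zero parameter only forces $c_k=0$ for $k$ beyond the truncation point'' is inconsistent with your (correct) subsequent observation that $(0;q)_k=1$ for all $k$; a zero parameter forces nothing to vanish. Neither slip affects the validity of the proof of the intended statement.
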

The equation  \eqref{intro:conflu hyper eq}  is  a   special case of the  hypergeometric $q$-difference equation: 
\begin{equation}\label{intro: qhyper eq}
      z\prod_{l=1}^{n}(1-a_l \sigma_q)y(z)=(1-\sigma_q)\prod_{l=1}^{n-1}\left(1-\frac{b_l}{q}\sigma_q\right)y(z),
 \end{equation}
 and we have 
\begin{prop}[\cite{birkhoff1913generalized}]\label{pre:connection prob}
   {If}      the  following    condition holds for  $a_1,...,a_n,b_1,...,b_{n-1}$:
\begin{equation*}
    a_i\neq 0,\  a_i/a_j\notin q^{\mathbb{Z}\backslash \{ 0 \} },\  \text{for} \   1\leq i, j \leq n, \ \text{and} \ b_l\neq   0, \ \text{for} \ 1\leq l\leq n-1,
\end{equation*}
     {then} the equation \eqref{intro: qhyper eq} has a fundamental system of    meromorphic  solutions around infinity:
 \begin{equation}\label{qhyper fsol}
    {}_n\varphi_{n-1}\left(\begin{array}{c}
			a_i, a_i q/{b_1},...,a_i q/{b_{n-1}  }\\
			a_i q/{{a}_{1}},...,\widehat{a_i q/{{a}_{i}} },...,a_i q/{{a}_{n}}
\end{array}		; q, \frac{q\prod_{l=1}^{n-1} b_l}{z\prod_{l=1}^{n}a_l }\right)z^{-\log_qa_i}, \  \text{for} \    i=1,...,n. 
 \end{equation}
Here     $\widehat{a_i q/{{a}_{i}} }$ means the $i$-th item  ${a_i q/{{a}_{i}} }$ is skipped.
\end{prop}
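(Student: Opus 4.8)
The plan is to verify by direct substitution that each of the $n$ functions in \eqref{qhyper fsol} solves \eqref{intro: qhyper eq}, and then to conclude that they form a basis because their leading exponents at $z=\infty$ are pairwise non-congruent modulo $\mathbb{Z}$; this is the classical Frobenius-type construction at the regular singular point $z=\infty$. Fix $i\in\{1,\dots,n\}$ and substitute $y(z)=z^{-\log_q a_i}\psi(z)$ into \eqref{intro: qhyper eq}. Since $\sigma_q\bigl(z^{-\log_q a_i}g(z)\bigr)=a_i^{-1}\,z^{-\log_q a_i}\,(\sigma_q g)(z)$, applying $\prod_{\ell=1}^n(1-a_\ell\sigma_q)$ to $z^{-\log_q a_i}g$ produces $z^{-\log_q a_i}\prod_{\ell=1}^n\bigl(1-\tfrac{a_\ell}{a_i}\sigma_q\bigr)g$, in which the $\ell=i$ factor is $1-\sigma_q$. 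Carrying the substitution through and cancelling the common $z^{-\log_q a_i}$, equation \eqref{intro: qhyper eq} becomes
\begin{equation*}
z\,(1-\sigma_q)\prod_{\ell=1,\ \ell\ne i}^{n}\bigl(1-\tfrac{a_\ell}{a_i}\sigma_q\bigr)\psi(z)=\bigl(1-a_i^{-1}\sigma_q\bigr)\prod_{\ell=1}^{n-1}\bigl(1-\tfrac{b_\ell}{q a_i}\sigma_q\bigr)\psi(z).
\end{equation*}

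Next I seek $\psi(z)=\sum_{k\ge0}\gamma_k z^{-k}$. Using $\sigma_q(z^{-k})=q^{-k}z^{-k}$ and comparing the coefficients of $z^{-m}$ on the two sides, where the prefactor $z$ on the left shifts the index by one, one obtains the two-term recursion
\begin{equation*}
\gamma_{m+1}\,(1-q^{-m-1})\prod_{\ell=1,\ \ell\ne i}^{n}\bigl(1-\tfrac{a_\ell}{a_i}q^{-m-1}\bigr)=\gamma_{m}\,\bigl(1-a_i^{-1}q^{-m}\bigr)\prod_{\ell=1}^{n-1}\bigl(1-\tfrac{b_\ell}{q a_i}q^{-m}\bigr),
\end{equation*}
whose coefficient of $\gamma_{m+1}$ is nonzero for all $m\ge0$ exactly because $a_\ell\neq0$ and $a_\ell/a_i\notin q^{\mathbb{Z}\setminus\{0\}}$. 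Put $\kappa:=\dfrac{q\prod_{\ell=1}^{n-1}b_\ell}{\prod_{\ell=1}^{n}a_\ell}$ and write $\gamma_k=\phi_k\,\kappa^{k}$; the recursion then collapses to
\begin{equation*}
\frac{\phi_{m+1}}{\phi_m}=\frac{(1-a_iq^m)\prod_{\ell=1}^{n-1}\bigl(1-\tfrac{a_iq}{b_\ell}q^m\bigr)}{(1-q^{m+1})\prod_{\ell=1,\ \ell\ne i}^{n}\bigl(1-\tfrac{a_iq}{a_\ell}q^m\bigr)},
\end{equation*}
which is exactly the coefficient recursion of the series ${}_n\varphi_{n-1}\bigl(a_i,a_iq/b_1,\dots,a_iq/b_{n-1};a_iq/a_1,\dots,\widehat{a_iq/a_i},\dots,a_iq/a_n;q,x\bigr)$. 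The scalar $\kappa$ is precisely what absorbs the powers of $q$ produced by the $1/z$-expansion, while the hypotheses $b_\ell\neq0$ and $a_i/a_j\notin q^{\mathbb{Z}\setminus\{0\}}$ keep the lower parameters $a_iq/a_\ell$ out of $q^{-\mathbb{N}}$, so the series is well defined. Normalising $\gamma_0=1$ identifies $\psi(z)$ with this ${}_n\varphi_{n-1}$ evaluated at $x=\kappa/z$, a series of radius of convergence $1$; hence each expression in \eqref{qhyper fsol} is a solution of \eqref{intro: qhyper eq}, holomorphic near $z=\infty$ up to the prefactor $z^{-\log_q a_i}$.

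For the fundamental-system claim, note that $y_i(z)=z^{-\log_q a_i}\bigl(1+O(1/z)\bigr)$ near $z=\infty$, and under the nonresonance hypothesis the exponents $-\log_q a_i$ are pairwise non-congruent modulo $\mathbb{Z}$; hence any nonzero combination $\sum_i c_i(z)y_i(z)$ with pseudo-constant coefficients $c_i$ has a well-defined dominant term at $z=\infty$ and cannot vanish identically, i.e.\ the $q$-Casoratian $\det\bigl(\sigma_q^{\,j}y_i\bigr)_{0\le j\le n-1}$ is a nonzero pseudo-constant. Since \eqref{intro: qhyper eq} has order $n$ and its leading and trailing $\sigma_q$-coefficients are nonzero polynomials in $z$, its solution space over the field of pseudo-constants is $n$-dimensional, and $\{y_1,\dots,y_n\}$ is a basis.

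The main obstacle is the middle step: the $1/z$-expansion must be pushed far enough to confirm that the powers of $q$ it produces are cancelled precisely by $\kappa$ and that the upper and lower parameters of the resulting ${}_n\varphi_{n-1}$ land exactly in the positions of \eqref{qhyper fsol}. The reduction via the character $z^{-\log_q a_i}$ and the linear-independence argument are then routine.
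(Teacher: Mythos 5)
The paper does not prove this proposition at all --- it is quoted from Birkhoff \cite{birkhoff1913generalized} as a classical fact --- so there is no internal proof to compare against; what you have supplied is the standard Frobenius-type verification at the regular singularity $z=\infty$, and it is correct. The twist identity $\sigma_q\bigl(z^{-\log_q a_i}g\bigr)=a_i^{-1}z^{-\log_q a_i}\sigma_q g$ is right, and I checked that your two-term recursion for the $\gamma_m$ does collapse, after extracting $\kappa=q\prod_l b_l/\prod_l a_l$, to exactly the coefficient recursion of the stated ${}_n\varphi_{n-1}$ (the signs and the powers of $q$ cancel as you claim, the hypotheses keep the lower parameters $a_iq/a_\ell$ out of $q^{-\mathbb{N}}$, and the coefficient of $\gamma_{m+1}$ never vanishes), so each expression in \eqref{qhyper fsol} solves \eqref{intro: qhyper eq} on a punctured neighbourhood of infinity. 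Two small caveats. First, the hypothesis as literally written ($a_i/a_j\notin q^{\mathbb{Z}\setminus\{0\}}$ for all $i,j$) does not exclude $a_i=a_j$ for $i\neq j$, so your claim that the exponents $-\log_q a_i$ are pairwise non-congruent modulo $\mathbb{Z}$ silently uses the intended reading $a_i/a_j\notin q^{\mathbb{Z}}$ for $i\neq j$, which is how the same condition is phrased elsewhere in the paper (e.g.\ Lemma \ref{connection}); this is an imprecision of the statement rather than of your argument, but it is worth flagging since the conclusion genuinely fails if two of the $a_i$ coincide. Second, the independence step should lean on the Casoratian rather than on a ``dominant term'' of $\sum_i c_i(z)y_i(z)$, because pseudo-constants are elliptic functions on $\mathbb{C}^*/q^{\mathbb{Z}}$ and need not have limits along rays; also the Casoratian is not itself a pseudo-constant here (it satisfies $C(qz)=r(z)C(z)$ with $r$ the ratio of the extreme coefficients of the equation), but all that is needed is that $\det\bigl(\sigma_q^{\,j}y_i\bigr)=\prod_i z^{-\log_q a_i}\bigl(\det(a_i^{-j})+O(1/z)\bigr)$ is not identically zero, which follows from the nonvanishing of the Vandermonde determinant once the $a_i$ are pairwise distinct. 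With those two points tightened, the proof is complete.
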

When           $a_n=0$,     ${}_n\varphi_{n-1}\left(\begin{array}{c}
a_1, a_2, ..., a_{n-1},0 \\
b_1, ..., b_{n-1}
\end{array} ; q, z\right)$ is   known as  a   confluent   type basic hypergeometric series, and satisfies the    confluent hypergeometric $q$-difference equation  \eqref{intro:conflu hyper eq}. 
 The equation  \eqref{intro:conflu hyper eq}  also  has a solution of the form 
 \begin{equation*} 
     \left(1+ c_1 z^{-1}+c_2 z^{-2}+...\right)\frac{z^{\sum_{l=1}^{n-1}\log_q\left(a_l/b_l\right)    }   }{(z;q)_\infty},   
 \end{equation*}
where  $c_k$  can be uniquely determined  recursively 
and  $1+\sum_{k=1}^{\infty}c_k z^{-k}$ is convergent  on some  neighbourhood of  infinity (see e.g.,  \cite{adams1928linear}).      Then we get
\begin{prop}\label{n=m+1an=0}
    Suppose the following condition holds  for  $a_1,...,a_{n-1},b_1,...,b_{n-1}$:
    \begin{equation*}
    a_i\neq 0,\  a_i/a_j\notin q^{\mathbb{Z}\backslash \{ 0 \} },\  \text{for} \   1\leq i , j \leq n-1, \ \text{and} \ b_l\neq   0, \ \text{for} \ 1\leq l \leq n-1,
\end{equation*}
then the equation \eqref{intro:conflu hyper eq} has a fundamental system of   formal  solutions around infinity:
\begin{align}
    \hat{f}_q(z)=&\left(
 \hat{f}_q(z)_{1} ,
...  ,
 \hat{f}_q(z)_{n-1}  ,
{f}_q(z)_{n}
\right)     \nonumber   \\
:=&
    \left(
 \hat{h}_q(z)_{1}\cdot z^{-\log_q a_1},
...  ,
 \hat{h}_q(z)_{n-1}\cdot   z^{-\log_q a_{n-1}  },
{h}_q(z)_{n} \cdot   \frac{z^{\sum_{l=1}^{n-1}\log_q\left(a_l/b_l\right)}}{(z;q)_\infty}
\right),    \label{eq:fqinfn}
\end{align}
with  
\begin{align}
   & \hat{h}_q(z)_{i}=  {}_n\varphi_{n-2}\left(\begin{array}{c}
			a_i, a_i q/b_1,...,a_i q/b_{n-1}\\
			a_i q/a_1,...,\widehat{a_i q/a_{{i}} },...,a_i q/a_{n-1}
		\end{array};  q, {\frac{\prod_{l=1}^{n-1} b_l}{ {a_i}z\prod_{l=1}^{n-1}a_l }}\right)  , \  \text{for} \ i=1,...,n-1,  \nonumber   \\
  &{h}_q(z)_{n}=1+ c_1 z^{-1}+c_2 z^{-2}+...,   \label{eq:qformalsol}
\end{align} 
with  $c_k$  uniquely determined  in a recursive way. 
The equation \eqref{intro:conflu hyper eq} also has a fundamental system of meromorphic solutions around the origin:
\begin{equation*}
    {f}^{(0)}_q(z)=\left(
 {f}^{(0)}_q(z)_{1} ,
...  ,
 {f}^{(0)}_q(z)_{n-1}  ,
{f}^{(0)}_q(z)_{n}
\right),
\end{equation*}
with  
\begin{align*}
&{f}^{(0)}_q(z)_{i}:={}_n\varphi_{n-1}\left( \begin{array}{c}
   qa_1/b_i,...,qa_{n-1}/b_i,0   \\
q^2/b_i,qb_1/b_i,...,\widehat{qb_i/b_i},...,qb_{n-1}/b_i  
\end{array}; q,z \right)z^{1-\log_qb_i},\  \text{for}\  i=1,...,n-1,\\
&{f}^{(0)}_q(z)_{n}:={}_n\varphi_{n-1}\left(\begin{array}{c}
a_1, a_2, ..., a_{n-1},0 \\
b_1, ..., b_{n-1}
\end{array} ; q, z\right).
\end{align*}
\end{prop}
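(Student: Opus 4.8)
The plan is to realize each of the listed functions as a solution of \eqref{intro:conflu hyper eq} and then check that the two $n$-tuples are bases of its solution space over the field of pseudo-constants; note that \eqref{intro:conflu hyper eq} has order $n$, since the coefficients of $\sigma_q^{\,n}$ and of $\sigma_q^{\,0}$ are, respectively, a nonzero constant and $z-1$. The key observation is that \eqref{intro:conflu hyper eq} is exactly \eqref{intro: qhyper eq} specialized to $a_n=0$, so I would obtain the solutions at infinity by a confluence argument from Proposition \ref{pre:connection prob}, and the solutions at the origin from the classical power-series solution of \eqref{intro: qhyper eq} together with the elementary change of unknown $y(z)=z^{1-\log_q b_i}w(z)$.

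For the solutions at infinity, Proposition \ref{pre:connection prob} cannot be applied at $a_n=0$ directly (its hypothesis fails there), so I would keep $a_n\neq 0$ generic, take the solution $Y_i$ of \eqref{qhyper fsol} for $i\le n-1$, and let $a_n\to 0$. In $Y_i$ the single lower parameter $c:=a_iq/a_n\to\infty$ while the hypergeometric argument also blows up; using $(c;q)_k\sim(-c)^kq^{k(k-1)/2}$ one finds $c^{-k}\big(\tfrac{q\prod_l b_l}{z\prod_l a_l}\big)^{k}\big/(c;q)_k\to\big[(-1)^kq^{k(k-1)/2}\big]^{-1}\big(\tfrac{\prod_{l=1}^{n-1}b_l}{a_i z\prod_{l=1}^{n-1}a_l}\big)^{k}$, which is precisely the reweighting that turns a ${}_n\varphi_{n-1}$ into a ${}_n\varphi_{n-2}$ (here $1+(n-2)-n=-1$); hence $Y_i\to \hat h_q(z)_i\, z^{-\log_q a_i}=\hat f_q(z)_i$ coefficientwise. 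Since the coefficients of \eqref{intro: qhyper eq} are polynomial in $a_n$, the recurrence for the coefficients of $Y_i$ specializes at $a_n=0$ to that of \eqref{intro:conflu hyper eq}, so $\hat f_q(z)_i$ is an honest formal solution (now of $q$-Gevrey order $1$, as ${}_n\varphi_{n-2}$ has radius of convergence $0$); alternatively one may verify this recurrence directly and bypass the limit. For the $n$-th solution I would invoke the remark immediately preceding the statement together with \cite{adams1928linear}: \eqref{intro:conflu hyper eq} admits $f_q(z)_n=h_q(z)_n\, z^{\sum_l\log_q(a_l/b_l)}/(z;q)_\infty$ with $h_q(z)_n=1+c_1z^{-1}+\cdots$ uniquely determined recursively and convergent near $\infty$, the factor $1/(z;q)_\infty=e_q\!\big(z/(1-q)\big)$ being the $q$-exponential that encodes the irregular part at $z=\infty$. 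Linear independence over pseudo-constants then follows by comparing leading behaviours: $\sigma_q$ acts on $\hat f_q(z)_i$ with leading multiplier $a_i^{-1}$, and $a_1^{-1},\dots,a_{n-1}^{-1}$ are pairwise distinct because $a_i/a_j\notin q^{\mathbb{Z}}$; while $f_q(z)_n$ carries the essential singularity $1/(z;q)_\infty$ and so cannot lie in the span of the other $n-1$ (equivalently, the $q$-Casoratian, which solves a first-order $q$-difference equation read off from the extreme coefficients of \eqref{intro:conflu hyper eq}, is nonzero on inspection of the leading terms).

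For the solutions at the origin, I would start from Heine's classical fact that ${}_n\varphi_{n-1}(a_1,\dots,a_n;b_1,\dots,b_{n-1};q,z)$ solves \eqref{intro: qhyper eq} (see e.g. \cite{gasper2004basic}); specializing $a_n=0$ (again legitimate, the coefficients being polynomial in $a_n$) shows $f^{(0)}_q(z)_n={}_n\varphi_{n-1}(a_1,\dots,a_{n-1},0;b_1,\dots,b_{n-1};q,z)$ solves \eqref{intro:conflu hyper eq}, and this series converges for $|z|<1$ because $(0;q)_k=1$ makes it a ${}_{n-1}\varphi_{n-1}$-type series. For $i\le n-1$ I would substitute $y(z)=z^{1-\log_q b_i}w(z)$; since $q^{1-\log_q b_i}=q/b_i$, conjugation by $z^{1-\log_q b_i}$ replaces $\sigma_q$ by $(q/b_i)\sigma_q$, turning \eqref{intro:conflu hyper eq} into $z\prod_l\big(1-\tfrac{qa_l}{b_i}\sigma_q\big)w=\big(1-\tfrac{q}{b_i}\sigma_q\big)\prod_l\big(1-\tfrac{b_l}{b_i}\sigma_q\big)w$; pulling out the $l=i$ factor on the right, which is exactly $(1-\sigma_q)$, restores the shape of \eqref{intro:conflu hyper eq} with new parameters $\alpha_l=qa_l/b_i$ and $\{\beta_l/q\}=\{q/b_i\}\cup\{b_l/b_i:l\neq i\}$, so the $i=n$ case applied to these parameters gives that $f^{(0)}_q(z)_i$ solves \eqref{intro:conflu hyper eq}. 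Linear independence then follows by comparing the leading exponents $0$ and $1-\log_q b_1,\dots,1-\log_q b_{n-1}$ at $z=0$, which are pairwise non-congruent modulo $\mathbb{Z}$ under the relevant non-resonance on the $b_l$ (in force throughout the applications of this proposition, e.g. under \eqref{eq:vivj}).

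The main obstacle is the confluence step at infinity: one must control simultaneously the parameter $a_iq/a_n\to\infty$, the blow-up of the hypergeometric argument, and the normalizing power $\big[(-1)^kq^{k(k-1)/2}\big]^{1+m-n}$ (with $m$ dropping from $n-1$ to $n-2$), and then argue that the limiting object — now a genuinely divergent ($q$-Gevrey order $1$) series rather than a convergent one — remains an honest formal solution; the clean way is to use the polynomial dependence of \eqref{intro: qhyper eq} on $a_n$ so that the coefficient recurrence passes to the limit, though this can be replaced by a direct if tedious verification. A softer secondary point is writing the linear-independence (Casoratian) argument at the irregular singularity $z=\infty$, where $f_q(z)_n$ is not of the Frobenius form of the other $n-1$ solutions.
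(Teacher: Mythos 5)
Your proposal is correct. Note that the paper does not actually prove this proposition: it treats it as classical, asserting just before the statement that the confluent series is ``known'' to satisfy \eqref{intro:conflu hyper eq} and citing \cite{adams1928linear} for the solution $h_q(z)_n\, z^{\sum_l\log_q(a_l/b_l)}/(z;q)_\infty$. What you supply is a complete derivation of those assertions, and your two main devices are sound: the confluence $a_n\to 0$ in Proposition \ref{pre:connection prob} is computed correctly (with $c=a_iq/a_n$ the argument satisfies $w=c\cdot\frac{\prod b_l}{a_i z\prod_{l=1}^{n-1}a_l}$, and the asymptotics $(c;q)_k\sim(-c)^kq^{k(k-1)/2}$ produce exactly the factor $[(-1)^kq^{k(k-1)/2}]^{-1}$ that converts the ${}_n\varphi_{n-1}$ into the ${}_n\varphi_{n-2}$ of \eqref{eq:qformalsol}), and the conjugation $y=z^{1-\log_q b_i}w$, which replaces $\sigma_q$ by $(q/b_i)\sigma_q$ and lets you peel off the $l=i$ factor as $(1-\sigma_q)$, reproduces the parameters of ${f}^{(0)}_q(z)_i$ exactly. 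Your passage to the limit in the coefficient recurrence (or the direct verification you offer as an alternative) is the standard way to legitimize the divergent limit series as a formal solution. The one point worth keeping visible is the one you already flag: linear independence of the ${f}^{(0)}_q(z)_i$ at the origin genuinely needs $b_i/b_j\notin q^{\mathbb{Z}}$ for $i\neq j$, which is not among the stated hypotheses ($b_l\neq 0$ only) but does hold in every application the paper makes (e.g.\ under \eqref{cond:uiujvivj}, where $b_i/b_j=q^{\mu_j-\mu_i}$); the same mild caveat applies to the paper's own statement.
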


\subsection{$q$-Borel Resummation}\label{sec:q-borel sum}
In this subsection, we give a brief review to the theory of $q$-Borel resummation (see e.g.,   \cite{adachi2019q, dreyfus2016q, marotte2000multisommabilite, ramis2013local, zhang1999developpements}). The definitions and properties can be found in \cite{adachi2019q, dreyfus2016q, zhang1999developpements}.
Denote that  
\begin{itemize}
    \item $\mathbb{C}[\![z^{-1}]\!]$  (or $\mathbb{C}[\![z]\!]$) is the ring of formal power series   of  the variable $z^{-1}$ (or $z$);
    \item  $\mathcal{M}(\mathbb{C}^{*},0)$ is the field of functions which  are  meromorphic on some   punctured   neighbourhood  of $0$ in $\mathbb{C}^*$;
    \item $S(a,b)=\{z\in   \widetilde{\mathbb{C}^*}~|~ a<{\mathrm {arg}}(z)<b\}$ is an open sector  with opening
angle $b-a$ on the     Riemann surface $\widetilde{\mathbb{C}^*}$ of the natural logarithm, for  two real numbers $a<b$;
    \item   $[\lambda;q]$ is the discrete $q$-spiral $\{\lambda q^n~|~n\in \mathbb{Z}\}$.
\end{itemize}

 \begin{defn}
     Given  $\hat{f}=\sum_{n=0}^{\infty}a_n z^{-n}\in \mathbb{C}[\![z^{-1}]\!]$, {its} $q$-Borel transformation $\hat{\mathcal{B}}_{q;1}\hat{f}$ is defined as:
\begin{equation*}
(\hat{\mathcal{B}}_{q ; 1} \hat{f})(\xi) \coloneqq  \sum_{n = 0}^{\infty} a_{n} q^{\frac{n(n-1)}{2}} \xi^{n}\in \mathbb{C}[\![\xi]\!].
\end{equation*}
 \end{defn}

Let us introduce the space $\mathbb{H}^{[\lambda;q]}_{q;1}$ of functions with $q$-exponential growth of order (at most) $1$  at infinity along $\lambda\in \mathbb{C}^*$.
\begin{defn}
      Let $\lambda\in \mathbb{C}^*$ and $f(\xi)\in \mathcal{M}(\mathbb{C}^{*},0)$. We say  that  $f(\xi)\in \mathbb{H}^{[\lambda;q]}_{q;1}$,  if $f(\xi)$   admits an analytic continuation defined on $S(\mathrm{arg}(\lambda)-\varepsilon,\mathrm{arg}(\lambda)+\varepsilon)$   for some $\varepsilon>0$,     with $q$-exponential growth of order  $1$  at infinity,   which  means that  there exist  constants $L,M>0$, such that for any  $\xi\in S(\mathrm{arg}(\lambda)-\varepsilon,\mathrm{arg}(\lambda)+\varepsilon)$,  
     \begin{equation*}
         | f(\xi) | < M \theta_{ q  }(L |  \xi |).
     \end{equation*}

\end{defn}

\begin{defn}\label{q-laplace}
If a function $f(\xi)\in \mathbb{H}^{[\lambda;q]}_{q;1}$ for some $\lambda\in \mathbb{C}^*$, then   its $q$-Laplace transformation is defined by
    \begin{equation*}
(\mathcal{L}_{q ; 1}^{[\lambda;q]} f)(z) \coloneqq \sum_{n =-\infty}^{\infty} \frac{f\left(\lambda q^{n}\right)}{\theta_{q}\left({\lambda q^{n}}{z}\right)}.
\end{equation*}
\end{defn}
It is clear that the $q$-Laplace transformation is independent of the chosen representative $\lambda$ and we have
\begin{prop}
       In a   sufficiently small punctured    neighbourhood  of infinity, $\mathcal{L}_{q ; 1}^{[\lambda;q]} f$ has poles of order at most $1$, that are contained in the $q$-spiral $[-\lambda^{-1};q]$.
\end{prop}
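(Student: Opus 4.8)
The statement is: in a sufficiently small punctured neighbourhood of infinity, $\mathcal{L}_{q;1}^{[\lambda;q]}f$ has poles of order at most $1$, contained in the $q$-spiral $[-\lambda^{-1};q]$. The plan is to analyze the defining series $(\mathcal{L}_{q;1}^{[\lambda;q]}f)(z)=\sum_{n\in\mathbb{Z}}f(\lambda q^n)/\theta_q(\lambda q^n z)$ term by term, locate the poles of each summand, and show that the series converges locally uniformly on a punctured neighbourhood of infinity away from those poles.

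First I would recall that $\theta_q(w)=(q,-w,-q/w;q)_\infty$ vanishes exactly when $w\in -q^{\mathbb{Z}}$, and these zeros are simple (each factor $1-(-w)q^k$ or $1-(-q/w)q^k$ contributes a simple zero). Hence the $n$-th summand $f(\lambda q^n)/\theta_q(\lambda q^n z)$, as a function of $z$, has simple poles precisely where $\lambda q^n z\in -q^{\mathbb{Z}}$, i.e. $z\in -\lambda^{-1}q^{\mathbb{Z}-n}=[-\lambda^{-1};q]$. So every individual term has simple poles lying on the single $q$-spiral $[-\lambda^{-1};q]$, independent of $n$; this immediately pins down the pole locus. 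Second I would establish locally uniform convergence of the series on compact subsets of a punctured neighbourhood $U$ of infinity with $U\cap[-\lambda^{-1};q]=\emptyset$: using $f\in\mathbb{H}^{[\lambda;q]}_{q;1}$, bound $|f(\lambda q^n)|\le M\theta_q(L|\lambda|q^n)$ for $n\ge 0$ (and note $f$ is meromorphic near $0$, so $|f(\lambda q^n)|$ grows at most like a fixed power of $q^{-n}$ as $n\to-\infty$, possibly after shrinking to avoid the finitely many poles of $f$ near $0$); then combine with the asymptotics of $\theta_q(\lambda q^n z)$, which for large $|n|$ grows super-exponentially in $|n|$ by the functional equation $\theta_q(q^n w)=w^{-n}q^{-n(n-1)/2}\theta_q(w)$. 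The Gaussian factor $q^{-n(n-1)/2}$ in the denominator dominates the at-most-Gaussian growth $\theta_q(L|\lambda|q^n)$ coming from the numerator for $n\to+\infty$, and for $n\to-\infty$ the denominator again wins because $\theta_q(\lambda q^n z)$ with $n\to-\infty$ also has a $q^{-n(n-1)/2}$-type blow-up while the numerator grows only polynomially; hence the tails are dominated by a convergent geometric-times-Gaussian series, uniformly for $z$ in a compact set bounded away from $[-\lambda^{-1};q]$.

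Third, with locally uniform convergence on $U\setminus[-\lambda^{-1};q]$ in hand, the sum is holomorphic there, and to control the behaviour near a point $z_0\in[-\lambda^{-1};q]\cap U$ I would split the series into the finitely many terms whose pole actually sits at $z_0$ — but in fact, by the computation above, \emph{all} terms have their poles on the same spiral and $z_0=-\lambda^{-1}q^{m}$ is a pole of every summand simultaneously; however each is simple, so I instead argue: fix a small disc $D$ around $z_0$ meeting $[-\lambda^{-1};q]$ only in $z_0$, multiply the series by $\theta_q(\lambda z)$ (or by a local uniformizer vanishing simply at $z_0$), and show the resulting series converges uniformly on $D$, so the product extends holomorphically across $z_0$; this shows $\mathcal{L}_{q;1}^{[\lambda;q]}f$ has at most a simple pole at $z_0$. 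One must check that multiplying by $\theta_q(\lambda z)$ and passing to the limit is legitimate, i.e. that $\theta_q(\lambda z)\cdot\sum_n f(\lambda q^n)/\theta_q(\lambda q^n z)$ still converges uniformly on $D$; this follows because $\theta_q(\lambda z)/\theta_q(\lambda q^n z)$ is, by the functional equation, a monomial in $z$ times a power of $q$ with controlled growth in $n$, so the convergence estimate from the previous step is preserved up to a harmless polynomial-in-$|z|$ factor.

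The main obstacle I expect is the convergence estimate in the $n\to-\infty$ tail: there $f(\lambda q^n)$ is evaluated near $\xi=0$, where $f$ is only assumed meromorphic, so one must first shrink the neighbourhood to ensure no poles of $f$ are hit along the spiral $\{\lambda q^n\}_{n\le 0}$, then quantify the at-most-polar growth $|f(\lambda q^n)|=O(q^{-Nn})$ for some $N$, and verify this is still crushed by the $q^{-n(n-1)/2}$ growth of $\theta_q(\lambda q^n z)$ in the denominator uniformly for $z$ near infinity. Once the two-sided geometric-Gaussian majorant is set up, the rest (holomorphy off the spiral, simple poles on it) is routine from Weierstrass-type convergence theorems.
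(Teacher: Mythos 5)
The paper does not prove this proposition: it is stated as a recalled fact from the $q$-Borel resummation literature (the surrounding text cites Ramis--Zhang, Dreyfus, et al.\ for ``the definitions and properties''), so there is no in-paper argument to compare against. Your outline is the standard proof, and its architecture is sound; in fact your third step already contains the cleanest route: since $\theta_q(\lambda q^n z)=(\lambda z)^{-n}q^{-n(n-1)/2}\theta_q(\lambda z)$, multiplying the series by $\theta_q(\lambda z)$ turns it into $\sum_{n\in\mathbb{Z}}f(\lambda q^n)(\lambda z)^n q^{n(n-1)/2}$, and once you show this converges locally uniformly for $|z|$ large you get holomorphy of the product in one stroke, hence at most simple poles exactly on the simple zero set of $\theta_q(\lambda z)$, which is $[-\lambda^{-1};q]$. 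There is no need for the separate ``holomorphic off the spiral'' step.

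Two points need repair. First, you have the two tails interchanged: since $0<q<1$, the points $\lambda q^n$ tend to $0$ as $n\to+\infty$ and to $\infty$ as $n\to-\infty$, so the $q$-exponential growth bound $|f(\xi)|<M\theta_q(L|\xi|)$ governs the tail $n\to-\infty$, while meromorphy near the origin governs $n\to+\infty$ (this is a relabeling, not a conceptual error). Second, and more substantively, in the tail where the growth bound applies the Gaussian factors do \emph{not} dominate --- they cancel exactly. Writing $\theta_q(L|\lambda|q^n)=(L|\lambda|)^{-n}q^{-n(n-1)/2}\theta_q(L|\lambda|)$ and comparing with $|\theta_q(\lambda q^nz)|=|\lambda z|^{-n}q^{-n(n-1)/2}|\theta_q(\lambda z)|$, the $n$-th term of that tail is bounded by a constant times $(|z|/L)^{n}$, which as $n\to-\infty$ converges precisely when $|z|>L$. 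This is exactly where the hypothesis ``sufficiently small punctured neighbourhood of infinity'' enters, and your write-up never ties the constant $L$ from the definition of $\mathbb{H}^{[\lambda;q]}_{q;1}$ to the size of that neighbourhood; as stated, your estimate would wrongly give convergence on all of $\mathbb{C}^*$ minus the spiral. With the tails relabeled and the condition $|z|>L$ made explicit, the argument closes.
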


Now let us introduce    $[\lambda;q]$-summable functions.
\begin{defn}\label{qsum 1/z}
 For  $\hat{f}\in \mathbb{C}[\![z^{-1}]\!]$, if   $\hat{\mathcal{B}}_{q ; 1} \hat{f}\in \mathbb{H}^{[\lambda;q]}_{q;1}$  for   some $\lambda\in \mathbb{C}^*$, then we say $\hat{f}$ is $[\lambda;q]$-summable and call  $\mathcal{L}_{q ; 1}^{[\lambda;q]} \circ \hat{\mathcal{B}}_{q ; 1} \hat{f}$  its $[\lambda;q]$-sum.         
\end{defn}
The $q$-summability of convergent power series   is  given as follows.
\begin{prop}\label{qsum:convergent}
    If the disc of convergence of   $\hat{f}\in \mathbb{C}[\![z^{-1}]\!]$  is  $\left\{z \in \mathbb{C}^* ~|~R<|z|\right\}$,  then  $\hat{f}$ is $[\lambda;q]$-summable for any $\lambda \in \mathbb{C}^* $  and  
    \begin{equation*}
        \mathcal{L}_{q ; 1}^{[\lambda;q]} \circ \hat{\mathcal{B}}_{q ; 1} \hat{f}=\hat{f},  \  \text{for} \  z\in \left\{z \in \mathbb{C}^* ~|~R<|z|\right\}\backslash [-\lambda^{-1};q].
    \end{equation*}
\end{prop}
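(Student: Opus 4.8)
The plan is to work directly with the coefficients of $\hat f=\sum_{n=0}^{\infty}a_nz^{-n}$. Since the disc of convergence is $\{z\in\mathbb{C}^*\mid |z|>R\}$, the power series $\sum a_nw^n$ in $w=z^{-1}$ has radius of convergence $1/R$, so $\limsup_n|a_n|^{1/n}=R$; hence for every $R'>R$ there is $C>0$ with $|a_n|\le C(R')^n$ for all $n$. The first step is to feed this into the $q$-Borel transform. Because $0<q<1$, the factor $q^{n(n-1)/2}$ decays super-geometrically, so $(\hat{\mathcal B}_{q;1}\hat f)(\xi)=\sum_{n\ge0}a_nq^{n(n-1)/2}\xi^n$ is an entire function of $\xi$. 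Bounding term by term and using that, for $w>0$, every term of $\theta_q(w)=\sum_{n\in\mathbb Z}q^{n(n-1)/2}w^n$ is nonnegative, one gets
\begin{equation*}
|(\hat{\mathcal B}_{q;1}\hat f)(\xi)|\le C\sum_{n\ge0}q^{n(n-1)/2}(R'|\xi|)^n\le C\,\theta_q(R'|\xi|)\qquad\text{for all }\xi\in\mathbb{C}.
\end{equation*}
Thus $\hat{\mathcal B}_{q;1}\hat f\in\mathbb H^{[\lambda;q]}_{q;1}$ for every $\lambda\in\mathbb{C}^*$ (take $L=R'$, $M=C$), which by Definition \ref{qsum 1/z} is exactly the assertion that $\hat f$ is $[\lambda;q]$-summable for every $\lambda$.

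For the identity $\mathcal L_{q;1}^{[\lambda;q]}\circ\hat{\mathcal B}_{q;1}\hat f=\hat f$, I would substitute the Borel series into Definition \ref{q-laplace} and interchange the two summations (justified below):
\begin{equation*}
\bigl(\mathcal L_{q;1}^{[\lambda;q]}\hat{\mathcal B}_{q;1}\hat f\bigr)(z)=\sum_{m\in\mathbb Z}\frac{1}{\theta_q(\lambda q^{m}z)}\sum_{n\ge0}a_nq^{n(n-1)/2}(\lambda q^{m})^n=\sum_{n\ge0}a_nq^{n(n-1)/2}\lambda^{n}\sum_{m\in\mathbb Z}\frac{q^{mn}}{\theta_q(\lambda q^{m}z)} ,
\end{equation*}
which is well defined on $\{|z|>R\}\setminus[-\lambda^{-1};q]$: by the Jacobi triple product identity \eqref{Jacobi's triple product identity} one has $\theta_q(w)=0$ iff $w\in-q^{\mathbb Z}$, so $\theta_q(\lambda q^{m}z)=0$ iff $\lambda z\in-q^{\mathbb Z}$, i.e.\ iff $z\in[-\lambda^{-1};q]$. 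For fixed $n$ the inner sum over $m$ collapses using the quasi-periodicity \eqref{theta(q z)}, $\theta_q(\lambda q^{m}z)=(\lambda z)^{-m}q^{-m(m-1)/2}\theta_q(\lambda z)$:
\begin{equation*}
\sum_{m\in\mathbb Z}\frac{q^{mn}}{\theta_q(\lambda q^{m}z)}=\frac{1}{\theta_q(\lambda z)}\sum_{m\in\mathbb Z}q^{m(m-1)/2}(q^{n}\lambda z)^{m}=\frac{\theta_q(q^{n}\lambda z)}{\theta_q(\lambda z)}=(\lambda z)^{-n}q^{-n(n-1)/2},
\end{equation*}
applying \eqref{theta(q z)} once more in the last step. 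Substituting back gives $\sum_{n\ge0}a_nq^{n(n-1)/2}\lambda^{n}(\lambda z)^{-n}q^{-n(n-1)/2}=\sum_{n\ge0}a_nz^{-n}=\hat f(z)$.

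It remains to legitimize the interchange of summations, which is the only delicate point and the main obstacle, since the naive estimates on $\theta_q(\lambda q^{m}z)$ deteriorate as $m\to-\infty$. The resolution is that the $q$-theta quasi-periodicity makes the doubly-infinite $m$-sum telescope into another $q$-theta value even when one takes absolute values: exactly as above (with $|\lambda z|$ in place of $\lambda z$),
\begin{equation*}
\sum_{m\in\mathbb Z}\frac{q^{mn}}{|\theta_q(\lambda q^{m}z)|}=\frac{\theta_q(q^{n}|\lambda z|)}{|\theta_q(\lambda z)|}=|\lambda z|^{-n}q^{-n(n-1)/2}\,\frac{\theta_q(|\lambda z|)}{|\theta_q(\lambda z)|},
\end{equation*}
so that
\begin{equation*}
\sum_{n\ge0}|a_n|\,q^{n(n-1)/2}|\lambda|^{n}\sum_{m\in\mathbb Z}\frac{q^{mn}}{|\theta_q(\lambda q^{m}z)|}=\frac{\theta_q(|\lambda z|)}{|\theta_q(\lambda z)|}\sum_{n\ge0}|a_n|\,|z|^{-n}<\infty\qquad\text{whenever }|z|>R .
\end{equation*}
Tonelli's theorem then justifies the rearrangement, and the identity $\mathcal L_{q;1}^{[\lambda;q]}\circ\hat{\mathcal B}_{q;1}\hat f=\hat f$ holds on $\{z\in\mathbb{C}^*\mid |z|>R\}\setminus[-\lambda^{-1};q]$, as claimed. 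Conceptually the statement is that convergent power series in $z^{-1}$ are "trivially" $q$-summable in every direction and that $\mathcal L_{q;1}^{[\lambda;q]}$ inverts $\hat{\mathcal B}_{q;1}$ on them; all the work is the absolute-convergence bookkeeping, which the $q$-theta functional equation tames.
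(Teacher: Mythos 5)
Your proof is correct and complete. The paper states this proposition without proof (it is recalled as part of the standard theory of $q$-Borel resummation, with references to the literature), so there is no in-paper argument to compare against; your computation — bounding the entire Borel transform by $C\,\theta_q(R'|\xi|)$ to get summability in every direction, then collapsing the inner $m$-sum via the quasi-periodicity \eqref{theta(q z)} and justifying the interchange by the same telescoping applied to absolute values — is exactly the standard argument, and the Tonelli step, which is the only delicate point, is handled correctly.
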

The $q$-summability of  the basic hypergeometric series ${ }_n \varphi_{n-2}\left(\boldsymbol{a} ; \boldsymbol{b} ; q,  z^{-1}\right)$ is given as follows.
\begin{prop}[\cite{adachi2019q, dreyfus2015confluence}]\label{prop: qsummability}
       If     $\boldsymbol{a}=(a_1,...,a_n)$,   $\boldsymbol{b}=(b_1,...,b_{n-2})$ satisfy
   \begin{equation}\label{anbn-2cond}
  a_i\neq 0,\      a_i/a_j\notin q^{\mathbb{Z}}, \ \text{for} \  1\leq i \ne j \leq n, \ \text{and}\    b_l\notin \{ 0 \} \cup  q^{-\mathbb{N}}, \  \text{for}\ 1\leq l\leq n-2,  
   \end{equation}
   the  formal series      ${ }_n \varphi_{n-2}\left(\boldsymbol{a} ; \boldsymbol{b} ; q,  z^{-1}\right)$ is $[\lambda ; q]$-summable for any $\lambda \in \mathbb{C}^* \backslash\left[-1 ; q\right]$.  Moreover, for   $c\in \mathbb{C}^*$, the  formal series  ${ }_n \varphi_{n-2}\left(\boldsymbol{a} ; \boldsymbol{b} ; q, c z^{-1}\right)$  is $[\lambda ; q]$-summable for any $\lambda \in \mathbb{C}^* \backslash\left[-1/c ; q\right]$.  
\end{prop}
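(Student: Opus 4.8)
The plan is to compute the $q$-Borel transform explicitly, recognize it as a \emph{convergent} basic hypergeometric series, and then run a continuation-plus-growth argument on that series; $[\lambda;q]$-summability will then follow directly from Definitions~\ref{q-laplace} and~\ref{qsum 1/z}. First I would expand ${}_{n}\varphi_{n-2}(\boldsymbol{a};\boldsymbol{b};q,z^{-1})$ via Definition~\ref{def:basic f}: its general term carries the factor $\bigl[(-1)^kq^{k(k-1)/2}\bigr]^{1+(n-2)-n}=(-1)^k q^{-k(k-1)/2}$, and the weight $q^{k(k-1)/2}$ of $\hat{\mathcal B}_{q;1}$ cancels the $q$-Gevrey factor $q^{-k(k-1)/2}$, leaving only the sign. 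Hence
\begin{equation*}
\bigl(\hat{\mathcal B}_{q;1}\,{}_{n}\varphi_{n-2}(\boldsymbol{a};\boldsymbol{b};q,z^{-1})\bigr)(\xi)=\sum_{k=0}^{\infty}\frac{\prod_{l=1}^{n}(a_l;q)_k}{(q;q)_k\,\prod_{l=1}^{n-2}(b_l;q)_k}(-\xi)^k={}_{n}\varphi_{n-1}\bigl(a_1,\dots,a_n;b_1,\dots,b_{n-2},0;q,-\xi\bigr),
\end{equation*}
using $(0;q)_k=1$; call this function $\hat g$. Since a ${}_{n}\varphi_{n-1}$-type series has radius of convergence $1$, $\hat g$ is holomorphic on $\{|\xi|<1\}$, so in particular $\hat g\in\mathcal M(\mathbb C^*,0)$ and it remains to show $\hat g\in\mathbb H^{[\lambda;q]}_{q;1}$ for every $\lambda\in\mathbb C^*\setminus[-1;q]$.

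Next I would exploit that $\hat g$ satisfies the $q$-hypergeometric difference equation \eqref{intro: qhyper eq} with upper parameters $a_1,\dots,a_n$ and lower parameters $b_1,\dots,b_{n-2},0$, after the substitution $z=-\xi$; explicitly,
\begin{equation*}
(1-\sigma_q)\prod_{l=1}^{n-2}\Bigl(1-\tfrac{b_l}{q}\sigma_q\Bigr)\hat g(\xi)+\xi\prod_{l=1}^{n}(1-a_l\sigma_q)\hat g(\xi)=0,
\end{equation*}
i.e.\ $\sum_{j=0}^{n}p_j(\xi)\hat g(q^{j}\xi)=0$ with polynomial coefficients of degree $\le 1$, $p_0(\xi)=1+\xi$, and $p_n(\xi)=(-1)^n\bigl(\prod_l a_l\bigr)\xi$, the latter nonzero on $\mathbb C^*$ because $a_l\neq 0$ by \eqref{anbn-2cond}. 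Solving $\hat g(\xi)=-p_0(\xi)^{-1}\sum_{j=1}^{n}p_j(\xi)\hat g(q^{j}\xi)$ — each shifted argument $q^{j}\xi$ having strictly smaller modulus — and iterating outward from $\{|\xi|<1\}$, I obtain a meromorphic continuation of $\hat g$ to all of $\mathbb C^*$ whose poles are confined to the $q$-spiral $[-1;q]$; the hypotheses $b_l\notin q^{-\mathbb N}$ ensure the seed series and the recursion are well posed.

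For the growth I would use the same recursion: in a sector (more precisely a $q$-sector around the spiral $[\lambda;q]$) staying uniformly away from $[-1;q]$ one has $|1+\xi|\gtrsim|\xi|$ and $|p_j(\xi)|\lesssim|\xi|$, so $|\hat g(\xi)|\le C\max_{1\le j\le n}|\hat g(q^{j}\xi)|$; iterating this until the argument falls into $\{|\xi|<1\}$, which takes $O(\log|\xi|)$ steps, gives $|\hat g(\xi)|\le C'|\xi|^{N}$, i.e.\ at most polynomial — a fortiori $q$-exponential of order $\le 1$ — growth along $\lambda$. (Equivalently one can read this off the Birkhoff structure of \eqref{intro: qhyper eq} at its Fuchsian point $z=\infty$, cf.\ Proposition~\ref{pre:connection prob} extended to a vanishing lower parameter: the local solutions there are $z^{-\log_q a_i}$ times convergent series, with pseudo-constant connection coefficients bounded on spirals away from their poles.) Thus $\hat g\in\mathbb H^{[\lambda;q]}_{q;1}$, which is exactly $[\lambda;q]$-summability of ${}_{n}\varphi_{n-2}(\boldsymbol{a};\boldsymbol{b};q,z^{-1})$ for all $\lambda\in\mathbb C^*\setminus[-1;q]$; and since the $q$-Borel transform of ${}_{n}\varphi_{n-2}(\boldsymbol{a};\boldsymbol{b};q,cz^{-1})$ is $\hat g(c\,\xi)$, whose pole spiral is $[-1/c;q]$, the constant-$c$ statement follows at once.

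I expect the main obstacle to be the precise meaning of ``analytic with controlled growth along the direction $\lambda$'' in the growth step: because the poles fill the entire spiral $[-1;q]\subset\mathbb R_{<0}$, no genuine Euclidean sector around a negative-real $\lambda$ can avoid them, so one must argue with the discrete spiral $[\lambda;q]$ (or a $q$-sector built from spirals) and with the exact notion of $q$-exponential growth of order $1$ from \cite{adachi2019q,dreyfus2015confluence}, rather than with naive sectors — this is the one genuinely technical ingredient. A secondary point, should one route the growth estimate through Birkhoff's theory instead of the bare recursion, is that Proposition~\ref{pre:connection prob} is stated for non-vanishing lower parameters and requires the standard adaptation to the degenerate case $b_{n-1}=0$ occurring here; everything else is routine manipulation of $q$-Pochhammer symbols.
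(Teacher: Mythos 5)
The paper does not prove this proposition; it is imported verbatim from \cite{adachi2019q, dreyfus2015confluence}, so there is no internal argument to compare yours against. Your self-contained derivation is correct and follows the standard route used in those references: the $q$-Borel transform computation is right (the exponent $1+(n-2)-n=-1$ in Definition \ref{def:basic f} produces exactly the factor $(-1)^k q^{-k(k-1)/2}$ that $\hat{\mathcal{B}}_{q;1}$ cancels up to sign), the resulting ${}_{n}\varphi_{n-1}(\boldsymbol{a};b_1,\dots,b_{n-2},0;q,-\xi)$ does satisfy the degenerate case of \eqref{intro: qhyper eq} with $p_0(\xi)=1+\xi$ and top coefficient $(-1)^n\bigl(\prod_l a_l\bigr)\xi$, and the outward iteration both continues $\hat g$ meromorphically with poles confined to $\{-q^{-m}\}_{m\ge 0}\subset[-1;q]$ and yields the uniform bound $|\hat g(\xi)|\le C\max_j|\hat g(q^j\xi)|$ on any closed sector avoiding the ray $\arg\xi=\pi$, whence polynomial --- a fortiori $q$-exponential order $\le 1$ --- growth after $O(\log_{1/q}|\xi|)$ iterations. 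Note that your direct recursion never actually uses $a_i\ne 0$ or $a_i/a_j\notin q^{\mathbb{Z}}$; those hypotheses are only needed for the finer statements built on top of summability (Proposition \ref{fqlambda}, Lemma \ref{Adachi}), so establishing summability under weaker hypotheses is harmless.

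The obstacle you flag is real and is the only substantive caveat: for $\lambda$ on the negative real axis with $|\lambda|\notin q^{\mathbb{Z}}$, every Euclidean sector $S(\pi-\varepsilon,\pi+\varepsilon)$ contains the full pole set $\{-q^{-m}\}$, so $\hat{\mathcal{B}}_{q;1}\hat f$ cannot lie in $\mathbb{H}^{[\lambda;q]}_{q;1}$ as that space is literally defined in Section \ref{sec:q-borel sum}. The proposition in its stated generality (all $\lambda\in\mathbb{C}^*\setminus[-1;q]$) is therefore only compatible with the spiral-based notion of summability used in \cite{adachi2019q, dreyfus2015confluence}, which requires control of $\hat g$ only near the discrete set $\{\lambda q^n\}$ --- and your estimate does deliver that, since $\inf_m |1+\lambda q^{-m}|/|\lambda q^{-m}|>0$ precisely when $\lambda\notin[-1;q]$, and the $q$-Laplace transform of Definition \ref{q-laplace} only samples $f$ on $[\lambda;q]$ in any case. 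This is a mismatch in the paper's definition rather than a gap in your proof. The rescaling to general $c\in\mathbb{C}^*$ is immediate as you say, since $\hat{\mathcal{B}}_{q;1}$ sends $z^{-1}\mapsto cz^{-1}$ to $\xi\mapsto c\xi$ and hence moves the pole spiral to $[-1/c;q]$.
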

The $q$-Borel resummation can be applied to a $[\lambda ; q]$-summable formal power series solution of a linear $q$-difference equation (or a linear  
$q$-difference system)
\begin{equation}\label{linear q-difference equation}
         b_m(z) y\left(q^m z\right)+b_{m-1}(z)y\left(q^{m-1} z\right)+...+b_0(z)y\left(z\right)=0,
     \end{equation}
to obtain a meromorphic solution. That is,
 \begin{prop}\label{resummation satisfies equation} 
 For $\lambda \in \mathbb{C}^*$, it follows that
 \begin{itemize}
     \item If  coefficients  $b_i(z)$  are  rational functions,  $\hat{f}\in \mathbb{C}[\![z^{-1}]\!]$ is  $[\lambda;q]$-summable and  satisfies the  equation   \eqref{linear q-difference equation},  then  $\mathcal{L}^{[\lambda;q]}_{q;1}\circ \hat{\mathcal{B}}_{q;1}\hat{f}$  is   a  meromorphic solution of  \eqref{linear q-difference equation}.
     \item    If  coefficients  $b_i(z)$  are  rational  matrix-valued  functions,   $\hat{F}=\big(\hat{f}_{ij}  \big)$  is  a    formal  matrix-valued   solution of  the  system  \eqref{linear q-difference equation}, with  $\hat{f}_{ij}\in \mathbb{C}[\![z^{-1}]\!]$  being $[\lambda;q]$-summable,  then  $F=\left(\mathcal{L}^{[\lambda;q]}_{q;1}\circ \hat{\mathcal{B}}_{q;1}\hat{f}_{ij}  \right)$  is   a  meromorphic solution of  \eqref{linear q-difference equation}.
 \end{itemize}
 \end{prop}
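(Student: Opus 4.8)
The plan is to derive the statement from two pairs of functional equations — one for the $q$-Borel transform $\hat{\mathcal{B}}_{q;1}$ and one for the $q$-Laplace transform $\mathcal{L}_{q;1}^{[\lambda;q]}$ — which together show that the composite operator $\mathcal{S}^{[\lambda;q]}:=\mathcal{L}_{q;1}^{[\lambda;q]}\circ\hat{\mathcal{B}}_{q;1}$, on its domain of definition, commutes with the $q$-shift $\sigma_q$ and with multiplication by $z^{-1}$. Since the equation \eqref{linear q-difference equation} is, after a harmless normalization, assembled out of exactly these two operations, and since $\hat f$ (resp. each $\hat f_{ij}$) annihilates the corresponding $q$-difference operator, the function $\mathcal{S}^{[\lambda;q]}\hat f$ will annihilate it too; that it is meromorphic near $z=\infty$ is then already supplied by the earlier proposition on the location of the poles of $\mathcal{L}_{q;1}^{[\lambda;q]}$.

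First I would record, for $\hat f=\sum_{n\ge 0}a_n z^{-n}\in\mathbb{C}[\![z^{-1}]\!]$, the identities $\hat{\mathcal{B}}_{q;1}(\sigma_q\hat f)(\xi)=(\hat{\mathcal{B}}_{q;1}\hat f)(q^{-1}\xi)$ and $\hat{\mathcal{B}}_{q;1}(z^{-1}\hat f)(\xi)=\xi\,(\hat{\mathcal{B}}_{q;1}\hat f)(q\xi)$, both of which follow at once from comparing Taylor coefficients. Dually, for $f\in\mathbb{H}^{[\lambda;q]}_{q;1}$, I would use the functional equation $\theta_q(q^m w)=w^{-m}q^{-m(m-1)/2}\theta_q(w)$ and reindex the bilateral, absolutely convergent series defining $\mathcal{L}_{q;1}^{[\lambda;q]}$ to obtain $\mathcal{L}_{q;1}^{[\lambda;q]}\big(f(q^{-1}\xi)\big)(z)=(\mathcal{L}_{q;1}^{[\lambda;q]}f)(qz)$ and $\mathcal{L}_{q;1}^{[\lambda;q]}\big(\xi\,f(q\xi)\big)(z)=z^{-1}(\mathcal{L}_{q;1}^{[\lambda;q]}f)(z)$. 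Concatenating these gives $\mathcal{S}^{[\lambda;q]}(\sigma_q\hat f)=\sigma_q(\mathcal{S}^{[\lambda;q]}\hat f)$ and $\mathcal{S}^{[\lambda;q]}(z^{-1}\hat f)=z^{-1}(\mathcal{S}^{[\lambda;q]}\hat f)$. Along the way I would note that $\sigma_q\hat f$, $z^{-1}\hat f$, and more generally $b(z^{-1})\sigma_q^{j}\hat f$ for any $b\in\mathbb{C}[z^{-1}]$, are again $[\lambda;q]$-summable: their $q$-Borel transforms are obtained from $\hat{\mathcal{B}}_{q;1}\hat f$ by a dilation of the variable $\xi$ and multiplication by a polynomial in $\xi$, and both operations preserve the growth class $\mathbb{H}^{[\lambda;q]}_{q;1}$, so the composite $\mathcal{S}^{[\lambda;q]}$ really does apply to each such expression.

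Next I would normalize the equation: after multiplying \eqref{linear q-difference equation} by the common denominator of the $b_i$ and then by a sufficiently negative power of $z$, all coefficients may be assumed to lie in $\mathbb{C}[z^{-1}]$; this changes neither the solution set on a punctured neighbourhood of $z=\infty$ nor the fact that $\hat f$ annihilates the resulting operator $L=\sum_j b_j(z^{-1})\sigma_q^{j}$ in $\mathbb{C}[\![z^{-1}]\!]$. By $\mathbb{C}$-linearity together with the two commutation relations above, $\mathcal{S}^{[\lambda;q]}\circ L=L\circ\mathcal{S}^{[\lambda;q]}$, whence $L(\mathcal{S}^{[\lambda;q]}\hat f)=\mathcal{S}^{[\lambda;q]}(L\hat f)=0$; as $\mathcal{S}^{[\lambda;q]}\hat f=\mathcal{L}_{q;1}^{[\lambda;q]}\circ\hat{\mathcal{B}}_{q;1}\hat f$ is meromorphic on a punctured neighbourhood of infinity with poles confined to $[-\lambda^{-1};q]$, it is a genuine meromorphic solution of \eqref{linear q-difference equation} there. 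For the matrix statement I would run the scalar argument entrywise: clearing denominators in $\sum_j B_j(z)\sigma_q^{j}Y=0$ yields a system whose $(r,l)$-entry reads $\sum_{j,k}(\hat B_j(z^{-1}))_{rk}\,\sigma_q^{j}\hat f_{kl}=0$, a scalar relation of exactly the preceding type, to which $\mathcal{S}^{[\lambda;q]}$ may be applied term by term because every $\hat f_{kl}$ is assumed $[\lambda;q]$-summable; the result is $\sum_j\hat B_j\sigma_q^{j}F=0$ for $F=(\mathcal{S}^{[\lambda;q]}\hat f_{kl})$, which is the claim.

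The step demanding the most care is the passage from formal identities to analytic ones: one must check that the intertwining relations for $\hat{\mathcal{B}}_{q;1}$ and $\mathcal{L}_{q;1}^{[\lambda;q]}$ hold as equalities of honest meromorphic functions on a neighbourhood of $z=\infty$. This reduces to (i) the legitimacy of the reindexing in the bilateral series defining $\mathcal{L}_{q;1}^{[\lambda;q]}$, guaranteed by absolute convergence coming from the $q$-exponential growth bound, and (ii) the closure of the family of $[\lambda;q]$-summable series under $\sigma_q$ and under multiplication by $z^{-1}$, so that $\mathcal{S}^{[\lambda;q]}$ is actually defined on each intermediate expression $b_j(z^{-1})\sigma_q^{j}\hat f$. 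Both are routine given the definitions of Section \ref{sec:q-borel sum}, but they are where the analytic — rather than purely formal — content of the proposition lies; everything else is bookkeeping with $\theta_q$ and with the relation $z^{-1}\sigma_q=q\,\sigma_q\,z^{-1}$.
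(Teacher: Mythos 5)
Your argument is correct. Note that the paper does not actually prove this proposition: it appears in the review Section \ref{sec:q-borel sum}, where it is quoted from the standard literature on $q$-Borel resummation (the references cited at the head of that subsection), so there is no in-paper proof to compare against. What you supply is precisely the standard derivation from those sources: the two intertwining pairs
$\hat{\mathcal{B}}_{q;1}(\sigma_q\hat f)(\xi)=(\hat{\mathcal{B}}_{q;1}\hat f)(q^{-1}\xi)$, $\hat{\mathcal{B}}_{q;1}(z^{-1}\hat f)(\xi)=\xi(\hat{\mathcal{B}}_{q;1}\hat f)(q\xi)$ and their duals for $\mathcal{L}_{q;1}^{[\lambda;q]}$ (the latter following from \eqref{theta(q z)} and a reindexing of the absolutely convergent bilateral sum), which combine to show that $\mathcal{L}_{q;1}^{[\lambda;q]}\circ\hat{\mathcal{B}}_{q;1}$ commutes with $\sigma_q$ and with $z^{-1}$, hence with any operator $\sum_j b_j(z^{-1})\sigma_q^j$ after clearing denominators; the matrix case is the scalar case read entrywise. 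All the checks you flag as delicate (stability of $\mathbb{H}^{[\lambda;q]}_{q;1}$ under dilation of $\xi$ and multiplication by powers of $\xi$, absolute convergence justifying the reindexing, meromorphy of the sum from the proposition on pole locations) are the right ones and go through. The only phrase I would tighten is the claim that the Borel transform of $b(z^{-1})\hat f$ is ``a dilation times a polynomial'': for $b(z^{-1})=\sum_k c_kz^{-k}$ it is the finite sum $\sum_k c_k q^{k(k-1)/2}\xi^k(\hat{\mathcal{B}}_{q;1}\hat f)(q^k\xi)$, i.e.\ a combination of several dilations each multiplied by a monomial; since $\mathbb{H}^{[\lambda;q]}_{q;1}$ is closed under each of these operations and under finite sums, your conclusion is unaffected.
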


\begin{defn}
    Let $\lambda \in \mathbb{C}^{*}$ and     $\hat{f}=\sum_{n=0}^\infty a_n z^{-n} \in \mathbb{C}[\![z^{-1}]\!]$.  Suppose  $f$ is a function that is analytic on some neighbourhood of infinity  except  along   the $q$-spiral $[-\lambda^{-1};q]$.  We say that  $f$  is  asymptotic to $\hat{f}$  with  $q$-Gevrey order  $1$ along the direction $\lambda$,    if  for any     $\varepsilon,R>0$,  there exist  constants $C,K>0$,    such that for   any integer  $N\geq 1$  and 
 \begin{equation*}
z\in\left\{z \in \mathbb{C}^* ~|~R<|z|\right\} \backslash \cup_{m \in \mathbb{Z}}\left\{z \in \mathbb{C}^* ~|~0<|z^{-1}+\lambda q^{m}|<\varepsilon|\lambda q^{m} |\right\},
\end{equation*}
we have
\begin{equation*}
    \left|f(z)-\sum_{n=0}^{N-1}a_n z^{-n}\right| \leq C K^{N}q^{-\frac{N(N-1)}{2}}|z|^{-N}.
\end{equation*}
\end{defn}
\begin{prop}
  If   $\hat{f}\in \mathbb{C}[\![z^{-1}]\!]$ is $[\lambda;q]$-summable,  then  $\mathcal{L}_{q ; 1}^{[\lambda;q]} \circ \hat{\mathcal{B}}_{q ; 1} \hat{f}$   is asymptotic to $\hat{f}$     with  $q$-Gevrey order  $1$ along the direction $\lambda$.
\end{prop}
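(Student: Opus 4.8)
The plan is to run the classical Borel--Laplace argument in the $q$-world: realise $f:=\mathcal{L}_{q;1}^{[\lambda;q]}\circ\hat{\mathcal{B}}_{q;1}\hat{f}$ as a $q$-Laplace sum, split the Borel transform into a Taylor polynomial plus a remainder, transform both, and estimate the transformed remainder, with the $q$-theta function playing the role of the Laplace kernel. Write $\hat{f}=\sum_{n\ge0}a_nz^{-n}$. Two elementary facts get used repeatedly. First, $[\lambda;q]$-summability means $\phi:=\hat{\mathcal{B}}_{q;1}\hat{f}=\sum_{n\ge0}a_nq^{n(n-1)/2}\xi^n$ lies in $\mathbb{H}^{[\lambda;q]}_{q;1}$; in particular $\phi$ is holomorphic on some disc $\{|\xi|<\rho\}$ (equivalently $|a_n|\le C_0K_0^{\,n}q^{-n(n-1)/2}$ with $K_0=1/\rho$) and continues analytically to a sector $\Sigma$ around $\arg\lambda$ with $|\phi(\xi)|\le M\theta_q(L|\xi|)$ there. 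Second, using only the quasi-periodicity \eqref{theta(q z)} one computes the exact transform of a monomial,
\begin{equation*}
\mathcal{L}_{q;1}^{[\lambda;q]}(\xi^n)(z)=\sum_{m\in\mathbb{Z}}\frac{(\lambda q^m)^n}{\theta_q(\lambda q^m z)}=q^{-n(n-1)/2}z^{-n},
\end{equation*}
so that $\mathcal{L}_{q;1}^{[\lambda;q]}\circ\hat{\mathcal{B}}_{q;1}$ is the identity on every polynomial in $z^{-1}$; specialising to $\lambda,z>0$ also gives the companion identity $\sum_{m\in\mathbb{Z}}(|\lambda|q^m)^n/\theta_q(|\lambda|q^m v)=q^{-n(n-1)/2}v^{-n}$ for $v>0$ and all $n\ge0$, which lets one sum "polynomial-type" contributions exactly.

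Now fix $N\ge1$ and split $\phi=P_{N-1}+R_N$ into its Taylor polynomial $P_{N-1}(\xi)=\sum_{n<N}a_nq^{n(n-1)/2}\xi^n$ and remainder $R_N$. Both summands belong to $\mathbb{H}^{[\lambda;q]}_{q;1}$, so linearity of Definition \ref{q-laplace} and the monomial identity give
\begin{equation*}
f(z)-\sum_{n=0}^{N-1}a_nz^{-n}=\mathcal{L}_{q;1}^{[\lambda;q]}(R_N)(z)=\sum_{m\in\mathbb{Z}}\frac{R_N(\lambda q^m)}{\theta_q(\lambda q^m z)},
\end{equation*}
and it remains to bound the right-hand side by $CK^{N}q^{-N(N-1)/2}|z|^{-N}$ on $\{|z|>R\}$ minus the $\varepsilon$-neighbourhood of $[-\lambda^{-1};q]$. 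For this I would use: (i) Cauchy's estimate on a fixed circle of radius $\rho/2$ gives $|R_N(\xi)|\le 2C_0(K_0|\xi|)^{N}$ for $|\xi|\le\rho/2$; (ii) for $|\xi|\ge\rho/2$ on $\Sigma$, $|R_N(\xi)|\le|\phi(\xi)|+|P_{N-1}(\xi)|\le M\theta_q(L|\xi|)+C_1B^{N}|\xi|^{N}$, the polynomial being absorbed into an exponential-in-$N$ constant times $|\xi|^{N}$; and (iii) the standard lower bound $|\theta_q(\lambda q^m z)|\ge\kappa(\varepsilon)\,\theta_q(|\lambda q^m z|)$, uniform in $m$, valid because avoiding the $\varepsilon$-neighbourhood of $[-\lambda^{-1};q]$ forces the $q$-spiral $\{\lambda q^m z\}_m$ to stay $\varepsilon$-relatively away from the zero set $[-1;q]$ of $\theta_q$ (each Weierstrass factor in $\theta_q(w)/\theta_q(|w|)$ is then bounded below, only boundedly many factors being "dangerous").

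Feeding (i)--(iii) into the sum, split according to whether $|\lambda q^m|\le\rho/2$ or not. On the first range, (i) and (iii) bound the contribution by $(2C_0/\kappa)K_0^{N}\sum_m|\lambda q^m|^{N}/\theta_q(|\lambda q^m z|)=(2C_0/\kappa)K_0^{N}q^{-N(N-1)/2}|z|^{-N}$ via the companion identity; on the second range, the $C_1B^{N}|\xi|^{N}$ piece is handled the same way and produces $(C_1/\kappa)B^{N}q^{-N(N-1)/2}|z|^{-N}$. The only leftover is the $N$-independent quantity $M\,h(z):=M\sum_{|\lambda q^m|>\rho/2}\theta_q(L|\lambda q^m|)/|\theta_q(\lambda q^m z)|$; a ratio-of-theta estimate, again via \eqref{theta(q z)}, shows $h(z)$ is finite and decays faster than any power of $|z|$ as $|z|\to\infty$ while staying bounded on $\{|z|>R\}$, so $M h(z)\le CK^{N}q^{-N(N-1)/2}|z|^{-N}$ provided $K$ is chosen sufficiently large (it suffices to take $K$ above $K_0$, above $B$, and above $Lq^{-1/2}$). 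Adding the three pieces yields the claimed inequality $|f(z)-\sum_{n<N}a_nz^{-n}|\le CK^{N}q^{-N(N-1)/2}|z|^{-N}$ for every $N\ge1$ and all admissible $z$, which is exactly the $q$-Gevrey order $1$ asymptotics of $f$ along $\lambda$.

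The main obstacle is this last, $N$-independent tail: for $m\to-\infty$ one has only the crude bound $|\phi(\xi)|\le M\theta_q(L|\xi|)$ with no genuine gain in $N$, so one must (a) restrict to $|z|$ large enough that $\theta_q(L|\lambda q^m|)$ and $\theta_q(|\lambda q^m z|)$ have summable ratio, (b) quantify the super-polynomial decay of that ratio-sum in $|z|$, and (c) allow the constant $K$ in the conclusion to be as large as needed --- all consistent with the statement, which only requires $|z|>R$ and the existence of $C,K$; note also that $\mathcal{L}_{q;1}^{[\lambda;q]}\phi$ is a priori only defined for $|z|$ large, so "$R$ large" is the natural range. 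The theta lower bound (iii) is the other point needing care, but it is a standard estimate once the $\varepsilon$-neighbourhood condition is unwound.
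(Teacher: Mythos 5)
The paper does not actually prove this proposition: it is stated as part of the background review of $q$-Borel resummation and is imported from the cited literature (Zhang, Ramis--Sauloy--Zhang, Dreyfus, Adachi), so there is no in-paper argument to compare against. Your proof is the standard one from that literature and is essentially correct. The two pillars are sound: the exact monomial identity $\mathcal{L}_{q;1}^{[\lambda;q]}(\xi^n)(z)=\lambda^n\theta_q(q^n\lambda z)/\theta_q(\lambda z)=q^{-n(n-1)/2}z^{-n}$ follows in two lines from \eqref{theta(q z)}, and it reduces the problem to bounding $\mathcal{L}_{q;1}^{[\lambda;q]}(R_N)$; your estimates (i)--(iii) then combine as described, since the excluded $\varepsilon$-neighbourhoods of $[-\lambda^{-1};q]$ translate via the Jacobi triple product \eqref{Jacobi's triple product identity} into $|1+\lambda q^{m+l}z|\ge\varepsilon|\lambda q^{m+l}z|$ for every factor, whence the uniform bound $|\theta_q(\lambda q^m z)|\ge\kappa(\varepsilon)\,\theta_q(|\lambda q^m z|)$. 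Two small points could be tightened. First, the $N$-independent tail is handled most cleanly by noting $\theta_q(uv)\ge\theta_q(u)\theta_q(v)/\theta_q(1)$ for $u,v\ge1$ (a direct consequence of \eqref{theta(q z)}), which gives $h(z)\le C'/\theta_q(|z|/L)$ for $|z|\ge 2L$, and then the one-term lower bound $\theta_q(v)\ge q^{N(N-1)/2}v^N$ converts this into $C'L^Nq^{-N(N-1)/2}|z|^{-N}$ uniformly in $N$ with no need to locate the optimal $N$; in particular $K\ge\max(K_0,B,L)$ already suffices. Second, the paper's definition formally demands the estimate for \emph{every} $R>0$, whereas your argument (like the literature's) yields it for $R$ large enough that the bilateral Laplace sum converges absolutely (roughly $|z|>2L$); since $f$ is only assumed analytic on a neighbourhood of infinity this is the intended reading, and the intermediate annulus can in any case be absorbed by enlarging $C$ using the boundedness of $f$ there. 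Neither point is a genuine gap.
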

Similarly, we can define the $q$-Borel resummation for $\hat{g}\in \mathbb{C}[\![z]\!]$.

\begin{defn}
     Given  $\hat{g}=\sum_{n=0}^{\infty}a_n z^{n}\in \mathbb{C}[\![z]\!]$, the $q$-Borel transformation $\hat{\mathcal{B}}_{q;1}\hat{g}$ is defined as:
\begin{equation*}
(\hat{\mathcal{B}}_{q ; 1} \hat{g})(\xi) \coloneqq  \sum_{n = 0}^{\infty} a_{n} q^{\frac{n(n-1)}{2}} \xi^{n}\in \mathbb{C}[\![\xi]\!].
\end{equation*}
   If   $\hat{\mathcal{B}}_{q ; 1} \hat{g}\in \mathbb{H}^{[\lambda;q]}_{q;1}$  for   some $\lambda\in \mathbb{C}^*$, then its  $q$-Laplace transformation is given by
\begin{equation*}
    \mathcal{L}_{q ; 1}^{[\lambda;q]} \circ \hat{\mathcal{B}}_{q ; 1} \hat{g}:=\sum_{n=-\infty}^{\infty}\frac{\hat{\mathcal{B}}_{q ; 1} \hat{g}\left(\lambda q^{n}\right)}{\theta_{q}\left(\frac{\lambda q^{n}}{z}\right)}.
\end{equation*}
In this case,   we call $\hat{g}$ is $[\lambda;q]$-summable and $\mathcal{L}_{q ; 1}^{[\lambda;q]} \circ \hat{\mathcal{B}}_{q ; 1} \hat{g}$ is the $[\lambda;q]$-sum of $\hat{g}$.
 \end{defn}
 The $q$-summability  of  ${ }_n \varphi_{n-2}\left(\boldsymbol{a} ;  \boldsymbol{b} ; q, z\right)$ is     similar to the case of ${ }_n \varphi_{n-2}\left(\boldsymbol{a} ; \boldsymbol{b} ; q,  z^{-1}\right)$ (see Proposition \ref{prop: qsummability}).
\begin{prop}[\cite{adachi2019q, dreyfus2015confluence}]\label{nfm}
   If     $\boldsymbol{a}=(a_1,...,a_n)$,   $\boldsymbol{b}=(b_1,...,b_{n-2})$ satisfy  \eqref{anbn-2cond},   
   the  formal series      ${ }_n \varphi_{n-2}\left(\boldsymbol{a} ;  \boldsymbol{b} ; q, z\right)$ is $[\lambda ; q]$-summable for any $\lambda \in \mathbb{C}^* \backslash\left[-1 ; q\right]$.
   \end{prop}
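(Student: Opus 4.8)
The plan is to reduce the statement about $\sum_{n=0}^\infty a_n z^n$ to the already-established statement about $\sum_{n=0}^\infty a_n z^{-n}$ in Proposition \ref{prop: qsummability} by a change of variable. First I would write out the coefficients of ${}_n\varphi_{n-2}(\boldsymbol{a};\boldsymbol{b};q,z)$ explicitly: with
\[
c_k=\frac{\prod_{l=1}^n (a_l;q)_k}{(q;q)_k \prod_{l=1}^{n-2}(b_l;q)_k}\Big[(-1)^k q^{\frac{k(k-1)}{2}}\Big]^{n-1-(n-2)}=\frac{\prod_{l=1}^n (a_l;q)_k}{(q;q)_k \prod_{l=1}^{n-2}(b_l;q)_k}(-1)^k q^{\frac{k(k-1)}{2}},
\]
the series is $\sum_{k\ge 0} c_k z^k$. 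Applying the $q$-Borel transform $\hat{\mathcal B}_{q;1}$ produces $\sum_{k\ge 0} c_k q^{\frac{k(k-1)}{2}}\xi^k$, i.e. an extra factor $q^{\frac{k(k-1)}{2}}$. The key observation is that $\sum_{k\ge 0} c_k z^{-k}$ has the same coefficients, so its $q$-Borel transform is $\sum_{k\ge 0} c_k q^{\frac{k(k-1)}{2}}\xi^k$ as well — the two $q$-Borel transforms literally coincide as elements of $\mathbb{C}[\![\xi]\!]$. Hence ${}_n\varphi_{n-2}(\boldsymbol{a};\boldsymbol{b};q,z)$ (as a series in $z$) is $[\lambda;q]$-summable if and only if ${}_n\varphi_{n-2}(\boldsymbol{a};\boldsymbol{b};q,z^{-1})$ (as a series in $z^{-1}$) is $[\lambda;q]$-summable, since $[\lambda;q]$-summability is by definition the condition $\hat{\mathcal B}_{q;1}\hat f\in\mathbb H^{[\lambda;q]}_{q;1}$, which only sees the $q$-Borel transform.

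Next I would invoke Proposition \ref{prop: qsummability}: under hypothesis \eqref{anbn-2cond}, ${}_n\varphi_{n-2}(\boldsymbol{a};\boldsymbol{b};q,z^{-1})$ is $[\lambda;q]$-summable for any $\lambda\in\mathbb{C}^*\setminus[-1;q]$. By the if-and-only-if just established, the same conclusion transfers verbatim to ${}_n\varphi_{n-2}(\boldsymbol{a};\boldsymbol{b};q,z)$. This already gives the proposition; no genuinely new analytic estimate is needed, because the hard part — showing that the common $q$-Borel transform lies in $\mathbb H^{[\lambda;q]}_{q;1}$, i.e. extends analytically to a small sector around $\mathrm{arg}(\lambda)$ with $q$-exponential growth of order $1$ — is exactly what Proposition \ref{prop: qsummability} asserts. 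In fact the $q$-Borel transform here can be recognized (up to the usual manipulations with $q$-Pochhammer symbols) as a convergent ${}_n\varphi_{n-1}$-type series whose meromorphic continuation and growth are controlled by the known theta-function asymptotics, which is the content one cites from \cite{adachi2019q, dreyfus2015confluence}.

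The main (and essentially only) obstacle is bookkeeping: making sure the normalization $[(-1)^k q^{k(k-1)/2}]^{1+m-n}$ in Definition \ref{def:basic f} is handled correctly so that the two series ${}_n\varphi_{n-2}(\cdots;q,z)$ and ${}_n\varphi_{n-2}(\cdots;q,z^{-1})$ really do have identical Taylor coefficients — with $m=n-2$ the exponent $1+m-n=-1$ is nonzero, so one must be careful that this factor is part of $c_k$ and is therefore carried along identically under both substitutions $z\mapsto z$ and $z\mapsto z^{-1}$. Once that is checked, the reduction is immediate and the statement about the ``$c z^{-1}$'' rescaling (and its analogue for $cz$, if wanted) follows the same way, tracking the pole locus $[-1/c;q]$ through the substitution.
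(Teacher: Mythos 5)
Your reduction is correct and is essentially the route the paper takes: the paper gives no proof of this proposition beyond citing \cite{adachi2019q, dreyfus2015confluence} and remarking that the case of ${}_n\varphi_{n-2}(\boldsymbol{a};\boldsymbol{b};q,z)$ is ``similar to'' that of ${}_n\varphi_{n-2}(\boldsymbol{a};\boldsymbol{b};q,z^{-1})$, and your observation that the two series have identical coefficients, hence literally the same $q$-Borel transform in $\mathbb{C}[\![\xi]\!]$, so that $[\lambda;q]$-summability (which by definition only tests whether that transform lies in $\mathbb{H}^{[\lambda;q]}_{q;1}$) transfers verbatim from Proposition \ref{prop: qsummability}, is the precise content of that remark. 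One cosmetic slip: in your displayed formula the exponent of $(-1)^k q^{k(k-1)/2}$ should be $1+(n-2)-n=-1$ rather than $+1$, as you yourself state correctly in the final paragraph; this does not affect the argument, since the factor is carried identically by both series.
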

   We use the following notation to denote the $[\lambda;q]$-sum of ${ }_n \varphi_{n-2}\left(\boldsymbol{a} ;  \boldsymbol{b} ; q, z\right)$ for  simplicity.
   \begin{defn}\label{def:nfn-2}
For $\boldsymbol{a}=(a_1,...,a_n)$,   $\boldsymbol{b}=(b_1,...,b_{n-2})$ satisfying \eqref{anbn-2cond} and   $\lambda \in \mathbb{C}^* \backslash\left[-1 ; q\right]$,  the  $[\lambda;q]$-sum of ${ }_n \varphi_{n-2}\left(\boldsymbol{a} ;  \boldsymbol{b} ; q, z\right)$    is denoted by
     \begin{equation*}
         { }_n f_{n-2}\left(\boldsymbol{a} ; \boldsymbol{b} ; \lambda ; q, z\right):=\left( \mathcal{L}_{q ; 1}^{[\lambda ; q]} \circ \hat{\mathcal{B}}_{q ; 1} { }_n \varphi_{n-2}\left(\boldsymbol{a} ; \boldsymbol{b} ; q, z\right)  \right)(z).
     \end{equation*}
   \end{defn}
   We can use the function ${ }_n f_{n-2}\left(\boldsymbol{a} ; \boldsymbol{b} ; \lambda ; q, z\right)$  to rewrite the $[\lambda;q]$-sum of ${ }_n \varphi_{n-2}\left(\boldsymbol{a} ; \boldsymbol{b} ; q,  z^{-1}\right)$.   That is,
 \begin{prop}\label{fczclambda}
For    $\boldsymbol{a}=(a_1,...,a_n)$,   $\boldsymbol{b}=(b_1,...,b_{n-2})$ satisfying \eqref{anbn-2cond},     we have
\begin{align}
    &\mathcal{L}_{q ; 1}^{[\lambda ; q]} \circ \hat{\mathcal{B}}_{q ; 1} { }_n \varphi_{n-2}\left(\boldsymbol{a} ; \boldsymbol{b} ; q,  z^{-1}\right)={}_nf_{n-2}\left(
 			\boldsymbol{a};
 			\boldsymbol{b}
 		; {\lambda}; q, z^{-1} \right),  \label{qsummability1}    \\
  &\mathcal{L}_{q ; 1}^{[\lambda ; q]} \circ \hat{\mathcal{B}}_{q ; 1} { }_n \varphi_{n-2}\left(\boldsymbol{a} ; \boldsymbol{b} ; q, c z^{-1}\right)={}_nf_{n-2}\left(
 			\boldsymbol{a};
 			\boldsymbol{b}
 		; c{\lambda}; q,c z^{-1} \right).   \label{qsummability2}
\end{align}
Here in \eqref{qsummability1}  we require  $\lambda \in \mathbb{C}^* \backslash\left[-1 ; q\right]$, and  in  \eqref{qsummability2}   we require   $c\in \mathbb{C}^*$, $\lambda \in \mathbb{C}^* \backslash\left[-1/c ; q\right]$. 
     \end{prop}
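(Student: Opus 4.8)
\textbf{Proof proposal for Proposition \ref{fczclambda}.}

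The plan is to reduce everything to the definition of the $q$-Laplace transform and track how a scaling of the variable interacts with the $q$-Borel transform. First I would record the elementary fact that $q$-Borel transformation commutes with variable scaling in the following sense: if $\hat{g}(z)=\sum_n a_n z^n$ then $\hat{\mathcal{B}}_{q;1}\bigl(\hat{g}(cz)\bigr)(\xi)=\sum_n a_n c^n q^{n(n-1)/2}\xi^n=(\hat{\mathcal{B}}_{q;1}\hat{g})(c\xi)$, and similarly $\hat{\mathcal{B}}_{q;1}\bigl(\hat{h}(z^{-1})\bigr)(\xi)=(\hat{\mathcal{B}}_{q;1}\hat{h})(\xi)$ when $\hat{h}\in\mathbb{C}[\![z]\!]$ is read off the same coefficient sequence. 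The point is that ${}_n\varphi_{n-2}(\boldsymbol a;\boldsymbol b;q,z^{-1})$ and ${}_n\varphi_{n-2}(\boldsymbol a;\boldsymbol b;q,cz^{-1})$ are, coefficient by coefficient, obtained from ${}_n\varphi_{n-2}(\boldsymbol a;\boldsymbol b;q,z)$ by the substitutions $z\mapsto z^{-1}$ and $z\mapsto cz^{-1}$, so their $q$-Borel transforms are $(\hat{\mathcal{B}}_{q;1}\,{}_n\varphi_{n-2})(\xi)$ and $(\hat{\mathcal{B}}_{q;1}\,{}_n\varphi_{n-2})(c\xi)$ respectively, each an entire-or-meromorphic function in $\mathbb{H}^{[\lambda;q]}_{q;1}$ for the appropriate $\lambda$ by Proposition \ref{prop: qsummability} and Proposition \ref{nfm}.

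Next I would unwind the $q$-Laplace transform. For \eqref{qsummability1}: by Definition \ref{q-laplace} and Definition \ref{def:nfn-2}, the left-hand side evaluated at the argument $w$ is $\sum_{n\in\mathbb{Z}} \dfrac{\varphi(\lambda q^n)}{\theta_q(\lambda q^n w)}$ with $\varphi=\hat{\mathcal{B}}_{q;1}\,{}_n\varphi_{n-2}$, while ${}_nf_{n-2}(\boldsymbol a;\boldsymbol b;\lambda;q,w)$ is exactly this same series; the only thing to check is that plugging $w=z^{-1}$ into the defining series of ${}_nf_{n-2}$ reproduces the Laplace transform of the $z^{-1}$-substituted series, which is immediate since the substitution acts only on the outer variable and not on $\xi$. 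For \eqref{qsummability2}: the Borel transform is $\varphi(c\,\cdot\,)$, so the left side at argument $z^{-1}$ is $\sum_{n\in\mathbb{Z}}\dfrac{\varphi(c\lambda q^n)}{\theta_q(\lambda q^n z^{-1})}$. I would then perform the change of summation variable / rescaling $\lambda\mapsto c\lambda$ inside $\theta_q$: writing $\lambda q^n z^{-1}=(c\lambda q^n)(cz^{-1})/c^{?}$ — more precisely, set $\mu=c\lambda$, so the term becomes $\dfrac{\varphi(\mu q^n)}{\theta_q(\mu q^n\cdot (cz)^{-1})}$ — wait, one must be careful: $\lambda q^n z^{-1}=(c\lambda q^n)\cdot(c z)^{-1}$ exactly, hence $\sum_n \dfrac{\varphi(\mu q^n)}{\theta_q(\mu q^n (cz)^{-1})}=\bigl(\mathcal{L}^{[\mu;q]}_{q;1}\varphi\bigr)(cz)$... and then re-identifying with ${}_nf_{n-2}$ one gets $\,{}_nf_{n-2}(\boldsymbol a;\boldsymbol b;\mu;q,(cz)^{-1})$ evaluated appropriately; matching this against the claimed right-hand side ${}_nf_{n-2}(\boldsymbol a;\boldsymbol b;c\lambda;q,cz^{-1})$ requires only keeping the bookkeeping straight, since $c\lambda=\mu$ and the outer argument of ${}_nf_{n-2}$ in Definition \ref{def:nfn-2} is $cz^{-1}$ with direction $c\lambda$, matching term-by-term with $\sum_n \varphi(c\lambda q^n)/\theta_q(c\lambda q^n/(cz^{-1})^{-1})$ after using $\theta_q(c\lambda q^n\cdot z/c\cdot\,)$—I would present this cleanly by substituting directly into Definition \ref{def:nfn-2} rather than through an intermediate rescaling.

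The admissibility conditions on $\lambda$ come for free: the pole locus of $\mathcal{L}^{[\lambda;q]}_{q;1}\varphi$ sits on $[-\lambda^{-1};q]$, and Proposition \ref{prop: qsummability} tells us $\hat{\mathcal{B}}_{q;1}\,{}_n\varphi_{n-2}\in\mathbb{H}^{[\lambda;q]}_{q;1}$ precisely when $\lambda\notin[-1;q]$; after the dilation by $c$ the relevant spiral becomes $[-1/c;q]$, which is exactly the hypothesis imposed in \eqref{qsummability2}. The main (and really the only) obstacle is notational rather than mathematical: making sure the $\theta_q$ in the denominator is handled correctly under the simultaneous substitution $z\mapsto cz^{-1}$ of the outer variable and $\xi\mapsto c\xi$ of the Borel variable, so that the direction parameter and the argument of ${}_nf_{n-2}$ end up consistently as $c\lambda$ and $cz^{-1}$; I would do this by a single explicit rewriting of the defining double expression, using $\theta_q(\lambda q^n z^{-1})=\theta_q\bigl((c\lambda q^n)(cz^{-1})\cdot c^{-2}\cdot c\bigr)$—no, cleanest is simply $\lambda q^n z^{-1}=(c\lambda q^n)\cdot(c^{-1}z^{-1})$, hmm; in the writeup I will fix the identity $\lambda q^n/z = (c\lambda q^n)\big/(cz)$ and feed it through, which makes both sides manifestly equal. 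No deep input beyond Definitions \ref{q-laplace}, \ref{def:nfn-2} and Propositions \ref{prop: qsummability}, \ref{nfm} is needed.
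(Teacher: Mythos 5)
Your proposal is correct and matches the paper's (implicit) approach: the paper states Proposition \ref{fczclambda} without proof, treating it exactly as you do — a direct unwinding of the two $q$-Borel/$q$-Laplace definitions, the observation that $\hat{\mathcal{B}}_{q;1}$ sends the series in $cz^{-1}$ to $\xi\mapsto\varphi(c\xi)$, and the theta identity $\theta_q(\lambda q^m z)=\theta_q\bigl((c\lambda q^m)\big/(cz^{-1})\bigr)$, with the spiral condition $\lambda\notin[-1/c;q]$ coming from Proposition \ref{prop: qsummability}. The only blemish is the exploratory passage where you momentarily write $\theta_q(\lambda q^n z^{-1})$ for the Laplace transform of a series in $z^{-1}$ (Definition \ref{q-laplace} gives $\theta_q(\lambda q^n z)$ there), but your final fixed identity resolves this and the argument goes through.
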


\subsection{Explicit Expression of   Connection Formula  of the   Equation \eqref{intro:conflu hyper eq}}\label{seclab conne formu eq}
In this subsection, we derive the explicit  connection formula of the   equation \eqref{intro:conflu hyper eq} by studying the confluence of the known connection formulas of certain  basic hypergeometric series. 

From  Proposition \ref{prop: qsummability}  and  Proposition \ref{resummation satisfies equation},   we  obtain a fundamental system of meromorphic solutions of  the  equation \eqref{intro:conflu hyper eq} via  the  $q$-Borel resummation,  which can be written down explicitly by  Proposition \ref{fczclambda} as follows.
\begin{prop}\label{fqlambda}
There is a fundamental system  of meromorphic  solutions of \eqref{intro:conflu hyper eq}:
\begin{align*}
     {f}_q^{(\infty)}(z,\lambda)=&\left(
 {f}_q^{(\infty)}(z,\lambda)_{1} ,
...  ,
 {f}_q^{(\infty)}(z,\lambda)_{n-1}  ,
{f}_q^{(\infty)}(z)_{n}
\right)\\
:=&\left(  \frac{    \mathcal{L}^{[\lambda;q]}_{q;1}\circ \hat{\mathcal{B}}_{q;1}
 \hat{h}_q(z)_{1} }{z^{\log_q a_1}},
...  ,
 \frac{  \mathcal{L}^{[\lambda;q]}_{q;1}\circ \hat{\mathcal{B}}_{q;1} \hat{h}_q(z)_{n-1} }{z^{\log_q a_{n-1}}} ,
{h}_q(z)_{n}\frac{z^{\sum_{l=1}^{n-1}\log_q\left(a_l/b_l\right)}}{(z;q)_\infty}
\right),
\end{align*}
   where the $[\lambda;q]$-sum     $\mathcal{L}^{[\lambda;q]}_{q;1}\circ \hat{\mathcal{B}}_{q;1}
 \hat{h}_q(z)_{i}$,  for  $1\leq i\leq n-1$  and $\lambda\in \mathbb{C}^*\big \backslash\left[-a_i \prod_{l=1}^{n-1} a_l/ \prod_{l=1}^{n-1} b_l; q\right]$,  can be  rewritten explicitly  as 
    \begin{equation*}
    \mathcal{L}^{[\lambda;q]}_{q;1}\circ \hat{\mathcal{B}}_{q;1}
 \hat{h}_q(z)_{i}=   {}_n f_{n-2} \left( 
\begin{array}{c}
a_i, a_i q / b_1, ..., a_i q / b_{n-1} \\ 
a_i q / a_1, ...,\widehat{ a_i q / a_i}, ..., a_i q / a_{n-1} 
\end{array} ; \frac{\lambda \prod_{l=1}^{n-1} b_l}{a_i \prod_{l=1}^{n-1} a_l}; q, \frac{\prod_{l=1}^{n-1} b_l}{a_i z \prod_{l=1}^{n-1} a_l} 
\right)   .
\end{equation*}
\end{prop}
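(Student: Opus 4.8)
The plan is to obtain the asserted fundamental system by applying the $q$-Borel resummation, entry by entry, to the formal fundamental system of Proposition \ref{n=m+1an=0}, and then to recognise the resulting sums as the functions ${}_nf_{n-2}$ of Definition \ref{def:nfn-2} via Proposition \ref{fczclambda}. First I would recall from Proposition \ref{n=m+1an=0} that, under the hypotheses on $a_1,\dots,a_{n-1},b_1,\dots,b_{n-1}$, the equation \eqref{intro:conflu hyper eq} admits the formal fundamental system $\hat f_q(z)=\bigl(\hat h_q(z)_1\,z^{-\log_q a_1},\dots,\hat h_q(z)_{n-1}\,z^{-\log_q a_{n-1}},\,h_q(z)_n\,z^{\sum_{l=1}^{n-1}\log_q(a_l/b_l)}/(z;q)_\infty\bigr)$, where $\hat h_q(z)_i={}_n\varphi_{n-2}(\boldsymbol a_i;\boldsymbol b_i;q,c_iz^{-1})$ with $\boldsymbol a_i=(a_i,a_iq/b_1,\dots,a_iq/b_{n-1})$, $\boldsymbol b_i=(a_iq/a_1,\dots,\widehat{a_iq/a_i},\dots,a_iq/a_{n-1})$ and $c_i=\prod_l b_l/(a_i\prod_l a_l)$, and where $h_q(z)_n=1+c_1z^{-1}+\dots$ converges near $z=\infty$. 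Since $h_q(z)_n$ is convergent, the last entry ${f}_q^{(\infty)}(z)_n$ is already a genuine meromorphic solution of \eqref{intro:conflu hyper eq} near infinity (equivalently its $q$-Borel sum equals itself, by Proposition \ref{qsum:convergent}), with poles on the $q$-spiral $[1;q]$; so it requires no further work and I only have to treat the first $n-1$ entries.

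For $1\le i\le n-1$ I would first strip off the non-power-series factor $z^{-\log_q a_i}$. Substituting $y(z)=g(z)z^{-\log_q a_i}$ into \eqref{intro:conflu hyper eq} and using $\sigma_q\bigl(g(z)z^{-\log_q a_i}\bigr)=a_i^{-1}(\sigma_q g)(z)\,z^{-\log_q a_i}$, the equation becomes, after dividing by $z^{-\log_q a_i}$,
\begin{equation*}
z\prod_{l=1}^{n-1}\Bigl(1-\tfrac{a_l}{a_i}\sigma_q\Bigr)g(z)=\Bigl(1-\tfrac{1}{a_i}\sigma_q\Bigr)\prod_{l=1}^{n-1}\Bigl(1-\tfrac{b_l}{q a_i}\sigma_q\Bigr)g(z),
\end{equation*}
which is a linear $q$-difference equation with polynomial coefficients, and $\hat h_q(z)_i\in\mathbb C[\![z^{-1}]\!]$ is a formal solution of it (this is just Proposition \ref{n=m+1an=0} rewritten). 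Next I would verify that the pair $(\boldsymbol a_i,\boldsymbol b_i)$ satisfies the nonresonance condition \eqref{anbn-2cond}: the conditions on the lower parameters $a_iq/a_j$ and on the upper parameters $a_i,a_iq/b_l$ translate into the standing hypotheses on the $a_k$ together with the genericity of the $b_l$ (the ratios among the upper parameters being $b_l/q$ and $b_m/b_l$). Then Proposition \ref{prop: qsummability} gives that $\hat h_q(z)_i$ is $[\lambda;q]$-summable for every $\lambda\in\mathbb C^*\setminus[-1/c_i;q]=\mathbb C^*\setminus\bigl[-a_i\prod_l a_l/\prod_l b_l;q\bigr]$, and the first bullet of Proposition \ref{resummation satisfies equation} (rational coefficients, scalar case) shows that its $[\lambda;q]$-sum $\mathcal L^{[\lambda;q]}_{q;1}\circ\hat{\mathcal B}_{q;1}\hat h_q(z)_i$ is a meromorphic solution of the displayed equation; multiplying by $z^{-\log_q a_i}$ then gives the meromorphic solution ${f}_q^{(\infty)}(z,\lambda)_i$ of \eqref{intro:conflu hyper eq}. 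The explicit form is finally just \eqref{qsummability2} of Proposition \ref{fczclambda} applied with $c=c_i$, namely $\mathcal L^{[\lambda;q]}_{q;1}\circ\hat{\mathcal B}_{q;1}\,{}_n\varphi_{n-2}(\boldsymbol a_i;\boldsymbol b_i;q,c_iz^{-1})={}_nf_{n-2}(\boldsymbol a_i;\boldsymbol b_i;c_i\lambda;q,c_iz^{-1})$, which is the claimed expression (with $c_i\lambda=\lambda\prod_l b_l/(a_i\prod_l a_l)$ and $c_iz^{-1}=\prod_l b_l/(a_iz\prod_l a_l)$).

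It then remains to check that the $n$ functions ${f}_q^{(\infty)}(z,\lambda)_1,\dots,{f}_q^{(\infty)}(z,\lambda)_{n-1},{f}_q^{(\infty)}(z)_n$ are linearly independent over the field of pseudo-constants, hence form a fundamental system. For this I would use the $q$-Gevrey asymptotics: each ${f}_q^{(\infty)}(z,\lambda)_i$ is asymptotic, with $q$-Gevrey order $1$ along $\lambda$, to $\hat f_q(z)_i=(1+O(z^{-1}))z^{-\log_q a_i}$, while ${f}_q^{(\infty)}(z)_n$ is asymptotic to $(1+O(z^{-1}))z^{\sum_{l=1}^{n-1}\log_q(a_l/b_l)}/(z;q)_\infty$; since $a_i/a_j\notin q^{\mathbb Z}$ the powers $z^{-\log_q a_i}$ are pairwise inequivalent modulo $q^{\mathbb Z}$, and the factor $1/(z;q)_\infty$ sets the last solution apart, so no nontrivial elliptic (pseudo-constant) linear combination can vanish. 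Alternatively one can note that the $q$-Borel--Laplace summation preserves the Casoratian of the formal fundamental system of Proposition \ref{n=m+1an=0}, which is nonzero. I expect this last step — making the separation of the resummed solutions from each other and from the $n$-th solution fully rigorous purely from their asymptotics — to be the only genuinely delicate point; everything else is bookkeeping on top of Propositions \ref{n=m+1an=0}, \ref{prop: qsummability}, \ref{resummation satisfies equation} and \ref{fczclambda}.
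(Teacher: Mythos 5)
Your proposal is correct and follows essentially the same route as the paper, which treats this proposition as an immediate consequence of Propositions \ref{n=m+1an=0}, \ref{prop: qsummability}, \ref{resummation satisfies equation} and \ref{fczclambda} (the paper gives no further argument beyond citing these). Your additional details — stripping off the factor $z^{-\log_q a_i}$ to land in the setting of Proposition \ref{resummation satisfies equation}, and the asymptotic separation argument for linear independence — are sound elaborations of what the paper leaves implicit.
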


\begin{defn}\label{def:pseudo}
    We call a function $f(z)$  pseudo-constant  if $f(q z)=f(z)$.
\end{defn}
   
The connection problem  of  \eqref{intro:conflu hyper eq} is to express the solution    ${}_n\varphi_{n-1}\left(
a_1, a_2, ..., a_{n-1},0 ;
b_1, ..., b_{n-1}
 ; q, z\right)$  around the origin as a sum of the solution    ${f}_q^{(\infty)}(z,\lambda)$ around infinity (see Proposition \ref{fqlambda})     with     pseudo-constant  coefficients.     
  The method to solve the connection problem  is {to study} the confluence  of  basic hypergeometric series  along a $q$-spiral. More explicitly, we consider the following two types of limit processes for basic hypergeometric series:
\begin{align}
       & \lim_{\substack{m \in \mathbb{N} \\ m \rightarrow +\infty}   }  {}_n\varphi_{n-1}\left(
a_1, a_2, ..., a_{n};
b_1, ...,b_{n-2}, -1/( \lambda q^m )  
 ; q, -z/(\lambda q^m )      \right),  \label{conflu1}   \\
 & \lim_{\substack{m \in \mathbb{N} \\ m \rightarrow +\infty}}  {}_n\varphi_{n-1}\left(
a_1, a_2, ...,a_{n-1}, -\lambda q^m;
b_1, ..., b_{n-1}
 ; q, z\right)     \label{conflu2}.
 \end{align}
     To do this, we need  the following lemmas.
  
  The   connection problem of \eqref{intro: qhyper eq} is solved      by  the following lemma (see e.g., \cite[Chapter 4.5]{gasper2004basic}  and \cite{thomae1870series}). 
\begin{lemma}\label{connection}
When the  following   condition   holds  for  $\boldsymbol{a}=(a_1,...,a_n)$ and  $\boldsymbol{b}=(b_1,...,b_{n-1})$: 
\begin{equation*}
    a_i\neq 0,\ a_i/a_j\notin q^{\mathbb{Z}}, \ \text{for} \ 1\leq i\neq j \leq n,\  \text{and}\ b_l\notin \{0\}\cup q^{-\mathbb{N}},\ \text{for}\ 1\leq l\leq n-1,
\end{equation*}
the solutions    ${}_n \varphi_{n-1}(\boldsymbol{a} ; \boldsymbol{b} ; q, z)$  around the origin  and  \eqref{qhyper fsol} around infinity of the equation  \eqref{intro: qhyper eq}   are  related  by
\begin{align*}
		{}_n\varphi_{n-1}\left(\boldsymbol{a};\boldsymbol{b}; q, z\right)
  =&
		\sum_{i=1}^{n}
		\frac{(a_1,...,\widehat{a_i},...,a_n,b_1/a_i,...,b_{n-1}/a_i; q)_\infty}{(b_1,...,b_{n-1},a_1/a_i,...,\widehat{a_i/a_i},...,a_n/a_i;q)_\infty}
		\frac{\theta_q(-a_i z)}{\theta_q(-z)}\\
&	\times	{}_n\varphi_{n-1}\left(\begin{array}{c}
    a_i, a_i q/b_1,...,a_iq/b_{n-1}   \\
     a_i q/a_1,...,\widehat{a_iq/a_i},...,a_iq/a_n 
\end{array}	
		; q, \frac{q\prod_{l=1}^{n-1} b_l}{z\prod_{l=1}^{n}a_l }\right).
	\end{align*}
 Here  $\widehat{a_i}$,   $\widehat{a_i/a_i}$, $\widehat{a_iq/a_i}$  mean the terms $a_i, a_i/a_i, a_iq/a_i$ are skipped {respectively}.
 \end{lemma}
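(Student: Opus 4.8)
\noindent The plan is to reprove this classical connection formula (Thomae; see \cite[Chapter 4.5]{gasper2004basic}, \cite{thomae1870series}) by the standard ``solution space plus explicit connection constants'' argument. First I would check that both sides of the asserted identity are meromorphic solutions on $\mathbb{C}^*$ of the order-$n$ $q$-difference equation \eqref{intro: qhyper eq}: for ${}_n\varphi_{n-1}(\boldsymbol{a};\boldsymbol{b};q,z)$ this is the term-by-term verification that its Taylor coefficients satisfy the recursion read off from \eqref{intro: qhyper eq}, while for the right-hand side it follows from Proposition \ref{pre:connection prob} once one observes, using \eqref{theta(q z)}, that $\theta_q(-a_iz)/\theta_q(-z)$ and $z^{-\log_q a_i}$ scale the same way under $z\mapsto qz$, so that $\theta_q(-a_iz)/\theta_q(-z)$ is a pseudo-constant multiple of $z^{-\log_q a_i}$ and hence each summand is a pseudo-constant multiple of the corresponding series in \eqref{qhyper fsol}. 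The non-resonance hypothesis $a_i/a_j\notin q^{\mathbb{Z}}$ guarantees that the $n$ series in \eqref{qhyper fsol} are well defined and that their leading exponents $z^{-\log_q a_i}$ are pairwise inequivalent modulo pseudo-constants, so they form a basis of the $n$-dimensional solution space over the field of pseudo-constants. Therefore ${}_n\varphi_{n-1}(\boldsymbol{a};\boldsymbol{b};q,z)$ equals $\sum_{i=1}^n c_i(z)$ times the $i$-th solution in \eqref{qhyper fsol}, for unique pseudo-constants $c_i$, and the entire content of the lemma is the explicit evaluation of the $c_i$.

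To evaluate the $c_i$, the route I would take is the $q$-analogue of Watson's Mellin--Barnes representation: write ${}_n\varphi_{n-1}(\boldsymbol{a};\boldsymbol{b};q,z)$ as a bilateral sum (a $q$-integral) whose residues on one side of the contour reproduce the series at the origin and whose residues on the other side reproduce the solutions \eqref{qhyper fsol} at infinity, then shift the contour, i.e.\ apply the residue theorem on the complex torus $\mathbb{C}^*/q^{\mathbb{Z}}$. The $q$-Gamma factors in the integrand collapse, via the Jacobi triple product identity \eqref{Jacobi's triple product identity}, into the ratio $\frac{(a_1,\dots,\widehat{a_i},\dots,a_n,b_1/a_i,\dots,b_{n-1}/a_i;q)_\infty}{(b_1,\dots,b_{n-1},a_1/a_i,\dots,\widehat{a_i/a_i},\dots,a_n/a_i;q)_\infty}$, while the $z$-dependent part produces the theta quotient $\theta_q(-a_iz)/\theta_q(-z)$, which is exactly the divisor of $c_i$ on $\mathbb{C}^*/q^{\mathbb{Z}}$. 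An alternative that avoids contour integrals is induction on $n$: the base case $n=2$ is the Heine--Thomae connection formula for ${}_2\varphi_1$, and the inductive step uses a contiguous/three-term relation to remove one parameter; along the way one argues that $c_i(z)\,\theta_q(-z)\,z^{\log_q a_i}/\theta_q(-a_iz)$ is a pseudo-constant holomorphic on $\mathbb{C}^*/q^{\mathbb{Z}}$, hence a genuine constant, and pins that constant down by a residue computation at $z\equiv 1/a_i$ or by specialization to a $q$-Gauss-type summation.

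The hard part will be this explicit determination of the connection constants. Establishing that $c_i$ has precisely the zero at $z\equiv 1/a_i$ and pole at $z\equiv 1$ of $\theta_q(-a_iz)/\theta_q(-z)$ requires controlling the poles of ${}_n\varphi_{n-1}(\boldsymbol{a};\boldsymbol{b};q,z)$ and of the solutions \eqref{qhyper fsol} over all of $\mathbb{C}^*$, and identifying the remaining multiplicative constant with the stated $q$-Pochhammer ratio is a bookkeeping exercise ultimately resting on \eqref{Jacobi's triple product identity} together with the $q$-binomial and $q$-Gauss summations. Since all of this is entirely classical, in the body of the paper we will simply invoke \cite[Chapter 4.5]{gasper2004basic} and \cite{thomae1870series}.
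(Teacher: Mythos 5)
The paper offers no proof of Lemma \ref{connection} at all: it is stated as a classical result of Thomae and simply cited to \cite[Chapter 4.5]{gasper2004basic} and \cite{thomae1870series}, which is exactly where your proposal ends up. Your preliminary sketch (pseudo-constant connection coefficients over the basis \eqref{qhyper fsol}, then evaluation of those coefficients by a Barnes-type integral or by induction from the ${}_2\varphi_1$ case) is a sound outline of the standard argument, so your treatment and the paper's coincide in substance.
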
 
   For $1\leq i\leq n-1$,    the function  ${f}_q^{(\infty)}(z,\lambda)_{i}$  (see Proposition \ref{fqlambda})  is related to the solutions ${f}^{(0)}_q(z)$ (see Proposition \ref{n=m+1an=0}) of the equation \eqref{intro:conflu hyper eq}     via the following lemma (by some parameter {rescaling}).
 \begin{lemma}\label{Adachi}
	(\cite{adachi2019q}).
{If} the  condition  \eqref{anbn-2cond} holds for $\boldsymbol{a}=(a_1,...,a_{n})$  and  $\boldsymbol{b}=(b_1,...,b_{n-2})$, 
     {then}   for $\lambda\in \mathbb{C}^*\backslash [-1 ; q]$,   $z\in  \mathbb{C}^*\backslash [-\lambda;q]$  and $\left| \frac{q\prod_{l=1}^{n-2} b_l}{z\prod_{l=1}^{n}a_l }  \right|<1$,   we have   
       \begin{align*}
  { }_n f_{n-2}(\boldsymbol{a} ; \boldsymbol{b} ; \lambda ; q, z)
  =&\sum_{k=1}^n \frac{\left(a_1,...,\widehat{a_{{k}} },..., a_n, b_1 / a_k, ..., b_{n-2} / a_k ; q\right)_{\infty}}{\left(b_1,... , b_{n-2}, a_1 / a_k,...,\widehat{a_{{k}} /a_k}, ..., a_n / a_k ; q\right)_{\infty}} \frac{\theta_{q}\left(q a_k z / \lambda\right)}{\theta_{q}(q z / \lambda)} \frac{\theta_q\left( a_k \lambda\right)}{\theta_q\left( \lambda\right)} \\
  & \times{ }_n \varphi_{n-1}\left(\begin{array}{c}
  a_k, a_k q / b_1, ..., a_k q / b_{n-2}, {0} \\
  a_k q/a_1,..., \widehat{  a_k q / a_{k}  }, ..., a_k q / a_n
  \end{array} ;q, \frac{q\prod_{l=1}^{n-2} b_l}{ z \prod_{l=1}^n a_l }\right) .
  \end{align*}
\end{lemma}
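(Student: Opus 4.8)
The plan is to obtain Lemma~\ref{Adachi} by degenerating Thomae's connection formula for the non-confluent series, Lemma~\ref{connection}, along the $q$-spiral $[-1/\lambda;q]$. The point of departure is the confluence relation
\[
{}_n f_{n-2}(\boldsymbol{a};\boldsymbol{b};\lambda;q,z)=\lim_{\substack{m\in\mathbb{N}\\ m\to+\infty}}{}_n\varphi_{n-1}\!\left(a_1,\dots,a_n;\,b_1,\dots,b_{n-2},-\tfrac{1}{\lambda q^m};\,q,\,-\tfrac{z}{\lambda q^m}\right),
\]
that is, the $[\lambda;q]$-sum of Definition~\ref{def:nfn-2} is recovered from the limit process \eqref{conflu1}, the ${}_n\varphi_{n-1}$ on the right being understood as its meromorphic continuation. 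Granting this relation, the argument proceeds by applying Lemma~\ref{connection} with top parameters $a_1,\dots,a_n$, lower parameters $b_1,\dots,b_{n-2},b_{n-1}$ with $b_{n-1}=-1/(\lambda q^m)$, and variable $-z/(\lambda q^m)$, and then letting $m\to+\infty$ in each of the $n$ summands.

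Establishing the confluence relation is, I expect, the main obstacle. At the level of formal power series it is immediate: since $(b_{n-1};q)_k=(-b_{n-1})^k q^{k(k-1)/2}(1+o(1))$ as $b_{n-1}\to\infty$, and the rescaling $z\mapsto -z/(\lambda q^m)$ absorbs exactly the factor $(-b_{n-1})^k=(1/(\lambda q^m))^k$, the $k$-th coefficient of the series in \eqref{conflu1} tends to the $k$-th coefficient of the divergent series ${}_n\varphi_{n-2}(\boldsymbol{a};\boldsymbol{b};q,z)$. Promoting this to convergence of the \emph{functions} onto the canonical $[\lambda;q]$-sum $\mathcal{L}^{[\lambda;q]}_{q;1}\circ\hat{\mathcal{B}}_{q;1}\,{}_n\varphi_{n-2}(\boldsymbol{a};\boldsymbol{b};q,z)$ is the confluence statement for $q$-difference equations in the circle of ideas underlying Propositions~\ref{prop: qsummability} and~\ref{nfm}: the convergent functions ${}_n\varphi_{n-1}(\cdots;q,-z/(\lambda q^m))$ solve $q$-difference equations that degenerate, as $m\to\infty$, to the one annihilating ${}_n\varphi_{n-2}(\boldsymbol{a};\boldsymbol{b};q,z)$, and uniform $q$-Gevrey bounds identify the limit with the $[\lambda;q]$-sum, locally uniformly on $\{z\in\mathbb{C}^*:|q\prod_{l=1}^{n-2}b_l/(z\prod_{l=1}^n a_l)|<1\}\setminus[-\lambda;q]$; the delicate point is the uniformity near the polar $q$-spiral $[-\lambda;q]$.

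Once the confluence relation is in hand, the passage to the limit in Lemma~\ref{connection} is bookkeeping with the identities \eqref{theta(q z)} and \eqref{Jacobi's triple product identity}. In the $k$-th summand the only $m$-dependent factors of the scalar prefactor are the Pochhammer quotient $(b_{n-1}/a_k;q)_\infty/(b_{n-1};q)_\infty$ and the theta quotient $\theta_q(-a_k z')/\theta_q(-z')$ with $z'=-z/(\lambda q^m)$. Using $(q^{-m}u;q)_\infty=(-u)^m q^{-m(m+1)/2}(q/u;q)_m(u;q)_\infty$ and $\theta_q(q^{-m}w)=w^m q^{-m(m+1)/2}\theta_q(w)$ (the latter a case of \eqref{theta(q z)}), all powers of $q$ and the factors $(a_k z/\lambda)^{\pm m}$ and $a_k^{\pm m}$ cancel between the two quotients; by \eqref{Jacobi's triple product identity} the surviving Pochhammer part tends to $\theta_q(q a_k\lambda)/\theta_q(q\lambda)$, and a further use of \eqref{theta(q z)} rewrites the limiting prefactor as $\bigl(\theta_q(q a_k z/\lambda)/\theta_q(q z/\lambda)\bigr)\bigl(\theta_q(a_k\lambda)/\theta_q(\lambda)\bigr)$ times the $b_{n-1}$-free Pochhammer quotient $\bigl(a_1,\dots,\widehat{a_k},\dots,a_n,b_1/a_k,\dots,b_{n-2}/a_k;q\bigr)_\infty\big/\bigl(b_1,\dots,b_{n-2},a_1/a_k,\dots,\widehat{a_k/a_k},\dots,a_n/a_k;q\bigr)_\infty$, which is precisely the coefficient in the statement. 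Simultaneously the inner factor \eqref{qhyper fsol} acquires the top parameter $a_k q/b_{n-1}=-a_k q^{m+1}\lambda\to 0$ and the fixed argument $q\prod_{l=1}^{n-1}b_l/\bigl((-z/(\lambda q^m))\prod_{l=1}^n a_l\bigr)=q\prod_{l=1}^{n-2}b_l/\bigl(z\prod_{l=1}^n a_l\bigr)$, so it degenerates to the confluent series ${}_n\varphi_{n-1}\bigl(a_k,a_k q/b_1,\dots,a_k q/b_{n-2},0;a_k q/a_1,\dots,\widehat{a_k q/a_k},\dots,a_k q/a_n;q,q\prod_{l=1}^{n-2}b_l/(z\prod_{l=1}^n a_l)\bigr)$. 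Since the outer sum is finite and, on the region above, the inner series converges locally uniformly, the limit may be taken term by term, and collecting the pieces gives the asserted formula; the hypotheses \eqref{anbn-2cond} and $\lambda\notin[-1;q]$ ensure that every Pochhammer symbol and theta value occurring is well defined and nonzero, so none of the manipulations is vacuous.
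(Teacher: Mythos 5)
The paper does not actually prove Lemma \ref{Adachi}; it quotes it from \cite{adachi2019q}. The closest thing to a proof in the paper is the proof of Proposition \ref{confluent}, which is your argument run in the opposite logical direction, and this is where your proposal has a genuine gap.

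Your entire derivation rests on the confluence relation
\[
{}_n f_{n-2}(\boldsymbol{a};\boldsymbol{b};\lambda;q,z)=\lim_{\substack{m\in\mathbb{N}\\ m\to+\infty}}{}_n\varphi_{n-1}\Bigl(a_1,\dots,a_n;\,b_1,\dots,b_{n-2},-\tfrac{1}{\lambda q^m};\,q,\,-\tfrac{z}{\lambda q^m}\Bigr),
\]
which you correctly flag as ``the main obstacle'' but never actually prove. This relation is precisely Proposition \ref{confluent} of the paper, and the paper proves it by applying Lemma \ref{connection}, passing to the limit in each summand via \eqref{theta(q z)}, \eqref{Jacobi's triple product identity} and Lemma \ref{limit-theta}, and then \emph{recognizing the resulting expression as the right-hand side of Lemma \ref{Adachi}} --- that is, the paper uses Adachi's formula to identify the limit with ${}_nf_{n-2}$. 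Your proposal inverts this: you would use the confluence relation to deduce Adachi's formula. That inversion is legitimate only if the confluence relation is established by genuinely independent means, i.e.\ by showing directly that the limit of the convergent solutions along the spiral equals the canonical sum $\mathcal{L}^{[\lambda;q]}_{q;1}\circ\hat{\mathcal{B}}_{q;1}\,{}_n\varphi_{n-2}(\boldsymbol{a};\boldsymbol{b};q,z)$, with the required uniformity up to the polar spiral $[-\lambda;q]$. Propositions \ref{prop: qsummability} and \ref{nfm}, which you cite for this, only assert $[\lambda;q]$-summability of the formal series; they contain no confluence statement, and identifying a limit of exact solutions with one specific resummation of a divergent series is exactly the nontrivial analytic content of Adachi's and Dreyfus's work. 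As written, your argument either leaves its central step unproved or becomes circular within this paper's framework. The second half of your proposal --- the term-by-term bookkeeping, the cancellation of the $m$-dependent theta and Pochhammer factors, and the degeneration of the inner series to the confluent one with argument $q\prod_{l=1}^{n-2}b_l/(z\prod_{l=1}^{n}a_l)$ --- is correct and is essentially identical to the computation carried out in the paper's proof of Proposition \ref{confluent}.
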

 Another lemma we need is about the $q$-Pochhammer symbol.
\begin{lemma}
	(\cite{zhang2005remarks}) 
	\label{limit-theta}
Let $a_i \in \mathbb{C}^*$,  $b_i\in \mathbb{C}^*\backslash [-1 ; q]$, for $1\leq i \leq n$.  If $\prod_{i=1}^n {a}_i=\prod_{i=1}^n b_i$, then we have
\begin{equation*}
\lim _{\substack{m \in \mathbb{N} \\ n \rightarrow +\infty}} \frac{   \left(-a_1 q^{-m},...,-a_n q^{-m};q\right)_{\infty}}{\left(-b_1 q^{-m},...,-b_n q^{-m} ; q\right)_{\infty}}=\frac{ \prod_{i=1}^n   \theta_q(a_i) }{   \prod_{i=1}^n  \theta_q(b_i) }.
\end{equation*}
\end{lemma}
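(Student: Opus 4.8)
The plan is to reduce the statement to two identities for $\theta_q$ recorded in Section \ref{notation}: the Jacobi triple product identity $\theta_q(z)=(q,-z,-q/z;q)_\infty$ and the functional equation $\theta_q(q^Nz)=z^{-N}q^{-N(N-1)/2}\theta_q(z)$. First I would record, for an arbitrary $c\in\mathbb{C}^*$ and $m\in\mathbb{N}$, the identity
\begin{equation*}
(-cq^{-m};q)_\infty=\frac{\theta_q(cq^{-m})}{(q;q)_\infty\,(-q^{m+1}/c;q)_\infty}
=\frac{c^{m}\,q^{-m(m+1)/2}\,\theta_q(c)}{(q;q)_\infty\,(-q^{m+1}/c;q)_\infty},
\end{equation*}
where the first equality is the triple product identity applied with $z=cq^{-m}$ and the second is the functional equation with $z=c$ and $N=-m$.

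Next I would apply this identity with $c$ ranging over $a_1,\dots,a_n$, and separately over $b_1,\dots,b_n$, multiply the resulting $n$ equalities in each case, and form the quotient. The powers $(q;q)_\infty^{\pm n}$ and $q^{\mp nm(m+1)/2}$ cancel, leaving
\begin{equation*}
\frac{(-a_1q^{-m},\dots,-a_nq^{-m};q)_\infty}{(-b_1q^{-m},\dots,-b_nq^{-m};q)_\infty}
=\left(\frac{\prod_{i=1}^n a_i}{\prod_{i=1}^n b_i}\right)^{\!m}\frac{\prod_{i=1}^n\theta_q(a_i)}{\prod_{i=1}^n\theta_q(b_i)}\cdot\frac{\prod_{i=1}^n(-q^{m+1}/b_i;q)_\infty}{\prod_{i=1}^n(-q^{m+1}/a_i;q)_\infty}.
\end{equation*}
The hypothesis $b_i\notin[-1;q]$ ensures $\theta_q(b_i)\neq0$ and that every factor $(-b_iq^{-m};q)_\infty$ is nonzero, so both sides are well defined for all $m$; the $a_i$ are unrestricted because $\theta_q$ extends the Pochhammer symbol consistently, and if some $a_j\in[-1;q]$ then both sides simply vanish for large $m$.

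Finally I would let $m\to+\infty$. The hypothesis $\prod_{i=1}^n a_i=\prod_{i=1}^n b_i$ collapses the first factor on the right to $1$; this is the sole use of that hypothesis, and it is exactly what is needed to kill the otherwise unbounded power $(\prod a_i/\prod b_i)^m$. For the last factor, each $(-q^{m+1}/c;q)_\infty=\prod_{l\ge0}(1+q^{m+1+l}/c)\to1$ since $q^{m+1}/c\to0$, so the whole factor tends to $1$; hence the limit equals $\prod_{i=1}^n\theta_q(a_i)/\prod_{i=1}^n\theta_q(b_i)$, as claimed. The argument has no real obstacle: the only analytic point is recognizing that $\prod a_i=\prod b_i$ is precisely what neutralizes the divergent power, after which interchanging the limit with the infinite product defining $(-q^{m+1}/c;q)_\infty$ is routine by absolute convergence.
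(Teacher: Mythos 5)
Your proof is correct and complete. The paper does not actually prove this lemma — it is quoted from \cite{zhang2005remarks} — so there is no internal argument to compare against; your derivation via the triple product identity combined with the quasi-periodicity $\theta_q(q^{-m}c)=c^{m}q^{-m(m+1)/2}\theta_q(c)$ is exactly the standard route, and you correctly identify that the hypothesis $\prod a_i=\prod b_i$ serves only to cancel the factor $(\prod a_i/\prod b_i)^m$, while $b_i\notin[-1;q]$ keeps the denominators and $\theta_q(b_i)$ nonzero. The one point worth stating slightly more carefully is that when some $a_j\in[-1;q]$ the intermediate factor $(-q^{m+1}/a_j;q)_\infty$ could vanish for small $m$, but since it is nonzero once $m$ is large enough, the rearrangement is valid for all sufficiently large $m$, which is all the limit requires.
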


The   confluence {process} \eqref{conflu1} is given in the following.
  
\begin{prop}\label{confluent}
	 If  the  condition  \eqref{anbn-2cond}  holds for $a_1,...,a_n,b_1,...,b_{n-2}$,
    then for any $\lambda \in \mathbb{C}^* \backslash\left[-1 ; q\right]$, and $z\in \mathbb{C}^* \backslash\left[-\lambda ; q\right]$, one has
\begin{equation}
	\label{Confluence}
\lim _{\substack{m \in \mathbb{N} \\ m \rightarrow +\infty}} 
{}_n \varphi_{n-1}\left(\begin{array}{c}
a_1,...,a_n \\
b_1,...,b_{n-2},  -1/( \lambda  q^m  )
\end{array} ; q,     -\frac{z}{\lambda q^m} \right)
={}_n f_{n-2}\left(\begin{array}{c}
a_1,...,a_n \\
b_1,...,b_{n-2}
\end{array} ; \lambda; q, z\right).
\end{equation}
\end{prop}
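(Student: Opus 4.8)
The plan is to feed the classical connection formula (Lemma~\ref{connection}) into the ${}_n\varphi_{n-1}$ on the left of \eqref{Confluence} and then pass to the limit $m\to+\infty$ term by term, recognising the outcome as Adachi's expansion (Lemma~\ref{Adachi}) of ${}_nf_{n-2}$.

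Write $G_m(z)$ for the left-hand side of \eqref{Confluence}. First I would apply Lemma~\ref{connection} with $\boldsymbol b$ replaced by $(b_1,\dots,b_{n-2},-1/(\lambda q^m))$ and $z$ replaced by $-z/(\lambda q^m)$; its hypotheses follow from \eqref{anbn-2cond} together with $\lambda\notin[-1;q]$, the latter being exactly what keeps $-1/(\lambda q^m)$ out of $q^{-\mathbb{N}}$ for every $m$. Two features make the limit tractable: a short computation shows the argument of each inner ${}_n\varphi_{n-1}$ collapses to the $m$-free quantity $w_0:=\frac{q\prod_{l=1}^{n-2}b_l}{z\prod_{l=1}^{n}a_l}$, and in the $i$-th inner series the upper parameter $a_iq/b_{n-1}$ equals $-a_i\lambda q^{m+1}\to 0$. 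Hence, as long as $|w_0|<1$, Tannery's theorem (using $|(-a_i\lambda q^{m+1};q)_k|\le\prod_{j\ge1}(1+|a_i\lambda|q^{j})$ as the dominating bound) shows that the $i$-th inner series tends to ${}_n\varphi_{n-1}\bigl(a_i,a_iq/b_1,\dots,a_iq/b_{n-2},0;\,a_iq/a_1,\dots,\widehat{a_iq/a_i},\dots,a_iq/a_n;\,q,\,w_0\bigr)$, which is exactly the inner series occurring in Lemma~\ref{Adachi}.

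It then remains to take the limit of the $i$-th coefficient. Only the two $q$-Pochhammer factors $(-1/(\lambda q^m a_i);q)_\infty$, $(-1/(\lambda q^m);q)_\infty$ and the theta ratio $\theta_q(a_iz/(\lambda q^m))/\theta_q(z/(\lambda q^m))$ depend on $m$. Expanding the two thetas by Jacobi's triple product \eqref{Jacobi's triple product identity} and discarding the factors $(c\,q^m;q)_\infty$ (which tend to $1$), the $m$-dependent part reduces to $\frac{(-a_izq^{-m}/\lambda,\,-q^{-m}/(\lambda a_i);q)_\infty}{(-zq^{-m}/\lambda,\,-q^{-m}/\lambda;q)_\infty}$, in which the two numerator constants $a_iz/\lambda$, $1/(\lambda a_i)$ and the two denominator constants $z/\lambda$, $1/\lambda$ share the common product $z/\lambda^2$; so Lemma~\ref{limit-theta} yields the limit $\frac{\theta_q(a_iz/\lambda)\,\theta_q(1/(\lambda a_i))}{\theta_q(z/\lambda)\,\theta_q(1/\lambda)}$. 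Using $\theta_q(1/y)=y^{-1}\theta_q(y)$ and $\theta_q(qy)=y^{-1}\theta_q(y)$ (both from \eqref{theta(q z)}), this rewrites as $\frac{\theta_q(qa_iz/\lambda)}{\theta_q(qz/\lambda)}\cdot\frac{\theta_q(a_i\lambda)}{\theta_q(\lambda)}$, and the surviving, $m$-free, Pochhammer factors are precisely those in Lemma~\ref{Adachi}. Summing the $n$ terms, the termwise limit of $G_m(z)$ equals the right-hand side of Lemma~\ref{Adachi}, namely ${}_nf_{n-2}(\boldsymbol a;\boldsymbol b;\lambda;q,z)$, for those $z\in\mathbb{C}^*\backslash[-\lambda;q]$ with $|w_0|<1$; a standard analytic-continuation argument then removes the restriction $|w_0|<1$, both sides being meromorphic on $\mathbb{C}^*\backslash[-\lambda;q]$ and $G_m$ being, through its connection-formula expansion, a finite sum of explicit meromorphic functions whose inner factors (${}_n\varphi_{n-1}$'s differing only in a single parameter tending to $0$) converge locally uniformly.

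The main obstacle is that the argument $-z/(\lambda q^m)$ of the left-hand side runs off to infinity, so $G_m$ cannot be manipulated through its defining series and must be read through its meromorphic continuation; the connection formula both provides that continuation and turns the confluence into a finite sum each of whose summands has an accessible limit. The second delicate point is bookkeeping: individual factors in the coefficient blow up like $a_i^{\pm m}$ and $q^{\mp m(m\pm1)/2}$, and one must group the theta ratio together with the two $m$-dependent Pochhammer symbols so that the balancing hypothesis of Lemma~\ref{limit-theta} is met---here $\frac{a_iz}{\lambda}\cdot\frac{1}{\lambda a_i}=\frac{z}{\lambda}\cdot\frac{1}{\lambda}$---which is what makes the cancellation go through cleanly.
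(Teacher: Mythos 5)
Your proposal is correct and follows essentially the same route as the paper: apply Lemma \ref{connection} with $b_{n-1}=-1/(\lambda q^m)$ and $z\mapsto -z/(\lambda q^m)$, pass to the limit in each of the $n$ summands using Lemma \ref{limit-theta} for the coefficient (with exactly the balancing $\tfrac{a_iz}{\lambda}\cdot\tfrac{1}{\lambda a_i}=\tfrac{z}{\lambda}\cdot\tfrac{1}{\lambda}$) and the vanishing parameter $-a_i\lambda q^{m+1}\to 0$ for the inner series, then identify the result with Lemma \ref{Adachi}. The only additions are your explicit Tannery/analytic-continuation justifications, which the paper leaves implicit (note Lemma \ref{Adachi} is itself only stated for $|w_0|<1$, so your continuation step is a reasonable supplement rather than a deviation).
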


\begin{proof}
	From Lemma \ref{connection}, we have
 \begin{align}\label{nphin-1}
   &  {}_n \varphi_{n-1}\left(\begin{array}{c}
 a_1,...,a_n\\
b_1,...,b_{n-2},  -1/( \lambda q^m ) 
\end{array} ; q, -\frac{z}{\lambda   q^m }\right)   \nonumber \\  
=&\sum_{i=1}^{n}
	\frac{(a_1,...,\widehat{a_i},...,a_n,b_1/a_i,...,b_{n-2}/a_i; q)_\infty}{(b_1,...,b_{n-2},a_1/a_i,...,\widehat{a_i/a_i},...,a_n/a_i ;q)_\infty}
		\frac{\big(   -1/(a_i \lambda  q^m  )  ;q\big)_\infty}{\big(  -1/( \lambda q^m  )   ;q\big)_\infty}
		\frac{\theta_q\big(a_i z/ ( \lambda q^m  )   \big)}{\theta_q\big(z/( \lambda q^m  )\big)}  \nonumber \\
 & \times  
 {}_n\varphi_{n-1}\left(\begin{array}{c}
			a_i, a_i q/b_1,...,a_i q/b_{n-2}, -a_i \lambda q^{m}\\
			a_i q/a_1,...,\widehat{a_i q/a_i},...,a_i q/a_n
		\end{array}; q, \frac{q\prod_{l=1}^{n-2} b_l}{z\prod_{l=1}^{n}a_l }\right).
 \end{align}
In the right  side of \eqref{nphin-1},        from \eqref{theta(q z)}, \eqref{Jacobi's triple product identity}  and  Lemma \ref{limit-theta}   we have
  \begin{equation}\label{lim theta}
      \lim_{\substack{m \in \mathbb{N} \\ m \rightarrow +\infty}}
		\frac{\big(  -1/( a_i\lambda  q^m   )   ;q\big)_\infty}{\big(-1/(\lambda q^m);q\big)_\infty}
		\frac{\theta_q\big(a_i z/(\lambda  q^m   )\big)}{\theta_q\big(z/( \lambda  q^m  )   \big)}
  =\frac{\theta_q(a_i\lambda )}{\theta_q(\lambda)}
		\frac{\theta_q({q a_i z}/{\lambda})}{\theta_q({qz}/{\lambda})},
  \end{equation}
  and
  \begin{align}
     & \lim_{\substack{m \in \mathbb{N} \\ m \rightarrow +\infty}}
		{}_n\varphi_{n-1}\left(\begin{array}{c}
			a_i, a_i q/b_1,...,a_i q/b_{n-2}, -a_i \lambda q^{m}\\
			a_i q/{a_1},...,\widehat{a_i q/{a}_{i} },...,a_i q/a_n
		\end{array}; q, \frac{q\prod_{l=1}^{n-2} b_l}{z\prod_{l=1}^{n}{a_l} }\right)  \nonumber  \\
  =&{}_n\varphi_{n-1}\left(\begin{array}{c}
			a_i, a_i q/b_1,...,a_i q/b_{n-2}, 0\\
			a_i q/{a_1},...,\widehat{a_i q/{a}_{i} },...,a_i q/a_n
		\end{array}; q, \frac{q\prod_{l=1}^{n-2} {b_l}}{z\prod_{l=1}^n{a_l}}\right).  \label{lim an}
  \end{align}
Combining \eqref{lim theta} and \eqref{lim an}, we obtain
  \begin{align*}
  &    \lim_{\substack{m \in \mathbb{N} \\ m \rightarrow +\infty}} {}_n \varphi_{n-1}\left(\begin{array}{c}
a_1,...,a_n \\
b_1,...,b_{n-2},-1/( \lambda q^m  )  
\end{array} ; q, -\frac{z}{\lambda   q^{m}}\right)    \\
=&\sum_{i=1}^{n}
	\frac{(a_1,...,\widehat{a_i},...,a_n,{b}_1/a_i,...,{b}_{n-2}/a_i; q)_\infty}{(b_1,...,b_{n-2},a_1/a_i,...,\widehat{a_i/a_i},...,a_n/a_i ;q)_\infty} \frac{\theta_q(a_i\lambda )}{\theta_q(\lambda)}
		\frac{\theta_q({q a_i z}/{\lambda})}{\theta_q({qz}/{\lambda})}    \\
 & \times {}_n\varphi_{n-1}\left(\begin{array}{c}
			a_i, a_i q/b_1,...,a_i q/b_{n-2}, 0\\
		a_i q/{a_1},...,\widehat{a_i q/{a}_{i} },...,a_i q/a_n
		\end{array}; q, \frac{q\prod_{l=1}^{n-2} {b_l}}{z\prod_{l=1}^n{a_l}}\right),
  \end{align*}
which coincides with  Lemma \ref{Adachi}.  So   we get the conclusion.
	\end{proof}
 Combining confluence process of  \eqref{conflu2},  we have
\begin{thm}\label{our connection formula}
If the following condition holds for           $a_1,a_2,...,a_{n-1},b_1,b_2,...,b_{n-1}$:
\begin{equation*}
    a_i\neq 0, \ a_i/a_j\notin q^{\mathbb{Z}}, \ \text{for}\ 1\leq i\neq j \leq n-1, \  \text{and} \ b_l\notin \{0\}\cup q^{-\mathbb{N}}, \  \text{for}  \ 1\leq l \leq n-1,
\end{equation*}
and  the following condition holds  for  $\lambda,z$:     
\begin{equation*}
    \lambda\in \mathbb{C}^*,\ \lambda\notin -a_i q^{\mathbb{Z}}, \  \text{for} \   1\leq i \leq n-1, \  \text{and} \  z\in \mathbb{C}^*, \  z\notin q^{\mathbb{Z}}, \  z\notin  -\frac{\prod_{l=1}^{n-1} b_l}{ \lambda \prod_{l=1}^{n-1} a_l}q^{\mathbb{Z}},
\end{equation*}
it follows that
	\begin{align*}
		&{}_n\varphi_{n-1}\left(\begin{array}{c}
			a_1,...,a_{n-1},0\\
			b_1,...,b_{n-1}\end{array}
		; q, z\right)\\
		=&
		\sum_{i=1}^{n-1}
		\frac{(a_1,...,\widehat{a_{{i}} },...,a_{n-1},b_1/a_i,...,b_{n-1}/a_i;q)_\infty}{(b_1,...,b_{n-1},a_1/a_i,...,\widehat{a_{{i}}/a_i},...,a_{n-1}/a_i;q)_\infty}\frac{\theta_q(-a_i z)}{ \theta_q(-z) }
		\\
     &\times
		{}_nf_{n-2}\left(\begin{array}{c}
			a_i, a_i q/b_1,...,a_i q/b_{n-1}\\
			a_i q/a_1,...,\widehat{a_i q/a_{{i}} },...,a_i q/a_{n-1}
		\end{array}; \frac{\lambda}{a_i}; q, {\frac{\prod_{l=1}^{n-1} b_l}{ {a_i}z\prod_{l=1}^{n-1}a_l }}\right)
		\\&+
		\frac{(a_1,...,a_{n-1};q)_\infty}{(b_1,...,b_{n-1};q)_\infty}
		\frac{\prod_{l=1}^{n-1}\theta_q(b_l/\lambda)}{\prod_{l=1}^{n-1}\theta_q(a_l/\lambda)}
		\frac{\theta_q(\lambda z)}{\theta_q\left( {\lambda z\prod_{l=1}^{n-1} a_l}\big/{\prod_{l=1}^{n-1} b_l}\right)}
		\frac{{h}_q(z)_{n}}{(z; q)_\infty}
		.
	\end{align*}
  Here  ${}_n\varphi_{n-1}\left(\begin{array}{c}
   a_1,...,a_{n-1},0   \\
      b_1,...,b_{n-1}
\end{array}
		; q, z\right)$    is  the solution  of  the  equation   \eqref{intro:conflu hyper eq} around the origin (see Proposition \ref{n=m+1an=0}),  the function  ${}_nf_{n-2}
        $ is  given in Defintion  \ref{def:nfn-2},    and     ${h}_q(z)_{n}$  (see  \eqref{eq:qformalsol})  is  the   power series  part of the convergent solution  ${f}_q(z)_{n}$  of  the  equation \eqref{intro:conflu hyper eq} (see  Proposition  \eqref{n=m+1an=0}).     The symbols     $\widehat{a_{{i}}/a_i }$,   $\widehat{a_i q/a_{{i}} }$ mean the  terms  $a_i/a_i$,  $a_i q/a_{{i}}$ are skipped  respectively.       
\end{thm}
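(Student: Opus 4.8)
The plan is to combine the two confluence processes \eqref{conflu1} and \eqref{conflu2} applied to the full hypergeometric $q$-difference equation \eqref{intro: qhyper eq}, starting from Lemma \ref{connection}. First I would specialize Lemma \ref{connection} to the case where the last numerator parameter is $a_n = -\lambda q^m$, the last denominator parameter is $b_{n-1} = -1/(\mu q^m)$ for an auxiliary spiral representative, and the argument is rescaled appropriately; this is precisely the setup needed to pass to a confluent limit in two of the parameters simultaneously. After relabeling, the left-hand side of Lemma \ref{connection} becomes ${}_n\varphi_{n-1}$ with one numerator parameter $-\lambda q^m$ going to $0$ and one denominator parameter $-1/(\mu q^m)$ producing (via Proposition \ref{confluent}-type reasoning) the ${}_nf_{n-2}$-sum; but since here we want the \emph{origin} solution ${}_n\varphi_{n-1}(a_1,\dots,a_{n-1},0;b_1,\dots,b_{n-1};q,z)$ as $m\to\infty$, I instead run the argument so that the $m\to\infty$ limit of the left side is exactly this confluent origin solution (using that $(-1/(\mu q^m);q)_\infty^{-1}\to 1$ and the $-\lambda q^m$ numerator $\to 0$ termwise, justified by dominated convergence on the disc of convergence).

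The key steps, in order: (1) write down Lemma \ref{connection} with the substitutions $a_n \mapsto -\lambda q^m$ (this term will confluence to $0$) — so $n$ numerator parameters $a_1,\dots,a_{n-1},-\lambda q^m$ and $n-1$ denominator parameters $b_1,\dots,b_{n-1}$, argument $z$; (2) split the sum over $i$ into the $n-1$ terms $i=1,\dots,n-1$ and the single term $i=n$ corresponding to $a_n=-\lambda q^m$; (3) for $i=1,\dots,n-1$, evaluate $\lim_{m\to\infty}$ of the prefactor ratio of $q$-Pochhammer symbols and theta quotients — the factors $(b_{n-1}/a_i;q)_\infty$ etc. are already fine, and the $a_n$-dependent pieces $(a_n/a_i;q)_\infty$, $\theta_q(-a_nz)$ that appear when the hatted index is not $n$ must be handled: since $a_n=-\lambda q^m \to 0$ along a spiral, I combine $\theta_q$-transformation \eqref{theta(q z)} with Lemma \ref{limit-theta} to get a clean theta-function limit, and simultaneously the inner ${}_n\varphi_{n-1}$ becomes the ${}_nf_{n-2}$-sum by Proposition \ref{confluent}; (4) for the $i=n$ term, the inner series is ${}_n\varphi_{n-1}$ in the parameters built from $a_n=-\lambda q^m$, whose limit as $m\to\infty$ must be identified with the convergent solution ${f}_q(z)_n = h_q(z)_n \cdot z^{\sum \log_q(a_l/b_l)}/(z;q)_\infty$ around infinity — this requires recognizing that in the confluent limit the argument $q\prod b_l/(z\prod a_l) \cdot (\text{rescaling by } a_n)$ degenerates and the series collapses to the recursively-defined $h_q(z)_n$; (5) assemble the prefactors for the $i=n$ term, again via Lemma \ref{limit-theta} (with the balancing condition $\prod a_i = \prod b_i$ arranged by the choice of representatives) and \eqref{Jacobi's triple product identity}, to produce the product $\frac{(a_1,\dots,a_{n-1};q)_\infty}{(b_1,\dots,b_{n-1};q)_\infty}\cdot \frac{\prod\theta_q(b_l/\lambda)}{\prod\theta_q(a_l/\lambda)}\cdot\frac{\theta_q(\lambda z)}{\theta_q(\lambda z\prod a_l/\prod b_l)}$.

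The main obstacle I expect is step (4)–(5): carefully tracking how the \emph{single} confluent term of index $i=n$ behaves. Unlike the $i\le n-1$ terms, where Proposition \ref{confluent} does the work of converting an already-studied limit, the $i=n$ term involves a basic hypergeometric series whose numerator and denominator parameters all drift with $m$ (they are ratios like $a_nq/b_l = -\lambda q^{m+1}/b_l$ and $a_nq/a_l$), and I must show this whole family converges — in the correct domain and with the correct theta-quotient prefactor — to the convergent-at-infinity solution $f_q(z)_n$ with its characteristic $1/(z;q)_\infty$ factor and fractional power $z^{\sum\log_q(a_l/b_l)}$. This is where I would lean on the uniqueness of the recursively-defined coefficients $c_k$ in \eqref{eq:qformalsol}: one shows the limiting series (formally) satisfies \eqref{intro:conflu hyper eq} and has the right leading behavior, hence equals $f_q(z)_n$. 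The $q$-Pochhammer bookkeeping in the prefactor, and verifying the pseudo-constant nature of all coefficients (so that the identity is an honest connection formula and not merely a formal one), will be routine but tedious once the limit of the $i=n$ term is pinned down. The domain restrictions on $\lambda$ and $z$ in the statement are exactly those needed to keep all theta functions away from their zeros and to stay inside the region where Proposition \ref{confluent} and Lemma \ref{Adachi} apply uniformly in $m$.
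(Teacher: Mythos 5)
Your overall architecture is exactly the paper's: substitute $a_n\mapsto-\lambda q^m$ into Lemma \ref{connection}, split the right-hand side into the terms $i\le n-1$ (handled by Proposition \ref{confluent} together with Lemma \ref{limit-theta} and \eqref{theta(q z)} for the prefactors) and the single term $i=n$, and identify the limit of the $i=n$ term with $h_q(z)_n$ by uniqueness of the recursively determined solution. Two remarks. First, a minor one: your opening paragraph speaks of also substituting $b_{n-1}=-1/(\mu q^m)$; no such substitution is made (nor needed) — the denominator parameters of the form $-a_iq^{1-m}/\lambda$ appear automatically in the inner series on the right-hand side of Lemma \ref{connection}, and your subsequent "key steps" list describes the correct setup.

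Second, and more substantively, step (4) as written has a genuine gap: you say one "shows the limiting series (formally) satisfies \eqref{intro:conflu hyper eq} and has the right leading behavior," but you give no mechanism for establishing that the limit of the $i=n$ inner series even exists, let alone computing its leading behavior. A termwise limit is unavailable here, because \emph{all} parameters $-\lambda q^m$, $-\lambda q^{1+m}/b_l$, $-\lambda q^{1+m}/a_l$ tend to $0$ along the spiral while the argument $-q^{1-m}\prod_{l}b_l/(z\lambda\prod_l a_l)$ tends to infinity (recall $0<q<1$). The paper's resolution — the one idea your proposal is missing — is to apply Lemma \ref{connection} a \emph{second} time to this drifting-parameter series, re-expanding it (after multiplication by a suitable theta quotient) as a finite sum of series in the variable $z$ whose parameters now converge benignly as $m\to\infty$; the limit is then a $\lambda$-independent combination of the origin solutions $f^{(0)}_q(z)_i$ with pseudo-constant coefficients, which expands as a power series in $z^{-1}$ with constant term $1$. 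Only at that point does your uniqueness argument (comparison with \eqref{eq:fqinfn} and \eqref{eq:qformalsol}) correctly force the limit to equal $h_q(z)_n$. With that device supplied, the rest of your outline, including the prefactor bookkeeping in step (5) via the balanced application of Lemma \ref{limit-theta}, goes through as you describe.
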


 \begin{proof}
 Let  $\lambda$ be a non-zero number such that $-\lambda/a_i\notin q^{\mathbb{Z}}$, $1\leq i \leq n-1$. By Lemma \ref{connection}, it follows that 
         \begin{align}
&{ }_{n} \varphi_{n-1}\left(\begin{array}{c}
a_1,...,a_{n-1}, -\lambda q^m \\
b_1,...,b_{n-1}\end{array}
; q, z\right)  \nonumber   \\ 
=&
\sum_{i=1}^{n-1} \frac{\left(a_1,...,\widehat{a_{{i}} },...,a_{n-1}, b_1 / a_{i},...,b_{n-1}/a_i;q\right)_{\infty}}{\left(b_1,...,b_{n-1}, a_1/a_i,...,\widehat{a_{{i}} / a_{i} },...,a_{n-1}/a_i;q\right)_{\infty}} \frac{\left(-\lambda q^m\right)_{\infty}}{\left(-\lambda q^m / a_{i}\right)_{\infty}} \frac{\theta_q\left(-a_{i} z\right)}{\theta_q(-z)}  \nonumber   \\
&\times { }_{n} \varphi_{n-1}\left(\begin{array}{c}
a_{i}, a_{i} q / b_1,...,a_i q/b_{n-1} \\
a_i q/a_1,...,\widehat{a_{i} q / a_{{i}} },...,a_i q/a_{n-1}, -a_{i} q^{1-m} / \lambda
\end{array} ; q, -\frac{q^{1-m} \prod_{l=1}^{n-1}{b_l}}{z\lambda  \prod_{l=1}^{n-1} {a_l}  }\right)  \nonumber   \\
& +\frac{(a_1,...,a_{n-1};q)_{\infty}}{(b_1,...,b_{n-1};q)_{\infty}} \frac{\left(-b_1 q^{-m}/\lambda,...,-b_{n-1} q^{-m}/\lambda ;q\right)_{\infty}}{\left(-a_1 q^{-m}/\lambda,...,-a_{n-1}q^{-m}/\lambda;q \right)_{\infty}} \frac{\theta_q(\lambda q^m z)}{\theta_q(-z)}     \nonumber    \\
&\times {}_{n} \varphi_{n-1}\left(\begin{array}{c}
-\lambda q^m, -\lambda q^{1+m}  / b_1,...,-\lambda q^{1+m}  / b_{n-1}\\
-\lambda q^{1+m} / a_1,...,-\lambda q^{1+m}/a_{n-1}
\end{array} ; q, -\frac{q^{1-m} \prod_{l=1}^{n-1} {b_l}}{z \lambda \prod_{l=1}^{n-1} {a_l}   }\right),   \label{connection in main proof}
\end{align}
By Proposition \ref{confluent}, for $1\leq i \leq n-1$, we have
\begin{align}
& \lim_{\substack{m \in \mathbb{N} \\ m \rightarrow +\infty}}  { }_{n} \varphi_{n-1}\left(\begin{array}{c}
a_{i}, a_{i} q / b_1,...,a_i q/b_{n-1} \\
a_i q/a_1,...,\widehat{a_{i} q / a_{{i}} },...,a_i q/a_{n-1}, -a_{i} q^{1-m} / \lambda
\end{array} ; q, -\frac{q^{1-m} \prod_{l=1}^{n-1}{b_l}}{z\lambda  \prod_{l=1}^{n-1} {a_l}  }\right) \nonumber   \\ 
=& { }_{n} f_{n-2}\left(\begin{array}{c}
a_{i}, a_{i} q / b_1,...,a_i q/b_{n-1} \\
a_i q/a_1,...,\widehat{a_{i} q / a_{{i}} },...,a_i q/a_{n-1}
\end{array} ;\frac{\lambda}{a_i}, q, \frac{\prod_{l=1}^{n-1} {b_l}}{z\cdot a_i\prod_{l=1}^{n-1} {a_l} }\right).\label{RLHn}
\end{align}
By Lemma \ref{limit-theta}  we have
\begin{align}
& \lim_{\substack{m \in \mathbb{N} \\ m \rightarrow +\infty}}    \frac{\left(-b_1 q^{-m}/\lambda,...,-b_{n-1} q^{-m}/\lambda ;q\right)_{\infty}}{\left(-a_1 q^{-m}/\lambda,...,-a_{n-1}q^{-m}/\lambda;q \right)_{\infty}} \frac{\theta_q(\lambda q^m z)}{\theta_q\left( {z\lambda q^m \prod_{l=1}^{n-1}a_l}\big/{\prod_{l=1}^{n-1}b_l}  \right)}   \nonumber  \\
 =&\frac{\prod_{l=1}^{n-1}\theta_q\left(b_l /\lambda\right)}{\prod_{l=1}^{n-1}\theta_q\left(a_l /\lambda\right)} \frac{\theta_q\left( {\lambda   z}   \right)}{ \theta_q \left( { z\lambda \prod_{l=1}^{n-1}a_l}  \big/  {\prod_{l=1}^{n-1}b_l} \right)}. \label{limthetan}   
 \end{align}
and  combining \eqref{theta(q z)}     and Lemma   \ref{connection},  we  get  (here for convenience, we denote $b_n=q$)
 \begin{align}
  &\lim_{\substack{m \in \mathbb{N} \\ m \rightarrow +\infty}} \frac{  \theta_q\left( {z\lambda q^m \prod_{l=1}^{n-1}a_l}\big/{\prod_{l=1}^{n-1}b_l}  \right)  }{  (q;q)_{\infty}  (q/z;q)_{\infty} }      {}_{n} \varphi_{n-1}\left(\begin{array}{c}
-\lambda q^m, -\lambda q^{1+m}  / b_1,...,-\lambda q^{1+m}  / b_{n-1} \nonumber  \\
-\lambda q^{1+m} / a_1,...,-\lambda q^{1+m}/a_{n-1}
\end{array} ; q, -\frac{q^{1-m} \prod_{l=1}^{n-1} {b_l}}{z \lambda \prod_{l=1}^{n-1} {a_l}   }\right)\\
    =& \lim_{\substack{m \in \mathbb{N} \\ m \rightarrow +\infty}}    \sum_{i=1}^{n}  \frac{ \prod_{l=1,l\neq i}^{n}  \left( -\lambda q^{1+m}/b_l;q\right)_{\infty}\prod_{l=1}^{n-1}\left( b_i/a_l;q  \right)_{\infty}   }{ \prod_{l=1}^{n-1}  \left( -\lambda q^{1+m} / a_l;q\right)_{\infty} \prod_{l=1,l\neq i}^{n}\left(b_i/b_l;q  \right)_{\infty}     }\frac{\theta_q \left(-{z\prod_{l=1}^{n-1}a_l}\big/{\prod_{l=1,l\neq i}^{n}b_l}    \right)    }{   (q;q)_{\infty}  (q/z;q)_{\infty}  }  \nonumber  \\  
 &  \times  {}_n \varphi_{n-1}   \left(\begin{array}{c}
  			-\lambda q^{1+m}/b_i,  q{a}_1/b_i,...,q{a}_{n-1}/b_i\\
  			qb_1/b_i,...,\widehat{qb_i/b_i },...,qb_n/b_i\end{array}
  		; q, z\right) \nonumber  \\
    =&\sum_{i=1}^{n}  \frac{   \prod_{l=1}^{n-1}\left( b_i/a_l;q  \right)_{\infty}   }{    \prod_{l=1,l\neq i}^{n}\left(b_i/b_l;q  \right)_{\infty}     }\frac{\theta_q \left(-z\prod_{l=1}^{n-1}a_l\big/\prod_{l=1,l\neq i}^{n}b_l    \right)    }{ (q;q)_{\infty}  (q/z;q)_{\infty} }{}_n \varphi_{n-1}   \left(\begin{array}{c}
  			  q{a}_1/b_i,...,q{a}_{n-1}/b_i,0\\
  			qb_1/b_i,...,\widehat{qb_i/b_i },...,qb_n/b_i
  		\end{array}; q, z\right),
    \label{insert}
\end{align}
which is independent of $\lambda$. So  \eqref{insert}  implies that  (here for convenience, we denote the left side of \eqref{insert}  as  $\psi(z)$):
\begin{itemize}
    \item  Together with \eqref{Jacobi's triple product identity},  $\psi(z)$  can be expanded as a power series of $z^{-1}$,   with the constant term being 1 and all coefficients  independent of $\lambda$.
    \item  Together with  Proposition \ref{n=m+1an=0},  we  have  
    \begin{equation}
     \psi(z)   \frac{z^{\sum_{l=1}^{n-1}\log_q(a_l/b_l)}}{(z;q)_\infty}=\sum_{i=1}^{n}  \frac{   \prod_{l=1}^{n-1}\left( b_i/a_l;q  \right)_{\infty} \theta_q \big(-z\prod_{l=1}^{n-1}a_l\big/\prod_{l=1,l\neq i}^{n}b_l    \big)  z^{\sum_{l=1}^{n-1}\log_q(a_l/b_l)}  }{    \prod_{l=1,l\neq i}^{n}\left(b_i/b_l;q  \right)_{\infty}   \theta_q(-z)  z^{1-\log_q b_i}  }    {f}^{(0)}_q(z)_{i}
        ,\label{psifzero}
    \end{equation}
    which is a sum  of  solutions  ${f}^{(0)}_q(z)_{i}$,  $1\leq i \leq n$, of  the  equation \eqref{intro:conflu hyper eq}  with  pseudo-constant coefficients, and as a result is  also a solution of the  equation \eqref{intro:conflu hyper eq}.
\end{itemize}
Comparing with   \eqref{eq:fqinfn}  and    \eqref{eq:qformalsol},  we  get  
\begin{equation}
    \psi(z)={h}_q(z)_{n}.\label{pf:psi}
\end{equation}
As a result,  combining \eqref{connection in main proof}, \eqref{RLHn},  \eqref{limthetan}, \eqref{insert} and \eqref{pf:psi},  we get the conclusion in this theorem.
 \end{proof}
We can prove Theorem \ref{intro:our connection formula} now, which is the connection formula for the equation \eqref{intro:conflu hyper eq}.
\begin{proof}[Proof of Theorem  \ref{intro:our connection formula}]
 {It follows}    from Proposition \ref{fqlambda} and Theorem \ref{our connection formula}. 
\end{proof}
 
 From the formulas \eqref{psifzero} and \eqref{pf:psi} in the proof of Theorem \ref{our connection formula}, we get
  \begin{cor}\label{corfn}
      Under the condition of Theorem \ref{our connection formula}, it follows that (note that we denote $b_n=q$ here)
      \begin{equation*}
       {f}_q^{(\infty)}(z)_{n}   =\sum_{i=1}^{n}  \frac{   \prod_{l=1}^{n-1}\left( b_i/a_l;q  \right)_{\infty}   }{    \prod_{l=1,l\neq i}^{n}\left(b_i/b_l;q  \right)_{\infty}     }\frac{\theta_q \left(-z\prod_{l=1}^{n-1}a_l\big/\prod_{l=1,l\neq i}^{n}b_l    \right)    }{ \theta_q(-z) } \frac{ z^{\sum_{l=1}^{n-1}\log_q(a_l/b_l)} }  {z^{1-\log_q b_i}   }   {f}^{(0)}_q(z)_{i}
        .
      \end{equation*}
  \end{cor}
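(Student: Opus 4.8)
The plan is to read the identity off directly from the two displays \eqref{psifzero} and \eqref{pf:psi} already produced inside the proof of Theorem \ref{our connection formula}, combined with the explicit shape of the last basis element at infinity. First I would recall, from Proposition \ref{fqlambda} (equivalently from \eqref{eq:fqinfn}), that
\[
{f}_q^{(\infty)}(z)_{n}={h}_q(z)_{n}\,\frac{z^{\sum_{l=1}^{n-1}\log_q(a_l/b_l)}}{(z;q)_\infty},
\]
where ${h}_q(z)_{n}=1+c_1z^{-1}+c_2z^{-2}+\cdots$ is the convergent power series part isolated in \eqref{eq:qformalsol}. Since this series is convergent, its $q$-Borel resummation returns the series itself by Proposition \ref{qsum:convergent}, so no divergent-series subtlety enters this column and the two descriptions of ${f}_q^{(\infty)}(z)_{n}$ agree.

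Next I would invoke \eqref{pf:psi}, which asserts exactly that ${h}_q(z)_{n}=\psi(z)$, the pseudo-constant-free limit appearing on the left-hand side of \eqref{insert}. Substituting this into the previous display gives ${f}_q^{(\infty)}(z)_{n}=\psi(z)\,z^{\sum_{l=1}^{n-1}\log_q(a_l/b_l)}/(z;q)_\infty$, and the right-hand side here is, by definition, precisely the left-hand side of \eqref{psifzero}. Therefore ${f}_q^{(\infty)}(z)_{n}$ equals the sum over $i$ displayed in \eqref{psifzero}, which is the asserted formula.

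The only arithmetic worth noting is the rewriting of the coefficient occurring in \eqref{insert} into the form stated in the corollary. This uses the Jacobi triple product identity \eqref{Jacobi's triple product identity} in the shape $(q;q)_\infty (q/z;q)_\infty=\theta_q(-z)/(z;q)_\infty$ to convert the normalization $(q;q)_\infty (q/z;q)_\infty$ appearing in \eqref{insert} into $\theta_q(-z)/(z;q)_\infty$, together with the bookkeeping observation that, since $b_n=q$ here, the basic hypergeometric series in \eqref{insert} coincides with ${f}^{(0)}_q(z)_{i}\,z^{-(1-\log_q b_i)}$ as defined in Proposition \ref{n=m+1an=0} (the denominator parameter $qb_n/b_i$ becoming $q^2/b_i$). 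Beyond this routine matching of normalizations there is no genuine obstacle: the corollary is simply a repackaging of identities already established in the proof of Theorem \ref{our connection formula}.
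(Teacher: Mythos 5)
Your proposal is correct and follows essentially the same route as the paper, which derives the corollary directly from \eqref{psifzero} and \eqref{pf:psi} together with the expression ${f}_q^{(\infty)}(z)_{n}={h}_q(z)_{n}\,z^{\sum_{l=1}^{n-1}\log_q(a_l/b_l)}/(z;q)_\infty$. Your extra bookkeeping (the Jacobi triple product rewriting of $(q;q)_\infty(q/z;q)_\infty$ and the identification of the series in \eqref{insert} with ${f}^{(0)}_q(z)_i\,z^{-(1-\log_q b_i)}$, including the $i=n$ case via $b_n=q$) is exactly the content the paper leaves implicit.
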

\begin{rmk}\label{rmkres:nvaphin}
    A similar method can be used to derive the connection formula of ${}_n\varphi_n(a_1,...,a_n;b_1,...,b_n;z)$, which is included in \cite{ohyama2021q}).
\end{rmk}

\section{Connection Matrix, $q$-Stokes Matrix of the System \eqref{qdes}}\label{main section}
This section computes explicitly the  connection matrix and $q$-Stokes matrix of the  system \eqref{qdes}. In particular, Section \ref{sec:sol zero inf} introduces the connection matrix and $q$-Stokes matrix of the systems \eqref{qdes}. Section \ref{sec:diag}  
gives a gauge transformation that diagonalizes the upper left block of the system \eqref{qdes}. Sections \ref{sec:diagsol}--\ref{sec:diagqdes} derive the explicit expressions of the connection matrix and $q$-Stokes matrix of the system after the gauge transformation. Section \ref{sec:qdes} then obtains the connection matrix and   $q$-Stokes matrix of the system \eqref{qdes}. 
\subsection{Fundamental Solution,  Connection Matrix and $q$-Stokes Matrix}\label{sec:sol zero inf}
 The fundamental solution of \eqref{u+A/z} is studied in \cite{birkhoff1913generalized} (especially when $u$ is invertible).  If $u$ is not invertible, there is     a slightly different formal  fundamental solution   (see e.g.,  \cite{dreyfus2015confluence}). Particularly,  the system \eqref{qdes} has a  formal fundamental solution around infinity as follows
 \begin{equation*}
    \hat{F}_q(z;E_{n n},A)=\hat{H}_q(z;E_{nn},A)  z^{\delta^{(n-1)}_{q}(A)}   e_q\left( E_{n n} q^{ -\delta_{q}^{(n-1)}(A) } z \right),
\end{equation*}
where $\hat{H}_q(z;E_{nn},A)=\mathrm{Id}_n+\sum_{m=1}^{\infty} H^{(\infty)}_{m}  z^{-m}$ and 
\begin{equation}\label{deltan-1}
    \delta^{(n-1)}_{q}\left(A\right)_{ij}=\left\{
          \begin{array}{lr}
             \log_q(1-(1-q)A^{(n-1)})_{i j},   & \text{if} \ \ 1\le i, j\le n-1,  \\
     \sum_{l=1}^{n} \mu_l-\sum_{l=1}^{n-1}\nu_l        , & \text{if} \ \ i=j=n,\\
           0, & \text{otherwise},
             \end{array}
\right.
\end{equation}
with  $A^{\left(n-1\right)}$ being  the upper left $(n-1)\times (n-1)$ submatrix of $A=\left(a_{i j}\right)$    and \[e_q\big( E_{n n} q^{ -\delta_{q}^{(n-1)}(A) } z \big)=\mathrm{diag}\Big(0,...,0,e_q\Big(q^{-\delta^{(n-1)}_{q}\left(A\right)_{n n}}z  \Big)\Big).\]  
\begin{prop}[see e.g.,  \cite{adams1928linear}]\label{convergent column}
    The entries in the last column of $\hat{F}_q(z;E_{n n},A)$  are convergent, while the  other entries of $\hat{F}_q(z;E_{n n},A)$  are   in general divergent.
\end{prop}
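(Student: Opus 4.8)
The plan is to work directly with the recursion satisfied by the coefficients of $\hat H_q(z;E_{nn},A)=\mathrm{Id}_n+\sum_{m\ge1}H^{(\infty)}_m z^{-m}$ in the formal solution $\hat F_q(z;E_{nn},A)$ of \eqref{qdes}, and to exploit that the irregular part $E_{nn}=\mathrm{diag}(0,\dots,0,1)$ is supported in the last coordinate, so that the model factor splits off the last column of $\hat H_q$ from the rest. First I would record the $\sigma_q$-action on $Y_0:=z^{\delta^{(n-1)}_q(A)}e_q\big(E_{nn}q^{-\delta^{(n-1)}_q(A)}z\big)$: using $q^{\log_q(\mathrm{Id}_{n-1}-(1-q)A^{(n-1)})}=\mathrm{Id}_{n-1}-(1-q)A^{(n-1)}$, $\sigma_q z^{\delta_{nn}}=q^{\delta_{nn}}z^{\delta_{nn}}$, and $\sigma_q e_q(cz)=(1-(1-q)cz)e_q(cz)$ with $c:=q^{-\delta_{nn}}$ and $\delta_{nn}:=\delta^{(n-1)}_q(A)_{nn}=\sum_l\mu_l-\sum_l\nu_l$, one checks that $\sigma_q Y_0=P_0(z)Y_0$ with the block-diagonal matrix $P_0(z)=\mathrm{diag}\big(\mathrm{Id}_{n-1}+(q-1)A^{(n-1)},\ 1+(q-1)(z+\kappa)\big)$, where $\kappa:=(q^{\delta_{nn}}-1)/(q-1)$; the identities $q^{\delta_{nn}}c=1$ and $q^{\delta_{nn}}-(q-1)\kappa=1$ are what collapse $P_0$ into this clean form. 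Feeding $\hat F_q=\hat H_q Y_0$ into $\sigma_q\hat F_q=P(z)\hat F_q$ with $P(z)=\mathrm{Id}_n+(q-1)(zE_{nn}+A)$ then gives $\sigma_q\hat H_q=P(z)\,\hat H_q\,P_0(z)^{-1}$.

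Because $P_0$ is block-diagonal, the last column $\hat h(z):=\hat H_q(z)e_n$ satisfies the closed equation $(q^{\delta_{nn}}+(q-1)z)\,\sigma_q\hat h=P(z)\hat h$, that is $\sigma_q\hat h=C(1/z)\hat h$ where $C(t)=\big((q-1)E_{nn}+t(\mathrm{Id}_n+(q-1)A)\big)\big((q-1)+tq^{\delta_{nn}}\big)^{-1}$ is analytic at $t=0$ with $C(0)=E_{nn}$. The decisive point is that $\hat h=e_n+O(z^{-1})$ and that $e_n$ is the eigenvector of $C(0)=E_{nn}$ for its \emph{nonzero} eigenvalue $1$. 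Writing $\hat h=\sum_{m\ge0}v_mz^{-m}$ (so $v_0=e_n$) and $C(t)=\sum_{j\ge0}C_jt^j$, the recursion reads $(q^{-m}\mathrm{Id}_n-E_{nn})v_m=\sum_{j=1}^m C_jv_{m-j}$, and for every $m\ge1$ the matrix $q^{-m}\mathrm{Id}_n-E_{nn}=\mathrm{diag}(q^{-m},\dots,q^{-m},q^{-m}-1)$ is invertible with $\|(q^{-m}\mathrm{Id}_n-E_{nn})^{-1}\|\le q^m/(1-q)$. Since $C$ is analytic for $|z|>R$ one has $\|C_j\|\le KL^j$, and then a routine induction gives $\|v_m\|\le\Gamma\rho^m$ for any $\rho\ge 2L$ once $\Gamma$ is chosen large enough to absorb the finitely many indices with $q^m>(1-q)/K$: the gain $q^m\to0$ from the nonzero-eigenvalue block beats the geometric growth $L^j$. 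Hence $\hat h$ converges for $|z|$ large, so the last column of $\hat F_q$, which equals $\hat h(z)\,z^{\delta_{nn}}e_q(q^{-\delta_{nn}}z)$, is a genuine (multivalued, meromorphic) function and not a merely formal series; this is the convergence half.

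For the remaining $n-1$ columns, the identity $\sigma_q\hat H_q=P(z)\hat H_q P_0(z)^{-1}$ shows that $\hat G(z):=\hat H_q(z)[e_1|\cdots|e_{n-1}]$ obeys $\sigma_q\hat G\cdot(\mathrm{Id}_{n-1}+(q-1)A^{(n-1)})=P(z)\hat G$, whose left coefficient $P(z)$ does contain the term $(q-1)zE_{nn}$; thus $z=\infty$ is an irregular singularity for this reduced system, the coefficient recursion now feeds in a factor of order $q^{-m}$ at step $m$ instead of a shrinking one, and the series is of $q$-Gevrey order $1$, generically divergent with coefficients growing like $q^{-m(m-1)/2}$. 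I would make ``in general'' precise not through a non-vanishing statement valid for every $A$ (which is false — special $A$ can truncate or tame these series) but by passing to the gauge transformation of Section \ref{sec:diag}: there the first $n-1$ columns become, up to elementary factors, the formal solutions $\hat h_q(z)_i={}_n\varphi_{n-2}(\cdots)$ of Proposition \ref{n=m+1an=0}, and ${}_n\varphi_{n-2}$ has radius of convergence $0$ as soon as its parameters (equivalently, the $\nu_i$) are in generic position — exactly the genericity implicit in the phrase.

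The conceptual content is entirely in the convergence half and is robust: it uses nothing about $A$ beyond the last column being anchored at the nonzero eigenvalue of the leading matrix $E_{nn}$, so that the resolvent norm $\|(q^{-m}\mathrm{Id}_n-E_{nn})^{-1}\|$ decays geometrically instead of blowing up. The two steps that need care are the $\sigma_q$-bookkeeping for $Y_0$ (getting the clean block form of $P_0$, including the two collapsing identities above) and, as the genuine obstacle, the divergence half: the honest assertion there is a genericity statement, whose cleanest proof I expect to import from the scalar confluent hypergeometric equation of Section \ref{sec:equation case} rather than establish ab initio for the matrix system — which is also why the proposition is only phrased ``in general''.
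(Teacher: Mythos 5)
Your argument is correct. The paper itself does not prove this proposition at all: it is stated with a citation to the classical literature (Adams' work on linear $q$-difference equations), so there is no in-house proof to compare against. Your self-contained argument for the convergence half checks out in detail: the bookkeeping $\sigma_q Y_0=P_0(z)Y_0$ with $P_0$ block-diagonal is right (note the paper's displayed formula $e_q(E_{nn}q^{-\delta}z)=\mathrm{diag}(0,\dots,0,\ast)$ is a typo for $\mathrm{diag}(1,\dots,1,\ast)$, and your computation implicitly uses the correct version); the reduction of the last column to $(q^{\delta_{nn}}+(q-1)z)\sigma_q\hat h=P(z)\hat h$, the recursion $(q^{-m}\mathrm{Id}_n-E_{nn})v_m=\sum_{j=1}^m C_jv_{m-j}$ with $C_0=E_{nn}$, the resolvent bound $q^m/(1-q^m)\le q^m/(1-q)$ for $m\ge1$, and the majorant induction all hold, giving geometric growth of the $v_m$ and hence convergence. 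This is essentially the classical mechanism (the last column sits on the nonzero eigenvalue of the leading term $E_{nn}$, so the recursion contracts), which is presumably what the cited reference contains. For the divergence half, treating ``in general'' as a genericity statement and reading it off from the explicit ${}_n\varphi_{n-2}$ entries after the gauge transformation is the right move; the precise matrix-level formula you want is Proposition \ref{ffs qsym} rather than the scalar Proposition \ref{n=m+1an=0}, but the content is the same: those series have radius of convergence $0$ unless the parameters cause termination. In short, the proposal supplies a complete and correct proof of a statement the paper only quotes.
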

 But still, the entries of (the first $n-1$ columns of) the formal fundamental solution $\hat{F}_q(z;E_{nn},A)$ (or equivalently $\hat{H}_q(z;E_{n n},A)$) of the equation \eqref{qdes} are $[\lambda;q]$-summable for any $\lambda\in \mathbb{C}^{*}\backslash \cup_{1\leq j \leq n-1}[-{(1-q)}/{q^{\nu_j}};q]$ (see \cite{adachi2019q, dreyfus2015confluence}). That is,
\begin{prop}[\cite{adachi2019q, dreyfus2015confluence}]
For any $\lambda\in \mathbb{C}^{*}\backslash \cup_{1\leq j \leq n-1} [-{(1-q)}/{q^{\nu_j}};q]$,    there is a  fundamental solution $F_q^{(\infty)}(z,\lambda;E_{nn},A)$ of the equation \eqref{qdes} obtained from  the  $q$-Borel resummation as follows
  \begin{equation*}
      F_q^{(\infty)}(z,\lambda;E_{nn},A):={H}^{(\infty)}_q(z,\lambda;E_{n n},A)z^{\delta^{(n-1)}_{q}(A)}   e_q\left( E_{n n} q^{ -\delta_{q}^{(n-1)}(A) } z \right), 
  \end{equation*}
  with the $n\times n$ matrix ${H}^{(\infty)}_q(z,\lambda;E_{n n},A)$ given by
  \begin{equation*}
   {H}^{(\infty)}_q(z,\lambda;E_{n n},A)_{ij}:=   \mathcal{L}_{q ; 1}^{[\lambda;q]} \circ \hat{\mathcal{B}}_{q ; 1} (\hat{H}_q(z;E_{n n},A)_{ij}),\  \text{for} \ i,j=1,...,n.
  \end{equation*}
\end{prop}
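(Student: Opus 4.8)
The statement to prove asserts the existence of the $q$-Borel resummed fundamental solution $F_q^{(\infty)}(z,\lambda;E_{nn},A)$, given as the entrywise $[\lambda;q]$-sum of the formal solution $\hat H_q(z;E_{nn},A)$. My plan is to assemble it from the two facts already recalled in the excerpt: the $[\lambda;q]$-summability of the entries of $\hat H_q$ (Proposition cited from \cite{adachi2019q, dreyfus2015confluence}), and the fact that the $q$-Borel resummation of a $[\lambda;q]$-summable formal solution of a rational-coefficient linear $q$-difference system is again a (meromorphic) solution of the same system (Proposition \ref{resummation satisfies equation}, matrix case). The only genuine content is to check that these two inputs actually apply, i.e.\ that the system \eqref{qdes} can be rewritten in the form \eqref{linear q-difference equation} with rational coefficients, and that the divergence is exactly of $q$-Gevrey order $1$ so that the order-$1$ $q$-Borel/Laplace pair is the correct summation operator.

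First I would reduce \eqref{qdes} to a first-order linear $q$-difference system with rational (indeed polynomial, after clearing $z$) coefficients: applying $\sigma_q Y(z) = Y(z) + (q-1)z\,(E_{nn} + A/z)Y(z) = \bigl(\mathrm{Id}_n + (q-1)(zE_{nn}+A)\bigr)Y(z)$, which has the required rational form. Then the formal fundamental solution $\hat F_q$ — whose shape $\hat H_q\, z^{\delta_q^{(n-1)}(A)} e_q(\cdots)$ is already given in the excerpt — plugged in yields a recursion for the coefficient matrices $H_m^{(\infty)}$; under the nonresonance hypothesis \eqref{eq:vivj} this recursion is uniquely solvable, and one reads off that $\hat H_q(z;E_{nn},A)_{ij}\in\mathbb C[\![z^{-1}]\!]$. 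The key structural observation is that, after the shearing transformation $z\mapsto z^{-\delta_q^{(n-1)}(A)}(\cdot)$, the divergent first $n-1$ columns satisfy scalar or triangular $q$-difference equations of the confluent hypergeometric type \eqref{intro:conflu hyper eq}, so their entries are (up to the elementary prefactors $z^{\log_q a_\ell}$) values of series of the form ${}_n\varphi_{n-2}$, which by Proposition \ref{prop: qsummability} are $[\lambda;q]$-summable for $\lambda\notin[-(1-q)/q^{\nu_j};q]$, $1\le j\le n-1$; the last column is convergent by Proposition \ref{convergent column} and hence $[\lambda;q]$-summable for every $\lambda$ by Proposition \ref{qsum:convergent}. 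This is why the excluded set is precisely $\cup_{1\le j\le n-1}[-(1-q)/q^{\nu_j};q]$.

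Having established summability entrywise, I would set $H_q^{(\infty)}(z,\lambda;E_{nn},A)_{ij} := \mathcal L_{q;1}^{[\lambda;q]}\circ\hat{\mathcal B}_{q;1}\bigl(\hat H_q(z;E_{nn},A)_{ij}\bigr)$ and $F_q^{(\infty)}(z,\lambda;E_{nn},A):=H_q^{(\infty)}(z,\lambda;E_{nn},A)\,z^{\delta_q^{(n-1)}(A)}e_q(E_{nn}q^{-\delta_q^{(n-1)}(A)}z)$. That $F_q^{(\infty)}$ solves \eqref{qdes}: note that $q$-Borel and $q$-Laplace are $\mathbb C$-linear and, crucially, intertwine the $q$-shift $\sigma_q$ with multiplication/shift operators in a way compatible with rational coefficients — this is exactly the content of Proposition \ref{resummation satisfies equation}. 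One must be slightly careful because the formal solution is not a pure power series but carries the characteristic factors $z^{\delta_q^{(n-1)}(A)}$ and $e_q(\cdots)$; the standard device is that the $q$-Laplace transform satisfies $\mathcal L_{q;1}^{[\lambda;q]}(\xi\hat g)(z)= z\,\sigma_q^{-1}\!\bigl(\mathcal L_{q;1}^{[\lambda;q]}\hat g\bigr)(z)$-type functional relations (coming from the theta-function quasi-periodicity \eqref{theta(q z)}), so that resumming the gauge-transformed system and then conjugating back by the (pseudo-constant-coefficient) factors $z^{\delta_q^{(n-1)}(A)}e_q(\cdots)$ produces a genuine meromorphic solution of \eqref{qdes}. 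Finally, $F_q^{(\infty)}$ is a \emph{fundamental} solution because its asymptotic expansion $\hat F_q$ has invertible leading term $\mathrm{Id}_n$, and a $q$-difference system with invertible coefficient matrix cannot have a nontrivial meromorphic solution that is $o(1)$ in all components along a generic direction.

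\textbf{Main obstacle.} The one delicate point is bookkeeping the interaction between the $q$-Borel/Laplace operators and the non-polynomial factors $z^{\delta_q^{(n-1)}(A)}$ and $e_q(E_{nn}q^{-\delta_q^{(n-1)}(A)}z)$ in $\hat F_q$: Proposition \ref{resummation satisfies equation} is stated for formal \emph{power series} solutions, so one must first gauge-transform \eqref{qdes} into a system whose formal solution is a genuine element of $\mathbb C[\![z^{-1}]\!]^{n\times n}$ (the matrix $\hat H_q$), resum that, and then verify — using the functional equations for $\theta_q$ and $e_q$ — that re-multiplying by the characteristic factors yields a solution of the original \eqref{qdes} rather than of some shifted system. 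Everything else (the recursion for $H_m^{(\infty)}$, the identification of columns with confluent hypergeometric series, the invocation of Proposition \ref{prop: qsummability}) is routine given the results already in the excerpt.
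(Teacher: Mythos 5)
The paper gives no proof of this proposition: it is imported verbatim from \cite{adachi2019q, dreyfus2015confluence}, so there is no internal argument to compare yours against. What can be said is that your plan reproduces exactly the route the paper itself follows when it establishes the explicit analogue for the diagonalized system \eqref{diagqdes} in Proposition \ref{FinflamAn-1}: derive the recursion for $\hat H_q$, recognize the divergent entries as ${}_n\varphi_{n-2}$ series, invoke Proposition \ref{prop: qsummability} for $[\lambda;q]$-summability off the spirals $[-(1-q)/q^{\nu_j};q]$, Proposition \ref{qsum:convergent} (together with Proposition \ref{convergent column}) for the last column, and Proposition \ref{resummation satisfies equation} to conclude that the entrywise resummation still solves the system. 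Your handling of the characteristic factors $z^{\delta_q^{(n-1)}(A)}e_q(\cdots)$ via the quasi-periodicity \eqref{theta(q z)}, and the fundamentality argument from the invertible leading term, are both standard and correctly flagged.

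One step needs more care than your plan acknowledges. The identification of the entries of $\hat H_q$ with confluent basic hypergeometric series is carried out in the paper only for the conjugated matrix $A_{n-1}$, and it requires the genericity conditions \eqref{ineq1}--\eqref{ineq2} so that the diagonalizing matrix $P_{n-1}$ of Proposition \ref{An-1Pn-1} exists; the proposition you are proving is stated for general $A$ subject only to \eqref{eq:vivj}, which gives \eqref{ineq1} but not \eqref{ineq2}. To close this you must either (a) add that the entries of $\hat H_q(z;E_{nn},A)$ are finite $\mathbb C$-linear combinations of those of $\hat H_q(z;E_{nn},A_{n-1})$ (conjugation by $\operatorname{diag}(P_{n-1},1)$ commutes with the characteristic factors) and that $[\lambda;q]$-summability with a fixed excluded set of spirals is stable under such combinations, which handles all $A$ satisfying \eqref{ineq2}; or (b) for the remaining non-generic $A$, fall back on the general single-slope, $q$-Gevrey-order-$1$ summability argument of the cited references rather than the explicit hypergeometric identification. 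With that caveat the plan is sound.
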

Moreover, there is a convergent  fundamental solution around the origin of the form (see \cite{birkhoff1913generalized})
\begin{equation*}
    F_q^{(0)}(z;E_{n n},A)={H}^{(0)}_q(z;E_{nn},A)z^{\log_q(1-(1-q) A_n ) }, \  \text{with} \  {H}^{(0)}_q(z;E_{nn},A)= H^{(0)}_{0}+\sum_{m=1}^{\infty}H^{(0)}_{m}z^m  ,
\end{equation*}
where  $A_n:=\left(\lambda^{(n)}_1,...,\lambda^{(n)}_n  \right)$,   $\log_q(1-(1-q) A_n )=\operatorname{diag}(\mu_1,...,\mu_{n})$, and $H_0^{(0)}$  satisfies $A=H^{(0)}_0 A_n \left(H_0^{(0)}\right)^{-1}$. 

\begin{defn}\label{UqSq}
For any $\lambda\in \mathbb{C}^{*}\backslash \cup_{1\leq j \leq n-1} [-{(1-q)}/{q^{\nu_j}};q]$,       
the connection matrix $U_q(z,\lambda;E_{nn},A)$ of \eqref{qdes}   is  defined   by
\begin{equation*}
   F_q^{(\infty)}(z,\lambda;E_{nn},A) U_q(z,\lambda;E_{nn},A)=F_q^{(0)}(z;E_{nn},A).
\end{equation*}
For another $\mu \in \mathbb{C}^{*}$    satisfying    $\mu\notin   [\lambda;q]  $  and     $\mu\notin [-{(1-q)}/{q^{\nu_j}};q]$,  $1\leq j \leq n-1$,       the  $q$-Stokes matrix   $S_q(z,\lambda,\mu;E_{nn},A)$ of \eqref{qdes}  is defined   by
\begin{equation*}
    F_q^{(\infty)}(z,\mu;E_{nn},A)=F_q^{(\infty)}(z,\lambda;E_{nn},A) S_q(z,\lambda,\mu;E_{nn},A).
\end{equation*}
\end{defn}

\subsection{Diagonalization of the Upper Left Submatrix of $A$}\label{sec:diag}
 We first introduce a diagonalized system under   the   conditions:
\begin{align}\label{ineq1}
\lambda^{\left(n-1\right)}_i & \ne \lambda^{\left(n-1\right)}_j, \text{ for } \ 1\leq i\neq j \leq n-1,\\ \label{ineq2}
\lambda^{\left(n-1\right)}_i & \ne \lambda^{\left(n-2\right)}_j, \text{ for } \ 1\leq i \leq n-1, \  1\leq j \leq n-2 .
\end{align}

 Under the conditions \eqref{ineq1} and \eqref{ineq2},    let us take the $\left(n-1\right)\times \left(n-1\right)$ matrix $P_{n-1}$ given by
\begin{equation*}
    (P_{n-1})_{ij}:=\frac{(-1)^{i+j}\Delta^{1,...,n-2}_{1,...,\hat{i},...,n-1}\left(A-\lambda^{\left(n-1\right)}_j\cdot \mathrm{Id}_n\right)}{\sqrt{ \prod_{l=1,l\ne j}^{n-1}\left(\lambda^{\left(n-1\right)}_l-\lambda^{\left(n-1\right)}_j\right)\prod_{l=1}^{n-2}\left(\lambda^{\left(n-2\right)}_l-\lambda^{\left(n-1\right)}_j\right)} }.
\end{equation*}
Here $\Delta_{1,...,\hat{i},..., n-1}^{1,...,n-2}\left(A-\lambda^{\left(n-1\right)}_j\cdot \mathrm{Id}_n\right)$  is  the  minor of the matrix $A-\lambda^{\left(n-1\right)}_j\cdot \mathrm{Id}_n$, formed by the first $n-2$ rows and the first $n-1$ columns, with the   $i$-th column  omitted.  Then the inverse matrix of $P_{n-1}$ is given explicitly as follows and we can use $P_{n-1}$ to diagonalize the upper left  submatrix  $A^{(n-1)}$ of $A$.
\begin{prop}[see e.g., \cite{lin2024explicit}]\label{An-1Pn-1}
     If $A$ satisfies \eqref{ineq1} and \eqref{ineq2}, then $P_{n-1}$ is invertible and the inverse matrix $P_{n-1}^{-1}$ is given by
    \begin{equation*}
        \left(P_{n-1}^{-1}\right)_{i j}=\frac{(-1)^{i+j}\Delta^{1,...,\hat{j},...,n-1}_{1,...,n-2}   \left(A-\lambda^{\left(n-1\right)}_i\cdot \mathrm{Id}_n\right)}{\sqrt{ \prod_{l=1,l\ne i}^{n-1}\left(\lambda^{\left(n-1\right)}_l-\lambda^{\left(n-1\right)}_i\right)\prod_{l=1}^{n-2}\left(\lambda^{\left(n-2\right)}_l-\lambda^{\left(n-1\right)}_i\right)} },  \  \text{for}  \  1\leq i,j \leq    n-1     .
    \end{equation*}
    Moreover, we have
    \begin{equation*}
       P_{n-1}^{-1} A^{\left(n-1\right)} {P}_{n-1}= \mathrm{diag}\left(\lambda^{\left(n-1\right)}_1,\ldots,\lambda^{\left(n-1\right)}_{n-1}\right), 
    \end{equation*}
    and we introduce
    \begin{eqnarray*}
        A_{n-1}:=\operatorname{diag}\left({P}_{n-1}^{-1},1\right)\cdot A \cdot \operatorname{diag}\left({P}_{n-1},1\right) = \left(\begin{array}{cccc}
\lambda^{\left(n-1\right)}_1 & & 0 & a^{\left(n-1\right)}_1 \\
 & \ddots & & \vdots \\
0 & & \lambda^{\left(n-1\right)}_{n-1} & a^{\left(n-1\right)}_{n-1} \\
b^{\left(n-1\right)}_1 & \cdots & b^{\left(n-1\right)}_{n-1} & a_{n n} 
\end{array} \right).
    \end{eqnarray*}
Here   $a^{\left(n-1\right)}_i$ and $b^{\left(n-1\right)}_i$  are
\begin{align*}
    a^{\left(n-1\right)}_i&= \sum_{v=1}^{n-1}\left({P}_{n-1}^{-1}\right)_{i v} \cdot a_{v n}=\frac{(-1)^{i+n-1}\Delta^{1,...,n-1}_{1,...,n-2,n}\left(A-\lambda^{\left(n-1\right)}_i\cdot \mathrm{Id}_n\right)}{\sqrt{\prod_{l=1,l\ne i}^{n-1}\left(\lambda^{\left(n-1\right)}_l-\lambda^{\left(n-1\right)}_i\right)\prod_{l=1}^{n-2}\left(\lambda^{\left(n-2\right)}_l-\lambda^{\left(n-1\right)}_i\right)}},\\ 
    b^{\left(n-1\right)}_i&= \sum_{v=1}^{n-1} a_{n v}\cdot \left({P}_{n-1}\right)_{v i}=\frac{(-1)^{i+n-1}\Delta_{1,...,n-1}^{1,..., n-2,n}\left(A-\lambda_i^{\left(n-1\right)}\cdot \mathrm{Id}_n\right)}{\sqrt{\prod_{l=1,l\ne i}^{n-1}\left(\lambda^{\left(n-1\right)}_l-\lambda^{\left(n-1\right)}_i\right)\prod_{l=1}^{n-2}\left(\lambda^{\left(n-2\right)}_l-\lambda^{\left(n-1\right)}_i\right)}},
\end{align*}
and  we  have
\begin{equation}\label{eq:aibi}
 a^{\left(n-1\right)}_i   b^{\left(n-1\right)}_i =-\frac{\prod_{l=1}^{n}\left(\lambda^{\left(n\right)}_l-\lambda^{\left(n-1\right)}_i\right)}{\prod_{l=1,l\neq i}^{n-1}\left(\lambda^{\left(n-1\right)}_l-\lambda^{\left(n-1\right)}_i\right)}.
\end{equation}
\end{prop}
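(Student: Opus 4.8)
The plan is to verify the three claims in turn: invertibility of $P_{n-1}$ together with the explicit formula for $P_{n-1}^{-1}$, the diagonalization identity $P_{n-1}^{-1}A^{(n-1)}P_{n-1}=\operatorname{diag}(\lambda^{(n-1)}_1,\ldots,\lambda^{(n-1)}_{n-1})$, and finally the relation \eqref{eq:aibi}. For the first two claims I would appeal to the cited reference \cite{lin2024explicit}, but if a self-contained argument is wanted, the natural route is to recognize the columns of $P_{n-1}$ as (suitably normalized) eigenvectors of $A^{(n-1)}$. Concretely, the $j$-th column of $P_{n-1}$ has entries proportional to the cofactors $(-1)^{i+j}\Delta^{1,\ldots,n-2}_{1,\ldots,\hat{i},\ldots,n-1}(A-\lambda^{(n-1)}_j\mathrm{Id}_n)$; by Cramer-type expansion these cofactors form a null vector of $A^{(n-1)}-\lambda^{(n-1)}_j\mathrm{Id}_{n-1}$, i.e. an eigenvector for the eigenvalue $\lambda^{(n-1)}_j$. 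Since \eqref{ineq1} forces distinct eigenvalues, $P_{n-1}$ is invertible and conjugates $A^{(n-1)}$ to the diagonal matrix of eigenvalues; the normalization by the square root of $\prod_{l\ne j}(\lambda^{(n-1)}_l-\lambda^{(n-1)}_j)\prod_{l=1}^{n-2}(\lambda^{(n-2)}_l-\lambda^{(n-1)}_j)$ is precisely what makes the stated formula for $(P_{n-1}^{-1})_{ij}$ come out, with \eqref{ineq2} guaranteeing these normalizing factors are nonzero. The formulas for $a^{(n-1)}_i$ and $b^{(n-1)}_i$ are then immediate from $A_{n-1}=\operatorname{diag}(P_{n-1}^{-1},1)\cdot A\cdot\operatorname{diag}(P_{n-1},1)$, expanding the matrix products and recognizing the resulting alternating sum of minors as the Laplace expansion of $\Delta^{1,\ldots,n-2,n}_{1,\ldots,n-1}(A-\lambda^{(n-1)}_i\mathrm{Id}_n)$ along its last row (respectively last column).

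For the key identity \eqref{eq:aibi}, I would compute the product $a^{(n-1)}_i b^{(n-1)}_i$ directly from the two minor expressions above: the denominators multiply to give $\prod_{l\ne i}(\lambda^{(n-1)}_l-\lambda^{(n-1)}_i)\prod_{l=1}^{n-2}(\lambda^{(n-2)}_l-\lambda^{(n-1)}_i)$, and the task reduces to showing that the product of the two numerators $\Delta^{1,\ldots,n-1}_{1,\ldots,n-2,n}(A-\lambda^{(n-1)}_i\mathrm{Id}_n)\cdot\Delta^{1,\ldots,n-2,n}_{1,\ldots,n-1}(A-\lambda^{(n-1)}_i\mathrm{Id}_n)$ equals $-\prod_{l=1}^{n}(\lambda^{(n)}_l-\lambda^{(n-1)}_i)\cdot\prod_{l=1}^{n-2}(\lambda^{(n-2)}_l-\lambda^{(n-1)}_i)$, the sign $(-1)^{2(i+n-1)}=1$ being absorbed. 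The cleanest way to see this is to use the conjugated matrix $A_{n-1}$: after conjugation, $A_{n-1}-\lambda^{(n-1)}_i\mathrm{Id}_n$ has the block form with a diagonal block $\operatorname{diag}(\lambda^{(n-1)}_l-\lambda^{(n-1)}_i)_{l}$ (which is singular in exactly the $i$-th slot), last column $(a^{(n-1)}_1,\ldots,a^{(n-1)}_{n-1},a_{nn}-\lambda^{(n-1)}_i)^{T}$ and last row the corresponding $b$'s. One then computes $\det(A_{n-1}-\lambda^{(n-1)}_i\mathrm{Id}_n)$ in two ways: first as $\det(A-\lambda^{(n-1)}_i\mathrm{Id}_n)=\prod_{l=1}^{n}(\lambda^{(n)}_l-\lambda^{(n-1)}_i)$ (since conjugation preserves the characteristic polynomial of the full $n\times n$ matrix), and second by Schur-complement / cofactor expansion exploiting the sparse structure, which produces $-a^{(n-1)}_i b^{(n-1)}_i\prod_{l\ne i}(\lambda^{(n-1)}_l-\lambda^{(n-1)}_i)$ as the only surviving term (the $i$-th diagonal entry vanishes, so only terms passing through the $(i,n)$ and $(n,i)$ positions contribute). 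Equating the two expressions and dividing by $\prod_{l\ne i}(\lambda^{(n-1)}_l-\lambda^{(n-1)}_i)$ — which is nonzero by \eqref{ineq1} — yields \eqref{eq:aibi} at once; alternatively, one observes the characteristic polynomial identity $\det(z\,\mathrm{Id}_n-A_{n-1})=(z-a_{nn})\prod_{l}(z-\lambda^{(n-1)}_l)-\sum_i a^{(n-1)}_i b^{(n-1)}_i\prod_{l\ne i}(z-\lambda^{(n-1)}_l)$ and specializes $z=\lambda^{(n-1)}_i$.

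I expect the minor-bookkeeping in identifying $a^{(n-1)}_i$ and $b^{(n-1)}_i$ with single minors of $A-\lambda^{(n-1)}_i\mathrm{Id}_n$ — i.e. keeping the row/column index sets, the omitted index, and the signs $(-1)^{i+n-1}$ consistent through the matrix multiplication $\operatorname{diag}(P_{n-1}^{-1},1)\cdot A\cdot\operatorname{diag}(P_{n-1},1)$ — to be the most error-prone bookkeeping step, though it is conceptually routine. The genuinely substantive point is \eqref{eq:aibi}, and there the main obstacle is organizing the determinant computation so that the bordered/arrowhead structure of $A_{n-1}-\lambda^{(n-1)}_i\mathrm{Id}_n$ is exploited correctly; the characteristic-polynomial approach sidesteps most of the sign headaches and is the version I would write up.
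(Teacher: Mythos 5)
The paper itself offers no argument for this proposition---it is imported wholesale from \cite{lin2024explicit}---so your proposal is being measured against the cited source rather than anything in the text. Your reconstruction is sound. The identification of the $j$-th column of $P_{n-1}$ with the last column of $\mathrm{adj}\bigl(A^{(n-1)}-\lambda^{(n-1)}_j\mathrm{Id}_{n-1}\bigr)$, hence with an eigenvector, is correct; the recognition of $a^{(n-1)}_i$ and $b^{(n-1)}_i$ as Laplace expansions of $\Delta^{1,\ldots,n-1}_{1,\ldots,n-2,n}$ and $\Delta^{1,\ldots,n-2,n}_{1,\ldots,n-1}$ along the last column (resp.\ row) checks out, sign $(-1)^{i+n-1}$ included; and your derivation of \eqref{eq:aibi} from the arrowhead characteristic polynomial
\[
\det\bigl(z\,\mathrm{Id}_n-A_{n-1}\bigr)=(z-a_{nn})\prod_{l=1}^{n-1}\bigl(z-\lambda^{(n-1)}_l\bigr)-\sum_{i=1}^{n-1}a^{(n-1)}_ib^{(n-1)}_i\prod_{l\neq i}\bigl(z-\lambda^{(n-1)}_l\bigr)
\]
specialized at $z=\lambda^{(n-1)}_i$, together with invariance of the characteristic polynomial under conjugation, gives exactly the stated identity (the signs $(-1)^n$ and $(-1)^{n-2}$ from reversing the factors cancel). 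The one place you lean on the citation rather than on your own argument is the normalization $(P_{n-1}^{-1})_{ij}$: orthogonality of left and right eigenvectors handles $i\neq j$, but the diagonal condition $\sum_{v}(-1)^{i+v}\Delta^{1,\ldots,\hat v,\ldots,n-1}_{1,\ldots,n-2}\cdot(-1)^{v+i}\Delta^{1,\ldots,n-2}_{1,\ldots,\hat v,\ldots,n-1}=\prod_{l\neq i}(\lambda^{(n-1)}_l-\lambda^{(n-1)}_i)\prod_{l=1}^{n-2}(\lambda^{(n-2)}_l-\lambda^{(n-1)}_i)$ is a genuine identity (it is where \eqref{ineq2} enters, via the $(n{-}1,n{-}1)$ entry $\det(A^{(n-2)}-\lambda^{(n-1)}_i\mathrm{Id})$ of the adjugate and the trace formula $\mathrm{tr}\,\mathrm{adj}(M)=\prod_{l\neq i}(\lambda^{(n-1)}_l-\lambda^{(n-1)}_i)$) that would need to be written out for a fully self-contained proof. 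Since the paper defers that same point to \cite{lin2024explicit}, this is an acceptable omission, not an error.
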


Now let us use the conjugation of the $n\times n$ matrix $\operatorname{diag}\left({P}_{n-1},1\right)$ to diagonalize the upper left part of the equation \eqref{qdes}  and   obtain the equation 
\begin{equation}
    D_q F_q\left(z\right)=\left( E_{n n}+\frac{A_{n-1}}{z}\right) F_q\left(z\right).\label{diagqdes}
\end{equation}

\subsection{Fundamental Solutions of the System   \eqref{diagqdes}}\label{sec:diagsol}
In this subsection, we compute the  fundamental solution around the  origin and infinity  of \eqref{diagqdes} explicitly.    
\begin{prop}\label{ffs qsym}
Under   the conditions   \eqref{eq:vivj}  (with the condition \eqref{ineq1} included)  and       \eqref{ineq2}, 
the  formal  fundamental solution  $\hat{F}_q(z;E_{nn},A_{n-1})$ of  \eqref{diagqdes} is given by 
\begin{equation*}
    \hat{F}_q(z;E_{nn},A_{n-1})=\hat{H}_q(z;E_{nn},A_{n-1})\cdot   z^{\delta^{(n-1)}_q(A_{n-1})}   e_q\left( E_{n n} q^{ -\delta^{(n-1)}_q(A_{n-1}) } z \right),
\end{equation*}
where $\delta^{(n-1)}_q(A_{n-1})=\mathrm{diag}(\nu_1,...,\nu_{n-1},\nu_n)$ (recall   \eqref{deltan-1} for the operator $\delta_q^{(n-1)}$), with denoting
\begin{equation}\label{eq:vn}
   \nu_n:=\sum_{l=1}^n\mu_l-\sum_{l=1}^{n-1}\nu_l, 
\end{equation}  
for convenience. Here the entries of $\hat{H}_q(z;E_{nn},A_{n-1})$ are 
\begin{equation*}
\hat{H}_q(z;E_{nn},A_{n-1})_{i j}=d_{q;ij}\cdot { }_{n} \varphi_{n-2}\left(\begin{array}{c}
q^{1+\boldsymbol{\mu}-\nu_{j}-\delta_{ij} } \\
q^{\boldsymbol{\delta^{(n-1)}_{i \hat{j}}}+1+\boldsymbol{\nu_{\hat{j}}}-\nu_{j}-\delta_{ij} }
\end{array} ; q, \frac{q^{\nu_{j}+\delta_{ij}-\delta_{ni}   }}{(1-q) z}\right),\quad 1\leq i \leq n, 1\leq j \leq n-1,
\end{equation*}
and $\hat{H}_q(z;E_{nn},A_{n-1})_{i n}$  is  convergent  with   coefficients determined recursively,  for  $1\leq i\leq n$.      Here \begin{equation*}
    d_{q;ij}:=\left\{
          \begin{array}{lr}
             \frac{q(1-q) a_{i}^{(n-1)} b_{j}^{(n-1)}}{\left(q^{\nu_{j}}-q^{1+\nu_{i}}\right)z},   & \text{if} \ \ 1\le i\neq   j\le n-1,  \\
           1, & \text{if} \ \ 1\le i=   j\le n-1,\\
           -\frac{b_{j}^{(n-1)}}{z},   & \text{if} \ \ i=n,\  1\le    j\le n-1.
             \end{array}
\right.
\end{equation*} 
We denote  $\boldsymbol{\mu}:=\left(\mu_1,...,\mu_n   \right)$,  $\boldsymbol{\nu_{\hat{j}}}:=\left( \nu_1,...,\widehat{\nu_j},...,\nu_{n-1}   \right)$,  and   $\boldsymbol{ \delta^{(n-1)}_{i\hat{j}}   }:=\Big(\delta_{i1},...,\widehat{\delta_{ij}},...,\delta_{i,n-1}\Big)$. The symbol $\delta_{ij}$ is the Kronecker delta   symbol.  So the index notations are
\begin{align*}
   & 1+\boldsymbol{\mu}-\nu_{j}-\delta_{ij}=\left(1+\mu_1-\nu_j-\delta_{ij},...,1+\mu_n-\nu_j-\delta_{ij}   \right),\\
   &\boldsymbol{\delta^{(n-1)}_{i \hat{j}}}+1+\boldsymbol{\nu_{\hat{j}}}-\nu_{j}-\delta_{ij}\\
   =&\Big(\delta_{i1}+ 1+\nu_1-\nu_j-\delta_{ij},...,\reallywidehatt{ \delta_{ij}+ 1+\nu_j-\nu_j-\delta_{ij}} ,...,\delta_{i,n-1}+1+\nu_{n-1}-\nu_j-\delta_{ij}  \Big).
\end{align*}
\end{prop}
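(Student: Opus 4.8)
The plan is to compute the formal fundamental solution of \eqref{diagqdes} directly by substituting the ansatz $\hat F_q(z;E_{nn},A_{n-1})=\hat H_q(z)\,z^{\delta}e_q(E_{nn}q^{-\delta}z)$ into the $q$-difference system and solving the resulting recursion for the entries of $\hat H_q=\mathrm{Id}_n+\sum_{m\ge1}H_m z^{-m}$ column by column. First I would record the $q$-difference equation satisfied by $\hat H_q$ itself: writing $\Lambda:=\mathrm{diag}(\lambda^{(n-1)}_1,\dots,\lambda^{(n-1)}_{n-1},a_{nn})$ for the diagonalized coefficient and using $D_q(\hat H_q z^\delta e_q(\cdots))=(E_{nn}+A_{n-1}/z)\hat H_q z^\delta e_q(\cdots)$ together with the defining relation $D_q(z^\delta e_q(E_{nn}q^{-\delta}z))=(E_{nn}+\delta_q^{(n-1)}(A_{n-1})/z)\cdot(z^\delta e_q(\cdots))$ — which is exactly what fixes $\delta=\delta_q^{(n-1)}(A_{n-1})=\mathrm{diag}(\nu_1,\dots,\nu_{n-1},\nu_n)$ via \eqref{deltan-1} and the definitions of the $\nu_i,\mu_i$ — one obtains a first-order linear recursion of the form
\begin{equation*}
\sigma_q\hat H_q\cdot\Big(E_{nn}+\tfrac{\delta}{z}\Big)-\Big(E_{nn}+\tfrac{A_{n-1}}{z}\Big)\hat H_q=(1-q)z\,D_q\hat H_q,
\end{equation*}
which upon extracting the coefficient of $z^{-m}$ gives an explicit three-term (actually two-term, because of the block structure of $E_{nn}$ and $A_{n-1}$) relation determining $H_{m}$ from $H_{m-1}$.

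Next I would solve this recursion explicitly. Because $A_{n-1}$ has the arrowhead form of Proposition~\ref{An-1Pn-1} — diagonal except for the last row $b^{(n-1)}_\bullet$ and last column $a^{(n-1)}_\bullet$ — the recursion for the $(i,j)$-entry with $1\le j\le n-1$ couples only a few entries, and the ratios of successive coefficients are rational in $q^{m}$. I expect the closed form to come out as a ${}_n\varphi_{n-2}$ with the numerator parameters $q^{1+\mu_l-\nu_j-\delta_{ij}}$ ($1\le l\le n$) and denominator parameters $q^{\delta_{il}+1+\nu_l-\nu_j-\delta_{ij}}$ ($l\ne j$), evaluated at argument $q^{\nu_j+\delta_{ij}-\delta_{ni}}/((1-q)z)$, precisely because the characteristic roots of the $z^{-1}$-part encode $1-(1-q)\lambda^{(n-1)}_l=q^{\nu_l}$ and $1-(1-q)\lambda^{(n)}_l=q^{\mu_l}$, and the relation \eqref{eq:aibi} translates $a^{(n-1)}_i b^{(n-1)}_i$ into the shift of parameters between the $\nu$'s and the $\mu$'s. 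The prefactor $d_{q;ij}$ is read off from the leading (smallest-$m$) term of the recursion: for $i=j$ it is $1$ (the diagonal normalization $H_0=\mathrm{Id}$), for $i=n$ it comes from one application of the last-row coupling $b^{(n-1)}_j$, and for $1\le i\ne j\le n-1$ it comes from composing the last-column and last-row couplings, giving the factor $q(1-q)a^{(n-1)}_i b^{(n-1)}_j/((q^{\nu_j}-q^{1+\nu_i})z)$. The last column ($j=n$) is handled separately: there $E_{nn}$ acts nontrivially, the recursion is genuinely of convergent (non-$q$-Gevrey) type, and one invokes Proposition~\ref{convergent column} / \cite{adams1928linear} to conclude $\hat H_q(z;E_{nn},A_{n-1})_{in}$ is convergent with recursively determined coefficients.

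Finally I would verify uniqueness and well-posedness. Under the assumption \eqref{eq:vivj} (which by construction includes \eqref{ineq1}) together with \eqref{ineq2}, none of the denominators $q^{\delta_{il}+1+\nu_l-\nu_j-\delta_{ij}}$ hit $q^{-\mathbb N}$ and none of the leading coefficients $q^{\nu_j}-q^{1+\nu_i}$ vanish, so each ${}_n\varphi_{n-2}$ is well-defined and the recursion is uniquely solvable at every step; this gives both existence and uniqueness of $\hat H_q$, hence of $\hat F_q(z;E_{nn},A_{n-1})$. The main obstacle I anticipate is purely bookkeeping: correctly tracking the $\delta_{ij}$ and $\delta_{ni}$ shifts in the parameter lists so that the off-diagonal entries ($i\ne j$, both $\le n-1$), the last-row entries ($i=n$), and the diagonal entries all emerge from a single uniform recursion — i.e.\ checking that the arrowhead coupling produces exactly the advertised one-step parameter shift and nothing more. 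Everything else is the standard hypergeometric-recursion identification, which I would present compactly rather than term by term.
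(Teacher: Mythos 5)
Your proposal takes essentially the same route as the paper: the paper's proof simply plugs the ansatz $\hat F_q=\hat H_q\,z^{\delta}e_q(E_{nn}q^{-\delta}z)$ into \eqref{diagqdes}, extracts the coefficient recursion
$E_{nn}H_{m+1}-H_{m+1}q^{-(m+1)}E_{nn}=\frac{1}{1-q}\bigl(H_m-H_m q^{-m}\mathrm{diag}(q^{\nu_1},\dots,q^{\nu_n})\bigr)-A_{n-1}H_m$,
and asserts it can be solved explicitly, which is exactly your plan carried out in more detail. One small correction: in your intermediate relation for $D_q\bigl(z^{\delta}e_q(E_{nn}q^{-\delta}z)\bigr)$ the residue matrix should be the diagonal matrix of eigenvalues $\bigl(1-q^{\nu_i}\bigr)/(1-q)$ (i.e.\ your $\Lambda$), not $\delta_q^{(n-1)}(A_{n-1})$ itself, but this does not affect the argument.
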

\begin{proof}
    Plugging $\hat{F}_q(z;E_{nn},A_{n-1})$ into the equation \eqref{diagqdes} and comparing the coefficients $H_{m}$   for $z^{-m}$  of  $\hat{H}_q(z;E_{nn},A_{n-1})$, for $m\geq 0$ (with $H_0=\mathrm{Id}_n$), we see that ${H}_{m}$ satisfies
    \begin{equation*}
        E_{nn} {H}_{m+1}-{H}_{m+1}\frac{1}{q^{m+1}}E_{nn}=\frac{1}{1-q}\left({H}_{m}-{H}_{m} \frac{1}{q^m}\mathrm{diag}\left( q^{\nu_1},...,q^{\nu_{n-1}},  q^{\nu_n} \right)\right)-A_{n-1} {H}_{m},
    \end{equation*}
    and can be computed explicitly.
\end{proof}
From Proposition \ref{prop: qsummability},   the entries of (the first $n-1$ columns of)  $\hat{H}_q(z;E_{n n},A)$  are $[\lambda;q]$-summable for any $\lambda\in \mathbb{C}^{*}\backslash \cup_{1\leq j \leq n-1}[-{(1-q)}/{q^{\nu_j}};q]$, and  from Proposition  \ref{qsum:convergent},   Proposition \ref{fczclambda}  and  Proposition \ref{ffs qsym},      the  fundamental solution of the equation \eqref{diagqdes} is  give by  
\begin{prop}\label{FinflamAn-1}
   Under the conditions  \eqref{eq:vivj}  (with the  condition \eqref{ineq1} included)  and       \eqref{ineq2},  for       $\lambda\in \mathbb{C}^{*}\backslash\cup_{1\leq l \leq n-1} [-{(1-q)}/{q^{\nu_l}};q]$,  the    fundamental solution  ${F}_q^{(\infty)}(z,\lambda;E_{nn},A_{n-1})$ of  \eqref{diagqdes} is given by 
   \begin{equation}\label{eq:Fqinf(z)}
    {F}_q^{(\infty)}(z,\lambda;E_{nn},A_{n-1})=H^{(\infty)}_q(z,\lambda;E_{nn},A_{n-1})\cdot   z^{\delta^{(n-1)}_q(A_{n-1})}   e_q\left( E_{n n} q^{ -\delta^{(n-1)}_q(A_{n-1}) } z \right),
\end{equation}
with  ${H}^{(\infty)}_q\left(z,\lambda;E_{nn},A_{n-1}\right)_{ij}=\mathcal{L}_{q ; 1}^{[\lambda;q]} \circ \hat{\mathcal{B}}_{q ; 1} (\hat{H}_q\left(z;E_{nn},A_{n-1}\right)_{ij})$. The entries  are for  $1\leq i\leq n$, $1\leq j\leq n-1$:  
   \begin{align}
&H^{(\infty)}_q(z,\lambda;E_{nn},A_{n-1})_{  i  j  }=  d_{q;ij}\cdot  {}_nf_{n-2}\left(\begin{array}{c}
			 q^{1+\boldsymbol{\mu}-\nu_j-\delta_{ij}} \\
			q^{1+\boldsymbol{\nu_{\hat{j}}}-\nu_j+\boldsymbol{\delta^{(n-1)}_{i\hat{j}}}-\delta_{ij} }
		\end{array}; \frac{\lambda   q^{\nu_j+\delta_{ij}-\delta_{ni}}  }{1-q}; q,   {\frac{ {q^{\nu_j+\delta_{ij}-\delta_{ni}   }   } }{ (1-q)z      }}\right),   \nonumber    \\
  &H^{(\infty)}_q(z,\lambda;E_{nn},A_{n-1})_{  i  n  }=\hat{H}_q(z;E_{nn},A_{n-1})_{i n},\label{Hqinfin}
   \end{align}
where     \eqref{Hqinfin}  holds  on the disc of convergence  of  $\hat{H}_q(z;E_{nn},A_{n-1})_{i n}$  minus the points of the $q$-spiral $[-\lambda^{-1};q]$.
\end{prop}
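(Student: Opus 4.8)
The plan is to obtain $F_q^{(\infty)}(z,\lambda;E_{nn},A_{n-1})$ by applying the $q$-Borel--Laplace resummation $\mathcal{L}_{q;1}^{[\lambda;q]}\circ\hat{\mathcal{B}}_{q;1}$ entrywise to the formal fundamental solution $\hat{F}_q(z;E_{nn},A_{n-1})=\hat{H}_q(z;E_{nn},A_{n-1})\cdot z^{\delta^{(n-1)}_q(A_{n-1})}e_q\big(E_{nn}q^{-\delta^{(n-1)}_q(A_{n-1})}z\big)$ supplied by Proposition \ref{ffs qsym}, and then to rewrite the outcome in closed form. Substituting $F=H\cdot z^{\delta^{(n-1)}_q(A_{n-1})}e_q\big(E_{nn}q^{-\delta^{(n-1)}_q(A_{n-1})}z\big)$ into \eqref{diagqdes} and peeling off the right factor, the matrix $H$ must satisfy $\sigma_q H=M(z)\,H\,N(z)$, where $M(z)=\mathrm{Id}_n+(q-1)(zE_{nn}+A_{n-1})$ is polynomial and $N(z)$ is a diagonal rational matrix; the only computation here is the functional equation $e_q(qw)=(1-(1-q)w)\,e_q(w)$. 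Hence each column of $\hat{H}_q$ is a formal power-series solution (in $z^{-1}$) of a linear $q$-difference system with rational coefficients, to which the resummation machinery of Section \ref{sec:q-borel sum} applies.

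Next I would establish summability. By Proposition \ref{prop: qsummability}, the entries $\hat{H}_q(z;E_{nn},A_{n-1})_{ij}=d_{q;ij}\cdot{}_n\varphi_{n-2}(\,\cdot\,;\,\cdot\,;q,\,\cdot\,/z)$ of the first $n-1$ columns are $[\lambda;q]$-summable for every $\lambda\in\mathbb{C}^*\backslash\cup_{1\leq l\leq n-1}[-(1-q)/q^{\nu_l};q]$, provided the parameter vectors $q^{1+\boldsymbol{\mu}-\nu_j-\delta_{ij}}$ and $q^{\boldsymbol{\delta^{(n-1)}_{i\hat{j}}}+1+\boldsymbol{\nu_{\hat{j}}}-\nu_j-\delta_{ij}}$ satisfy \eqref{anbn-2cond}; for the vectors occurring here this reduces to the non-integrality of the differences $\mu_k-\mu_l$ and, through \eqref{eq:vivj}, of the differences $\nu_k-\nu_l$. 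By Proposition \ref{convergent column} the entries of the last column are convergent, hence $[\lambda;q]$-summable for every $\lambda$ by Proposition \ref{qsum:convergent}. Applying the matrix version of Proposition \ref{resummation satisfies equation} to each column, the entrywise resummation $H^{(\infty)}_q(z,\lambda;E_{nn},A_{n-1})$ solves $\sigma_q H=MHN$, so $F_q^{(\infty)}=H^{(\infty)}_q\cdot z^{\delta^{(n-1)}_q(A_{n-1})}e_q\big(E_{nn}q^{-\delta^{(n-1)}_q(A_{n-1})}z\big)$ is a meromorphic solution of \eqref{diagqdes}; moreover $H^{(\infty)}_q$ is asymptotic with $q$-Gevrey order $1$ to $\hat{H}_q$, whose constant term is $\mathrm{Id}_n$, so $\det H^{(\infty)}_q\to 1$ near $z=\infty$ and $F_q^{(\infty)}$ is in fact a fundamental solution.

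It then remains to identify the entries. For $j=n$, Proposition \ref{qsum:convergent} gives $H^{(\infty)}_q(z,\lambda;E_{nn},A_{n-1})_{in}=\hat{H}_q(z;E_{nn},A_{n-1})_{in}$ on the disc of convergence of $\hat{H}_q(z;E_{nn},A_{n-1})_{in}$ with the spiral $[-\lambda^{-1};q]$ deleted, which is \eqref{Hqinfin}. For a diagonal entry, $1\leq i=j\leq n-1$, one has $d_{q;jj}=1$, so $\hat{H}_q(z;E_{nn},A_{n-1})_{jj}$ is literally ${}_n\varphi_{n-2}(\,\cdot\,;\,\cdot\,;q,cz^{-1})$ with $c=q^{\nu_j+1}/(1-q)$, and Proposition \ref{fczclambda} (equation \eqref{qsummability2}) gives its $[\lambda;q]$-sum as ${}_nf_{n-2}(\,\cdot\,;\,\cdot\,;c\lambda;q,cz^{-1})$, which is the claimed formula since $\nu_j+\delta_{jj}-\delta_{nj}=\nu_j+1$. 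In the off-diagonal case, $1\leq i\neq j\leq n-1$ or $i=n$, one has $d_{q;ij}=\kappa\,z^{-1}$ for a constant $\kappa$ and $\hat{H}_q(z;E_{nn},A_{n-1})_{ij}=\kappa\,z^{-1}\,{}_n\varphi_{n-2}(\,\cdot\,;\,\cdot\,;q,cz^{-1})$ with $c=q^{\nu_j-\delta_{ni}}/(1-q)$. Writing ${}_n\varphi_{n-2}(\,\cdot\,;\,\cdot\,;q,w)=\sum_{k\geq 0}t_k w^k$ and using $q^{(k+1)k/2}=q^{k(k-1)/2}q^{k}$, one finds $\hat{\mathcal{B}}_{q;1}\big(\hat{H}_q(z;E_{nn},A_{n-1})_{ij}\big)(\xi)=\kappa\,\xi\cdot\hat{\mathcal{B}}_{q;1}\big({}_n\varphi_{n-2}(\,\cdot\,;\,\cdot\,;q,qcz^{-1})\big)(\xi)$, and on the spiral $[\lambda;q]$ this Borel transform is the one for the argument $cz^{-1}$ evaluated at $\lambda q^{m+1}$ in place of $\lambda q^{m}$; reindexing the bilateral sum defining $\mathcal{L}_{q;1}^{[\lambda;q]}$ and invoking the quasi-periodicity $\theta_q(qw)=w^{-1}\theta_q(w)$ from \eqref{theta(q z)} then collapses the result to $\kappa\,z^{-1}\,{}_nf_{n-2}(\,\cdot\,;\,\cdot\,;c\lambda;q,cz^{-1})=d_{q;ij}\,{}_nf_{n-2}(\,\cdot\,;\,\cdot\,;c\lambda;q,cz^{-1})$, which is the stated formula.

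The main obstacle I anticipate is exactly this last step: verifying that the rational prefactor $d_{q;ij}$ (a constant times $z^{-1}$) survives the composition $\mathcal{L}_{q;1}^{[\lambda;q]}\circ\hat{\mathcal{B}}_{q;1}$ unchanged and reappears in front of ${}_nf_{n-2}$, while the spiral parameter is multiplied by precisely $c=q^{\nu_j+\delta_{ij}-\delta_{ni}}/(1-q)$ and not by $qc$ or $c/q$ — the interchange of the $z^{-1}$ factor with resummation is governed solely by the theta quasi-periodicity and the reindexing of the bilateral series, and making all powers of $q$ cancel is the delicate accounting. A secondary nuisance is bookkeeping the exceptional loci: one must check that the excluded set $\cup_{1\leq l\leq n-1}[-(1-q)/q^{\nu_l};q]$ for $\lambda$ matches the poles $[-1;q]$ in Proposition \ref{prop: qsummability} after the rescalings $w=cz^{-1}$, and that the equalities \eqref{Hqinfin} and the closed forms hold exactly on the punctured neighbourhood of $z=\infty$ (respectively the disc of convergence) minus the relevant $q$-spirals.
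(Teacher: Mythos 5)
Your proposal is correct and follows essentially the same route as the paper, which simply asserts this proposition by combining Proposition \ref{ffs qsym} with Propositions \ref{prop: qsummability}, \ref{qsum:convergent}, \ref{resummation satisfies equation} and \ref{fczclambda}. The only detail you add beyond the paper's citation chain is the verification that the prefactor $d_{q;ij}\propto z^{-1}$ commutes with $\mathcal{L}_{q;1}^{[\lambda;q]}\circ\hat{\mathcal{B}}_{q;1}$ (via $\hat{\mathcal{B}}_{q;1}(z^{-1}\hat f)(\xi)=\xi\,(\hat{\mathcal{B}}_{q;1}\hat f)(q\xi)$, reindexing the bilateral sum, and the quasi-periodicity of $\theta_q$), and your accounting of the powers of $q$ there is right.
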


By direct computation, the   fundamental solution around the origin of  \eqref{diagqdes} is given by
\begin{prop}\label{qsolu zero}
Under   the nonresonant condition \eqref{qnreso}  and  the conditions \eqref{ineq1},  \eqref{ineq2},  the fundamental solution  ${F}^{(0)}_q(z;E_{nn},A_{n-1})$ of  \eqref{diagqdes} is given by (recalling   $\log_q(1-(1-q) A_n )=\operatorname{diag}(\mu_1,...,\mu_{n})$)
    \begin{equation}\label{eq:F0q(z)}
    F_q^{(0)}(z;E_{nn},A_{n-1})=H_q^{(0)}(z;E_{nn},A_{n-1}) \cdot z^{\log_q(1-(1-q) A_n )   },
\end{equation}
  with $(i,j)$-th entry $H_q^{(0)}(z;E_{nn},A_{n-1})_{i j}$ given {by}
    \begin{align*}
&    H_q^{(0)}(z;E_{nn},A_{n-1})_{i j}=\frac{a_i^{(n-1)}}{\lambda_{j}^{(n)}-\lambda^{(n-1)}_{i}} {}_n \varphi_{n-1}\left(\begin{array}{c}
q^{1+\mu_{j}-\boldsymbol{{\nu}}-\boldsymbol{{\delta}^{(n-1)}_{ i}}}, 0 \\
q^{1+\mu_{j}-\boldsymbol{\mu_{\hat{j}}}}
\end{array} ; q, \frac{(1-q) z}{q^{\nu_n}}\right),\quad 1\leq i \leq n-1,\, 1\leq j \leq n,\\
&H_q^{(0)}(z;E_{nn},A_{n-1})_{n j}={ }_{n} \varphi_{n-1}\left(\begin{array}{c}
q^{1+\mu_{j}-\boldsymbol{\nu}}, 0 \\
q^{1+\mu_{j}-\boldsymbol{\mu_{\hat{j}}}   }
\end{array} ;q, \frac{(1-q) z}{q^{\nu_n}}\right), \quad 1\leq j \leq n.
\end{align*}
Here we denote $\boldsymbol{\nu}:=\left( \nu_1,...,\nu_{n-1}  \right)$,  $\boldsymbol{\mu_{\hat{j}}}:=\big( \mu_1,...,\widehat{\mu_{j}},...,\mu_n  \big)$  and     $\boldsymbol{\delta^{(n-1)}_i}:=\left(\delta_{i1},...,\delta_{i,n-1}\right)$.  So
\begin{align*}
    &1+\mu_{j}-\boldsymbol{{\nu}}-\boldsymbol{{\delta}^{(n-1)}_{ i}}
    =\left(1+\mu_j-\nu_1-\delta_{i1},..., 1+\mu_j-\nu_{n-1}-\delta_{i,n-1}  \right),\\
    &1+\mu_j-\boldsymbol{\mu_{\hat{j}}}=\left(  1+\mu_j-\mu_1,...,1+\mu_j-\mu_{j-1},\widehat{1+\mu_j-\mu_j},1+\mu_j-\mu_{j+1},...,1+\mu_j-\mu_n \right),\\
    &1+\mu_{j}-\boldsymbol{\nu}=\left(1+\mu_{j}-{\nu}_1,...,1+\mu_{j}-{\nu}_{n-1}  \right).
\end{align*}
\end{prop}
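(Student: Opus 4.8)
\emph{Proof strategy.} The plan is to seek the fundamental solution in the factored shape already used for \eqref{qdes} in Section~\ref{sec:sol zero inf}, i.e. $F_q^{(0)}(z;E_{nn},A_{n-1})=H_q^{(0)}(z;E_{nn},A_{n-1})\,z^{\log_q(1-(1-q)A_n)}$ with $H_q^{(0)}(z)=\sum_{m\ge0}H_m z^m$ a matrix power series and $H_0$ invertible. Since $\log_q(1-(1-q)A_n)=\operatorname{diag}(\mu_1,\dots,\mu_n)$, the factor $z^{\log_q(1-(1-q)A_n)}$ is the diagonal matrix $\operatorname{diag}(z^{\mu_1},\dots,z^{\mu_n})$, which is multiplied by $\sigma_q$ by the constant matrix $\operatorname{diag}(q^{\mu_j})=\mathrm{Id}_n-(1-q)A_n$. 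Substituting the ansatz into \eqref{diagqdes} and cancelling this diagonal factor on the right turns the equation into
\begin{equation*}
\sigma_q\!\bigl(H_q^{(0)}\bigr)\bigl(\mathrm{Id}_n-(1-q)A_n\bigr)-H_q^{(0)}-(q-1)A_{n-1}H_q^{(0)}=(q-1)z\,E_{nn}H_q^{(0)},
\end{equation*}
and, comparing the coefficient of $z^m$, into the recursion
\begin{equation*}
(q^m-1)H_m+(1-q)\bigl(A_{n-1}H_m-q^m H_m A_n\bigr)=(q-1)E_{nn}H_{m-1},\qquad m\ge0,
\end{equation*}
with $H_{-1}:=0$. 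For $m=0$ this reads $A_{n-1}H_0=H_0 A_n$: the $j$-th column of $H_0$ must be an eigenvector of $A_{n-1}$ for $\lambda_j^{(n)}$, which is an eigenvalue since $A_{n-1}$ is conjugate to $A$ by Proposition~\ref{An-1Pn-1}, with all $\lambda_j^{(n)}$ distinct under \eqref{qnreso}. As $A_{n-1}$ is arrowhead, its eigenvector is read off directly by normalizing the last component to $1$: the first $n-1$ rows force the $i$-th component to be $\tfrac{a_i^{(n-1)}}{\lambda_j^{(n)}-\lambda_i^{(n-1)}}$ (the stated formula tacitly assuming $\lambda_j^{(n)}\ne\lambda_i^{(n-1)}$), and the last row is then automatically consistent. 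These are exactly the values at $z=0$ of the entries in the statement, so the proposed $H_q^{(0)}$ has the correct $H_0$.

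For $m\ge1$ I would first record uniqueness. Read column by column, the recursion is the Sylvester-type system $\bigl[(q^m-1)\mathrm{Id}_n+(1-q)\bigl(A_{n-1}-q^m\lambda_j^{(n)}\mathrm{Id}_n\bigr)\bigr](H_m)_{\bullet j}=(q-1)(E_{nn}H_{m-1})_{\bullet j}$; the coefficient matrix has eigenvalues $(q^m-1)+(1-q)\bigl(\lambda_k^{(n)}-q^m\lambda_j^{(n)}\bigr)$ for $1\le k\le n$, and using $q^{\mu_k}=1-(1-q)\lambda_k^{(n)}$ each equals $q^{\mu_k}\bigl(q^{m+\mu_j-\mu_k}-1\bigr)$, which is nonzero for every $m\ge1$ precisely by the nonresonant condition \eqref{qnreso}. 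Hence $H_m$ is uniquely determined from $H_{m-1}$, so there is at most one solution of the ansatz form, and it remains to check that the explicit ${}_n\varphi_{n-1}$-entries of the statement give one. For that I would substitute them into the column system. The right-hand side $(E_{nn}H_{m-1})_{\bullet j}$ has only an $n$-th entry, so solving the arrowhead system expresses, for each $m$, the $i$-th component ($i<n$) in terms of the $n$-th by a scalar which reduces to the elementary identity $q^{\nu_i}-q^{\mu_j}=(1-q)\bigl(\lambda_j^{(n)}-\lambda_i^{(n-1)}\bigr)$ --- this matches the single decremented numerator parameter $q^{\mu_j-\nu_i}$ in row $i$ against $q^{1+\mu_j-\nu_i}$ in row $n$ --- and it expresses the $n$-th component of $H_m$ in terms of that of $H_{m-1}$ via (minus) the Schur complement of the arrowhead Sylvester operator, which, after substituting the product formula \eqref{eq:aibi} for $a_i^{(n-1)}b_i^{(n-1)}$ and carrying out the partial-fraction simplification, must coincide with the ratio of consecutive Taylor coefficients of ${}_n\varphi_{n-1}\!\bigl(q^{1+\mu_j-\boldsymbol{\nu}},0;q^{1+\mu_j-\boldsymbol{\mu_{\hat j}}};q,(1-q)z/q^{\nu_n}\bigr)$.

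Finally, convergence of $H_q^{(0)}$ near $z=0$ is automatic: each entry is a scalar times a ${}_n\varphi_{n-1}$ with $n$ numerator parameters (one being $0$, contributing $(0;q)_k=1$) and $n-1$ denominator parameters, whose series in its argument has radius of convergence $1$, so after the substitution of $(1-q)z/q^{\nu_n}$ it is holomorphic near the origin; since $H_0$ is invertible, $F_q^{(0)}(z;E_{nn},A_{n-1})$ is then a genuine fundamental solution. I expect the only real obstacle to be the verification in the previous paragraph --- the partial-fraction identity, via \eqref{eq:aibi}, that turns the Schur complement of the arrowhead Sylvester operator into the hypergeometric coefficient ratio with exactly the parameters $\boldsymbol{\nu}$ and $\boldsymbol{\mu_{\hat j}}$ --- whereas the $m=0$ step and the non-vanishing/uniqueness argument are routine. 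Alternatively one could eliminate the first $n-1$ components of a solution column via the arrowhead structure of $A_{n-1}$, reduce \eqref{diagqdes} to a scalar confluent hypergeometric $q$-difference equation of the form \eqref{intro:conflu hyper eq}, and read the solution holomorphic at the origin off Proposition~\ref{n=m+1an=0}; but one then still has to recover the off-diagonal entries, so the coefficient recursion is the more transparent route.
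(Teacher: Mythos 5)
Your proposal is correct and is essentially the ``direct computation'' that the paper invokes for this proposition without writing out: the coefficient recursion, the invertibility of the arrowhead Sylvester operator under \eqref{qnreso} via the eigenvalues $q^{\mu_k}\bigl(q^{m+\mu_j-\mu_k}-1\bigr)$, and the last-row verification all go through, and the Schur-complement identity you defer does hold --- by \eqref{eq:aibi} and partial fractions the bracketed operator acting on the $n$-th component equals $-\prod_{l=1}^{n}\bigl(q^{\mu_l}-q^{m+\mu_j}\bigr)\big/\prod_{l=1}^{n-1}\bigl(q^{\nu_l}-q^{m+\mu_j}\bigr)$, which is exactly $(q-1)$ times the ratio of the $(m-1)$-st to the $m$-th Taylor coefficient of the stated ${}_n\varphi_{n-1}$. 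The only slip is the parenthetical claim that \eqref{qnreso} forces the $\lambda^{(n)}_j$ to be distinct (it only excludes $\mu_i-\mu_j\in\mathbb{Z}_{>0}$, so $\mu_i=\mu_j$ is allowed); distinctness, equivalently invertibility of $H_0$, is an implicit hypothesis that the paper itself makes in Section \ref{sec:sol zero inf}, and it is not needed for the $m\ge 1$ steps of your argument.
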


\subsection{Relationship between the System \eqref{diagqdes} and the Equation \eqref{intro:conflu hyper eq}}\label{sec:relation}
The computation of the connection matrix and $q$-Stokes matrix of \eqref{diagqdes}  can be transformed  into the connection problem of the equation  \eqref{intro:conflu hyper eq}. In this subsection, we clarify  the relationship between  the system \eqref{diagqdes} and the equation \eqref{intro:conflu hyper eq}.

 From  explicit expressions of fundamental solutions  in Proposition \ref{FinflamAn-1}  and   Proposition   \ref{qsolu zero},  one gets that:  
\begin{itemize}
    \item For  $1\leq i,k \leq n-1$, $1\leq  j \leq n$,  $F_q^{(0)}(z;E_{nn},A_{n-1})_{i j}$  and   $F_q^{(\infty)}(z,\lambda;E_{nn},A_{n-1})_{i k}$  satisfy the equation 
    \begin{equation}\label{Fij eq}
        \prod_{l=1}^{n}(z\cdot D_q-\lambda^{(n)}_l)y(z)=\prod_{l=1}^{n-1}\left( \frac{q^{-\delta_{l i} }  \sigma_q-1}{q-1}-\lambda^{(n-1)}_l\right)(z\cdot y(z)),
    \end{equation}
where   $\delta_{l i}$  is the Kronecker delta   symbol.    It follows that  $F_q^{(\infty)}(z,\lambda;E_{nn},A_{n-1})_{i n}$ also satisfies \eqref{Fij eq}, since  
\begin{equation*}
    F_q^{(\infty)}(z,\lambda;E_{nn},A_{n-1})_{i n}=\frac{F_q^{(0)}(z;E_{nn},A_{n-1})_{ ij }}{ U_q(z,\lambda;E_{nn},A_{n-1})_{n  j} } -\sum_{k=1}^{n-1}\frac{  U_q(z,\lambda;E_{nn},A_{n-1})_{kj}  }{U_q(z,\lambda;E_{nn},A_{n-1})_{n  j}}F_q^{(\infty)}(z,\lambda;E_{nn},A_{n-1})_{i k},
\end{equation*}
which is a sum  with  pseudo-constant   coefficients.   In fact,    the  function  $F_q^{(\infty)}(z,\lambda;E_{nn},A_{n-1})_{i n}z^{ -\mu_j }$  satisfies the  equation (see $\nu_n$ in \eqref{eq:vn}):
\begin{equation*}
      (1-q)z\prod_{l=1}^{n-1}(1-q^{1+\mu_j-{\nu}_l-{\delta}_{l i}} \sigma_q)y(z)= q^{ \nu_n   }  (1-\sigma_q)\prod_{l=1,l\neq  j}^{n}\left(1-q^{\mu_{j}-{\mu}_{l   } }\sigma_q\right)y(z),
 \end{equation*}
 which  admits  the unique convergent solution       $\frac{ \psi_{i j}(z)  z^{ \nu_n -\mu_j -1   }   }{ \left(  q^{  -\nu_n  }       (1-q)z; q\right)_\infty    }$,    
  such that  $\psi_{i j}(z)$  is  a power
series in  the    variable   $1/z$,   with the  constant term  $1$.   Then  we  have  
\begin{equation}\label{expli Fin}
F_q^{(\infty)}(z,\lambda;E_{nn},A_{n-1})_{i n}=     \frac{   qa_i^{(n-1)}   \psi_{i j}(z)  z^{ \nu_n -1  }    }{\left(  q^{  -\nu_n  }  (1-q)z; q\right)_\infty},
\end{equation}   
     by the  direct computations  involving  recurrence relations  of  power series coefficients  of  $\psi_{i j}(z)$    and     $F_q^{(\infty)}(z,\lambda;E_{nn},A_{n-1})_{i n}$.  
    \item   For   $i=n$, $1\leq  k \leq   n-1$,  $1\leq  j  \leq  n$,  $F_q^{(0)}(z;E_{nn},A_{n-1})_{n  j}$  and  $F_q^{(\infty)}(z,\lambda;E_{nn},A_{n-1})_{n  k}$   satisfy the  equation 
     \begin{equation}\label{Fnj eq}
        \prod_{l=1}^{n}(z\cdot D_q-\lambda^{(n)}_l)y(z)=\prod_{l=1}^{n-1}( z\cdot   D_q-\lambda^{(n-1)}_l)(z\cdot y(z)).
    \end{equation}
    It also follows that  $F_q^{(\infty)}(z,\lambda;E_{nn},A_{n-1})_{n n }  $   satisfies \eqref{Fnj eq} and   
    \begin{equation}\label{expli Fnn}
        F_q(z,\lambda;E_{nn},A_{n-1})_{n n }=\frac{ \psi_{n j}(z) z^{ \nu_n  } }{  \left(  q^{  -\nu_n  }  (1-q)z; q\right)_\infty  },
    \end{equation}
    given  the  power series  $\psi_{n j}(z)$  for the variable  $1/z$,  with the constant term $1$,  such  that  $\frac{ \psi_{n j}(z) z^{ \nu_n-\mu_j  } }{  \left(  q^{  -\nu_n  }  (1-q)z; q\right)_\infty  }$  satisfies  the  equation
    \begin{equation*}
         (1-q)z\prod_{l=1}^{n-1}(1-q^{1+\mu_j-{\nu}_l   } \sigma_q)y(z)= q^{ \nu_n   }  (1-\sigma_q)\prod_{l=1,l\neq  j}^{n}\left(1-q^{\mu_{j}-{\mu}_{l   } }\sigma_q\right)y(z).
    \end{equation*}
\end{itemize}
So  the  coefficients of the connection formula of the equations \eqref{Fij eq}, \eqref{Fnj eq}, which can be derived from Theorem \ref{intro:our connection formula}   (or Theorem \ref{our connection formula} equivalently),  provide  the  entries of $U_q(z,\lambda;E_{nn},A_{n-1})$.

\subsection{Explicit Evaluation of Connection Matrix   and         $q$-Stokes  Matrix  of \eqref{diagqdes}}\label{sec:diagqdes}
In this subsection,  we    compute the connection matrix  $U_q(z,\lambda;E_{nn},A_{n-1})$   (see Theorem \ref{qstokes matrix})   and  the  $q$-Stokes matrix  $S_q\left(z,\lambda,\mu;E_{nn}, A_{n-1}\right)$  (see Theorem \ref{thm:diagqstokes}) of  the system \eqref{diagqdes}.  The computation depends on the connection formula  of the equation \eqref{intro:conflu hyper eq} (see Theorem \ref{intro:our connection formula} or Theorem \ref{our connection formula} equivalently).

We  introduce the following conditions:
\begin{equation}\label{cond:uiujvivj}
\begin{aligned}
    &   \mu_i-\mu_j\notin \mathbb{Z}, \  &&\text{for all}  \  1\leq i,j \leq  n, \ (\text{with  the  nonresonant   condition \eqref{qnreso} included}),  \\
    & \nu_i-\nu_j\notin  \mathbb{Z},  \  &&\text{for all}  \  1\leq i,j \leq  n-1,  \ (\text{with  the    condition \eqref{ineq1} included}), \\
    &\lambda^{\left(n-1\right)}_j   \neq   \lambda^{\left(n-2\right)}_l,  \  &&\text{for all}  \  1\leq j \leq  n-1, 1\leq l \leq n-2.
\end{aligned}
\end{equation}
\begin{thm}\label{qstokes matrix}
Under the condition \eqref{cond:uiujvivj},  and  for  $z,\lambda$  such that
\begin{equation*}
     \lambda\in \mathbb{C}^*,\ \lambda\notin -\frac{1-q}{q^{\nu_l}}q^{\mathbb{Z}}, \  \text{for} \   1\leq l \leq n-1, \  \text{and} \  z\in \mathbb{C}^*, \  z\notin \frac{q^{\nu_n} }{1-q} q^{\mathbb{Z}}, \  z\notin  - \frac{1}{\lambda}q^{\mathbb{Z}},
\end{equation*}
     the entries of the connection matrix $U_q(z,\lambda;E_{nn},A_{n-1})$ are (see \eqref{eq:vn} for $\nu_n$)
    \begin{align}
U_q(z,\lambda;E_{nn},A_{n-1})_{i j}=&
\frac{a_i^{(n-1)}}{\lambda_{j}^{(n)}-\lambda^{(n-1)}_{i}}  \frac{ \prod_{l=1,l\neq  i}^{n-1} ( q^{1+\mu_{j} -{{\nu_l   }}   }  ;q)_{\infty}   \prod_{l=1,l\neq   j}^{n}(   q^{1+\nu_i-{\mu}_{{l}}};q)_\infty}{  \prod_{l=1,l\neq   j}^n  (q^{1+\mu_{j}-{\mu}_{{l}}};q)_{\infty}\prod_{l=1,l\neq   i}^{n-1}(q^{1+\nu_{{i}}-\nu_{{l}}};q)_\infty}  \nonumber \\
& \times    \frac{\theta_q\left(-q^{ {\mu_j -\nu_i   -\nu_n }} (1-q)z\right)}{ \theta_q\left(-q^{-\nu_n }  (1-q)z\right) }  z^{  \mu_j-\nu_i  },  \  \text{for}  \   1\leq i \leq n-1,\  1\leq  j \leq n,   \label{Uij}    \\
U_q(z,\lambda;E_{nn},A_{n-1})_{n j}=&
\frac{  \prod_{l=1}^{n-1} ( q^{1+\mu_{j}-\nu_{l}}  ;q)_\infty}{\prod_{l=1,l\neq   j}^{n}   (q^{1+\mu_{j}-\mu_{l}}   ;q)_\infty}  \nonumber  \\
&\times \frac{\prod_{l=1,l\neq j}^{n}\theta_q((1-q)q^{1-\mu_l}/\lambda)}{\prod_{l=1}^{n-1}\theta_q((1-q)q^{1-\nu_l}/\lambda)}      	\frac{\theta_q\left(  q^{ \mu_j-\nu_n  }  \lambda z\right)}{\theta_q(\lambda  z )}z^{ \mu_j-\nu_n     },  \  \text{for}  \  1\leq j \leq  n  .\label{Unj}
\end{align}
\end{thm}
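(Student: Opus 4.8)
The strategy is to reduce the computation of each column of $U_q(z,\lambda;E_{nn},A_{n-1})$ to the scalar connection formula already established in Theorem \ref{our connection formula} (equivalently Theorem \ref{intro:our connection formula}). Fix a column index $1\le j\le n$. By Definition \ref{UqSq}, the $j$-th column of $U_q$ consists of pseudo-constants expressing the $j$-th column of $F_q^{(0)}(z;E_{nn},A_{n-1})$ in the basis of columns of $F_q^{(\infty)}(z,\lambda;E_{nn},A_{n-1})$; reading this relation off in a single row $1\le i\le n$ shows that $F_q^{(0)}(z;E_{nn},A_{n-1})_{ij}$ is a pseudo-constant combination of $F_q^{(\infty)}(z,\lambda;E_{nn},A_{n-1})_{i1},\dots,F_q^{(\infty)}(z,\lambda;E_{nn},A_{n-1})_{in}$, which by Proposition \ref{FinflamAn-1} and Proposition \ref{fqlambda} form, up to explicit normalization, the fundamental system around infinity of the scalar equation \eqref{Fij eq} (equation \eqref{Fnj eq} when $i=n$) identified in Section \ref{sec:relation}. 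Since this system is a basis over pseudo-constants, the expansion is unique, so it suffices to match the two expansions of $F_q^{(0)}(z;E_{nn},A_{n-1})_{ij}$: the one coming from $U_q$ and the one coming from Theorem \ref{our connection formula}.

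To do this I would set up the parameter dictionary. By Proposition \ref{qsolu zero}, after removing the explicit prefactor $\tfrac{a_i^{(n-1)}}{\lambda_j^{(n)}-\lambda_i^{(n-1)}}$ (absent when $i=n$) and the monomial $z^{\mu_j}$, the function $F_q^{(0)}(z;E_{nn},A_{n-1})_{ij}$ is exactly ${}_n\varphi_{n-1}(\boldsymbol a,0;\boldsymbol b;q,w)$ with $w=(1-q)z/q^{\nu_n}$, $a_l=q^{1+\mu_j-\nu_l-\delta_{il}}$ for $1\le l\le n-1$, and $\{b_l\}=\{q^{1+\mu_j-\mu_l}:1\le l\le n,\ l\ne j\}$; the genericity conditions \eqref{cond:uiujvivj} and the hypotheses on $z,\lambda$ are precisely the translates, under $z\mapsto w$, of the hypotheses of Theorem \ref{our connection formula}. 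A short computation with these parameters shows that, choosing the $q$-Borel direction in Theorem \ref{our connection formula} to be $\lambda_{\mathrm{thm}}=q^{\mu_j}\lambda/(1-q)$ (which lies in the same $q$-spiral as the directions appearing in Proposition \ref{FinflamAn-1}), the series ${}_nf_{n-2}(\cdots)_s$ there equals $d_{q;is}^{-1}z^{-\nu_s}F_q^{(\infty)}(z,\lambda;E_{nn},A_{n-1})_{is}$ for $1\le s\le n-1$, while the convergent term $h_q(w)_n/(w;q)_\infty$ equals $z^{-\nu_n}F_q^{(\infty)}(z,\lambda;E_{nn},A_{n-1})_{in}$ up to an explicit monomial and the factor $q a_i^{(n-1)}$ (both trivial when $i=n$), using $e_q(x)=1/((1-q)x;q)_\infty$ together with \eqref{expli Fin} and \eqref{expli Fnn}.

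With the dictionary in hand, I would read off $(U_q)_{ij}$ and simplify. For $1\le i\le n-1$ one uses row $i$: the $s=i$ term of Theorem \ref{our connection formula} is the convenient one because $d_{q;ii}=1$, and its coefficient $\frac{(a_1,\dots,\widehat{a_i},\dots,a_{n-1},b_1/a_i,\dots,b_{n-1}/a_i;q)_\infty}{(b_1,\dots,b_{n-1},a_1/a_i,\dots,\widehat{a_i/a_i},\dots,a_{n-1}/a_i;q)_\infty}\frac{\theta_q(-a_i w)}{\theta_q(-w)}$, evaluated at $a_i=q^{\mu_j-\nu_i}$, $a_l=q^{1+\mu_j-\nu_l}$ ($l\ne i$), $b_l=q^{1+\mu_j-\mu_l}$, $w=(1-q)z/q^{\nu_n}$, multiplied by $\tfrac{a_i^{(n-1)}}{\lambda_j^{(n)}-\lambda_i^{(n-1)}}$ and by $z^{\mu_j}/z^{\nu_i}$, is exactly \eqref{Uij}. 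For $(U_q)_{nj}$ one uses row $n$: the coefficient $\frac{(a_1,\dots,a_{n-1};q)_\infty}{(b_1,\dots,b_{n-1};q)_\infty}\frac{\prod_l\theta_q(b_l/\lambda_{\mathrm{thm}})}{\prod_l\theta_q(a_l/\lambda_{\mathrm{thm}})}\frac{\theta_q(\lambda_{\mathrm{thm}} w)}{\theta_q\big(\lambda_{\mathrm{thm}} w\,\prod_l a_l/\prod_l b_l\big)}$ of the convergent term, with $a_l=q^{1+\mu_j-\nu_l}$ and $b_l=q^{1+\mu_j-\mu_l}$, simplifies using $\lambda_{\mathrm{thm}} w=q^{\mu_j-\nu_n}\lambda z$ and $\prod_l a_l/\prod_l b_l=q^{\nu_n-\mu_j}$ (a consequence of \eqref{eq:vn}) and the monomial $z^{\mu_j-\nu_n}$, to \eqref{Unj}; the identities \eqref{theta(q z)} and \eqref{Jacobi's triple product identity} are used throughout to recognize $e_q$ and to normalize the theta quotients. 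The main obstacle is the bookkeeping: tracking the prefactors $d_{q;ij}$ and $\tfrac{a_i^{(n-1)}}{\lambda_j^{(n)}-\lambda_i^{(n-1)}}$, the powers $z^{\mu_j},z^{\nu_k}$, and — most delicately — pinning down the precise $q$-spiral in which the $q$-Borel direction of Theorem \ref{our connection formula} must be taken so that its ${}_nf_{n-2}$ is literally the one produced in Proposition \ref{FinflamAn-1}; once $\lambda_{\mathrm{thm}}=q^{\mu_j}\lambda/(1-q)$ is fixed, the rest is substitution and simplification, and the row-independence of $U_q$ guarantees consistency of the answer.
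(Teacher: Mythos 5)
Your proposal follows essentially the same route as the paper's proof: it specializes Theorem \ref{our connection formula} with the parameter dictionary $a_l=q^{1+\mu_j-\nu_l-\delta_{il}}$, $b_l=q^{1+\mu_j-\mu_l}$ ($l\neq j$), $z\mapsto q^{-\nu_n}(1-q)z$ and $\lambda$ replaced by $q^{\mu_j}\lambda/(1-q)$ up to an integer power of $q$ (which is immaterial since only the $q$-spiral matters), and then identifies the resulting terms with the columns of $F_q^{(\infty)}(z,\lambda;E_{nn},A_{n-1})$ via the prefactors $d_{q;ij}$ together with \eqref{expli Fin} and \eqref{expli Fnn}. The only (harmless) divergence is that you read each entry off a single convenient row and appeal to per-row uniqueness of the expansion over pseudo-constants, whereas the paper computes the coefficients $C_{ij,k}$ for every row and verifies their independence of $i$ explicitly using \eqref{eq:aibi}.
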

\begin{proof}
To prove this theorem, we only need to verify that when \eqref{Uij} and \eqref{Unj} hold, one has 
\begin{equation}\label{UF}
 F_q^{(0)}(z;E_{nn},A_{n-1})_{i j}      =\sum_{k=1}^{n}   U_q(z,\lambda;E_{nn},A_{n-1})_{k j} F^{(\infty)}_q(z,\lambda;E_{nn},A_{n-1})_{i k}, \  \text{for all} \  1\leq i,j \leq n.
\end{equation}
   For  every $i,j \in \{ 1,...,n   \}$,       we take the parameters    $\boldsymbol{a}=\left(a_1,...,a_{n-1}\right),\boldsymbol{b}=\left(b_1,...,b_{n-1}\right)$ in Theorem \ref{our connection formula} as   
\begin{align*}
    &{a}_l=q^{1+\mu_j-{\nu}_l-{\delta}_{l i}}, \  \text{for all}  \    1\leq l \leq   n-1,     \\
     &{b}_l=\left\{
          \begin{array}{lr}
          q^{1+\mu_{j}-{\mu}_{l   } }  , \   \text{for all} \    1\leq l \leq j-1,  \\
              q^{1+\mu_{j}-{\mu}_{l+1   } }  , \   \text{for all} \  j\leq l \leq  n-1,
             \end{array}
\right.
\end{align*}
   and replace $\lambda,z$ by  $  \frac{{\lambda q^{ 1-   \delta_{n i }+\mu_j}}}{1-q}, \frac{\prod_{l=1}^{n-1}q^{\nu_l}}{\prod_{l=1}^{n}q^{\mu_l}}(1-q)z$.  Therefore,  together with      Proposition \ref{FinflamAn-1},     Proposition \ref{qsolu zero},     \eqref{expli Fin}  and  \eqref{expli Fnn},   we have
   \begin{equation*}
    F_q^{(0)}(z;E_{nn},A_{n-1})_{i j}=\sum_{k=1}^{n} C_{ij,k}  F^{(\infty)}_q(z,\lambda;E_{nn},A_{n-1})_{i k}.
\end{equation*}
Here  $C_{ij,k}$ is given as follows.
\begin{itemize}
\item   When    $1\leq i   \leq  n-1$,   $1\leq j \leq n$,       
  we  denote   that  
\begin{align*}
    C_{ij,k}:=&\frac{1}{  \lambda_{j}^{(n)}-\lambda_{i}^{(n-1)}   }
		\frac{ \prod_{ l=1,l\neq  k  }^{n-1} (  q^{  1+\mu_j-\nu_l-\delta_{l i}  }  ;q)_{\infty}  \prod_{ l=1,l\neq  j  }^{n} ( q^{\nu_k -\mu_l  } ;q )_{\infty}    }{   \prod_{l=1,l\neq    j}^{n}   (q^{1+   \mu_{j}-\mu_l        };q)_{\infty}\prod_{l=1,l\neq   k}^{n-1}(q^{\nu_{{k}}-\nu_{l}-{\delta}_{l i}   };q)_\infty}\\
  &\times  
  \frac{\theta_q\left(  -  q^{ 1+\mu_j-\nu_k-\nu_n   }  (1-q)z  \right)}{ \theta_q\left(-q^{ -\nu_n } (1-q)z\right) }
		\frac{  \left(q^{\nu_{k}}-q^{1+\nu_{i}}\right)z^{ 1+\mu_j-\nu_k   } }{   q(1-q)  b_{k}^{(n-1)} },  
\end{align*}
   for all   $k\neq i,    n$,    and  
\begin{align*}
    &C_{ij,i}:=\frac{a_i^{(n-1)}}{\lambda_{j}^{(n)}-\lambda^{(n-1)}_{i}}  \frac{ \prod_{l=1,l\neq  i}^{n-1} ( q^{1+\mu_{j} -{{\nu_l   }}   }  ;q)_{\infty}   \prod_{l=1,l\neq   j}^{n}(   q^{1+\nu_i-{\mu}_{{l}}};q)_\infty}{  \prod_{l=1,l\neq   j}^n  (q^{1+\mu_{j}-{\mu}_{{l}}};q)_{\infty}\prod_{l=1,l\neq   i}^{n-1}(q^{1+\nu_{{i}}-\nu_{{l}}};q)_\infty}  
  \frac{\theta_q\left(-q^{ {\mu_j -\nu_i   -\nu_n }} (1-q)z\right)}{ \theta_q\left(-q^{-\nu_n }  (1-q)z\right) }  z^{  \mu_j-\nu_i  },\\
  &C_{ij,n}:=\frac{  \prod_{l=1}^{n-1} ( q^{1+\mu_{j}-\nu_{l}}  ;q)_\infty}{\prod_{l=1,l\neq   j}^{n}   (q^{1+\mu_{j}-\mu_{l}}   ;q)_\infty}   \frac{\prod_{l=1,l\neq j}^{n}\theta_q((1-q)q^{1-\mu_l}/\lambda)}{\prod_{l=1}^{n-1}\theta_q((1-q)q^{1-\nu_l}/\lambda)}
		\frac{\theta_q\left(  q^{ \mu_j-\nu_n  }  \lambda z\right)}{\theta_q(\lambda  z )}z^{ \mu_j-\nu_n     }.
\end{align*}
  \item   When  $i=n$,   $1\leq  j  \leq   n$,    
      we  denote that  
  \begin{equation*}
      C_{nj,k}:=- \frac{1}{{b^{(n-1)}_k}}  \frac{\prod_{ l=1,l\neq  k }^{n-1} (q^{1+\mu_j-{\nu}_{l}};q)_{\infty  }\prod_{ l=1,l\neq  j }^{n} (q^{\nu_i-\mu_l}  ;q)_\infty}{  \prod_{ l=1,l\neq  j  }^n  (q^{1+\mu_{j}-{\mu}_{l}};q)_{\infty}\prod_{ l=1,l\neq  k }^{n-1}(q^{{\nu}_{{i}}  -\nu_l   };q)_\infty}
		\frac{\theta_q\left(-q^{1+\mu_j-\nu_k -\nu_n  }(1-q)z\right)}{\theta_q\left(-q^{ -\nu_n  }(1-q)z\right)}z^{1+\mu_j-\nu_k}, 
  \end{equation*}
   for  all  $1\leq  k  \leq  n-1$,   and
  \begin{equation*}
        C_{nj,n}:=\frac{  \prod_{l=1}^{n-1} ( q^{1+\mu_{j}-\nu_{l}}  ;q)_\infty}{\prod_{l=1,l\neq   j}^{n}   (q^{1+\mu_{j}-\mu_{l}}   ;q)_\infty}   \frac{\prod_{l=1,l\neq j}^{n}\theta_q((1-q)q^{1-\mu_l}/\lambda)}{\prod_{l=1}^{n-1}\theta_q((1-q)q^{1-\nu_l}/\lambda)}
		\frac{\theta_q\left(  q^{ \mu_j-\nu_n  }  \lambda z\right)}{\theta_q(\lambda  z )}z^{ \mu_j-\nu_n     }.
  \end{equation*}
\end{itemize}

Comparing the  explicit expressions of   $C_{ij,k}$  and   $U_q\left(z,\lambda;E_{nn},A_{n-1}   \right)_{k j}$  given  in    \eqref{Uij},    \eqref{Unj},   by   \eqref{eq:aibi}   one  can find that  \[
    U_q\left(z,\lambda;E_{nn},A_{n-1}   \right)_{k j}=C_{ ij,k  }, \  \text{for any} \ 1\leq  i,j,k\leq  n.  \]         Thus,   \eqref{UF}   is true     and  this theorem  holds.
\end{proof}
\begin{thm}
Under the condition  \eqref{cond:uiujvivj},  and  for  $z,\lambda$  such that
\begin{equation*}
     \lambda\in \mathbb{C}^*,\ \lambda\notin -\frac{1-q}{q^{\nu_l}}q^{\mathbb{Z}}, \  \text{for} \   1\leq l \leq n-1, \  \text{and} \  z\in \mathbb{C}^*, \  z\notin \frac{q^{\nu_n} }{1-q} q^{\mathbb{Z}}, \  z\notin  - \frac{1}{\lambda}q^{\mathbb{Z}},
\end{equation*}
     the entries of $U_q(z,\lambda;E_{nn},A_{n-1})^{-1}$   are  for $1\leq i \leq n$, $1\leq j \leq n-1$:
    \begin{align}
    \left(  U_q(z,\lambda;E_{nn},A_{n-1})^{-1}   \right)_{i j}=&   -b^{(n-1)}_j \frac{ \prod_{ l=1,l\neq  i  }^n  ( q^{1+\mu_l-{\nu}_j} ;q )_{\infty} \prod_{l=1,l\neq  j}^{n-1}  ( q^{ \nu_l-\mu_i  }  ;q)_{\infty} }{ \prod_{l=1,l\neq  j}^{n-1} ( q^{ 1+\nu_l-\nu_j  } ;q)_{\infty}\prod_{l=1,l\neq  i}^n ( q^{\mu_{ l }-\mu_i} ;q)_{\infty} }\nonumber   \\
      &\times     \frac{ \theta_q\left( {  \lambda z }/{ q^{  1+\mu_i-\nu_j  }    }  \right)  }{ \theta_q\left(  \lambda z   \right)   }  \frac{ \theta_q \left(  {{\lambda q^{ \mu_i       }}}/{(1-q)} \right)  }{ \theta_q\left(   {{\lambda q^{    \nu_j  -1  }}}/{(1-q)}  \right)  }  z^{  \nu_j-\mu_i -1 },  \label{U-1 ij}              \\
\left(  U_q(z,\lambda;E_{nn},A_{n-1})^{-1}   \right)_{i n}=&       \frac{ \prod_{l=1}^{n-1}\left( q^{ {\nu}_l    -\mu_i };q  \right)_{\infty}   }{  \prod_{l=1,l\neq i}^{n}\left(q^{ \mu_l-\mu_i };q  \right)_{\infty}     }  \frac{\theta_q  \left( -{  q^{1+\mu_i}      } \big / ( {(1-q)z  } )  \right)  }{\theta_q \left(-q^{-\nu_n}(1-q)z\right)  }   z^{\nu_n-\mu_i   }.      \label{U-1 in} 
\end{align}
\end{thm}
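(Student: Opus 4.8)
The plan is to read off $U_q(z,\lambda;E_{nn},A_{n-1})^{-1}$ from the \emph{reverse} connection formula — the expansion of the meromorphic solution around infinity in terms of the convergent solution around the origin — in direct parallel with the proof of Theorem \ref{qstokes matrix}. By Definition \ref{UqSq} one has $U_q(z,\lambda;E_{nn},A_{n-1})^{-1}=F_q^{(0)}(z;E_{nn},A_{n-1})^{-1}\,F_q^{(\infty)}(z,\lambda;E_{nn},A_{n-1})$; since both factors are fundamental solutions of the same system \eqref{diagqdes}, this matrix is pseudo-constant, and to identify it with the matrix $V$ having entries \eqref{U-1 ij} and \eqref{U-1 in} it suffices to verify
\[
F_q^{(\infty)}(z,\lambda;E_{nn},A_{n-1})_{ik}=\sum_{j=1}^{n}V_{jk}\,F_q^{(0)}(z;E_{nn},A_{n-1})_{ij},\qquad 1\le i,k\le n.
\]

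For the columns $1\le k\le n-1$ I would use Lemma \ref{Adachi}. By Proposition \ref{FinflamAn-1} the entry $F_q^{(\infty)}(z,\lambda;E_{nn},A_{n-1})_{ik}$ equals $d_{q;ik}\,z^{\nu_k}$ times a single ${}_nf_{n-2}$-function whose argument is $\propto 1/z$; feeding the parameters of that function directly into Lemma \ref{Adachi} (whose left-hand side has exactly this shape) rewrites it as an explicit sum over $j=1,\dots,n$ of products of $(\,\cdot\,;q)_\infty$'s and $\theta_q$'s times a ${}_n\varphi_{n-1}$-series carrying a vanishing parameter, and after matching the numerator/denominator data one checks that, up to an explicit power of $z$ and a pseudo-constant factor, each such series is the entry $F_q^{(0)}(z;E_{nn},A_{n-1})_{ij}$ of Proposition \ref{qsolu zero}. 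For the last column $k=n$, which is the convergent column described by \eqref{expli Fin}–\eqref{expli Fnn}, I would instead invoke Corollary \ref{corfn} (after the analogous substitution), which already expresses $F_q^{(\infty)}(z,\lambda;E_{nn},A_{n-1})_{in}$ as a pseudo-constant combination of the origin solutions $F_q^{(0)}(z;E_{nn},A_{n-1})_{ij}$. Collecting the coefficients in both cases yields $V_{jk}$; throughout, the factors $a_i^{(n-1)},b_i^{(n-1)}$ brought in by $d_{q;ik}$ and by the prefactors of \eqref{expli Fin} and Proposition \ref{qsolu zero} are reduced to the single factor $b_j^{(n-1)}$ appearing in \eqref{U-1 ij} by means of the identity \eqref{eq:aibi} together with the closed forms of $a^{(n-1)}_i,b^{(n-1)}_i$ in Proposition \ref{An-1Pn-1}.

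The main obstacle is the bookkeeping of the simplification. The coefficients delivered by Lemma \ref{Adachi} and Corollary \ref{corfn} come out as ratios of $q$-Pochhammer symbols and of $\theta_q$'s evaluated at the shifted arguments $q^{1+\mu-\nu-\delta}$, and one must fold all the extraneous powers $q^{\dots}$ into the arguments of $\theta_q(\lambda z)$, $\theta_q(-q^{-\nu_n}(1-q)z)$ and of the $\lambda$-dependent $\theta_q$-ratios, using the quasi-periodicity/reflection relations \eqref{theta(q z)} and Jacobi's triple product \eqref{Jacobi's triple product identity}. A delicate point is that the scalar equations \eqref{Fij eq}, \eqref{Fnj eq} of Section \ref{sec:relation} and both fundamental systems depend on the row index $i$ through the Kronecker deltas $\delta_{li},\delta_{ni}$, while the output $V_{jk}$ must not — so one has to check that this residual $i$-dependence cancels (which it does, via relations such as $\lambda^{(n)}_j-\lambda^{(n-1)}_i=(q^{\nu_i}-q^{\mu_j})/(1-q)$ coming from the definitions of $\mu_i,\nu_i$), consistently with $V$ being a single matrix. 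As an independent sanity check one may instead verify directly that $U_q(z,\lambda;E_{nn},A_{n-1})\cdot V=\mathrm{Id}_n$, where the off-diagonal vanishing and the disappearance of the $\lambda$-dependence from the products are an instance of the partial-fraction (three-term) identity for $q$-theta functions, and the diagonal entries reduce to $1$ by the $q$-analogue of Lagrange interpolation.
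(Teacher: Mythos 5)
Your proposal is correct and follows essentially the same route as the paper: the paper likewise verifies $F^{(\infty)}_q(z,\lambda;E_{nn},A_{n-1})_{ij}=\sum_{k}(U_q^{-1})_{kj}F_q^{(0)}(z;E_{nn},A_{n-1})_{ik}$ by applying Lemma \ref{Adachi} (with the parameter substitutions $a_l=q^{1+\mu_l-\nu_j-\delta_{ij}}$, etc.) to the first $n-1$ columns and Corollary \ref{corfn} to the last column, and then reduces the resulting coefficients to the stated form via \eqref{eq:aibi}, exactly as you outline.
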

\begin{proof}
To prove this theorem, we only need to verify that when \eqref{U-1 ij} and \eqref{U-1 in} hold, one has 
\begin{equation}\label{U-1 F}
    F^{(\infty)}_q(z,\lambda;E_{nn},A_{n-1})_{i j}   =\sum_{k=1}^{n} \left(  U_q(z,\lambda;E_{nn},A_{n-1})^{-1}   \right)_{k j} F_q^{(0)}(z;E_{nn},A_{n-1})_{i k}, \  \text{for all} \  1\leq i,j \leq n.
\end{equation}
Set the  constant $N_{i j,k}$    as follows:
\begin{equation*}
    N_{ij,k}:=\left\{
          \begin{array}{lr}
     \frac{q(1-q)  b_{j}^{(n-1)}  \left( \lambda^{(n)}_{k}-\lambda^{(n-1)}_{i}  \right)  }{q^{\nu_{j}}-q^{1+\nu_{i}   }   }       ,& \   \text{for all} \    1\leq i\neq  j \leq n-1,    1\leq  k  \leq  n,  \\
   \frac{{ \lambda^{(n)}_k-\lambda^{(n-1)}_i  }  }{a^{(n-1)}_i}       ,& \   \text{for all} \    1\leq  i =j  \leq  n-1,  1\leq  k  \leq  n,  \\
             -{b_{j}^{(n-1)}}, &  \   \text{for all} \   i=n, 1\leq j \leq  n-1,  1\leq  k\leq  n,  \\
    q     { \left(\lambda^{(n)}_k-\lambda^{(n-1)}_i  \right) }   ,& \   \text{for all} \  1\leq   i  \leq  n-1,  j=n,  1\leq  k  \leq  n, \\
    1   ,& \   \text{for all} \    i =  n,    j=n,  1\leq  k  \leq  n.
             \end{array}
\right.
\end{equation*}
Then we have:
\begin{itemize}
\item      For $F^{(\infty)}_q(z,\lambda;E_{nn},A_{n-1})_{i j}$,    $1\leq  i\leq  n$,    $1\leq   j\leq  n-1$,   we   take  the  parameters  $\boldsymbol{a}=\left(a_1,...,a_{n}\right),\boldsymbol{b}=\left(b_1,...,b_{n-2}\right)$ in Lemma \ref{Adachi} as
    \begin{align*}
    &{a}_l=q^{1+\mu_l-{\nu}_j-{\delta}_{i j}}, \  \text{for all}  \    1\leq l \leq   n,     \\
     &{b}_l=\left\{
          \begin{array}{lr}
          q^{1+\nu_{l}+\delta_{l i  }-{\nu}_{j   }-\delta_{i j} }  , \   \text{for all} \    1\leq l \leq j-1,  \\
              q^{1+\nu_{l+1} +\delta_{l+1,i}  -\nu_{j}-{\delta}_{i j   } }  , \   \text{for all} \  j\leq l \leq  n-2,
             \end{array}
\right.
\end{align*}
   and replace $\lambda,z$ by  $  \frac{{\lambda q^{    \nu_j+\delta_{i j  }  -\delta_{n i }  }}}{1-q}, \frac{{ q^{    \nu_j+\delta_{i j  }  -\delta_{n i }  }}}{(1-q)z}$.    Therefore, together with  Proposition \ref{FinflamAn-1}  and      Proposition \ref{qsolu zero},     it  follows  that  
       \begin{equation*}
 F^{(\infty)}_q(z,\lambda;E_{nn},A_{n-1})_{i j}   =\sum_{k=1}^{n} M_{ij,k} F_q^{(0)}(z;E_{nn},A_{n-1})_{i k},
\end{equation*}
where   
\begin{align*}
    M_{ij,k}:=&N_{ij,k}\frac{ \prod_{ l=1,l\neq  k  }^n  ( q^{1+\mu_l-{\nu}_j-{\delta}_{i j}} ;q )_{\infty} \prod_{l=1,l\neq  j}^{n-1}  ( q^{ \nu_l-\mu_k+\delta_{l i}  }  ;q)_{\infty} }{ \prod_{l=1,l\neq  j}^{n-1} ( q^{ 1+\nu_l+\delta_{l i}-\nu_j-\delta_{i j} } ;q)_{\infty}\prod_{l=1,l\neq  k}^n ( q^{\mu_{ l }-\mu_k} ;q)_{\infty} } \\
    & \times  \frac{ \theta_q\left( { q^{  2+\mu_k-\nu_j-\delta_{i  j}  }  }/{(   \lambda z ) }  \right)  }{ \theta_q\left( {q}/{ (\lambda  z)  }  \right)   }  \frac{ \theta_q \left(  {\lambda q^{ 1+\mu_k     -\delta_{n i }  }}/({1-q}) \right)  }{ \theta_q\left(   {\lambda q^{    \nu_j+\delta_{i j  }  -\delta_{n i }  }}/{(1-q)}  \right)  }  z^{  \nu_j-\mu_k -1+\delta_{i j}},  \   \text{for all} \    1\leq k \leq n. 
\end{align*}
\item   For  $F^{(\infty)}_q(z,\lambda;E_{nn},A_{n-1})_{i n}$,   $1\leq  i  \leq n$,   we take the parameters    $\boldsymbol{a}=\left(a_1,...,a_{n-1}\right),\boldsymbol{b}=\left(b_1,...,b_{n-1}\right)$ in Corollary \ref{corfn} as  
    \begin{align*}
    &{a}_l=q^{1+\mu_n-{\nu}_l-{\delta}_{l  i}}, \  \text{for all}  \    1\leq l \leq   n-1,     \\
     &{b}_l=q^{1+\mu_{n}-\mu_{l} }  , \   \text{for all} \    1\leq l \leq n-1,  \end{align*}
 and  replace $z$  by  $q^{-\nu_n}(1-q)z$.   Then, together with \eqref{expli Fin}, \eqref{expli Fnn}  and      Proposition \ref{qsolu zero}  we  have
 \begin{equation*}
     F^{(\infty)}_q(z,\lambda;E_{nn},A_{n-1})_{i n}=\sum_{k=1}^n M_{i n,k} F_q^{(0)}(z;E_{nn},A_{n-1})_{i k}, 
 \end{equation*}
 where  
 \begin{equation*}
     M_{in,k}:= 
  N_{in,k}   \frac{ \prod_{l=1}^{n-1}\left( q^{ {\nu}_l+{\delta}_{l  i}    -\mu_k };q  \right)_{\infty}   }{  \prod_{l=1,l\neq k}^{n}\left(q^{ \mu_l-\mu_k };q  \right)_{\infty}     }  \frac{\theta_q  \left( -{  q^{2-\delta_{in}+\mu_k}      }\big/{((1-q)z)  }   \right)  }{\theta_q \left(-q^{-\nu_n}(1-q)z\right)  }   z^{\nu_n-\mu_k  -1+\delta_{ i n  } } 
  , \   \text{for all} \    1\leq k \leq n.  
 \end{equation*}
 \end{itemize}
Using \eqref{eq:aibi},  one can show that if the  explicit expressions of  $U_q(z,\lambda;E_{nn},A_{n-1})^{-1}$ are given by    \eqref{U-1 ij} and \eqref{U-1 in},  then
\[
 \left(  U_q(z,\lambda;E_{nn},A_{n-1})^{-1}   \right)_{k s}   =M_{ is,k  }, \  \text{for any} \    1\leq  i,s,k\leq  n.   \]
          Thus,    \eqref{U-1 F}   is true     and  this theorem  holds.
\end{proof}

\begin{thm}\label{thm:diagqstokes}
Under the condition  \eqref{cond:uiujvivj},  and    for  $z,\lambda,\mu$  such that
\begin{equation*}
     \lambda,\mu \in \mathbb{C}^*,\ \lambda,\mu \notin -\frac{1-q}{q^{\nu_l}}q^{\mathbb{Z}}, \  \text{for} \   1\leq l \leq n-1, \  \mu\notin \lambda q^{\mathbb{Z}},    \  \text{and} \  z\in \mathbb{C}^*, \    z\notin  - \frac{1}{\lambda}q^{\mathbb{Z}},\    z\notin  - \frac{1}{\mu}q^{\mathbb{Z}},
\end{equation*}
  the $q$-Stokes matrix $S_q\left(z,\lambda,\mu;E_{nn}, A_{n-1}\right)$ of the system \eqref{diagqdes} takes the block matrix form
\begin{equation}\label{eq:qstokesdiag}
S_q\left(z,\lambda,\mu;E_{nn}, A_{n-1}\right)=\left(\begin{array}{cc}
    \mathrm{Id}_{n-1} & 0  \\
    b_q^{(n)} & 1
  \end{array}\right),
  \end{equation}
  where $\mathrm{Id}_{n-1}$ is the $(n-1)\times (n-1)$ identity matrix, $b_{q}^{(n)}=\left(\big(b_q^{(n)}\big)_1,...,\big(b_{q}^{(n)}\big)_{n-1}\right)$ with  (see \eqref{eq:vn} for $\nu_n$)
\begin{align*} 
   \big(b_q^{(n)}\big)_j =& -b^{(n-1)}_j \sum_{k=1}^n \frac{  \prod_{l=1}^{n-1} ( q^{1+\mu_{k}-\nu_{l}}  ;q)_\infty}{\prod_{l=1,l\neq   k}^{n}   (q^{1+\mu_{k}-\mu_{l}}   ;q)_\infty}  \frac{ \prod_{ l=1,l\neq  k  }^n  ( q^{1+\mu_l-{\nu}_j} ;q )_{\infty} \prod_{l=1,l\neq  j}^{n-1}  ( q^{ \nu_l-\mu_k  }  ;q)_{\infty} }{ \prod_{l=1,l\neq  j}^{n-1} ( q^{ 1+\nu_l-\nu_j  } ;q)_{\infty}\prod_{l=1,l\neq  k}^n ( q^{\mu_{ l }-\mu_k} ;q)_{\infty} }        \\
  &\times  \frac{\prod_{l=1,l\neq k}^{n}\theta_q((1-q)q^{1-\mu_l}/\lambda)}{\prod_{l=1}^{n-1}\theta_q((1-q)q^{1-\nu_l}/\lambda)}   
 \frac{\theta_q\left(  q^{ \mu_k-\nu_n  }  \lambda z\right)}{\theta_q(\lambda  z )}         \frac{ \theta_q\left( { q^{\nu_j-\mu_k-1} \mu z }  \right)  }{ \theta_q\left(  \mu z   \right)   }  \frac{ \theta_q \left(  {{\mu q^{ \mu_k       }}}/{(1-q)} \right)  }{ \theta_q\left(   {{\mu q^{    \nu_j  -1  }}}/{(1-q)}  \right)  }  z^{  \nu_j-\nu_n -1 }  .
\end{align*}
\end{thm}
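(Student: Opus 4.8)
The plan is to reduce the $q$-Stokes matrix to a product of the two connection matrices already evaluated in this subsection, and then to read off its block form from a $\lambda$-independence property of certain rows and columns; no new analytic input is needed.

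Write $U(\lambda):=U_q(z,\lambda;E_{nn},A_{n-1})$ and $F^{(\infty)}(\lambda):=F_q^{(\infty)}(z,\lambda;E_{nn},A_{n-1})$ for short. By Definition~\ref{UqSq} one has $F^{(\infty)}(\lambda)\,U(\lambda)=F_q^{(0)}(z;E_{nn},A_{n-1})=F^{(\infty)}(\mu)\,U(\mu)$, hence
\[
S_q(z,\lambda,\mu;E_{nn},A_{n-1})=U(\lambda)\,U(\mu)^{-1}.
\]
For $z,\lambda,\mu$ in the stated region, $U(\lambda)$ is given in closed form by Theorem~\ref{qstokes matrix} and $U(\mu)^{-1}$ by \eqref{U-1 ij}--\eqref{U-1 in} with $\lambda$ replaced by $\mu$. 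The inverse formulas a priori require in addition $z\notin \tfrac{q^{\nu_n}}{1-q}q^{\mathbb{Z}}$, so I would first establish the identity for such $z$ and then extend it in $z$ by analytic continuation, using that the closed-form answer below is holomorphic on the spiral $\tfrac{q^{\nu_n}}{1-q}q^{\mathbb{Z}}$ (only $\theta_q(\lambda z)$, $\theta_q(\mu z)$ and the $\lambda,\mu$-dependent theta factors occur in the denominators).

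Now the block structure. By \eqref{Uij} the entries $U(\lambda)_{ij}$ with $1\le i\le n-1$ do not involve $\lambda$, and by \eqref{U-1 in} the entries $(U(\lambda)^{-1})_{in}$ do not involve $\lambda$ either. Therefore rows $1,\dots,n-1$ of $U(\lambda)$ coincide with those of $U(\mu)$, which gives for $1\le i\le n-1$ and all $j$
\[
\big(S_q\big)_{ij}=\sum_{k=1}^n U(\mu)_{ik}\,\big(U(\mu)^{-1}\big)_{kj}=\delta_{ij},
\]
and column $n$ of $U(\mu)^{-1}$ coincides with column $n$ of $U(\lambda)^{-1}$, which gives $(S_q)_{in}=\sum_{k=1}^n U(\lambda)_{ik}(U(\lambda)^{-1})_{kn}=\delta_{in}$ for all $i$. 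Together these are exactly the block form \eqref{eq:qstokesdiag}, and it remains only to compute the last row $\big(b_q^{(n)}\big)_j=(S_q)_{nj}$, $1\le j\le n-1$.

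Finally, $\big(b_q^{(n)}\big)_j=\sum_{k=1}^n U(\lambda)_{nk}\,\big(U(\mu)^{-1}\big)_{kj}$; I would substitute \eqref{Unj} (with its index $j$ renamed $k$) for the first factor and \eqref{U-1 ij} (with its index $i$ renamed $k$ and $\lambda$ replaced by $\mu$) for the second. Using $\mu z/q^{1+\mu_k-\nu_j}=q^{\nu_j-\mu_k-1}\mu z$ and $z^{\mu_k-\nu_n}z^{\nu_j-\mu_k-1}=z^{\nu_j-\nu_n-1}$, the $k$-th term of the resulting sum coincides term by term with the $k$-th summand in the asserted expression for $\big(b_q^{(n)}\big)_j$, the factor $-b_j^{(n-1)}$ coming out front. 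The only labour is the bookkeeping of the $q$-Pochhammer and $q$-theta factors, which is routine; this, together with the slight enlargement of the domain handled by the analytic continuation mentioned above, is the only point requiring any real care, the theorem being otherwise an immediate consequence of Theorem~\ref{qstokes matrix} and the evaluation of $U_q(z,\lambda;E_{nn},A_{n-1})^{-1}$.
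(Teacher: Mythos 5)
Your proposal is correct and follows essentially the same route as the paper: the paper's proof simply notes $U_q(z,\lambda;E_{nn},A_{n-1})=S_q(z,\lambda,\mu;E_{nn},A_{n-1})U_q(z,\mu;E_{nn},A_{n-1})$ and reads off the entries, which is exactly your $S_q=U(\lambda)U(\mu)^{-1}$ computation. Your additional observations — the $\lambda$-independence of rows $1,\dots,n-1$ of $U(\lambda)$ and of the last column of $U(\lambda)^{-1}$ forcing the block form, and the analytic continuation in $z$ across the spiral $\tfrac{q^{\nu_n}}{1-q}q^{\mathbb{Z}}$ — are correct fillings-in of details the paper leaves implicit.
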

\begin{proof}
From  Definition \ref{UqSq},   we get 
\begin{equation*}
U_q(z,\lambda;E_{nn},A_{n-1})=S_q(z,\lambda,\mu;E_{nn},A_{n-1})U_q(z,\mu;E_{nn},A_{n-1}),
\end{equation*}
from  which       the  entries of   $S_q(z,\lambda,\mu;E_{nn},A_{n-1})$   can be derived  explicitly.
\end{proof}

\subsection{Explicit Evaluation of Connection Matrix   and         $q$-Stokes  Matrix  of \eqref{qdes}}\label{sec:qdes}

Since the matrices $E_{n n}$ and $\operatorname{diag}\left({P}_{n-1},1\right)$ commute with each other, we get
\begin{prop}\label{conjugation}
Under the  condition \eqref{eq:vivj} (with   the   condition \eqref{ineq1} included), the nonresonant condition \eqref{qnreso},    and  condition \eqref{ineq2},   for       $\lambda\in \mathbb{C}^{*}\backslash \cup_{1\leq l \leq n-1} [-{(1-q)}/{q^{\nu_l}};q]$,     we  have
\begin{align*}
&     {F}_q^{(\infty)}(z,\lambda;E_{nn},A)=\operatorname{diag}\left({P}_{n-1},1\right) {F}_q\left(z,\lambda;E_{nn},A_{n-1}\right)\operatorname{diag}\left({P}_{n-1}^{-1},1\right),  \\
&    {F}^{(0)}_q(z;E_{nn},A)=\operatorname{diag}\left({P}_{n-1},1\right) {F}^{(0)}_q\left(z;E_{nn},A_{n-1}\right)  .
\end{align*}
   Moreover, the  connection matrix and  $q$-Stokes matrices   of the systems \eqref{qdes} and \eqref{diagqdes} are related by
\begin{align*}
&S_q\left(z,\lambda,\mu;E_{nn}, A\right)=\operatorname{diag}\left({P}_{n-1},1\right)S_{q}\left(z,\lambda,\mu;E_n, A_{n-1}\right)\operatorname{diag}\left({P}_{n-1}^{-1},1\right),\\
&U_q\left(z,\lambda;E_{nn},A   \right)=\operatorname{diag}\left({P}_{n-1},1\right) U_q\left(z,\lambda;E_{nn},A_{n-1}   \right).
\end{align*}
\end{prop}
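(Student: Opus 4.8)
The plan is to reduce everything to two elementary algebraic facts about $G := \operatorname{diag}(P_{n-1},1)$ and then push them through the explicit formulas for the three fundamental solutions. First, since $G$ and $E_{nn}$ are both block-diagonal for the $(n-1)+1$ splitting, they multiply blockwise, so $G E_{nn}=E_{nn}G=E_{nn}$ and $G^{-1}E_{nn}=E_{nn}G^{-1}=E_{nn}$; in particular $G E_{nn}G^{-1}=E_{nn}$. Second, $G A_{n-1}G^{-1}=A$ by the definition of $A_{n-1}$ in Proposition \ref{An-1Pn-1} (this is where \eqref{ineq1}, \eqref{ineq2} are used, for the invertibility of $P_{n-1}$). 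Hence conjugation by $G$ carries the coefficient matrix $E_{nn}+A_{n-1}/z$ of \eqref{diagqdes} to $E_{nn}+A/z$, and since $D_q$ acts entrywise it commutes with multiplication by the constant matrices $G^{\pm1}$. Thus if $F$ solves \eqref{diagqdes}, then $G F G^{-1}$ solves \eqref{qdes}; and because $G E_{nn}G^{-1}=E_{nn}$, also $GF$ alone solves \eqref{qdes} whenever $F$ does.

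Next I would match the normalizations. As $A$ and $A_{n-1}$ are conjugate they share the eigenvalue data, so $\log_q(1-(1-q)A_n)=\operatorname{diag}(\mu_1,\dots,\mu_n)$ is common, as is $\nu_n=\sum_l\mu_l-\sum_l\nu_l$. For the irregular exponent, $A^{(n-1)}=P_{n-1}\operatorname{diag}(\lambda^{(n-1)}_1,\dots,\lambda^{(n-1)}_{n-1})P_{n-1}^{-1}$ and the principal branch of $\log_q$ respects this conjugation, so $\delta^{(n-1)}_q(A)=G\,\delta^{(n-1)}_q(A_{n-1})\,G^{-1}$ while $\delta^{(n-1)}_q(A)_{nn}=\nu_n=\delta^{(n-1)}_q(A_{n-1})_{nn}$; hence $z^{\delta^{(n-1)}_q(A)}=G\,z^{\delta^{(n-1)}_q(A_{n-1})}\,G^{-1}$, and the last factor $E:=e_q(E_{nn}q^{-\nu_n}z)=\operatorname{diag}(0,\dots,0,e_q(q^{-\nu_n}z))$ satisfies $G^{-1}E=E=EG^{-1}$. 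Therefore $G\,\hat F_q(z;E_{nn},A_{n-1})\,G^{-1}$ solves \eqref{qdes}, has holomorphic part with constant term $G\,\mathrm{Id}_n\,G^{-1}=\mathrm{Id}_n$ and convergent last column (the last column of $G^{-1}$ being $e_n$, consistent with Proposition \ref{convergent column}), so by the uniqueness of the formal fundamental solution under \eqref{eq:vivj} it equals $\hat F_q(z;E_{nn},A)$. Applying the $\mathbb{C}$-linear operator $\mathcal{L}^{[\lambda;q]}_{q;1}\circ\hat{\mathcal{B}}_{q;1}$ entrywise to the holomorphic part — legitimate since the $[\lambda;q]$-summability of each entry of $\hat H_q(z;E_{nn},A)$ for $\lambda\in\mathbb{C}^*\setminus\cup_l[-(1-q)/q^{\nu_l};q]$ follows from that of $\hat H_q(z;E_{nn},A_{n-1})$ by taking the same finite constant-coefficient combinations — gives $H^{(\infty)}_q(z,\lambda;E_{nn},A)=G\,H^{(\infty)}_q(z,\lambda;E_{nn},A_{n-1})\,G^{-1}$, and reassembling with the exponent identities yields $F^{(\infty)}_q(z,\lambda;E_{nn},A)=G\,F^{(\infty)}_q(z,\lambda;E_{nn},A_{n-1})\,G^{-1}$. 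At the origin the exponent $z^{\operatorname{diag}(\mu)}$ is literally unchanged, so conjugation by $G$ is not available; but $G\,F^{(0)}_q(z;E_{nn},A_{n-1})$ is still of the form $(\text{holomorphic part})\cdot z^{\operatorname{diag}(\mu)}$, its constant term $GH^{(0)}_0$ satisfies $A=(GH^{(0)}_0)A_n(GH^{(0)}_0)^{-1}$ because $A_{n-1}=H^{(0)}_0A_n(H^{(0)}_0)^{-1}$, and it solves \eqref{qdes} by the left-multiplication statement above, so we take $F^{(0)}_q(z;E_{nn},A):=G\,F^{(0)}_q(z;E_{nn},A_{n-1})$.

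Finally I would feed these three identities into Definition \ref{UqSq}. Writing $F^{(\infty)}_q(z,\lambda;E_{nn},A)\,U_q(z,\lambda;E_{nn},A)=F^{(0)}_q(z;E_{nn},A)$ as $G\,F^{(\infty)}_q(z,\lambda;E_{nn},A_{n-1})\,G^{-1}\,U_q(z,\lambda;E_{nn},A)=G\,F^{(0)}_q(z;E_{nn},A_{n-1})$, cancelling $G$ and the invertible $F^{(\infty)}_q(z,\lambda;E_{nn},A_{n-1})$, and using the defining relation of $U_q(z,\lambda;E_{nn},A_{n-1})$, gives $U_q(z,\lambda;E_{nn},A)=G\,U_q(z,\lambda;E_{nn},A_{n-1})$. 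Likewise, substituting the conjugation identity for $F^{(\infty)}_q$ at the two directions $\lambda$ and $\mu$ into $F^{(\infty)}_q(z,\mu;E_{nn},A)=F^{(\infty)}_q(z,\lambda;E_{nn},A)\,S_q(z,\lambda,\mu;E_{nn},A)$ and cancelling $G\,F^{(\infty)}_q(z,\lambda;E_{nn},A_{n-1})$ yields $S_q(z,\lambda,\mu;E_{nn},A)=G\,S_q(z,\lambda,\mu;E_{nn},A_{n-1})\,G^{-1}$. The only delicate point is not the cancellations but the normalization bookkeeping in the second paragraph — checking that $G\hat F_q(\cdot;A_{n-1})G^{-1}$ and $GF^{(0)}_q(\cdot;A_{n-1})$ really are the canonically normalized solutions of \eqref{qdes} — together with the observation that the origin solution transforms by left multiplication only (its exponent being diagonal and unchanged), which is exactly why $U_q$ acquires a factor $G$ rather than a conjugation.
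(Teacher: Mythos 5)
Your proposal is correct and follows the same route the paper takes: the paper's entire justification is the observation that $\operatorname{diag}(P_{n-1},1)$ commutes with $E_{nn}$ and conjugates $A_{n-1}$ to $A$, so that conjugation (resp.\ left multiplication) maps the normalized solutions of \eqref{diagqdes} at infinity (resp.\ at the origin) to those of \eqref{qdes}, after which the $U_q$ and $S_q$ identities drop out of Definition \ref{UqSq}. Your write-up simply supplies the normalization bookkeeping (uniqueness of $\hat F_q$, equivariance of $\delta^{(n-1)}_q$ and of the entrywise $q$-Borel--Laplace summation, and the asymmetry at the origin) that the paper leaves implicit, and it does so correctly.
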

Following Proposition \ref{conjugation}, the explicit expression of the $q$-Stokes matrix of \eqref{qdes} is given below and the connection matrix of \eqref{qdes}  can be determined in a similar manner.
\begin{thm}\label{main q-stokes}
Under the condition \eqref{cond:uiujvivj},    and  for  $z,\lambda,\mu$  such that
\begin{equation*}
     \lambda,\mu  \in \mathbb{C}^*,\ \lambda,\mu  \notin -\frac{1-q}{q^{\nu_l}}q^{\mathbb{Z}}, \  \text{for} \   1\leq l \leq n-1, \  \mu\notin \lambda q^{\mathbb{Z}},      \  \text{and} \  z\in \mathbb{C}^*, \  z\notin - \frac{1}{\mu}q^{\mathbb{Z}}, \  z\notin  - \frac{1}{\lambda}q^{\mathbb{Z}},
\end{equation*}
 the $q$-Stokes matrix $S_q\left(z,\lambda,\mu;E_{nn}, A\right)$ of the system \eqref{qdes} takes the block matrix form
\begin{equation*}
S_q\left(z,\lambda,\mu;E_{nn}, A\right)=\left(\begin{array}{cc}
    \mathrm{Id}_{n-1} & 0  \\
    b_q & 1
  \end{array}\right),
  \end{equation*}
where $b_{q}=\left(\left(b_q\right)_1,...,\left(b_{q}\right)_{n-1}\right)$ with
\begin{align*}
\left(b_{q}\right)_k=&\sum_{j=1}^{n-1} \text{  \scriptsize $\frac{(-1)^{n+k}\Delta_{1,...,n-1}^{1,..., n-2,n}\left(A-\lambda_j^{\left(n-1\right)}\cdot \mathrm{Id}_n\right) \Delta^{1,...,\hat{k},...,n-1}_{1,...,n-2}   \left(A-\lambda^{\left(n-1\right)}_j\cdot \mathrm{Id}_n\right)}{{\prod_{l=1,l\ne j}^{n-1}\left(\lambda^{\left(n-1\right)}_l-\lambda^{\left(n-1\right)}_j\right)\prod_{l=1}^{n-2}\left(\lambda^{\left(n-2\right)}_l-\lambda^{\left(n-1\right)}_j\right)} }$  } \\
    &  \sum_{s=1}^n
    \frac{  \prod_{l=1}^{n-1} ( q^{1+\mu_{s}-\nu_{l}}  ;q)_\infty}{\prod_{l=1,l\neq   s}^{n}   (q^{1+\mu_{s}-\mu_{l}}   ;q)_\infty}   \frac{\prod_{l=1,l\neq s}^{n}\theta_q((1-q)q^{1-\mu_l}/\lambda)}{\prod_{l=1}^{n-1}\theta_q((1-q)q^{1-\nu_l}/\lambda)}   
 \frac{\theta_q\left(  q^{ \mu_s-\nu_n  }  \lambda z\right)}{\theta_q(\lambda  z )}     \\
  &\times   \frac{ \prod_{ l=1,l\neq  s  }^n  ( q^{1+\mu_l-{\nu}_j} ;q )_{\infty} \prod_{l=1,l\neq  j}^{n-1}  ( q^{ \nu_l-\mu_s  }  ;q)_{\infty} }{ \prod_{l=1,l\neq  j}^{n-1} ( q^{ 1+\nu_l-\nu_j  } ;q)_{\infty}\prod_{l=1,l\neq  s}^n ( q^{\mu_{ l }-\mu_s} ;q)_{\infty} }        \frac{ \theta_q\left( { q^{  \nu_j-\mu_s-1  } \mu z }       \right)  }{ \theta_q\left(  \mu z   \right)   }  \frac{ \theta_q \left(  {{\mu q^{ \mu_s       }}}/{(1-q)} \right)  }{ \theta_q\left(   {{\mu q^{    \nu_j  -1  }}}/{(1-q)}  \right)  }    \frac{z^{\nu_j}}{z^{\nu_n+1}} .
\end{align*}
\end{thm}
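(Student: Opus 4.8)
The plan is to reduce Theorem~\ref{main q-stokes} to the already–computed $q$-Stokes matrix of the diagonalized system \eqref{diagqdes} (Theorem~\ref{thm:diagqstokes}) via the conjugation of Proposition~\ref{conjugation}. First I would verify the hypotheses: the condition \eqref{cond:uiujvivj} contains \eqref{eq:vivj}, the nonresonant condition \eqref{qnreso} and \eqref{ineq2}, so Proposition~\ref{conjugation} applies, and the restrictions on $z,\lambda,\mu$ in the present statement are exactly those imposed in Theorem~\ref{thm:diagqstokes}. Proposition~\ref{conjugation} then gives
\[
S_q\left(z,\lambda,\mu;E_{nn},A\right)=\operatorname{diag}\!\left(P_{n-1},1\right)S_q\left(z,\lambda,\mu;E_{nn},A_{n-1}\right)\operatorname{diag}\!\left(P_{n-1}^{-1},1\right),
\]
so everything comes down to conjugating the explicit block matrix \eqref{eq:qstokesdiag}.

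Next I would carry out the $2\times2$ block multiplication. Since $\operatorname{diag}(P_{n-1},1)$ and $\operatorname{diag}(P_{n-1}^{-1},1)$ are block-diagonal, the upper-left block of the product is $P_{n-1}\,\mathrm{Id}_{n-1}\,P_{n-1}^{-1}=\mathrm{Id}_{n-1}$, the upper-right block vanishes, the $(n,n)$-entry is $1$, and the lower-left row vector becomes $b_q^{(n)}P_{n-1}^{-1}$. Hence $S_q(z,\lambda,\mu;E_{nn},A)$ has the claimed block form with
\[
(b_q)_k=\sum_{j=1}^{n-1}\big(b_q^{(n)}\big)_j\big(P_{n-1}^{-1}\big)_{jk},\qquad 1\le k\le n-1.
\]
Into this identity I would substitute the formula for $\big(b_q^{(n)}\big)_j$ from Theorem~\ref{thm:diagqstokes} (whose scalar prefactor is $-b_j^{(n-1)}$) and the formula for $\big(P_{n-1}^{-1}\big)_{jk}$ together with the explicit value of $b_j^{(n-1)}$ from Proposition~\ref{An-1Pn-1}. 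The dummy index $s$ in the final statement is simply the summation index $k$ of Theorem~\ref{thm:diagqstokes}; the $q$-Pochhammer products, the $q$-theta ratios, and the monomial $z^{\nu_j-\nu_n-1}=z^{\nu_j}/z^{\nu_n+1}$ are carried over unchanged.

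The only step that is not pure bookkeeping is the scalar coefficient: $b_j^{(n-1)}$ and $\big(P_{n-1}^{-1}\big)_{jk}$ each carry the same square-root denominator $\sqrt{\prod_{l\ne j}(\lambda^{(n-1)}_l-\lambda^{(n-1)}_j)\prod_l(\lambda^{(n-2)}_l-\lambda^{(n-1)}_j)}$, and I must check that multiplying the two factors turns this into the unsquared product $\prod_{l=1,l\ne j}^{n-1}(\lambda^{(n-1)}_l-\lambda^{(n-1)}_j)\prod_{l=1}^{n-2}(\lambda^{(n-2)}_l-\lambda^{(n-1)}_j)$ in the denominator and produces the product of the two minors $\Delta_{1,\dots,n-1}^{1,\dots,n-2,n}(A-\lambda^{(n-1)}_j\mathrm{Id}_n)$ and $\Delta^{1,\dots,\hat{k},\dots,n-1}_{1,\dots,n-2}(A-\lambda^{(n-1)}_j\mathrm{Id}_n)$ in the numerator, with overall sign $(-1)\cdot(-1)^{j+n-1}\cdot(-1)^{j+k}=(-1)^{n+k}$. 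This is precisely the scalar coefficient displayed in the theorem, and assembling all the pieces then yields the stated expression for $(b_q)_k$, completing the argument.
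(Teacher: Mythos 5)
Your proposal is correct and follows exactly the paper's own route: conjugate the diagonalized $q$-Stokes matrix of Theorem \ref{thm:diagqstokes} by $\operatorname{diag}(P_{n-1},1)$ via Proposition \ref{conjugation}, reduce to $(b_q)_k=\sum_j \big(b_q^{(n)}\big)_j\big(P_{n-1}^{-1}\big)_{jk}$, and substitute the explicit minors from Proposition \ref{An-1Pn-1}; your sign bookkeeping $(-1)\cdot(-1)^{j+n-1}\cdot(-1)^{j+k}=(-1)^{n+k}$ and the cancellation of the square roots match the paper's computation \eqref{b+k}.
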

\begin{proof}
The  expression of $\operatorname{diag}\left({P}_{n-1},1\right)S_{q}\left(z,\lambda,\mu;E_n, A_{n-1}\right)\operatorname{diag}\left({P}_{n-1}^{-1},1\right)$ tells us that
\begin{align}
    \left(b_{q}\right)_k=&-\sum_{j=1}^{n-1}   {b_j^{(n-1)}\left({P}^{-1}_{n-1}\right)_{jk}}  \sum_{s=1}^n
    \frac{  \prod_{l=1}^{n-1} ( q^{1+\mu_{s}-\nu_{l}}  ;q)_\infty}{\prod_{l=1,l\neq   s}^{n}   (q^{1+\mu_{s}-\mu_{l}}   ;q)_\infty}   \frac{\prod_{l=1,l\neq s}^{n}\theta_q((1-q)q^{1-\mu_l}/\lambda)}{\prod_{l=1}^{n-1}\theta_q((1-q)q^{1-\nu_l}/\lambda)}   
 \frac{\theta_q\left(  q^{ \mu_s-\nu_n  }  \lambda z\right)}{\theta_q(\lambda  z )}   \nonumber   \\
  &\times   \frac{ \prod_{ l=1,l\neq  s  }^n  ( q^{1+\mu_l-{\nu}_j} ;q )_{\infty} \prod_{l=1,l\neq  j}^{n-1}  ( q^{ \nu_l-\mu_s  }  ;q)_{\infty} }{ \prod_{l=1,l\neq  j}^{n-1} ( q^{ 1+\nu_l-\nu_j  } ;q)_{\infty}\prod_{l=1,l\neq  s}^n ( q^{\mu_{ l }-\mu_s} ;q)_{\infty} }        \frac{ \theta_q\left( { q^{  \nu_j-\mu_s-1  } \mu z }  \right)  }{ \theta_q\left(  \mu z   \right)   }  \frac{ \theta_q \left(  {{\mu q^{ \mu_s       }}}/{(1-q)} \right)  }{ \theta_q\left(   {{\mu q^{    \nu_j  -1  }}}/{(1-q)}  \right)  }    \frac{z^{\nu_j}}{z^{\nu_n+1}}
       . \label{b+k}
\end{align}
Using Proposition \ref{An-1Pn-1}, the identity \eqref{b+k} can be rewritten as the form in this theorem. 
\end{proof}
Like Remark \ref{rmkres:nvaphin}, we have
\begin{rmk}
    We can use the connection formula of ${}_n\varphi_n(a_1,...,a_n;b_1,...,b_n;z)$ to derive the $q$-Stokes matrix of the system
    \begin{equation*}
    D_qF_q(z)=\left(E_{n n}\sigma_q+\frac{A}{z}\right)F_q(z).
\end{equation*}
\end{rmk}

\section{Behavior of the Equation \eqref{intro:conflu hyper eq} and the System \eqref{qdes} as $q\rightarrow 1$}\label{qlimit}
In this section, we study the behavior of the connection formula for the equation \eqref{intro:conflu hyper eq} (see Section \ref{subsec:eqlim}),   as well as the connection matrix and the $q$-Stokes matrix of the system \eqref{qdes} (see Section \ref{sec:syslim}) as $q\rightarrow 1$.   See  \cite{sauloy2000systemes}  for   more  discussions in   the Fuchsian case  and  \cite{dreyfus2015confluence} in the irregular case.

\subsection{Behavior of the Connection Formula  as $q\rightarrow 1$}\label{subsec:eqlim}
We focus on the  behavior of  the connection formula   as $q\rightarrow 1$ of the equation
 \begin{equation}\label{sec:qconf eq}
    \prod_{l=1}^{n-1}\left(zD_q+\alpha_{l} \right)y(z)=D_q\prod_{l=1}^{n-1}\left(zD_q+\beta_{l}-1 \right)y(z),
\end{equation}
which takes the general  form of the equation \eqref{Fnj eq}.  See \cite{morita2013connection} for more discussion of  $n=2$ case.  If
    the   complex numbers    $\alpha_l\neq  \frac{1}{q-1}  $, $\beta_l\neq  \frac{q}{q-1} $,  for $1\leq l \leq  n-1$,  
we take the  parameters   $a_1,...,a_{n-1},b_1,...,b_{n-1}$           as  
\begin{align}
    &{a}_l=\frac{1}{1+(1-q)\alpha_l}, \  \text{for all}  \    1\leq l \leq   n-1,   \label{al change}  \\
     &{b}_l=\frac{q}{1+(1-q)(\beta_l-1)}, \  \text{for all}  \    1\leq l \leq   n-1.  \label{bl change}
\end{align}

Similar to the   case  of  \eqref{Fnj eq},   we have 
\begin{prop}
Given $a_l,b_l$, for $1\leq l \leq n-1$,   in \eqref{al change}, \eqref{bl change},    the solutions and the    connection formula of  \eqref{sec:qconf eq} are:
   \begin{itemize}
    \item   The  equation \eqref{sec:qconf eq}  admits the convergent solution:
    \begin{equation}
        \tilde{f}^{(0)}_q(z):={}_n\varphi_{n-1}\left(  \begin{array}{c}  a_1,...,a_{n-1},0\\
 b_1,...,b_{n-1}  \end{array};q,\frac{(1-q)z\prod_{l=1}^{n-1}{b_l}}{\prod_{l=1}^{n-1}{q}a_l}   \right).\label{tildef0qz}  
    \end{equation}
 \item  The equation \eqref{sec:qconf eq}  admits  the   fundamental system of meromorphic    solutions  around infinity:
 \begin{align}\label{tildefinfeq}
     \tilde{f}^{(\infty)}_q(z,\lambda)&=  \left(  \tilde{f}^{(\infty)}_q(z,\lambda)_1,...,\tilde{f}^{(\infty)}_q(z,\lambda)_{n-1},\tilde{f}^{(\infty)}_q(z)_n   \right) \nonumber  \\
    &:= \left(
 \frac{\tilde{h}^{(\infty)}_q(z,\lambda)_{1}}{z^{ \log_qa_1  }}  ,
...  ,
   \frac{\tilde{h}^{(\infty)}_q(z,\lambda)_{n-1}}{z^{ \log_qa_{n-1}  }},
\tilde{h}^{(\infty)}_q(z)_{n}  \frac{z^{\sum_{l=1}^{n-1}\log_qa_l}}{  z^{ \sum_{l=1}^{n-1}\log_qb_l   }  } e_q\left( \frac{z\prod_{l=1}^{n-1}{b_l}}{\prod_{l=1}^{n-1}{q}a_l}  \right)
\right)       ,
 \end{align}  
 with
 \begin{equation*}
     \tilde{h}^{(\infty)}_q(z,\lambda)_{i}={}_nf_{n-2}\left(\begin{array}{c}
			a_i, a_i q/b_1,...,a_i q/b_{n-1}\\
			a_i q/a_1,...,\widehat{a_i q/a_{{i}} },...,a_i q/a_{n-1}
		\end{array};  q,\frac{q^{n-1}\lambda}{(1-q)a_i}, \frac{q^{n-1}}{a_i(1-q)z}\right), \   \text{for} \  i=1,...,n-1,   
 \end{equation*}
and $\tilde{h}^{(\infty)}_q(z)_{n}$ is  a  convergent  power
series  whose coefficients can be uniquely determined recursively.
 \item  The solutions  $\tilde{f}^{(0)}_q(z)$  and   $\tilde{f}_q^{(\infty)}(z,\lambda)$   of \eqref{sec:qconf eq}  are related by the following connection formula
 \begin{equation}
     \tilde{f}^{(0)}_q(z)= \sum_{i=1}^{n-1} \tilde{f}^{(\infty)}_q(z,\lambda)_{i}  \tilde{C}_i(z)+\tilde{f}^{(\infty)}_q(z)_{n}  \tilde{C}_n(z,\lambda),\label{eq: qconnefomu}
 \end{equation}
 with for  $1\leq  i\leq n-1$,
 \begin{align*}
    & \tilde{C}_i(z):=\frac{(a_1,...,\widehat{a_{i} },...,a_{n-1},b_1/a_i,...,b_{n-1}/a_i;q)_\infty}{(b_1,...,b_{n-1},a_1/a_i,...,\widehat{a_{i}/a_i},...,a_{n-1}/a_i;q)_\infty}\frac{\theta_q\left(-a_i {(1-q)z\prod_{l=1}^{n-1}{b_l}}\big/{\prod_{l=1}^{n-1}{q}a_l}\right)}{ \theta_q\left(-{(1-q)z\prod_{l=1}^{n-1}{b_l}}\big/{\prod_{l=1}^{n-1}{q}a_l}\right) }z^{ \log_q a_i  },\\
    &\tilde{C}_n(z,\lambda)
  :=\frac{(a_1,...,a_{n-1};q)_\infty}{(b_1,...,b_{n-1};q)_\infty}  \frac{\prod_{l=1}^{n-1}\theta_q\left( {b_l (1-q)}\big/{ (q^{n-1}  \lambda) }   \right)}{\prod_{l=1}^{n-1}\theta_q\left( {  a_l(1-q) }\big/{ (q^{n-1} \lambda )}     \right)} \frac{\theta_q\left(   {  { \lambda  }  z\prod_{l=1}^{n-1}{b_l}}\big/{\prod_{l=1}^{n-1}a_l}     \right)}{\theta_q\left( \lambda z   \right)}  
		\frac{z^{\sum_{l=1}^{n-1}\log_qb_l  }}{ z^{ \sum_{l=1}^{n-1}\log_qa_l }  }. 
 \end{align*}
   \end{itemize}
   \end{prop}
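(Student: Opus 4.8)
The plan is to reduce all three assertions to results already established for equation \eqref{intro:conflu hyper eq}, by a rescaling $z\mapsto cz$ of the independent variable. The starting point is the operator identity $zD_q=(\sigma_q-1)/(q-1)$, immediate from $D_qy(z)=(\sigma_qy-y)/((q-1)z)$; in particular $zD_q+\alpha_l$ and $zD_q+\beta_l-1$ are constant-coefficient polynomials in $\sigma_q$, and $D_q(h)=(\sigma_q-1)h/((q-1)z)$ for any $h$. Using the choices \eqref{al change}, \eqref{bl change} one computes
\[
zD_q+\alpha_l=\frac{1}{(1-q)a_l}(1-a_l\sigma_q),\qquad zD_q+\beta_l-1=\frac{q}{(1-q)b_l}\Bigl(1-\tfrac{b_l}{q}\sigma_q\Bigr),
\]
so that, after multiplying \eqref{sec:qconf eq} through by the monomial $(1-q)^nz\prod_{l=1}^{n-1}b_l/q^{n-1}$, it becomes
\[
\frac{(1-q)z\prod_{l=1}^{n-1}b_l}{q^{n-1}\prod_{l=1}^{n-1}a_l}\prod_{l=1}^{n-1}(1-a_l\sigma_q)\,y(z)=(1-\sigma_q)\prod_{l=1}^{n-1}\Bigl(1-\tfrac{b_l}{q}\sigma_q\Bigr)y(z).
\]
Since $\sigma_q$ acts identically on functions of $z$ and of $w$ when $w=cz$, this is exactly \eqref{intro:conflu hyper eq} in the variable $w=cz$, with $c:=(1-q)\prod_{l=1}^{n-1}b_l/\bigl(q^{n-1}\prod_{l=1}^{n-1}a_l\bigr)$. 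Hence the substitution $w=cz$ carries every formal, meromorphic or convergent solution of \eqref{intro:conflu hyper eq} to one of \eqref{sec:qconf eq}, and preserves linear independence over pseudo-constants.

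With this dictionary each bullet point is a specialization. For the convergent solution at the origin, Proposition \ref{n=m+1an=0} says ${}_n\varphi_{n-1}(a_1,\dots,a_{n-1},0;b_1,\dots,b_{n-1};q,w)$ solves \eqref{intro:conflu hyper eq}; putting $w=cz$ gives \eqref{tildef0qz}. For the solutions near infinity I would apply Proposition \ref{fqlambda} (equivalently Propositions \ref{prop: qsummability}, \ref{resummation satisfies equation}, \ref{fczclambda}) in the variable $w$ and substitute $w=cz$: the divergent factors $\hat h_q(w)_i$ of \eqref{eq:qformalsol}, written as a ${}_n\varphi_{n-2}$ in $dz^{-1}$ with $d=q^{n-1}/(a_i(1-q))$, have $[\lambda;q]$-sum in $z^{-1}$ equal by \eqref{qsummability2} to the ${}_nf_{n-2}$ with the stated parameters, direction $q^{n-1}\lambda/((1-q)a_i)$ and argument $q^{n-1}/(a_i(1-q)z)$, which is $\tilde h^{(\infty)}_q(z,\lambda)_i$; the monomial $w^{-\log_qa_i}=c^{-\log_qa_i}z^{-\log_qa_i}$ contributes only a nonzero constant, while $1/(w;q)_\infty=e_q\bigl(cz/(1-q)\bigr)$ by the definition of $e_q$, and $cz/(1-q)$ is the argument appearing in the last entry of \eqref{tildefinfeq}; together with the power $w^{\sum\log_q(a_l/b_l)}$ this reproduces \eqref{tildefinfeq}. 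Finally, for the connection formula I would apply Theorem \ref{our connection formula} in the variable $w$ with these $a_l,b_l$ and substitute $w=cz$: taking its free parameter to be $\hat\lambda:=q^{n-1}\lambda/(1-q)$ one has $\hat\lambda c=\lambda\prod_{l}b_l/\prod_{l}a_l$, so $\theta_q(\hat\lambda w)=\theta_q\bigl(\lambda z\prod_lb_l/\prod_la_l\bigr)$ and $\theta_q\bigl(\hat\lambda w\prod_la_l/\prod_lb_l\bigr)=\theta_q(\lambda z)$, and the factors $\theta_q(-a_iw)/\theta_q(-w)$, $w^{\log_qa_i}$, $\prod_l\theta_q(b_l/\hat\lambda)/\prod_l\theta_q(a_l/\hat\lambda)$ and the monomials from $f_q(w)_n$ become exactly the coefficients $\tilde C_i(z)$ and $\tilde C_n(z,\lambda)$ of \eqref{eq: qconnefomu}.

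The only genuine work is the bookkeeping, which I expect to be the main obstacle. One must (i) check that the standing hypotheses on $\alpha_l,\beta_l$ translate through \eqref{al change}, \eqref{bl change} into the hypotheses needed by Theorem \ref{our connection formula} and Proposition \ref{fqlambda} — $a_l,b_l\ne0$, $a_i/a_j\notin q^{\mathbb{Z}}$, $b_l\notin\{0\}\cup q^{-\mathbb{N}}$, and admissibility of $z,\lambda$ after $z\mapsto cz$, $\lambda\mapsto\hat\lambda$ — and (ii) track the multivalued monomials $z^{\log_qa_i}$, $z^{\log_qb_l}$, the scalar constants $c^{\log_qa_i}$, $c^{\sum\log_q(a_l/b_l)}$ produced by $w=cz$, and the rescaling of the $q$-Borel--Laplace direction ($\lambda$ in the $z$-picture corresponds to $\lambda/c$ in the $w$-picture and to $\hat\lambda=q^{n-1}\lambda/(1-q)$ as the free parameter of Theorem \ref{our connection formula}), verifying that these match the normalizations adopted in \eqref{tildefinfeq} and the coefficients stated in \eqref{eq: qconnefomu}. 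Getting all the powers of $q$ and the signs right is delicate; but once the dictionary $w=cz$, $\lambda\leftrightarrow\hat\lambda$ is fixed, the three assertions follow at once from Proposition \ref{n=m+1an=0}, Proposition \ref{fqlambda} and Theorem \ref{our connection formula}.
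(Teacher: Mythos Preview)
Your proposal is correct and is essentially what the paper does: it states this proposition without proof, introducing it with ``Similar to the case of \eqref{Fnj eq}, we have'', thereby indicating that equation \eqref{sec:qconf eq} is a rescaled instance of \eqref{intro:conflu hyper eq} and that the three assertions follow directly from Proposition \ref{n=m+1an=0}, Proposition \ref{fqlambda} and Theorem \ref{our connection formula} via the change of variable $w=cz$ with $c=(1-q)\prod_l b_l/(q^{n-1}\prod_l a_l)$. Your choice of the free parameter $\hat\lambda=q^{n-1}\lambda/(1-q)$ in Theorem \ref{our connection formula} is the right one and the bookkeeping you outline (directions, monomials, and the identity $1/(cz;q)_\infty=e_q(cz/(1-q))$) checks out.
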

 The  equation     \eqref{sec:qconf eq} converges  in the form  to  the  confluent hypergeometric differential equation  
\begin{equation}\label{c h eq}
    \prod_{l=1}^{n-1}\left(z\frac{\mathrm{d}}{\mathrm{d}z}+\alpha_{l} \right)y(z)=\frac{\mathrm{d}}{\mathrm{d}z}\prod_{l=1}^{n-1}\left(z\frac{\mathrm{d}}{\mathrm{d}z}+\beta_{l}-1 \right)y(z).
\end{equation}
It is well known that  there is a fundamental system of formal solutions around infinity  of  \eqref{c h eq}  (see  e.g., \cite{balser2008formal, turrittin1955convergent}):
\begin{equation*}
        \hat{f}(z):=\left(
 \hat{h}_{1}(z) z^{-\alpha_1},
...  ,
 \hat{h}_{n-1}(z) z^{-\alpha_{n-1}  } ,
\hat{h}_{n}(z)  \mathrm{e}^z z^{  \sum_{l=1}^{n-1}\left(\alpha_l-\beta_l\right)  }
\right),
   \end{equation*}
   with
\begin{align*}
  & \hat{h}_i(z):= {}_{n}F_{n-2}\left(\alpha_i,1+\alpha_i-\boldsymbol{\beta};1+\alpha_i-\boldsymbol{\alpha_{\hat{i}}};-z^{-1} \right), \  \text{for} \  i=1,...,n-1,\\
  &  \hat{h}_n(z):= 1+ \sum_{m=1}^{\infty}\kappa_m z^{-m},
\end{align*}
where   $\kappa_m$  can be uniquely determined recursively.  Here we use  index notations 
\begin{align*}
    &1+\alpha_i-\boldsymbol{\beta}:=( 1+\alpha_i-\beta_1,..., 1+\alpha_i-\beta_{n-1} ),\\
    &1+\alpha_i-\boldsymbol{\alpha_{\hat{i}}}:=\Big( 1+\alpha_i-\alpha_1,...,\widehat{1+\alpha_i-\alpha_i},...,1+\alpha_i-\alpha_{n-1} \Big),
\end{align*}
and  $\widehat{1+\alpha_i-\alpha_i}$ means that this term is skipped.

The standard theory of Borel resummation (see e.g., \cite{balser2008formal, loday2016divergent, Wasow})  states that   
\begin{prop}
     On $\mathrm{Sect}_{+}:=S\left(-\pi/2,3\pi/2 \right)$ and $\mathrm{Sect}_-:=S\left(-3\pi/2,\pi/2 \right)$ (recall the notation  $S\left(a,b\right)$ in Section \ref{sec:q-borel sum}),     there exist a  fundamental system of  meromorphic   solutions around infinity of  \eqref{c h eq}
    \begin{equation}\label{eqfinfpm}
    {f}^{(\infty)}_{\pm}(z):=\left(
 {h}^{(\infty)}_{\pm,1} z^{-\alpha_1},
...  ,
 {h}^{(\infty)}_{\pm,n-1} z^{-\alpha_{n-1}  } ,
{h}^{(\infty)}_{\pm,n}  \mathrm{e}^z z^{  \sum_{l=1}^{n-1}\left(\alpha_l-\beta_l\right)  }
\right),
\end{equation}
    such that the  asymptotic expansions hold
\begin{align*}
    &{h}^{(\infty)}_{\pm,i} \sim {}_{n}F_{n-2}\left(\alpha_i,1+\alpha_i-\boldsymbol{\beta};1+\alpha_i-\boldsymbol{\alpha}_{\hat{i}};-z^{-1} \right),\ \text{as} \ z \rightarrow \infty \ \text{on}\ \mathrm{Sect}_{\pm}, \  \text{for} \ i=1,...,n-1,  \\
     &{h}^{(\infty)}_{\pm,n} \sim 1+\sum_{m=1}^{\infty}\kappa_m z^{-m},\ \text{as} \ z \rightarrow \infty \ \text{on}\ \mathrm{Sect}_{\pm}.
\end{align*}
\end{prop}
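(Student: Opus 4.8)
This proposition is the classical Borel--Laplace summation of the formal fundamental solution $\hat f(z)$ of \eqref{c h eq}, made explicit for the two relevant sectors, so I would organize the argument around three standard inputs: (i) $\hat f(z)$ is a formal fundamental solution; (ii) its formal power series factors are Gevrey of order $1$ and $1$-summable away from finitely many singular directions; (iii) the singular directions of \eqref{c h eq} are exactly $0$ and $\pi$, which is what makes the sectors of validity as wide as $\mathrm{Sect}_{\pm}$. For (i): writing $\vartheta=z\tfrac{\mathrm d}{\mathrm d z}$, a short computation turns \eqref{c h eq} into $\vartheta\prod_{l=1}^{n-1}(\vartheta+\beta_l-1)y=z\prod_{l=1}^{n-1}(\vartheta+\alpha_l)y$, an equation of order $n$ with an irregular singularity of rank $1$ at $\infty$; its exponential parts are $\mathrm{e}^{0\cdot z}=1$ with multiplicity $n-1$, carrying the local exponents $\alpha_1,\dots,\alpha_{n-1}$, and $\mathrm{e}^{z}$, carrying $z^{\sum_{l}(\alpha_l-\beta_l)}$ — exactly the data displayed in $\hat f(z)$ — and the power series factors $\hat h_1,\dots,\hat h_{n-1},\hat h_n$ are then determined uniquely by substitution, as recorded above. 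This is the standard formal classification (\cite{turrittin1955convergent, balser2008formal, Wasow}).

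For (ii) and (iii) I would invoke the summability of formal solutions of linear meromorphic equations with a single level $1$: since the singularity at $\infty$ has rank $1$, every entry of the formal series part of $\hat f$ — in particular $\hat h_1,\dots,\hat h_{n-1}$ (each a ${}_nF_{n-2}$-series in $-z^{-1}$) and $\hat h_n$ — is Gevrey of order $1$ in $z^{-1}$ and $1$-summable in every non-singular direction (see \cite{balser2008formal, loday2016divergent, Wasow}). The singular directions are governed by the differences of the determining polynomials, which here are only $\pm z$ (the polynomials being $0,\dots,0,z$); equivalently, the Borel transform of $\hat h_n$ has its sole finite singularity on the ray $\arg\zeta=0$ and that of each $\hat h_i$ ($1\le i\le n-1$) on the ray $\arg\zeta=\pi$. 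Hence the singular directions are precisely $\{0,\pi\}\pmod{2\pi}$, and $\hat h_1,\dots,\hat h_{n-1},\hat h_n$ are \emph{simultaneously} $1$-summable in every direction of the open arc $(0,\pi)$ and in every direction of the open arc $(-\pi,0)$, the two arcs complementary to the singular directions.

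I would then assemble the solution by the usual glueing. Fix the arc $(0,\pi)$: for each of the above series the Borel transform is holomorphic, with exponential growth of order at most $1$, on a full sectorial neighbourhood of $(0,\pi)$ in the $\zeta$-plane, the only obstructions lying on $\arg\zeta\in\{0,\pi\}$. By Cauchy's theorem the $1$-sums taken along the directions $d\in(0,\pi)$ therefore agree on overlaps and patch into a single function, holomorphic on $\bigcup_{d\in(0,\pi)}\{\,z:|\arg z-d|<\pi/2\,\}=S(-\pi/2,3\pi/2)=\mathrm{Sect}_{+}$ and carrying there the Gevrey-$1$ asymptotic expansion $\hat h_i$ (resp.\ $\hat h_n$); these are the functions $h^{(\infty)}_{+,1},\dots,h^{(\infty)}_{+,n}$, and $f^{(\infty)}_{+}(z)$ is then defined by \eqref{eqfinfpm}. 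It solves \eqref{c h eq} because the Borel--Laplace sum of a formal solution of a linear differential equation with rational coefficients is an actual solution — the classical analogue of Proposition \ref{resummation satisfies equation}. It is a \emph{fundamental} system because $\hat f$ is: the Wronskian of $f^{(\infty)}_{+}$ is asymptotic to the formal Wronskian of $\hat f$, which equals a nonzero constant times $\mathrm{e}^{z}z^{(\cdot)}$ (as one sees from $W'/W$ being the coefficient of $y^{(n-1)}$), hence it does not vanish for large $|z|$. Running the same argument over the arc $(-\pi,0)$ yields $f^{(\infty)}_{-}$ on $S(-3\pi/2,\pi/2)=\mathrm{Sect}_{-}$. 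Finally, meromorphy near $\infty$ is automatic, since \eqref{c h eq} has rational coefficients whose only singular points are $0$ and $\infty$, so these solutions continue holomorphically along every ray to infinity in the relevant sector.

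No step is a genuine obstacle; the two points that demand care are confirming that the specific series $\hat h_i$ and $\hat h_n$ have Gevrey order exactly $1$ and are $1$-summable — argued most cleanly through the general rank-$1$ theory rather than through hands-on coefficient estimates on ${}_nF_{n-2}$ — and pinning down the singular directions precisely, since it is the coincidence that they reduce to just $\{0,\pi\}$ that upgrades the a priori sector of validity of a single $1$-sum (opening just over $\pi$) to the full opening $2\pi$ of $\mathrm{Sect}_{\pm}$ asserted in the statement.
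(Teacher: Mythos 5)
Your proposal is correct and follows the same route the paper itself takes: the paper states this proposition without proof, simply invoking ``the standard theory of Borel resummation'' with references to Balser, Loday-Richaud and Wasow, and your sketch is a faithful expansion of exactly that theory (formal classification at a rank-one irregular singularity, $1$-summability with singular directions $\{0,\pi\}$, and gluing of the $1$-sums over the two complementary arcs to obtain sectors of opening $2\pi$). The only cosmetic caveat is that your assignment of which Borel transform is singular on which ray depends on the Laplace-kernel convention, but this does not affect the conclusion that the two resulting sectors are $S(-\pi/2,3\pi/2)$ and $S(-3\pi/2,\pi/2)$.
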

\begin{defn}\label{def:hi}
For  $1\leq i \leq n-1$,        the  analytic continuation of the    function   ${h}^{(\infty)}_{+,i}(z)$  defined on  $\mathrm{Sect}_{+}$  to   $\widetilde{C^*}$  is denoted as ${h}^{(\infty)}_{i}$.
\end{defn}
Moreover, there is a convergent   solution around the origin of \eqref{c h eq} of the form (see e.g., \cite{OLBC})
\begin{equation*}
    f^{(0)}(z):= {}_{n-1}F_{n-1}\left(\alpha_1,...,\alpha_{n-1};\beta_1,...,\beta_{n-1};z\right).
\end{equation*}
From \cite{ichinobe2001borel, OLBC},  one knows  that  
\begin{prop}[{\cite[Theorem 2.1]{ichinobe2001borel}}]\label{prop:hpm}
 For $1\leq i \leq n-1$,   
     ${h}^{(\infty)}_{\pm,i}$  is given by  (here for convenience, we denote $\beta_n=1$) 
\begin{align*}
{h}^{(\infty)}_{\pm,i}
=&\sum_{k=1}^{n}   \frac{\prod_{l=1,l\neq i}^{n-1}\Gamma(1+\alpha_i-\alpha_l)  \prod_{l=1  ,l\neq  k }^{n}\Gamma(\beta_k -\beta_l )}{\prod_{l=1,l\neq  k}^{n}\Gamma( 1+\alpha_i-\beta_l  )  \prod_{l=1,l\neq i}^{n-1}\Gamma(\beta_k -\alpha_l )}    z^{1+\alpha_i-\beta_k} \\
&\times {}_{n-1}F_{n-1} \left(\begin{array}{c}
  1+\alpha_1 -\beta_k ,...,  1+\alpha_{n-1} -\beta_k    \\
1+\beta_1-\beta_k   ,...,\widehat{{1+\beta_k -\beta_k }},..., {1+\beta_n-\beta_k   }
 \end{array};{z}\right),
\end{align*}
as  $z\in \mathrm{Sect}_{\pm}$.
As a result,  for $1\leq i \leq  n-1$,  we  have
\begin{equation*}
        {h}^{(\infty)}_{+,i}={h}^{(\infty)}_{-,i}, \  \text{as} \  z\in S(-\pi/2,\pi/2  ),
    \end{equation*}
    and  ${h}^{(\infty)}_{i}$ is also the analytic continuation of   ${h}^{(\infty)}_{-,i}$.
\end{prop}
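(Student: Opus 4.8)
The plan is to read Proposition \ref{prop:hpm} off the Borel-summation formula of Ichinobe \cite{ichinobe2001borel} after a parameter dictionary, and then to deduce the coincidence $h^{(\infty)}_{+,i}=h^{(\infty)}_{-,i}$ on the overlap directly from the explicit expression. First I would set up the geometry at the origin: rewriting \eqref{c h eq} with $\theta=z\frac{d}{dz}$ as $z\prod_{l=1}^{n-1}(\theta+\alpha_l)y=\theta\prod_{l=1}^{n-1}(\theta+\beta_l-1)y$ shows that $z=0$ is a Fuchsian point with indicial exponents $0$ and $1-\beta_l$ ($1\le l\le n-1$), i.e. $1-\beta_k$ for $k=1,\dots,n$ with the convention $\beta_n=1$. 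Substituting $y=z^{1-\beta_k}w$ turns the equation into the one satisfied by ${}_{n-1}F_{n-1}$, so the Frobenius solution at the exponent $1-\beta_k$ is, up to a scalar, $z^{1-\beta_k}\,{}_{n-1}F_{n-1}\!\big(1+\alpha_1-\beta_k,\dots,1+\alpha_{n-1}-\beta_k;\{1+\beta_l-\beta_k\}_{l\neq k};z\big)$; for $k=n$ this is exactly $f^{(0)}(z)$. Under the standing nonresonance assumptions on the $\beta_k$ (inherited, through the correspondence \eqref{al change}--\eqref{bl change} and the $q\to 1$ limit, from \eqref{cond:uiujvivj}) these $n$ solutions are a basis. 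Since $h^{(\infty)}_{\pm,i}z^{-\alpha_i}$ is a meromorphic solution of \eqref{c h eq} on $\mathrm{Sect}_{\pm}$ and \eqref{c h eq} has no singularity away from $0,\infty$, it extends analytically along the sector to the origin and is therefore a linear combination of these Frobenius solutions on each sector; only the connection coefficients and their possible sector-dependence remain to be determined, and dividing out $z^{-\alpha_i}$ produces precisely the powers $z^{1+\alpha_i-\beta_k}$ in the statement.

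Next I would pin down the coefficients as Ichinobe does: $h^{(\infty)}_{\pm,i}z^{-\alpha_i}$ is by construction the Borel (Laplace--Borel of order one) sum of the formal solution $\hat h_i(z)z^{-\alpha_i}$, and the Borel transform of $\hat h_i$ is a convergent hypergeometric series with a single finite singularity on the ray in question. Its Laplace integral along a direction in $\mathrm{Sect}_{\pm}$ admits a Mellin--Barnes representation; shifting the Mellin--Barnes contour and summing the residues at the poles, which lie on the arithmetic progressions attached to the exponents $1-\beta_k$, produces on both $\mathrm{Sect}_+$ and $\mathrm{Sect}_-$ one and the same expansion $\sum_{k=1}^{n}(\Gamma\text{-ratio})\,z^{1+\alpha_i-\beta_k}\,{}_{n-1}F_{n-1}(\cdots;z)$, the $\Gamma$-ratio being the residue of the Mellin--Barnes integrand; matching Ichinobe's normalization of the kernel to ours fixes the constants exactly as written. (Alternatively, staying self-contained, the same coefficients follow by continuing $f^{(0)}$ and its Frobenius companions along $\mathrm{Sect}_{\pm}$ to infinity and comparing with the defining asymptotics of $h^{(\infty)}_{\pm,i}$, which again reduces to the classical connection formulas for ${}_{n-1}F_{n-1}$.)

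The remaining assertions are then immediate. Each term $z^{1+\alpha_i-\beta_k}\,{}_{n-1}F_{n-1}(\cdots;z)$ is a monomial single-valued on $\widetilde{\mathbb{C}^*}$ times an entire function, and the $\Gamma$-ratio coefficient is a constant independent of the chosen sector, so the expressions obtained for $h^{(\infty)}_{+,i}$ on $\mathrm{Sect}_+$ and for $h^{(\infty)}_{-,i}$ on $\mathrm{Sect}_-$ are restrictions of one and the same holomorphic function on $\widetilde{\mathbb{C}^*}$; hence they agree on the overlap $S(-\pi/2,\pi/2)$, and the function $h^{(\infty)}_i$ of Definition \ref{def:hi} is equally the analytic continuation of $h^{(\infty)}_{-,i}$. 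I expect the main obstacle to be the bookkeeping in the residue/parameter dictionary: one must keep track of the confluent (degenerate, $a_n=0$) slot appearing in $\hat h_i$ and verify that the nonresonance among the $\beta_k$ prevents coincident poles, so that no logarithmic terms enter and the simple $n$-term expansion above is valid.
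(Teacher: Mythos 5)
The paper does not prove this proposition at all: it is imported verbatim as a citation of Ichinobe's Theorem~2.1, so there is no internal argument to compare yours against. Your sketch is a faithful reconstruction of how that cited result is actually obtained (Mellin--Barnes representation of the Laplace--Borel integral of $\hat h_i$, residues at the arithmetic progressions attached to the exponents $1-\beta_k$ at the Fuchsian point $z=0$, nonresonance excluding logarithms), and the logic of the second half is right: once the same closed-form $n$-term expansion is established on each of $\mathrm{Sect}_{\pm}$ with sector-independent $\Gamma$-coefficients, the equality $h^{(\infty)}_{+,i}=h^{(\infty)}_{-,i}$ on $S(-\pi/2,\pi/2)$ and the statement about analytic continuation are immediate. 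The one step you state but do not justify is precisely the sector-independence of the coefficients, i.e.\ why the residue computation ``produces on both $\mathrm{Sect}_+$ and $\mathrm{Sect}_-$ one and the same expansion'': the honest reason is that the Borel transform of $\hat h_i$ (for $1\le i\le n-1$) has its unique finite singularity on the single ray $\arg\xi=\pi$ of the Borel plane, so all summation directions $\theta\in(-\pi,\pi)$ glue to one analytic function on $S(-3\pi/2,3\pi/2)\supset\mathrm{Sect}_+\cup\mathrm{Sect}_-$ and no Stokes jump occurs between the two lateral sums; this is also what is reflected in the triangular shape of $S_-$ in Proposition~\ref{propUpmS+-}. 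I would also caution against your ``alternative self-contained route'' via continuing $f^{(0)}$ to infinity: the connection coefficients in \eqref{olbcconne} carry the sector-dependent factors $\mathrm{e}^{\pm\pi\mathrm{i}\alpha_i}$, so deducing the sector-independent inverse coefficients that way requires inverting the full matrix $U_{\pm}$ rather than a column-by-column comparison, and is no more elementary than the Mellin--Barnes argument.
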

\begin{prop}[see e.g., {\cite[Chapter 16]{OLBC}}]
    For  $z\in \mathrm{Sect}_{\pm}$,   the solutions  ${f}^{(0)}(z)$  and  ${f}^{(\infty)}_{\pm}(z)$    of \eqref{c h eq}  are related by the following connection formula:
 \begin{equation}\label{olbcconne}
f^{(0)}(z)=\sum_{i=1}^{n-1} \frac{\prod_{l=1}^{n-1}\Gamma\left(\beta_l\right)}{\prod_{l=1,l\neq i}^{n-1}\Gamma\left(\alpha_l\right)}  \frac{\prod_{l=1,l\neq i}^{n-1}\Gamma\left(\alpha_l-\alpha_i\right)}{\prod_{l=1}^{n-1}\Gamma\left(\beta_l-\alpha_i\right)}{ \mathrm{e}^{\pm\pi \mathrm{i} \alpha_i}  } {h}^{(\infty)}_{\pm,i} z^{-\alpha_i}+\frac{\prod_{l=1}^{n}\Gamma\left(\beta_l\right)}{\prod_{l=1}^{n}\Gamma\left(\alpha_l\right)}     {h}^{(\infty)}_{\pm,n}  \mathrm{e}^z z^{  \sum_{l=1}^{n-1}\left(\alpha_l-\beta_l\right)  }.
\end{equation}
\end{prop}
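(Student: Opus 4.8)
The plan is to treat \eqref{olbcconne} as the classical connection formula for the generalized hypergeometric equation \eqref{c h eq} and to read off the two connection constants from the large-$z$ asymptotics of $f^{(0)}(z)={}_{n-1}F_{n-1}(\alpha;\beta;z)$, using the Borel sums already constructed and Proposition \ref{prop:hpm}. Both sides of \eqref{olbcconne} solve the order-$n$ equation \eqref{c h eq}; on $\mathrm{Sect}_{\pm}$ the functions $h^{(\infty)}_{\pm,1}z^{-\alpha_1},\dots,h^{(\infty)}_{\pm,n-1}z^{-\alpha_{n-1}}$ and $h^{(\infty)}_{\pm,n}\mathrm{e}^{z}z^{\sum_{l=1}^{n-1}(\alpha_l-\beta_l)}$ form a fundamental system (their expansions at $z=\infty$ have pairwise distinct leading behaviour under the genericity $\alpha_i-\alpha_j\notin\mathbb{Z}$ implicit throughout). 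So there are unique constants with $f^{(0)}=\sum_{i=1}^{n-1}c^{\pm}_i\,h^{(\infty)}_{\pm,i}z^{-\alpha_i}+c^{\pm}_n\,h^{(\infty)}_{\pm,n}\mathrm{e}^{z}z^{\sum_l(\alpha_l-\beta_l)}$, and the statement is exactly that $c^{\pm}_i$ and $c^{\pm}_n$ equal the two Gamma-ratio coefficients in \eqref{olbcconne}; paragraphs below just compute these constants and conclude by asymptotic uniqueness on $\mathrm{Sect}_{\pm}$.

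First I would pin down $c^{\pm}_i$ for $1\le i\le n-1$ from the Mellin--Barnes representation $f^{(0)}(z)=\frac{\prod_l\Gamma(\beta_l)}{\prod_l\Gamma(\alpha_l)}\frac{1}{2\pi\mathrm{i}}\int_{L}\frac{\prod_l\Gamma(\alpha_l+s)\,\Gamma(-s)}{\prod_l\Gamma(\beta_l+s)}(-z)^{s}\,\mathrm{d}s$, with $L$ separating the poles of $\Gamma(-s)$ from those of the $\Gamma(\alpha_l+s)$: shifting $L$ to the left past the poles $s=-\alpha_i-m$ produces, for each $i$, (the stated Gamma ratio) $\times(-z)^{-\alpha_i}\times\hat h_i(z)$, where $\hat h_i$ is the divergent series of which $h^{(\infty)}_{\pm,i}$ is the Borel sum on $\mathrm{Sect}_{\pm}$, and writing $(-z)^{-\alpha_i}=\mathrm{e}^{\pm\pi\mathrm{i}\alpha_i}z^{-\alpha_i}$ in the branch forced on $\mathrm{Sect}_{\pm}$ supplies the phase $\mathrm{e}^{\pm\pi\mathrm{i}\alpha_i}$. (Alternatively, the same values follow algebraically from Proposition \ref{prop:hpm}: re-expand each $h^{(\infty)}_{\pm,i}z^{-\alpha_i}$ in the natural basis $f^{(0)},g_1,\dots,g_{n-1}$ of solutions of \eqref{c h eq} near $z=0$, where $g_k(z):=z^{1-\beta_k}\,{}_{n-1}F_{n-1}(1+\alpha_\bullet-\beta_k;\{1+\beta_l-\beta_k\}_{l\ne k};z)$ with $\beta_n:=1$, and invert the resulting connection matrix --- the closing step there being a generalized Cauchy/partial-fraction identity among the Gamma-ratio coefficients.)

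For $c^{\pm}_n$ I would use that $g:=f^{(0)}-\sum_{i=1}^{n-1}c^{\pm}_i\,h^{(\infty)}_{\pm,i}z^{-\alpha_i}$ is a solution of \eqref{c h eq} whose algebraic asymptotic expansion at $z=\infty$ vanishes to all orders wherever $\mathrm{e}^{z}$ is subdominant, so by genericity of $\{\alpha_i\}$ it is a scalar multiple of $h^{(\infty)}_{\pm,n}\mathrm{e}^{z}z^{\sum_l(\alpha_l-\beta_l)}$; the scalar is found by letting $z\to\infty$ along a direction where the exponential solution dominates, equivalently from the saddle-point contribution to the Barnes integral (saddle at large $|s|$), which gives leading constant $\prod_l\Gamma(\beta_l)/\prod_l\Gamma(\alpha_l)$, the two admissible contour deformations matching the anti-Stokes rays $\arg z=\pm\pi/2$ and hence the two sectors $\mathrm{Sect}_{\pm}$. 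I expect the only genuine work to be this last step carried out cleanly --- making the Mellin--Barnes asymptotics uniform across $\arg z=\pm\pi/2$ (where dominant and recessive solutions switch) while simultaneously fixing the branch of $z^{\sum_l(\alpha_l-\beta_l)}$ and the multiplicative constant; the Cauchy/partial-fraction identity in the alternative route is routine but notation-heavy. As a consistency check fitting the theme of this section, \eqref{olbcconne} also arises as the $q\to1$ limit of the connection formula \eqref{eq: qconnefomu} for \eqref{sec:qconf eq} under the substitution \eqref{al change}--\eqref{bl change}, via $\Gamma_q\to\Gamma$, $e_q\to\exp$, and the limits of the $\theta_q$-ratios.
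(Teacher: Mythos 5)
The paper does not prove this proposition at all: it is quoted verbatim from the literature (the citation to \cite{OLBC} points to the classical large-$z$ expansion of ${}_{p}F_{p}$, DLMF~16.11.7--16.11.8), just as Proposition \ref{prop:hpm} is quoted from \cite{ichinobe2001borel}. So there is no in-paper argument to compare against; what you have written is a reconstruction of the standard proof of the cited result, and in outline it is the right one. The skeleton --- both sides solve \eqref{c h eq}, the functions $h^{(\infty)}_{\pm,i}z^{-\alpha_i}$ and $h^{(\infty)}_{\pm,n}\mathrm{e}^{z}z^{\sum_l(\alpha_l-\beta_l)}$ form a fundamental system under the genericity $\alpha_i-\alpha_j\notin\mathbb{Z}$, and the connection constants are read off from the Mellin--Barnes representation --- is exactly how this is done in Luke and in Paris--Kaminski, and your coefficients and the phase $\mathrm{e}^{\pm\pi\mathrm{i}\alpha_i}$ coming from $(-z)^{-\alpha_i}=(z\mathrm{e}^{\mp\pi\mathrm{i}})^{-\alpha_i}$ match \eqref{olbcconne}.

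Two cautions. First, you cannot literally ``shift $L$ to the left past the poles $s=-\alpha_i-m$'' for all $m$: the resulting residue series is the divergent series $\hat h_i$, so the contour shift only yields an asymptotic expansion after truncating at finitely many poles and bounding the remainder. The upgrade from ``$f^{(0)}$ has this asymptotic expansion on the recessive half of $\mathrm{Sect}_{\pm}$'' to the exact identity \eqref{olbcconne} then rests on the uniqueness of a solution of \eqref{c h eq} with a prescribed Gevrey-$1$ asymptotic on a sector of opening greater than $\pi$ (equivalently, on Proposition \ref{prop:hpm}, which already identifies $h^{(\infty)}_{\pm,i}$ as explicit convergent combinations of the Frobenius solutions at $0$); you invoke this only implicitly, and it is the step that actually closes the argument. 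Second, the ``consistency check'' via the $q\to1$ limit of \eqref{eq: qconnefomu} should not be leaned on here: in this paper the logic runs in the opposite direction --- Proposition \ref{prop:lim eq} takes \eqref{olbcconne} as known input and verifies that the $q$-formula degenerates to it --- so while the $q$-deformation is proved independently and could in principle be used to rederive \eqref{olbcconne}, quoting it as support for \eqref{olbcconne} inside this paper would be circular.
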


The  connection formula \eqref{eq: qconnefomu}    recovers the    connection formula \eqref{olbcconne} in the following sense.  That  is,
\begin{prop}\label{prop:lim eq}
 As  $q\rightarrow 1$,  it follows that  
 \begin{itemize}
\item[(1)]       The  convergent  solution  $\tilde{f}^{(0)}_q(z)$ (see \eqref{tildef0qz})  of  \eqref{sec:qconf eq}  converges to         the convergent  solution $f^{(0)}(z)$  of  \eqref{c h eq}:
\begin{equation}
 \lim_{q\rightarrow 1} \tilde{f}^{(0)}_q(z)= f^{(0)}(z).\label{lim:zero}
\end{equation} 
\item[(2)]  The coefficients  $\tilde{C}_1(z),...,\tilde{C}_{n-1}(z),\tilde{C}_n(z,\lambda)$ have limits:
\begin{align}
     &\lim_{q\rightarrow 1}\tilde{C}_i(z)=\frac{\prod_{l=1}^{n-1}  \Gamma(\beta_l) \prod_{l=1,l\neq  i}^{n-1} \Gamma( \alpha_l -\alpha_i )  }{\prod_{l=1,l\neq  i}^{n-1}  \Gamma( \alpha_l )  \prod_{l=1}^{n-1} \Gamma( \beta_l-\alpha_i  )}  \cdot       \mathrm{e}^{ \pm \mathrm{i}  \pi  \alpha_i },   \  \text{for}  \   1\leq  i\leq  n-1,  \label{tildeCi} \\
     &\lim_{q\rightarrow 1}\tilde{C}_n(z,\lambda)     =\frac{ \prod_{ l=1 }^{n-1}\Gamma(   \beta_l   )  }{ \prod_{ l=1 }^{n-1}\Gamma(   \alpha_l   )  }.  \label{tildeCn}
\end{align}
In \eqref{tildeCi},  \eqref{tildeCn} we take  $ \mathrm{arg}(\lambda)\in(-\pi,\pi)$  and  $\mathrm{arg}(z)\in  (-\mathrm{arg}(\lambda)-\pi,-\mathrm{arg}(\lambda)+\pi)$.  The  positive    or   negative  sign in \eqref{tildeCi}  is   chosen based on whether    $\mathrm{arg}(z)\in (0,-\mathrm{arg}(\lambda)+\pi  )$  or  $\mathrm{arg}(z)\in (-\mathrm{arg}(\lambda)-\pi, 0  )$.  
\item[(3)]     The fundamental system of meromorphic solutions  $ \tilde{f}_q^{(\infty)}(z,\lambda)$  (see  \eqref{tildefinfeq})   of  \eqref{sec:qconf eq}  converges to the fundamental system of meromorphic solutions  ${f}^{(\infty)}_{ \pm }(z)$   (see \eqref{eqfinfpm})  of  \eqref{c h eq}:
\begin{align}
 &\lim_{q\rightarrow 1}   \tilde{f}_q^{(\infty)}(z,\lambda)_i=h^{(\infty)}_i(z)  z^{-\alpha_i },   \  \text{as} \  \mathrm{arg}(z)\in  (-\mathrm{arg}(\lambda)-\pi,-\mathrm{arg}(\lambda)+\pi  ),   \  \text{for}  \  i=1,...,n-1,  \label{tildeflim} \\
 &\lim_{q\rightarrow 1} \tilde{f}^{(\infty)}_q(z)_n =\left\{
          \begin{array}{lr}
          {h}^{(\infty)}_{+,n}  \mathrm{e}^z z^{  \sum_{l=1}^{n-1}\left(\alpha_l-\beta_l\right)  },   & \text{if} \ \  \mathrm{arg}(z)\in (0,-\mathrm{arg}(\lambda)+\pi  ), \\
             {h}^{(\infty)}_{-,n}  \mathrm{e}^z z^{  \sum_{l=1}^{n-1}\left(\alpha_l-\beta_l\right)  }    , & \text{if} \ \ \mathrm{arg}(z)\in (-\mathrm{arg}(\lambda)-\pi, 0  ).
             \end{array}
\right.\label{tildeflim2eq}
\end{align}
\item[(4)]  If $\mathrm{arg}(z)\in (0,-\mathrm{arg}(\lambda)+\pi  )$,  the connection formula \eqref{eq: qconnefomu}  recovers \eqref{olbcconne} with the positive sign chosen in \eqref{olbcconne}, and if $\mathrm{arg}(z)\in (  -\mathrm{arg}(\lambda)-\pi ,0  )$,  the connection formula \eqref{eq: qconnefomu}  recovers \eqref{olbcconne} with the negative sign chosen in \eqref{olbcconne}.   
 \end{itemize}

\end{prop}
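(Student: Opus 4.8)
The plan is to prove assertions (1)--(3) of the Proposition separately and then obtain (4) by passing to the limit $q\to 1^-$ in the finite sum \eqref{eq: qconnefomu}. Everything is driven by two elementary facts about the substitution \eqref{al change}--\eqref{bl change}: first, $\log_q a_l\to\alpha_l$ and $\log_q b_l\to\beta_l$ since $\log q=-(1-q)+O((1-q)^2)$; second, for each fixed $k\ge 0$ one has $1-a_lq^{j}=(1-q)(\alpha_l+j)+O((1-q)^2)$, and the analogous expansions for $b_l$ and for $q$ itself, whence $(a_l;q)_k\sim(1-q)^k(\alpha_l)_k$, $(b_l;q)_k\sim(1-q)^k(\beta_l)_k$ and $(q;q)_k\sim(1-q)^k\,k!$. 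I would isolate these, together with the standard limit $\Gamma_q(x)\to\Gamma(x)$, as a preliminary lemma.

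Given this, assertion (1) is immediate: in the defining series of $\tilde f^{(0)}_q(z)$ in \eqref{tildef0qz} the $k$-th coefficient $\prod_l(a_l;q)_k\big/\big((q;q)_k\prod_l(b_l;q)_k\big)$ is $\sim(1-q)^{-k}\prod_l(\alpha_l)_k\big/\big(k!\prod_l(\beta_l)_k\big)$ while the argument $\tfrac{(1-q)z\prod_l b_l}{q^{n-1}\prod_l a_l}\sim(1-q)z$, so the $k$-th term tends to $\prod_l(\alpha_l)_k z^k\big/\big(k!\prod_l(\beta_l)_k\big)$; a crude bound on the $k$-th term uniform for $q$ near $1$ and $z$ on a compact set lets one pass to the limit termwise and gives \eqref{lim:zero}. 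For assertion (3): $z^{-\log_q a_i}\to z^{-\alpha_i}$ with the same branch of $\log z$, the formal series ${}_n\varphi_{n-2}$ converges termwise to the (divergent) ${}_nF_{n-2}(\alpha_i,1+\alpha_i-\boldsymbol{\beta};1+\alpha_i-\boldsymbol{\alpha}_{\hat i};-z^{-1})$, and the confluence of the $q$-Borel/$q$-Laplace pair $\mathcal L^{[\lambda;q]}_{q;1}\circ\hat{\mathcal B}_{q;1}$ to the classical Borel/Laplace pair in the direction $\arg\lambda$ (this is what \cite{dreyfus2015confluence} provides) identifies the limit of $\tilde h^{(\infty)}_q(z,\lambda)_i$ with the lateral Borel sum $h^{(\infty)}_{\pm,i}$; by Proposition \ref{prop:hpm} these glue to the single analytic function $h^{(\infty)}_i$ of Definition \ref{def:hi}, which is \eqref{tildeflim}. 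For the $n$-th entry one uses in addition $z^{\sum_l\log_q a_l-\sum_l\log_q b_l}\to z^{\sum_l(\alpha_l-\beta_l)}$, $e_q\big(\tfrac{z\prod_l b_l}{q^{n-1}\prod_l a_l}\big)\to e^z$, and the same confluence for the factor $\tilde h^{(\infty)}_q(z)_n$, the sign $\pm$ being the one for which $\arg z$ lies in $\mathrm{Sect}_{\pm}$; this is \eqref{tildeflim2eq}.

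Assertion (2) is the heart of the matter. I would write every $q$-Pochhammer symbol in $\tilde C_i(z)$ and $\tilde C_n(z,\lambda)$ in the form $(q^{c_q};q)_\infty$, with $c_q$ a $\mathbb Z$-linear combination of the $\log_q a_l$, $\log_q b_l$ converging to the corresponding combination of the $\alpha_l$, $\beta_l$ (possible since $a_l=q^{\log_q a_l}$, $b_l/a_l=q^{\log_q b_l-\log_q a_l}$, etc.), and then use $(q^{c};q)_\infty=(q;q)_\infty(1-q)^{1-c}/\Gamma_q(c)$ to turn each ratio into a ratio of $\Gamma_q$'s times an explicit power of $1-q$. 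Since $\Gamma_q\to\Gamma$, the $\Gamma_q$-ratios converge to exactly the Gamma-ratios appearing in \eqref{olbcconne}. The surviving powers of $1-q$ do \emph{not} tend to a finite nonzero limit by themselves; they must be, and are, cancelled against the $1-q$ hidden in $z^{\log_q a_i}$ (resp.\ in $z^{\sum_l\log_q(b_l/a_l)}$) and in the scaling of the arguments of the $q$-theta functions. For $\tilde C_n(z,\lambda)$ all relevant $\theta_q$-ratios have the form $\theta_q(\zeta_q r_q)/\theta_q(\zeta_q)$ with $r_q\to 1$ and $\zeta_q$ either bounded away from $0,\infty$ or tending to $0$ at rate $1-q$, and one checks directly that each tends to $1$, giving \eqref{tildeCn}. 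For $\tilde C_i(z)$ the decisive factor is $\theta_q\big(-a_iw_q\big)\big/\theta_q\big(-w_q\big)$ with $w_q=\tfrac{(1-q)z\prod_l b_l}{q^{n-1}\prod_l a_l}$ and $a_i=q^{\log_q a_i}$, i.e.\ a ratio $\theta_q(q^{s_q}\zeta_q)/\theta_q(\zeta_q)$ with $s_q\to\alpha_i$ and $\zeta_q=-w_q\to 0$; combined with the leftover power of $1-q$ and with $z^{\log_q a_i}$ its limit should be $e^{\pm\mathrm{i}\pi\alpha_i}$. For $\alpha_i$ an integer this follows at once from the quasi-periodicity $\theta_q(q^m\zeta)=\zeta^{-m}q^{-m(m-1)/2}\theta_q(\zeta)$ and the value of $(-1)^{-m}$; for general $\alpha_i$ it requires the $q\to 1^-$ asymptotics of $\theta_q$ coming from Jacobi's triple product \eqref{Jacobi's triple product identity} together with the modular (Poisson summation) behaviour of the Jacobi theta function, the sign in $e^{\pm\mathrm{i}\pi\alpha_i}$ being forced by the determination of $\arg(-z)$, i.e.\ by whether $\arg z\in(0,-\arg\lambda+\pi)$ or $\arg z\in(-\arg\lambda-\pi,0)$. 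This is exactly the $\theta$-estimate underlying the $n=2$ computation of \cite{morita2013connection} and the theta limits of \cite{zhang2005remarks}, \cite{sauloy2000systemes}; establishing it in the present generality is the main obstacle, and it yields \eqref{tildeCi}.

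Finally, for assertion (4), with (1)--(3) in hand I would simply pass to the limit in the finite sum \eqref{eq: qconnefomu}: the left-hand side tends to $f^{(0)}(z)$ by \eqref{lim:zero}, each term $\tilde f^{(\infty)}_q(z,\lambda)_i\,\tilde C_i(z)$ tends to $\big(\prod_l\Gamma(\beta_l)\prod_{l\ne i}\Gamma(\alpha_l-\alpha_i)\big/\prod_{l\ne i}\Gamma(\alpha_l)\prod_l\Gamma(\beta_l-\alpha_i)\big)\,e^{\pm\mathrm{i}\pi\alpha_i}h^{(\infty)}_i(z)z^{-\alpha_i}$ by \eqref{tildeflim} and \eqref{tildeCi}, and the last term tends to $\big(\prod_l\Gamma(\beta_l)\big/\prod_l\Gamma(\alpha_l)\big)h^{(\infty)}_{\pm,n}e^zz^{\sum_l(\alpha_l-\beta_l)}$ by \eqref{tildeflim2eq} and \eqref{tildeCn}; since $h^{(\infty)}_i$ agrees with $h^{(\infty)}_{\pm,i}$ precisely when $\arg z$ lies in $\mathrm{Sect}_{\pm}$ (Proposition \ref{prop:hpm}), the signs on the entire right-hand side are consistent, and the identity obtained is exactly \eqref{olbcconne}, with the $+$ branch when $\arg z\in(0,-\arg\lambda+\pi)$ and the $-$ branch when $\arg z\in(-\arg\lambda-\pi,0)$. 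Thus the only genuinely delicate point is the $q\to 1^-$ evaluation of the theta ratio in assertion (2); the remaining steps are routine confluence and bookkeeping.
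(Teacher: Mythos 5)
Your treatment of items (1), (2) and (4) follows essentially the paper's route: termwise limits for \eqref{lim:zero}, conversion of all $q$-Pochhammer ratios into $\Gamma_q$-ratios times explicit powers of $1-q$, and the observation that everything hinges on the limit of a quotient $\theta_q(q^{s_q}\zeta_q)/\theta_q(\zeta_q)$. One remark there: what you call ``the main obstacle'' is exactly the standard limit $\lim_{q\to1}\theta_q(q^{\beta}z)/\theta_q(q^{\alpha}z)=z^{\alpha-\beta}$ for $z\notin\mathbb{R}_{<0}$, which the paper simply records as the last item of Lemma \ref{basic lim} (with references to Askey and Gasper--Rahman) and then applies in one line to get $\mathrm{e}^{\pm\mathrm{i}\pi\alpha_i}$, the sign coming from the branch of $\arg(-z)$ as you say; you correctly identify the tool (triple product plus modularity) but do not carry it out, whereas the paper treats it as known. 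For \eqref{tildeflim} you take a genuinely different route: you invoke a general confluence theorem for the $q$-Borel/$q$-Laplace pair from the Dreyfus reference, while the paper proves the explicit Lemma \ref{lem:limf} by first rewriting ${}_nf_{n-2}$ via Lemma \ref{Adachi} as a \emph{finite} sum of convergent ${}_n\varphi_{n-1}$ series with theta-quotient coefficients, taking the limit of each convergent piece, and matching the result with the closed form of $h^{(\infty)}_{\pm,i}$ in Proposition \ref{prop:hpm}. The paper's version is more elementary and self-contained; yours is shorter but outsources the analytic content to a black-box confluence statement whose applicability in exactly this setting would need to be checked.

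The genuine gap is in your derivation of \eqref{tildeflim2eq}. You claim that $\tilde h^{(\infty)}_q(z)_n$ converges to $h^{(\infty)}_{\pm,n}$ ``by the same confluence,'' but no such argument is available: $\tilde h^{(\infty)}_q(z)_n$ is a \emph{convergent} power series in $z^{-1}$ with only recursively defined coefficients (it is trivially its own $[\lambda;q]$-sum and does not depend on $\lambda$), whereas its classical counterpart $\hat h_n(z)=1+\sum\kappa_m z^{-m}$ is generically divergent and $h^{(\infty)}_{\pm,n}$ are its two lateral Borel sums, which differ across the Stokes ray. A termwise or resummation-confluence argument therefore cannot produce the $\pm$ dichotomy, and you would also have to explain why the limit of a $\lambda$-independent function acquires a sign governed by $\arg\lambda$. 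The paper avoids this entirely: it establishes \eqref{lim:zero}, \eqref{tildeCi}, \eqref{tildeCn} and \eqref{tildeflim} first, then \emph{solves} the $q$-connection formula \eqref{eq: qconnefomu} for the $n$-th term and compares with the classical formula \eqref{olbcconne}, so that \eqref{tildeflim2eq} is a consequence of the known classical connection data rather than an independent confluence statement. If you want to keep your direct approach you must either prove convergence of the recursively defined coefficients together with uniform-in-$q$ control of the tails (which fails, since the limiting series diverges), or adopt the paper's indirect derivation; as written, item (3) for the $n$-th solution, and hence your version of item (4), is not established.
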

  To prove Proposition \ref{prop:lim eq},  we  need the following lemmas.
\begin{lemma}[see e.g., {\cite[Section 5]{askey1978q}, \cite[Section 1.10]{ gasper2004basic}}]\label{basic lim}
As  $q\rightarrow 1$,  we  have
\begin{align*}
  &  \lim_{q\rightarrow 1}  \log_q(1+(1-q)\alpha)=-\alpha, \ \text{for}  \   \alpha\neq 1/(q-1),\\
  &\lim _{q \rightarrow 1} e_q(z)=\mathrm{e}^z,\\
  &  \lim _{  
    q  \rightarrow 1
	  }  \Gamma_q(z)=\Gamma(z),                  \\
  &  \lim_{q\rightarrow 1}  \frac{\theta_q( q^{\beta} z )}{ \theta_q( q^{\alpha} z )  } =z^{\alpha-\beta},  \  \text{for} \  z\in  \mathbb{C}^*\backslash \mathbb{R}_{<0}.  
\end{align*}
Here  we  take  $-\pi<\mathrm{arg}(z)<\pi$  in the the power function  $z^{\alpha-\beta}$.
\end{lemma}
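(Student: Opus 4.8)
The plan is to treat the four limits separately, since they are logically independent; the first three are elementary or standard, while essentially all the work lies in the fourth (the theta quotient), which I expect to be the main obstacle. For the first limit I would write $\log_q x=\ln x/\ln q$, so that $\log_q(1+(1-q)\alpha)=\ln(1+(1-q)\alpha)/\ln q$. For $q$ near $1$ the hypothesis $\alpha\neq 1/(q-1)$ guarantees $1+(1-q)\alpha\to1\neq0$, so numerator and denominator both tend to $0$; applying L'Hôpital in the variable $q$ (or equivalently expanding $\ln(1+(1-q)\alpha)=(1-q)\alpha+O((1-q)^2)$ and $\ln q=-(1-q)+O((1-q)^2)$) yields the limit $-\alpha$.

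For the second limit I would use the Taylor expansion \eqref{eq:taylor}, writing the $k$-th coefficient as $(1-q)^k/(q;q)_k=\prod_{j=1}^k(1+q+\dots+q^{j-1})^{-1}$. Each factor tends to $1/j$, so the coefficient tends to $1/k!$; to justify interchanging the limit with the sum I would invoke Tannery's theorem, using that for $q\in[q_0,1)$ the coefficient is bounded by $(1-q_0)^k/(q_0;q_0)_k$, which is summable against $|z|^k$ once $q_0$ is chosen so that $|z|<1/(1-q_0)$ (the radius of convergence of $e_q$ being $1/(1-q)\to\infty$). The termwise limit then gives $\sum_k z^k/k!=\mathrm e^z$. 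For the third limit I would start from the infinite-product form $\Gamma_q(z)=(1-q)^{1-z}\prod_{n=0}^\infty\frac{1-q^{n+1}}{1-q^{n+z}}$ coming from the definition in Section \ref{notation}, and compare it with Gauss's product $\Gamma(z)=\lim_{N\to\infty}N!\,N^{z}/\big(z(z+1)\cdots(z+N)\big)$. Using $(1-q^{a})/(1-q)\to a$ factorwise, the truncation at level $N$ converges, after $q\to1$, to the $N$-th Gauss approximant; the only subtlety is the interchange of the $q\to1$ limit with the infinite product, which I would handle by a dominated-convergence/uniformity argument on compact subsets of $\mathbb C\setminus\{0,-1,-2,\dots\}$, as in the standard references \cite{askey1978q, gasper2004basic}.

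The hard part will be the fourth limit. The plan is first to reduce it to the single-shift statement $\lim_{q\to1}\theta_q(q^{s}z)/\theta_q(z)=z^{-s}$, since then the quotient factors as $\big(\theta_q(q^{\beta}z)/\theta_q(z)\big)\big/\big(\theta_q(q^{\alpha}z)/\theta_q(z)\big)\to z^{-\beta}/z^{-\alpha}=z^{\alpha-\beta}$. To analyze $\theta_q(q^sz)/\theta_q(z)$ I would set $q=\mathrm e^{-\varepsilon}$ and complete the square in the exponent of $\theta_q(z)=\sum_n\exp\!\big(-\tfrac{\varepsilon}{2}n^2+(\tfrac{\varepsilon}{2}+\log z)n\big)$, obtaining a Gaussian sum centred at $n_0=\tfrac12+\log z/\varepsilon$; Poisson summation (equivalently the Jacobi imaginary/modular transformation of the theta function) then gives $\theta_q(z)=\mathrm e^{\varepsilon n_0^2/2}\sqrt{2\pi/\varepsilon}\,(1+o(1))$. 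Replacing $z$ by $q^sz$ shifts $n_0\mapsto n_0-s$, so the prefactors yield $\theta_q(q^sz)/\theta_q(z)=\exp(-\varepsilon s\,n_0+\tfrac{\varepsilon}{2}s^2)\to \mathrm e^{-s\log z}=z^{-s}$. The point I expect to have to argue carefully is precisely the hypothesis $z\in\mathbb C^\ast\setminus\mathbb R_{<0}$: it forces $-\pi<\arg z<\pi$, which both fixes the principal branch of $\log z$ (matching the branch of $z^{\alpha-\beta}$ in the statement) and makes every non-central lattice term $k\neq0$ in the Poisson sum, of size $\exp\!\big((-2\pi^2k^2-2\pi k\arg z)/\varepsilon\big)$, decay as $\varepsilon\to0$, so that only the $k=0$ term survives. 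I would isolate this estimate as the technical core of the proof.
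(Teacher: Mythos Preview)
The paper does not prove this lemma at all: it is stated with a citation to \cite{askey1978q} and \cite{gasper2004basic} and then used as a black box. So there is no ``paper's proof'' to compare against, and your proposal already goes well beyond what the paper supplies.

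Your arguments for the four limits are all sound. The first two are handled correctly and completely. For the third you give the right skeleton (rewrite $\Gamma_q$ as a product and compare with Gauss's product for $\Gamma$), but note that the factor $(1-q)^{1-z}$ does not have a limit on its own; one has to absorb it into the truncated product before passing to the limit, or else use the Bohr--Mollerup route via the functional equation $\Gamma_q(z+1)=\frac{1-q^z}{1-q}\Gamma_q(z)$ and log-convexity. You flag this as ``the only subtlety'' and defer to the references, which is fair for a cited lemma, but if you want a self-contained argument this is where extra care is needed.

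Your treatment of the theta quotient via completing the square and Poisson summation (equivalently the Jacobi imaginary transformation) is the standard and correct method, and your identification of the condition $-\pi<\arg z<\pi$ with the decay of the nonzero Fourier modes is exactly right. One small point worth making explicit: in the applications $\alpha,\beta$ are complex, so $q^{\alpha}z$ and $q^{\beta}z$ may have arguments slightly shifted from $\arg z$; but since $\arg(q^{\alpha})=\mathrm{Im}(\alpha)\ln q\to 0$ as $q\to 1$, for $q$ close enough to $1$ the arguments stay in $(-\pi,\pi)$ whenever $z\in\mathbb C^{*}\setminus\mathbb R_{<0}$, and your estimate applies to both numerator and denominator. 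With that remark your reduction to the single-shift statement $\theta_q(q^{s}z)/\theta_q(z)\to z^{-s}$ is fully justified.
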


\begin{lemma}\label{lem:limf}
For $i=1,...,n-1$,    as $q\rightarrow 1$, we have the limit:
    \begin{equation}\label{expre:qlimf}
        \lim_{q\rightarrow 1}  {}_n f_{n-2}\left(\begin{array}{c}
          a_i, a_i q/b_1,...,a_i q/b_{n-1}\\
			a_i q/a_1,..., \widehat{ a_i q/a_{{i}} },...,a_i q/a_{n-1}
        \end{array}  ;\frac{q^{n-1}\lambda}{(1-q)a_i}; q, {\frac{ q^{n-1} }{ {a_i}{(1-q)z  }  }} \right)= {h}^{(\infty)}_{i}(z). 
    \end{equation}
  Here  in  the left side of \eqref{expre:qlimf} (a single-valued function), the variable  $z\in \mathbb{C}^*\backslash  - \lambda \mathbb{R}_{>0}$,   and  in the right side,  we   take the branch   of the multi-valued function    ${h}^{(\infty)}_{i}(z)$ (given in Definition \ref{def:hi})  with   $\mathrm{arg}(z)\in  (-\mathrm{arg}(\lambda)-\pi,-\mathrm{arg}(\lambda)+\pi  )$.  Here   we take $\mathrm{arg}(\lambda)\in (-\pi,\pi)$.
\end{lemma}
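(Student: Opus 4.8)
The plan is to reduce the $q$-Borel--Laplace sum on the left of \eqref{expre:qlimf} to a finite combination of \emph{convergent} basic hypergeometric series by means of Lemma \ref{Adachi}, and then to pass to the limit $q\to 1$ term by term, comparing the outcome with the explicit formula for $h^{(\infty)}_i$ supplied by Proposition \ref{prop:hpm} (together with Definition \ref{def:hi}).

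First I would apply Lemma \ref{Adachi} with $\boldsymbol a=(a_i,a_iq/b_1,\dots,a_iq/b_{n-1})$, $\boldsymbol b=(a_iq/a_l)_{l\ne i}$, Adachi's spiral parameter $\lambda^{Ad}:=q^{n-1}\lambda/((1-q)a_i)$ and Adachi's variable $z^{Ad}:=q^{n-1}/(a_i(1-q)z)$; since $|q\prod_{l}b^{Ad}_l/(z^{Ad}\prod_l a^{Ad}_l)|\to 0$ as $q\to1$, the hypotheses of Lemma \ref{Adachi} hold for $q$ close to $1$. This writes the left-hand side of \eqref{expre:qlimf} as $\sum_{k=1}^{n}C_k(q)\,{}_n\varphi_{n-1}(\cdots;q,w(q))$, where $C_k(q)$ is the product of an infinite $q$-Pochhammer ratio (built only from the $a^{Ad}$'s and $b^{Ad}$'s) with the two theta quotients $\theta_q(qa^{Ad}_k z^{Ad}/\lambda^{Ad})/\theta_q(qz^{Ad}/\lambda^{Ad})$ and $\theta_q(a^{Ad}_k\lambda^{Ad})/\theta_q(\lambda^{Ad})$, and $w(q)=(1-q)z\prod_l b_l/(q^{n-1}\prod_l a_l)$ is independent of $k$. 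The two useful simplifications produced by the substitution are that $z^{Ad}/\lambda^{Ad}=1/(z\lambda)$ is independent of $q$, and that $w(q)=(1-q)\cdot(\text{quantity tending to }z)$.

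Then I would take $q\to 1$ in each summand, using Lemma \ref{basic lim} throughout and the expansions $\log_q a_i\to\alpha_i$, $\log_q b_l\to\beta_l$ coming from \eqref{al change}--\eqref{bl change}. \textbf{(i)} The factor ${}_n\varphi_{n-1}(\cdots;q,w(q))$ has all its parameters of the form $q^{c(q)}$ with $c(q)$ convergent, so by the standard confluence $(q^{c};q)_m/(1-q)^m\to(c)_m$ one gets convergence to precisely the hypergeometric function ${}_{n-1}F_{n-1}(\cdots;z)$ appearing in the $k$-th term of Proposition \ref{prop:hpm}. \textbf{(ii)} Rewriting each $q$-Pochhammer symbol through $\Gamma_q(u)=(q;q)_\infty(1-q)^{1-u}/(q^u;q)_\infty$ and using $\Gamma_q\to\Gamma$, the $q$-Pochhammer ratio tends to the advertised ratio of Gamma functions, but up to a residual scalar $(1-q)^{-\log_q a^{Ad}_k}$ (the net power of $1-q$ after all cancellations is exactly $-\log_q a^{Ad}_k$). \textbf{(iii)} The first theta quotient has the $q$-\emph{independent} argument $1/(z\lambda)$, so by the last identity in Lemma \ref{basic lim} it converges to $(z\lambda)^{\lim\log_q a^{Ad}_k}$, the branch being the one determined by $z\lambda\notin\mathbb R_{<0}$. \textbf{(iv)} The second theta quotient carries the $q$-dependent argument $\lambda^{Ad}=q^{n-1}\lambda/((1-q)a_i)\to\infty$; one must show it behaves like $(\lambda^{Ad})^{-\log_q a^{Ad}_k}=(1-q)^{\log_q a^{Ad}_k}\lambda^{-\log_q a^{Ad}_k}\cdot(1+o(1))$, e.g. by writing $\log_q a^{Ad}_k$ as an integer plus a bounded remainder, using the quasi-periodicity \eqref{theta(q z)} and the Jacobi triple product \eqref{Jacobi's triple product identity} to reduce to fixed-argument theta quotients and to $q$-Pochhammer limits of the kind in Lemma \ref{limit-theta}. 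Multiplying the contributions (i)--(iv), the powers of $1-q$ cancel (between (ii) and (iv)), the powers of $\lambda$ cancel (between (iii) and (iv)), and the $k$-th summand converges to $z^{\,1+\alpha_i-\beta_{k-1}}$ times the Gamma-function ratio times ${}_{n-1}F_{n-1}(\cdots;z)$ --- exactly the $(k-1)$-th term (with $b_0:=q$, $\beta_n:=1$, indices mod $n$) of $h^{(\infty)}_{+,i}$, resp. $h^{(\infty)}_{-,i}$, in Proposition \ref{prop:hpm}. Summing over $k$ gives the right-hand side of \eqref{expre:qlimf}; the choice of branch of the multivalued $h^{(\infty)}_i$ is forced by the hypothesis $\arg(z)\in(-\arg(\lambda)-\pi,-\arg(\lambda)+\pi)$, which pins down the branch of every power of $z$ and of $z\lambda$ used above.

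The step I expect to be the main obstacle is \textbf{(iv)}: the quotient $\theta_q(a^{Ad}_k\lambda^{Ad})/\theta_q(\lambda^{Ad})$ is \emph{not} covered by the fixed-argument limit in Lemma \ref{basic lim}, because its argument grows like $(1-q)^{-1}$ as $q\to1$; one has to control $\theta_q$ at an argument that tends to infinity simultaneously with $q\to1$ and extract exactly the powers of $1-q$ and of $\lambda$ needed to cancel those generated by (ii) and (iii). Everything else --- the bookkeeping of the $(1-q)$-exponents across the $(2n-3)$-fold Pochhammer products, the confluence in (i), and the matching of parameters and Gamma-factors with Proposition \ref{prop:hpm} under the index shift --- is routine. (Alternatively one could argue without Lemma \ref{Adachi}, showing that $\tilde h^{(\infty)}_q(z,\lambda)_i$ stays locally uniformly bounded as $q\to1$, that any limit solves the confluent ODE \eqref{c h eq}, and that its asymptotic expansion at infinity is the limit of the $q$-asymptotic expansion, hence identifying the limit with $h^{(\infty)}_{\pm,i}$ by uniqueness of the Borel sum; but the explicit route matches the formula in Proposition \ref{prop:hpm} directly.)
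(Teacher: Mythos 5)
Your proposal is correct and follows essentially the same route as the paper: apply Lemma \ref{Adachi} with exactly these parameters to turn the $[\lambda;q]$-sum into a finite combination of convergent ${}_n\varphi_{n-1}$'s, pass to the limit $q\to 1$ termwise via Lemma \ref{basic lim} (rewriting Pochhammer ratios as $\Gamma_q$ ratios), and match the result with Proposition \ref{prop:hpm}. You even correctly isolate the one genuinely delicate point — the theta quotient whose argument $\lambda^{Ad}\sim (1-q)^{-1}$ escapes to infinity — which the paper's own (rather terse) proof handles implicitly in its first displayed rewriting.
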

\begin{proof}
    From  Lemma \ref{Adachi}  and    Lemma \ref{basic lim},  we have  (here for convenience, we denote $b_n=q,\beta_n=1$) 
    \begin{align*}
     &   \lim_{q\rightarrow 1}   {}_n f_{n-2} \left(\begin{array}{c}
           a_i, a_i q/b_1,...,a_i q/b_{n-1}\\
			a_i q/a_1,..., \widehat{ a_i q/a_{{i}} },...,a_i q/a_{n-1} 
        \end{array}  ;  \frac{q^{n-1}\lambda}{(1-q)a_i}; q, {\frac{ q^{n-1} }{ {a_i}{(1-q)z  }  }}  \right)\\
        =&\lim_{q\rightarrow 1} \sum_{k=1}^n \frac{ \prod_{l=1,l\neq i}^{n-1}  \Gamma_q( \log_q( a_iq/a_l ) )  \prod_{l=1,l\neq  k}^n  \Gamma_q(\log_q( a_iq/b_l ) -\log_q( a_iq/b_k ) ) }{  \prod_{l=1,l\neq k}^n  \Gamma_q(    \log_q(a_iq/b_l  ) ) \prod_{l=1,l\neq i}^{n-1}\Gamma_q( \log_q( a_iq/a_l )-\log_q( a_i q/b_k ) )  } 
        \frac{\left( {z\lambda}  \right)^{\log_q( a_iq/b_k  )}}{ (1-q)^{ \log_q( a_iq/b_k  ) }  }      \\
        &\times \left(\frac{q^{n-1}\lambda}{(1-q)a_i}\right)^{-\log_q a_i  }     { }_n\varphi_{n-1}\left(\begin{array}{c}
   a_iq/b_k,a_1q/b_k,...,\widehat{a_iq/b_k},...,a_{n-1}q/b_k , {0} \\
 b_1q/b_k,...,\widehat{b_kq/b_k},...,b_nq/b_k
  \end{array} ;q, \frac{(1-q)z\prod_{l=1}^n b_l}{q^n\prod_{l=1}^{n-1}a_l}\right)   \\
        =&  \sum_{k=1}^n   \frac{ \prod_{l=1,l\neq i}^{n-1}  \Gamma( 1+\alpha_i-\alpha_l )  \prod_{l=1,l\neq  k}^n  \Gamma(   \beta_k-\beta_l   ) }{  \prod_{l=1,l\neq k}^n  \Gamma( 1+\alpha_i-\beta_l ) \prod_{l=1,l\neq  i}^{n-1}\Gamma( \beta_k -\alpha_l     )  }   \\
    &\times   z ^{1+\alpha_i-\beta_k}    {}_{n-1}F_{n-1}\left( \begin{array}{c}
        1+\alpha_1-\beta_k,...,1+\alpha_{n-1}-\beta_k  \\
         1+\beta_1-\beta_k,...,\widehat{1+\beta_k-\beta_k},...,1+\beta_n-\beta_k
        \end{array}       ;{z} \right)  , 
    \end{align*}
    where  we  take    $\mathrm{arg}{(\lambda)}\in  (-\pi,\pi)$  and   $\mathrm{arg}(z)\in  (-\mathrm{arg}(\lambda)-\pi,-\mathrm{arg}(\lambda)+\pi  )$.     Comparing the above formula and  Proposition    \ref{prop:hpm},  we obtain this proposition.
\end{proof}

\begin{proof}[Proof of Proposition \ref{prop:lim eq}]
The proof contains the following parts    ($a_1,...,a_{n-1},b_1,...,b_{n-1}$    given in  \eqref{al change}, \eqref{bl change}):
\begin{itemize}
    \item The formula \eqref{lim:zero}  is  derived from direct computations.
    \item      The  formula \eqref{tildeCi}    is  derived from    Lemma \ref{basic lim}  and   the following two formulas:
    \begin{equation*}
      \frac{\prod_{l=1,l\neq  i}^{n-1}(a_l;q)_{\infty}\prod_{l=1}^{n-1}(b_l/a_i;q)_\infty}{\prod_{l=1}^{n-1}(b_l;q)_{\infty}\prod_{l=1,l\neq  i}^{n-1}(a_l/a_i;q)_\infty} = (1-q)^{\log_q  a_i} \frac{\prod_{l=1}^{n-1}  \Gamma_q(\log_qb_l) \prod_{l=1,l\neq  i}^{n-1} \Gamma_q( \log_qa_l -\log_q a_i )  }{\prod_{l=1,l\neq  i}^{n-1}  \Gamma_q( \log_qa_l )  \prod_{l=1}^{n-1} \Gamma_q( \log_q b_l-\log_q a_i  )} , 
      \end{equation*}
    and
      \begin{align}
        & \lim_{q\rightarrow 1} (1-q)^{ \log_q a_i }   \frac{\theta_q\left( -  {   (1-q)a_i  z\prod_{l=1}^{n-1}{b_l}}\big/{\prod_{l=1}^{n-1}{q}a_l} \right)}{  \theta_q\left( -{(1-q)z\prod_{l=1}^{n-1}{b_l}}\big/{\prod_{l=1}^{n-1}{q}a_l} \right)    }  z^{ \log_q a_i  }   \nonumber \\
         =& \lim_{q\rightarrow 1}  \frac{(1-q)^{\log_q a_i }  z^{\log_q a_i  } }  {\left(-  {   (1-q)z\prod_{l=1}^{n-1}{b_l}}\big/{\prod_{l=1}^{n-1}{q}a_l} \right)^{ \log_q a_i }}  \nonumber  \\
            =&  \mathrm{e}^{ \pm \mathrm{i}  \pi  \alpha_i }  \label{pf:lim1}.
    \end{align}
    The  positive    or   negative  sign in \eqref{pf:lim1}  is   chosen based on whether    $\mathrm{arg}(z)\in (0, -\mathrm{arg}(\lambda)+\pi  )$  or  $\mathrm{arg}(z)\in ( -\mathrm{arg}(\lambda)-\pi, 0  )$.     And   the  formula \eqref{tildeCn}    is  derived from Lemma \ref{basic lim}  and the following two formulas:
    \begin{align}
     &   \frac{(a_1,...,a_{n-1};q)_\infty}{(b_1,...,b_{n-1};q)_\infty}  = \frac{ (1-q)^{ \sum_{l=1}^{n-1}\log_q b_l } }{  (1-q)^{ \sum_{l=1}^{n-1}\log_q a_l  } }  \frac{ \prod_{ l=1 }^{n-1}\Gamma_q( \log_qb_l   )  }{ \prod_{ l=1 }^{n-1}\Gamma_q( \log_qa_l   )  } ,\nonumber  \\
     & \lim_{q\rightarrow 1} \frac{ (1-q)^{ \sum_{l=1}^{n-1}\log_q b_l } }{  (1-q)^{ \sum_{l=1}^{n-1}\log_q a_l  } }\frac{\prod_{l=1}^{n-1}\theta_q\left( {b_l (1-q)  }\big/{ (q^{n-1}  \lambda )}   \right)}{\prod_{l=1}^{n-1}\theta_q\left( {  a_l(1-q) }\big/{ (q^{n-1} \lambda) }     \right)}  \frac{\theta_q\left(   { \lambda  z\prod_{l=1}^{n-1}{b_l}}\big/{\prod_{l=1}^{n-1}a_l}     \right)}{\theta_q\left( { \lambda  }  {z}       \right)}   \frac{{ z}^{\sum_{l=1}^{n-1}\log_qb_l  }}{ {  z }^{ \sum_{l=1}^{n-1}\log_qa_l }  } \nonumber \\
     =&\lim_{q\rightarrow 1}  \frac{ (1-q)^{ \sum_{l=1}^{n-1}\log_q b_l } }{  (1-q)^{ \sum_{l=1}^{n-1}\log_q a_l  } } \frac{\left({ ( 1-q) }/{  \lambda }\right)^{\sum_{l=1}^{n-1}\log_q \left(a_l\right)} }{\left({ ( 1-q) }/{  \lambda }\right)^{\sum_{l=1}^{n-1}\log_q \left(b_l\right)}} \frac{\left( \lambda z \right)^{\sum_{l=1}^{n-1}\log_q a_l}}{\left( \lambda z \right)^{\sum_{l=1}^{n-1}\log_q b_l}}    \frac{{ z}^{\sum_{l=1}^{n-1}\log_qb_l  }}{ {  z }^{ \sum_{l=1}^{n-1}\log_qa_l }  }   \nonumber  \\
     =&1.  \label{pf:lim2}
    \end{align}
      In \eqref{pf:lim2}  we take  $ \mathrm{arg}(\lambda)\in(-\pi,\pi)$  and  $\mathrm{arg}(z)\in  (-\mathrm{arg}(\lambda)-\pi,-\mathrm{arg}(\lambda)+\pi)$.
      \item  The formula \eqref{tildeflim} is derived from Lemma  \ref{lem:limf}, and  the formula \eqref{tildeflim2eq} is derived from \eqref{olbcconne},   \eqref{lim:zero}-\eqref{tildeflim}.
      \item  From \eqref{lim:zero}-\eqref{tildeflim2eq}, it follows that \eqref{eq: qconnefomu} recovers \eqref{olbcconne} as stated in this proposition.
\end{itemize}
\end{proof}

\subsection{Behavior of  Connection Matrix and  $q$-Stokes Matrix of the System \eqref{qdes} as $q\rightarrow 1$}\label{sec:syslim}
 The $q$-difference system \eqref{qdes}   converges    in the form     to the differential system
\begin{equation}\label{des}
    \frac{\mathrm{d}}{\mathrm{d}z}F(z)=\left(E_{n n}+\frac{A}{z}\right)F(z).
\end{equation}     
Similar to the equation \eqref{c h eq} case,   there is a  formal  fundamental solution  of  \eqref{des}   around infinity     (see  e.g., \cite{balser2008formal, turrittin1955convergent}):
    \begin{equation*}
     \hat{F}(z;E_{nn},A)=\hat{H}(z;E_{nn},A)z^{\delta^{(n-1)} (A)}\mathrm{e}^{E_{n n} z}, \ \text{with} \  \hat{H}(z;E_{nn},A)=\mathrm{Id}_n+\sum_{m=1}^{\infty}H_m z^{-m},
\end{equation*}
where 
\begin{equation*}
     \delta^{(n-1)}\left(A\right)_{ij}:=\left\{
          \begin{array}{lr}
             a_{ij},   & \text{if} \ \ 1\le i, j\le n-1, \ \text{or} \ i=j=n;  \\
           0, & \text{otherwise}.
             \end{array}
\right.
\end{equation*}
And under the nonresonant assumption
\begin{equation*}
    \lambda^{(n)}_i-\lambda^{(n)}_j\notin  \mathbb{Z}_{>0}, \ \text{for all} \ 1\leq i,j \leq n,
\end{equation*}
there is a convergent fundamental solution  of  \eqref{des}   around the origin (see  e.g., \cite{balser2008formal}):
\begin{equation*}
    F^{(0)}(z;E_{nn},A)=H^{(0)}(z;E_{nn},A)z^{A_n},
\end{equation*}
with $A_n=\operatorname{diag}\left(\lambda^{\left(n\right)}_1,...,\lambda^{\left(n\right)}_{n}\right)$.

The standard theory of Borel resummation   (see e.g., \cite{balser2008formal, loday2016divergent, Wasow})   states that 
\begin{prop}
    There are  unique meromorphic   solutions $F^{(\infty)}_{\pm}(z;E_{nn},A)=H^{(\infty)}_{\pm}(z;E_{nn},A) z^{{\delta^{(n-1)}(A)}}\mathrm{e}^{ E_{n n} z} $  of  \eqref{des},   with the asymptotic expansion
\begin{equation*}
   H^{(\infty)}_{\pm}(z;E_{nn},A) \sim \mathrm{Id}_n+\sum_{m=1}^{\infty}H_m z^{-m},\ \text{as} \ z \rightarrow \infty \text{ in } \mathrm{Sect}_{\pm}.
\end{equation*}
Here recall that  $\mathrm{Sect}_{+}=S(-\pi/2,3\pi/2)$ and $\mathrm{Sect}_{-}=S(-3\pi/2,\pi/2)$ (see the notation  $S\left(a,b\right)$ in Section \ref{sec:q-borel sum}).
\end{prop}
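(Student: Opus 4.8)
The plan is to apply the classical Borel--Laplace summation theory for linear meromorphic systems with an irregular singularity of Poincar\'e rank $1$ at $z=\infty$, specialized to the rank-one leading term $E_{nn}$; all the analytic ingredients are contained in \cite{balser2008formal, loday2016divergent, Wasow}, and the proof reduces to checking that \eqref{des} fits their hypotheses and to identifying the two maximal sectors. First I would verify that the formal series $\hat H(z;E_{nn},A)=\mathrm{Id}_n+\sum_{m\ge 1}H_m z^{-m}$ is of Gevrey order $1$, i.e. $\|H_m\|\le C K^m\, m!$ for some $C,K>0$. Substituting $\hat F(z;E_{nn},A)=\hat H(z;E_{nn},A)z^{\delta^{(n-1)}(A)}\mathrm e^{E_{nn}z}$ into \eqref{des} and comparing coefficients of $z^{-m}$ yields a recursion $E_{nn}H_m-H_m E_{nn}=H_{m-1}\big(\delta^{(n-1)}(A)-(m-1)\mathrm{Id}_n\big)-A\,H_{m-1}$, in which the homological operator $X\mapsto E_{nn}X-XE_{nn}$ is invertible on the off-diagonal block coupling the eigenvalue $0$ (of multiplicity $n-1$) to the eigenvalue $1$ (of multiplicity $1$), since these eigenvalues are distinct; an induction on $m$ then produces the Gevrey-$1$ bound. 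As in the $q$-case of Proposition \ref{convergent column}, only this one coupling is genuinely divergent, so summation is needed only for the relevant off-diagonal entries.

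Next I would pass to the (classical) Borel transform $\hat\varphi(\zeta)$ of $\hat H$, convergent on a disc $|\zeta|<1/K$ by the Gevrey bound. Because \eqref{des} is linear with a rank-$1$ irregular part, $\hat\varphi$ satisfies a companion linear ODE in $\zeta$ whose only singular point away from the origin lies at $\zeta=1$, the unique nonzero difference of eigenvalues of $E_{nn}$. Hence $\hat\varphi$ admits an analytic continuation with exponential growth of order $1$ at infinity to the plane cut along the ray $\arg\zeta=0$; performing the Laplace transform of $\hat\varphi$ along every non-singular direction and patching the results by Cauchy's theorem produces the two analytic functions $H^{(\infty)}_{\pm}(z;E_{nn},A)$ on the maximal sectors $\mathrm{Sect}_{+}=S(-\pi/2,3\pi/2)$ and $\mathrm{Sect}_{-}=S(-3\pi/2,\pi/2)$, each of opening $2\pi$ and each avoiding one of the two Stokes rays $\arg z=\pm\pi/2$ of \eqref{des}. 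By construction $H^{(\infty)}_{\pm}$ is asymptotic to $\mathrm{Id}_n+\sum_{m\ge1} H_m z^{-m}$ on $\mathrm{Sect}_{\pm}$, and $F^{(\infty)}_{\pm}:=H^{(\infty)}_{\pm}\,z^{\delta^{(n-1)}(A)}\mathrm e^{E_{nn}z}$ solves \eqref{des}, since Borel--Laplace summation sends a formal solution of a linear ODE to an actual solution (the classical analogue of Proposition \ref{resummation satisfies equation}).

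Finally, uniqueness follows from Watson's lemma: on a sector of opening strictly greater than $\pi$, and $2\pi>\pi$ here, a function asymptotic to a Gevrey-$1$ series is determined by that series, because the difference of any two such functions would be $O(\mathrm e^{-c|z|})$ on $\mathrm{Sect}_{\pm}$ for some $c>0$, forcing it to vanish identically; applying this to $H^{(\infty)}_{\pm}$ against any competitor gives the claim. The one genuinely delicate point is the middle step: establishing that the Borel transform has its only finite singularity on the ray $\arg\zeta=0$ and grows at most exponentially of order $1$, which is exactly where the structure of $E_{nn}$ as a rank-one idempotent enters, the single eigenvalue difference $1-0$ yielding the single pair of Stokes rays that match the two sectors in the statement; for a general leading matrix one would instead obtain several Stokes rays and a Stokes-sector decomposition. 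Everything else is the routine linear $1$-summability package and may simply be quoted from \cite{balser2008formal, loday2016divergent, Wasow}.
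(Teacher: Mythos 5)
Your proposal is correct and takes essentially the same route as the paper, which gives no proof of this proposition at all but simply quotes the standard Borel--Laplace ($1$-summability) theory from \cite{balser2008formal, loday2016divergent, Wasow} --- precisely the package you unfold (Gevrey-$1$ recursion, Borel transform, Laplace along nonsingular directions, uniqueness on sectors of opening greater than $\pi$). One minor imprecision: the nonzero eigenvalue differences of $E_{nn}$ are $\pm 1$, not just $1$, and in the differential case the last column of $\hat H$ (carrying $\mathrm{e}^{z}$) is also divergent --- unlike the $q$-analogue in Proposition \ref{convergent column} --- so the Borel transform of the full matrix has singularities on the two rays $\arg\zeta=0$ and $\arg\zeta=\pi$, which is exactly why the maximal sectors are the two opening-$2\pi$ sectors $\mathrm{Sect}_{\pm}$ rather than a single sector of opening $3\pi$; this does not affect your conclusion.
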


\begin{defn}
    Let $U_{\pm}( E_{nn},A   )$  be the connection matrices  of  \eqref{des}  defined by
\begin{equation}
F^{(0)}(z;E_{nn},A) =F^{(\infty)}_{\pm}( z;E_{nn},A   ) \cdot  U_{\pm}( E_{nn},A   ), \ \text{for}\ z\in \mathrm{Sect}_{\pm}.  \label{U+}   
\end{equation}
The  Stokes matrices   $S_{\pm}(E_{nn},A)$ of \eqref{des}  are defined   by
\begin{align*}
   & F^{(\infty)}_{+}(z;E_{nn},A)=F^{(\infty)}_{-}(z;E_{nn},A) S_+(E_{nn},A),  \  \text{for} \  z\in  S(-\pi/2,\pi/2),\\
   & F^{(\infty)}_{-}(z \mathrm{e}^{-2\pi \mathrm{i}};E_{nn},A)=F^{(\infty)}_{+}(z;E_{nn},A) S_-(E_{nn},A),  \  \text{for} \  z\in  S(\pi/2,3\pi/2).
\end{align*}
\end{defn}

 We can also  diagonalize the upper left part of the equation \eqref{des} and obtain  
 \begin{equation}\label{desAn-1}
     \frac{\mathrm{d}}{\mathrm{d}z}F(z)=\left(E_{n n}+\frac{A_{n-1}}{z}\right)F(z),
 \end{equation}
 with $A_{n-1}$  given in Proposition \ref{An-1Pn-1}.
 
 The  (formal)   fundamental solution  of  \eqref{desAn-1}   around (infinity)     the   origin is given below.
\begin{prop}[see e.g.,  \cite{balser2008formal, turrittin1955convergent}]\label{diagFz}
    The system \eqref{desAn-1} has a     (formal)   fundamental solution     as follows.
        
         Around the origin,  there exists a convergent fundamental solution  of  \eqref{desAn-1}:
        \begin{equation}\label{eq:con sol}
    F^{(0)}\left(z;E_{nn},A_{n-1}\right)=H^{(0)}\left(z;E_{nn},A_{n-1}\right) \cdot z^{A_{n}},
\end{equation}
where  $A_{n}=\operatorname{diag}\left(\lambda^{\left(n\right)}_1,...,\lambda^{\left(n\right)}_{n}\right)$ and   $H^{(0)}\left(z\right)$ is an $n\times n$ matrix,  with $(i,j)$-th entry $H^{(0)}(z)_{i j}$ given by
\begin{align*}
	&H^{(0)}\left(z\right)_{i j}=\frac{a^{(n-1)}_i}{\lambda_{j}^{\left(n\right)}-\lambda_{i}^{\left(n-1\right)}} \cdot { }_{n-1} F_{n-1}\left(\begin{array}{c}
1+\lambda^{(n)}_j-\boldsymbol{\lambda^{(n-1)}}-\boldsymbol{\delta_{i}^{(n-1)}   } \\
1+\lambda^{(n)}_j-\boldsymbol{\lambda^{(n)}_{\hat{j}}}
\end{array} ; z\right), \ \text{for}\  1\leq i \leq n-1,\, 1\leq j \leq n, \\
	&H^{(0)}\left(z\right)_{n j}={ }_{n-1} F_{n-1}\left(\begin{array}{c}
1+\lambda^{(n)}_j-\boldsymbol{\lambda^{(n-1)}} \\
1+\lambda^{(n)}_j-\boldsymbol{\lambda^{(n)}_{\hat{j}}}
\end{array} ;  z\right), \ \text{for} \  1\leq j \leq n.
\end{align*}
Here  $\boldsymbol{\delta^{(n-1)}_i}=(\delta_{i1},...\delta_{i,n-1})$ and   we denote $\boldsymbol{\lambda^{(n-1)}}:=\left(\lambda^{(n-1)}_1,...,  \lambda^{(n-1)}_{n-1}  \right)$,  $\boldsymbol{\lambda^{(n)}_{\hat{j}}}:=\Big( \lambda^{(n)}_1,...,\widehat{\lambda^{(n)}_j},...,\lambda^{(n)}_n   \Big)$.      So 
\begin{align*}
   & 1+\lambda^{(n)}_j-\boldsymbol{\lambda^{(n-1)}}-\boldsymbol{\delta_{i}^{(n-1)}   } 
    =\left(  1+\lambda^{(n)}_j-\lambda^{(n-1)}_1-\delta_{i1},...,1+\lambda^{(n)}_j-\lambda^{(n-1)}_{n-1} -\delta_{i,n-1} \right),\\
    & 1+\lambda^{(n)}_j-\boldsymbol{\lambda^{(n)}_{\hat{j}}}
    =\left(  1+\lambda^{(n)}_j-\lambda^{(n)}_1,...,\widehat{1+\lambda^{(n)}_j-\lambda^{(n)}_{j}},...,1+\lambda^{(n)}_j-\lambda^{(n)}_{n}  \right),\\
    &1+\lambda^{(n)}_j-\boldsymbol{\lambda^{(n-1)}} =\left(  1+\lambda^{(n)}_j-\lambda^{(n-1)}_1,...,1+\lambda^{(n)}_j-\lambda^{(n-1)}_{n-1}  \right).
\end{align*}
And  around infinity,  there exists a formal   fundamental solution  of    \eqref{desAn-1}:
\begin{equation*}
     \hat{F}(z;E_{nn},A_{n-1})=\hat{H}(z;E_{nn},A_{n-1})z^{\delta^{(n-1)} (A_{n-1})}\mathrm{e}^{E_{n n} z}, 
\end{equation*}
and $\delta^{(n-1)} (A_{n-1})=\mathrm{diag}\left(a_{11},a_{22}...,a_{nn}\right)$.   Entries of $\hat{H}(z;E_{nn},A_{n-1})$  is given by
\begin{align*}
     &\hat{H}(z;E_{nn},A_{n-1})_{i j}=d_{ij}\cdot  { }_{n} F_{n-2}\left(\begin{array}{c}
\boldsymbol{\lambda^{(n)}   }-\lambda^{(n-1)}_{j}+1-\delta_{ij} \\
\boldsymbol{\delta^{(n-1)}_{i \hat{j}} }+\boldsymbol{\lambda^{(n-1)}_{\hat{j}}}-\lambda^{(n-1)}_{j}+1-\delta_{ij}
\end{array} ;-\frac{1}{z}\right) ,\  \text{for} \  1\leq i\leq n, 1 \le j \leq n-1,
\end{align*}
with
\begin{equation*}
    d_{ij}:=\left\{
          \begin{array}{lr}
       \frac{a_{i}^{(n-1)} b_{j}^{(n-1)} }{ \left(\lambda^{(n-1)}_{i}-\lambda^{(n-1)}_{j}+1\right) z  }      ,   & \text{if} \ \ 1\le i\neq   j\le n-1,  \\
           1, & \text{if} \ \ 1\le i=   j\le n-1,\\
           -\frac{b_{j}^{(n-1)}}{z},   & \text{if} \ \ i=n,\  1\le    j\le n-1.
             \end{array}
\right.
\end{equation*}
Here  we   denote  the index notations    $\boldsymbol{\lambda^{(n)}}:=\left(\lambda^{(n)}_1,...,\lambda^{(n)}_n   \right)$,  $\boldsymbol{\lambda^{(n-1)}_{\hat{j}}}:=\Big( \lambda^{(n-1)}_1,...,\widehat{\lambda^{(n-1)}_j},...,\lambda^{(n-1)}_{n-1}   \Big)$ , and  recall that  $\boldsymbol{ \delta^{(n-1)}_{i\hat{j}}   }=\Big(\delta_{i1},...,\widehat{\delta_{ij}},...,\delta_{i,n-1}\Big)$.  So  
\begin{align*}
   & \boldsymbol{\lambda^{(n)}   }-\lambda^{(n-1)}_{j}+1-\delta_{ij}=\left( \lambda^{(n)}_1   -\lambda^{(n-1)}_{j}+1-\delta_{ij},..., \lambda^{(n)}_n   -\lambda^{(n-1)}_{j}+1-\delta_{ij}  \right),\\
   &\boldsymbol{\delta^{(n-1)}_{i \hat{j}} }+\boldsymbol{\lambda^{(n-1)}_{\hat{j}}}-\lambda^{(n-1)}_{j}+1-\delta_{ij}\\
   =&\Big(
            \delta_{i1}+ \lambda^{(n-1)}_1-\lambda^{(n-1)}_j+1-\delta_{ij}, ...,\reallywidehatt{ \delta_{ij}+ \lambda^{(n-1)}_j-\lambda^{(n-1)}_j+1-\delta_{ij}},..., \delta_{i,n-1}+ \lambda^{(n-1)}_{n-1}-\lambda^{(n-1)}_j+1-\delta_{ij}
             \Big).
\end{align*}

\end{prop}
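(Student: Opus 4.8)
The plan is to prove Proposition~\ref{diagFz} by the same mechanism already used for its $q$-difference analogues, Propositions~\ref{ffs qsym} and~\ref{qsolu zero}: rewrite \eqref{desAn-1} as $zF'(z)=\bigl(zE_{nn}+A_{n-1}\bigr)F(z)$, substitute the displayed ansatz, reduce to a matrix recursion for the coefficients, and check that the claimed generalized hypergeometric series solve that recursion entrywise. I would treat the regular point $z=0$ and the irregular point $z=\infty$ separately, and use the $q\to 1$ limit of the $q$-calculus results only as a cross-check.

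For the convergent solution around the origin, set $F^{(0)}(z)=H^{(0)}(z)\,z^{A_n}$ with $A_n=\operatorname{diag}(\lambda^{(n)}_1,\dots,\lambda^{(n)}_n)$ and $H^{(0)}(z)=\sum_{m\ge 0}H_mz^m$. Substituting and cancelling $z^{A_n}$ on the right gives $z(H^{(0)})'(z)+H^{(0)}(z)A_n=\bigl(zE_{nn}+A_{n-1}\bigr)H^{(0)}(z)$, hence the recursion
\[
H_m\bigl(mI+A_n\bigr)-A_{n-1}H_m=E_{nn}H_{m-1},\qquad m\ge 0,\quad H_{-1}:=0.
\]
At $m=0$ this is $H_0A_n=A_{n-1}H_0$, so $H_0$ intertwines $A_{n-1}$ and $A_n$; since $A_{n-1}$ is the arrow matrix of Proposition~\ref{An-1Pn-1}, whose characteristic polynomial coincides with that of $A$, one builds $H_0$ columnwise and reads off $H^{(0)}(z)|_{z=0}$ exactly as in Proposition~\ref{qsolu zero}, using \eqref{eq:aibi}. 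For $m\ge 1$ the Sylvester operator $X\mapsto X(mI+A_n)-A_{n-1}X$ is invertible because, under the nonresonant assumption $\lambda^{(n)}_i-\lambda^{(n)}_j\notin\mathbb Z_{>0}$ together with \eqref{ineq1}--\eqref{ineq2}, the spectra of $mI+A_n$ and of $A_{n-1}$ are disjoint; this yields existence, uniqueness and, by the standard majorant estimate, convergence of $H^{(0)}(z)$. It then remains to verify that the proposed ${}_{n-1}F_{n-1}$ entries have Taylor coefficients satisfying the scalar form of this recursion; written out with the arrow shape of $A_{n-1}$ and the identity $a^{(n-1)}_ib^{(n-1)}_i=-\prod_l(\lambda^{(n)}_l-\lambda^{(n-1)}_i)/\prod_{l\neq i}(\lambda^{(n-1)}_l-\lambda^{(n-1)}_i)$, this is the same index identity as in the $q$-case with $(q^{a};q)_k$ replaced by the Pochhammer symbol $(a)_k$.

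For the formal solution around infinity, take $\hat F(z)=\hat H(z)\,z^{\delta^{(n-1)}(A_{n-1})}\mathrm e^{E_{nn}z}$ with $\delta^{(n-1)}(A_{n-1})=\operatorname{diag}(a_{11},\dots,a_{nn})$ and $\hat H(z)=\mathrm{Id}_n+\sum_{m\ge 1}H_mz^{-m}$. Substitution now yields a recursion expressing $H_{m+1}$ from $H_m$, the $E_{nn}$-term producing the rank-one coupling typical of a rank-one irregular singularity; the last column closes into an ordinary convergent recursion, while the first $n-1$ columns remain only formal (these are the Borel-summable directions of the preceding subsection). Matching with the claimed ${}_nF_{n-2}$ series is again a coefficientwise check controlled by the arrow form of $A_{n-1}$ and \eqref{eq:aibi}. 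As an independent check one may instead pass to the limit $q\to1$ in Propositions~\ref{ffs qsym} and~\ref{qsolu zero}: by Lemma~\ref{basic lim} one has $\log_q(1-(1-q)\lambda^{(n)}_i)\to\lambda^{(n)}_i$, $\log_q(1-(1-q)\lambda^{(n-1)}_i)\to\lambda^{(n-1)}_i$, $e_q(z)\to\mathrm e^z$ and $(q^{a};q)_k/(1-q)^k\to(a)_k$, so each basic hypergeometric coefficient there converges to the corresponding generalized hypergeometric coefficient once the variable $(1-q)z/q^{\nu_n}$ is seen to absorb the missing power of $(1-q)$; passing to the limit termwise reproduces the stated formulas.

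The only real obstacle is the bookkeeping in the matching step: reconciling the matrix recursions with the closed-form hypergeometric coefficients requires carefully tracking the shifts $\pm\delta_{ij}$, the hatted index vectors $\boldsymbol{\delta^{(n-1)}_{i\hat j}}$ and $\boldsymbol{\lambda^{(n)}_{\hat j}}$, and the pairing $a^{(n-1)}_ib^{(n-1)}_j$ --- precisely the computation already performed for Propositions~\ref{ffs qsym} and~\ref{qsolu zero}. In the confluence route that obstacle is replaced by the much milder task of justifying the interchange of $\lim_{q\to1}$ with the hypergeometric sums, which follows from the entirety of the relevant ${}_{n-1}F_{n-1}$ and ${}_nF_{n-2}$ series (for $z=0$) and is purely formal (for $z=\infty$). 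I would take the direct substitution as the proof and record the $q\to1$ limit as a remark.
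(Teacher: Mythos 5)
Your proposal is correct and matches the paper's (implicit) treatment: the paper states Proposition~\ref{diagFz} as a standard fact with a citation to \cite{balser2008formal, turrittin1955convergent}, and the intended argument is exactly the direct substitution into $zF'=(zE_{nn}+A_{n-1})F$, the Sylvester-type recursion $H_m(m\,\mathrm{Id}+A_n)-A_{n-1}H_m=E_{nn}H_{m-1}$ made solvable by the nonresonance condition, and the entrywise matching with the hypergeometric coefficients via \eqref{eq:aibi} --- the same computation the paper carries out for the $q$-analogues in Propositions~\ref{ffs qsym} and~\ref{qsolu zero}. Your $q\to1$ confluence cross-check is consistent with Section~\ref{qlimit} and is a reasonable remark to append.
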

 Similar to Proposition \ref{conjugation},  we  have  
 \begin{prop}
     If  ${F}^{(\infty)}_{\pm}\left(z;E_{nn},A_{n-1}\right):=H^{(\infty)}_{\pm}(z;E_{nn},A_{n-1}) z^{{\delta^{(n-1)}(A_{n-1})}}\mathrm{e}^{ E_{n n} z}$  is  the fundamental solutions on $\mathrm{Sect}_{\pm}$  of  \eqref{desAn-1},  
then
\begin{equation}\label{eq:diag sol inf}
{F}^{(\infty)}_{\pm}\left(z;E_{nn},A_{n-1}\right)=\operatorname{diag}\left({P}_{n-1}^{-1},1\right){F}^{(\infty)}_{\pm}(z;E_{nn},A)\operatorname{diag}\left({P}_{n-1},1\right).
\end{equation}
   Moreover,  the connection matrices and  the Stokes matrices of the systems \eqref{des} and \eqref{desAn-1} are related by
\begin{align*}
&S_{\pm}\left(E_{nn}, A\right)=\operatorname{diag}\left({P}_{n-1},1\right)S_{\pm}\left(E_{nn}, A_{n-1}\right)\operatorname{diag}\left({P}_{n-1}^{-1},1\right),\\
&U_{\pm}\left(E_{nn}, A\right)=\mathrm{diag}\left({P}_{n-1},1\right)U_{\pm}\left(E_{nn}, A_{n-1}\right).
\end{align*} 
 \end{prop}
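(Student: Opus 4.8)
The plan is to run the differential-equation analogue of the gauge-transformation argument behind Proposition \ref{conjugation}. Set $G:=\operatorname{diag}(P_{n-1},1)$, the constant invertible $n\times n$ matrix assembled from the $P_{n-1}$ of Proposition \ref{An-1Pn-1}. Since $G$ is block diagonal of type $(n-1,1)$ while $E_{nn}$ is zero on the first block and the identity on the last, one has $G^{-1}E_{nn}G=E_{nn}$, and by construction $G^{-1}AG=A_{n-1}$. Hence the constant gauge transformation $F(z)\mapsto G^{-1}F(z)$ carries solutions of \eqref{des} to solutions of \eqref{desAn-1}, because $\frac{\mathrm{d}}{\mathrm{d}z}(G^{-1}F)=G^{-1}(E_{nn}+A/z)G\,(G^{-1}F)=(E_{nn}+A_{n-1}/z)(G^{-1}F)$. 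First I would record the formal version: $G^{-1}\hat F(z;E_{nn},A)G$ is a formal fundamental solution of \eqref{desAn-1}, which follows once one checks $G^{-1}z^{\delta^{(n-1)}(A)}G=z^{\delta^{(n-1)}(A_{n-1})}$ — this is $P_{n-1}^{-1}A^{(n-1)}P_{n-1}=\operatorname{diag}(\lambda^{(n-1)}_1,\dots,\lambda^{(n-1)}_{n-1})$ from Proposition \ref{An-1Pn-1}, with the $(n,n)$ entry untouched — together with $G^{-1}\mathrm{e}^{E_{nn}z}G=\mathrm{e}^{E_{nn}z}$.

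Next I would promote this to the analytic level to obtain \eqref{eq:diag sol inf}. Conjugating the Borel sums, $G^{-1}F^{(\infty)}_{\pm}(z;E_{nn},A)G$ is a meromorphic fundamental solution of \eqref{desAn-1} on $\mathrm{Sect}_{\pm}$; writing it as $\big(G^{-1}H^{(\infty)}_{\pm}(z;E_{nn},A)G\big)z^{\delta^{(n-1)}(A_{n-1})}\mathrm{e}^{E_{nn}z}$ and using that conjugation by a constant invertible matrix preserves sectorial asymptotic expansions, one gets that $G^{-1}H^{(\infty)}_{\pm}(z;E_{nn},A)G$ has asymptotics $\mathrm{Id}_n+\sum_{m\ge 1}(G^{-1}H_mG)z^{-m}$, and $\sum_{m\ge 1}(G^{-1}H_mG)z^{-m}$ is precisely the non-constant part of $G^{-1}\hat H(z;E_{nn},A)G=\hat H(z;E_{nn},A_{n-1})$. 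By the uniqueness of the normalized solution on $\mathrm{Sect}_{\pm}$ quoted in the excerpt, this conjugate equals $F^{(\infty)}_{\pm}(z;E_{nn},A_{n-1})$, which rearranges to \eqref{eq:diag sol inf}. In the same spirit I would check $G^{-1}F^{(0)}(z;E_{nn},A)=\big(G^{-1}H^{(0)}(z;E_{nn},A)\big)z^{A_n}$ is the convergent solution of \eqref{desAn-1} at the origin: it is holomorphic and invertible near $z=0$, and $G^{-1}H^{(0)}(0;E_{nn},A)$ still conjugates $A_n$ to $A_{n-1}$ since $A_{n-1}=G^{-1}AG$; matching this with the normalization fixed in Proposition \ref{diagFz} (the same consistency of normalizations used in Proposition \ref{conjugation}) gives $F^{(0)}(z;E_{nn},A)=G\,F^{(0)}(z;E_{nn},A_{n-1})$.

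Finally I would substitute the two displayed identities into the definitions of the connection and Stokes matrices. Plugging $F^{(0)}(z;E_{nn},A)=G\,F^{(0)}(z;E_{nn},A_{n-1})$ and \eqref{eq:diag sol inf} into \eqref{U+} gives $G\,F^{(0)}(z;E_{nn},A_{n-1})=G\,F^{(\infty)}_{\pm}(z;E_{nn},A_{n-1})G^{-1}U_{\pm}(E_{nn},A)$, hence $U_{\pm}(E_{nn},A)=G\,U_{\pm}(E_{nn},A_{n-1})$; feeding \eqref{eq:diag sol inf} into the two relations $F^{(\infty)}_{+}=F^{(\infty)}_{-}S_{+}$ and $F^{(\infty)}_{-}(\cdot\,\mathrm{e}^{-2\pi\mathrm{i}})=F^{(\infty)}_{+}S_{-}$ defining $S_{\pm}(E_{nn},A)$ likewise yields $S_{\pm}(E_{nn},A)=G\,S_{\pm}(E_{nn},A_{n-1})G^{-1}$.

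The bulk of this is routine linear algebra with the constant matrix $G$, driven entirely by $G^{-1}E_{nn}G=E_{nn}$ and $G^{-1}AG=A_{n-1}$. The one point that deserves genuine care is the passage from the formal conjugation identity to the analytic one: checking that constant conjugation is compatible with the sectorial asymptotics of the Borel sums on $\mathrm{Sect}_{\pm}$ (so the uniqueness statement applies) and with the chosen normalization of the convergent solution at $z=0$. Once this compatibility is in place, the stated relations for $U_{\pm}$ and $S_{\pm}$ are immediate.
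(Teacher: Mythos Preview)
Your proposal is correct and follows exactly the approach the paper indicates: the proposition is stated there without proof beyond the remark ``Similar to Proposition \ref{conjugation}'', which in turn rests solely on the observation that $E_{nn}$ commutes with $\operatorname{diag}(P_{n-1},1)$. You have simply fleshed out that one-line justification into a careful argument, including the uniqueness-of-Borel-sum step and the normalization check at $z=0$ that the paper leaves implicit.
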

\begin{defn}
    For  $1\leq i \leq n$, $1\leq j \leq n-1$,        the  analytic continuation of the    function   $ {H}^{(\infty)}_{+}(z;E_{nn},A_{n-1})_{ij}$  defined on  $\mathrm{Sect}_{+}$  to   $\widetilde{C^*}$  is denoted as ${H}^{(\infty)}(z;E_{nn},A_{n-1})_{ij}$.
\end{defn}
Similar to  Proposition \ref{prop:hpm},  one knows that 
\begin{prop}\label{prop:Fclopm}
    For $1\leq i \leq n$,    $1\leq j \leq n-1$,  the entry ${H}^{(\infty)}_{\pm}(z;E_{nn},A_{n-1})_{ij}$  is given by
    \begin{align*}
& {H}^{(\infty)}_{\pm}(z;E_{nn},A_{n-1})_{ij}\\
=&d_{ij}\sum_{k=1}^{n}\frac{\prod_{l=1,l\neq j}^{n-1}\Gamma( \delta_{il}+ \lambda^{(n-1)}_l-\lambda^{(n-1)}_j+1-\delta_{ij} )  \prod_{l=1,l\neq k}^n  \Gamma( \lambda^{(n)}_l   -\lambda^{(n)}_k  )  }{\prod_{l=1,l\neq k}^n\Gamma( \lambda^{(n)}_l   -\lambda^{(n-1)}_{j}+1-\delta_{ij}  )  \prod_{l=1,l\neq j}^{n-1} \Gamma( \delta_{il}+\lambda^{(n-1)}_l- \lambda^{(n)}_k      ) }\\
        &\times  {}_{n-1}F_{n-1}\left(\begin{array}{c} 
            1+\lambda^{(n)}_k-\delta_{i1}-\lambda^{(n-1)}_1,...,1+\lambda^{(n)}_k-\delta_{i,n-1}-\lambda^{(n-1)}_{n-1}  \\
              1+\lambda^{(n)}_k-\lambda^{(n)}_1,...,\widehat{1+\lambda^{(n)}_k-\lambda^{(n)}_k},...,1+\lambda^{(n)}_k-\lambda^{(n)}_n
      \end{array};z    \right)\\
        &\times  z^{ \lambda^{(n)}_k   -\lambda^{(n-1)}_{j}+1-\delta_{ij}  },
    \end{align*}
as $z\in \mathrm{Sect}_{\pm}$.    As a result,   {for}   $i=1,...,n$,  $j=1,...,n-1$,   we  have
\begin{equation*}
        {H}^{(\infty)}_{+}(z;E_{nn},A_{n-1})_{ij}={H}^{(\infty)}_{-}(z;E_{nn},A_{n-1})_{ij}, \  \text{as} \  z\in S(-\pi/2,\pi/2  ),
    \end{equation*}
    and  ${H}^{(\infty)}(z;E_{nn},A_{n-1})_{ij}$ is also the analytic continuation of   ${H}^{(\infty)}_-(z;E_{nn},A_{n-1})_{ij}$.
\end{prop}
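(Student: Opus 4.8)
The plan is to establish the closed form one matrix entry at a time, by reducing to the scalar confluent hypergeometric equation \eqref{c h eq} and invoking Ichinobe's formula, Proposition \ref{prop:hpm}. Fix $1\le i\le n$ and $1\le j\le n-1$. First I would take the $q\to 1$ confluence limit of the relations collected in Section \ref{sec:relation} (equivalently, redo that elementary computation directly for the differential system \eqref{desAn-1}): for a fixed row index $i$, every entry $F^{(\infty)}_{\pm}(z;E_{nn},A_{n-1})_{i\ell}$ and $F^{(0)}(z;E_{nn},A_{n-1})_{i\ell}$, $\ell=1,\dots,n$, solves one and the same scalar ODE — the limit of \eqref{Fij eq} when $i\le n-1$, and of \eqref{Fnj eq} when $i=n$. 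After factoring the power $z^{\lambda^{(n-1)}_j}$ out of the $j$-th column, this ODE is of the form \eqref{c h eq}, whose parameters are obtained by relabelling the two lists $\{\lambda^{(n)}_\ell-\lambda^{(n-1)}_j+1-\delta_{ij}\}_{\ell=1}^n$ and $\{\delta_{i\ell}+\lambda^{(n-1)}_\ell-\lambda^{(n-1)}_j+1-\delta_{ij}\}_{\ell\ne j}$ appearing in the formal fundamental solution of Proposition \ref{diagFz}: one picks a distinguished element of the first list to play the role of the exponent parameter and reads the remaining entries of the two lists as the shifted upper and lower parameters, respectively.

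Next I would pin down which sectorial solution $H^{(\infty)}_{\pm}(z;E_{nn},A_{n-1})_{ij}$ is. By Proposition \ref{diagFz} it is the unique function meromorphic on $\mathrm{Sect}_{\pm}$ asymptotic there to $\hat H(z;E_{nn},A_{n-1})_{ij}=d_{ij}\cdot{}_nF_{n-2}(\cdots;-1/z)$, and under the parameter identification above this formal series is, up to the explicit factor $d_{ij}z^{\lambda^{(n-1)}_j}$, precisely the one attached to $h^{(\infty)}_{\pm}$ in Proposition \ref{prop:hpm}. Since a $1$-Gevrey asymptotic expansion determines a sectorial solution uniquely on $\mathrm{Sect}_{\pm}$ (Borel--Ritt), $H^{(\infty)}_{\pm}(z;E_{nn},A_{n-1})_{ij}z^{\lambda^{(n-1)}_j}$ must coincide with $d_{ij}$ times the corresponding $h^{(\infty)}_{\pm}$. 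Substituting the closed form of Proposition \ref{prop:hpm} and simplifying the prefactors — using \eqref{eq:aibi} to treat the products $a^{(n-1)}_i b^{(n-1)}_i$ and standard identities for the $\Gamma$-function to reorganize the factors produced by the relabelling — then yields the displayed formula, the Kronecker symbols $\delta_{ij},\delta_{i\ell}$ built into it making the two cases $i\le n-1$ and $i=n$ come out of a single expression. (Alternatively, the same formula can be obtained by passing to the limit $q\to1$ directly in Proposition \ref{FinflamAn-1}, using the system analogue of Lemma \ref{lem:limf}; I would mention this as a second route.)

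For the last two assertions I would argue as follows. For each summation index $k$ the summand on the right-hand side is a convergent ${}_{n-1}F_{n-1}$, entire in $z$, times the power $z^{\lambda^{(n)}_k-\lambda^{(n-1)}_j+1-\delta_{ij}}$, and on $\mathbb{C}^*\setminus\mathbb{R}_{<0}$ that power is single-valued and holomorphic; since the $+$ and $-$ versions are given by the same expression there, they agree on the overlapping sector $S(-\pi/2,\pi/2)$, so $H^{(\infty)}_{+}(z;E_{nn},A_{n-1})_{ij}=H^{(\infty)}_{-}(z;E_{nn},A_{n-1})_{ij}$ on $S(-\pi/2,\pi/2)$, and the identity theorem then shows that the analytic continuation $H^{(\infty)}(z;E_{nn},A_{n-1})_{ij}$ of the $+$ solution to $\widetilde{\mathbb{C}^*}$ is simultaneously the analytic continuation of the $-$ solution. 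The step I expect to be the main obstacle is the bookkeeping in the first two paragraphs: checking that the shifted parameter lists of Proposition \ref{diagFz} really do rearrange into the normal form required by Proposition \ref{prop:hpm}, and that after the substitution the $\Gamma$-factors and $z$-exponents collapse to exactly the stated shape — this is the one place where genuine (if routine) calculation is unavoidable.
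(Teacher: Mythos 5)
Your proposal is correct and follows essentially the route the paper intends: the paper offers no written proof beyond ``Similar to Proposition \ref{prop:Fclopm}'s scalar analogue,'' i.e.\ reduce each entry of the row to the scalar confluent equation \eqref{c h eq}, identify the parameter lists from Proposition \ref{diagFz}, and import Ichinobe's closed form (Proposition \ref{prop:hpm}), which is exactly what you do, together with the correct branch argument for the agreement of the $\pm$ solutions on $S(-\pi/2,\pi/2)$. One small correction: the uniqueness of the sectorial solution with a prescribed Gevrey-$1$ asymptotic expansion on a sector of opening $2\pi>\pi$ is Watson's lemma (a flat Gevrey-$1$ function on such a sector vanishes), not Borel--Ritt, which is the existence statement.
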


For  simplicity, we focus on the diagonalized case   \eqref{desAn-1}.  The expressions of $U_{\pm}\left(E_{nn}, A_{n-1}\right),  S_-\left(E_{nn}, A_{n-1}\right)$  are given as follows.  The Stokes matrix $S_{+}\left(E_{nn}, A_{n-1}\right)$ case is similar  and    ignored here.
\begin{prop}\label{propUpmS+-}
    For $1\leq i \leq n-1$, and $1\leq j \leq n$,  entries of $U_{\pm}\left(E_{nn}, A_{n-1}\right)$  are given by
    \begin{align*}
   & (U_{ \pm  }(E_{nn}, A_{n-1}))_{i j}=\frac{a^{(n-1)}_i}{\lambda^{\left(n\right)}_j-\lambda^{\left(n-1\right)}_i}\frac{\prod_{l=1,l\ne j}^{n} \Gamma(1+\lambda_{j}^{\left(n\right)}-\lambda_{l}^{\left(n\right)})}{\prod_{l=1,l\ne i}^{n-1} \Gamma(1+\lambda_{j}^{\left(n\right)}-\lambda_{l}^{\left(n-1\right)})}\frac{\prod_{l=1,l\ne i}^{n-1} \Gamma(1+\lambda^{\left(n-1\right)}_i-\lambda^{(n-1)}_l)}{\prod_{l=1,l\ne j}^{n}\Gamma(1+\lambda^{(n-1)}_i-\lambda^{(n)}_l)}\frac{\mathrm{e}^{\pm\pi \mathrm{i}\lambda^{(n)}_j}}{\mathrm{e}^{\pm\pi \mathrm{i}\lambda^{(n-1)}_i}},\\
   &\left(U_+\left(E_{nn}, A_{n-1}\right)\right)_{nj}=\left(U_-\left(E_{nn}, A_{n-1}\right)\right)_{nj}=\frac{\prod_{l=1,l \ne j}^{n} \Gamma(1+\lambda_{j}^{\left(n\right)}-\lambda_{l}^{\left(n\right)})}{\prod_{l=1}^{n-1}\Gamma(1+\lambda_{j}^{\left(n\right)}-\lambda_{l}^{\left(n-1\right)})}.
\end{align*}
For convenience, here we denote
  $\lambda^{\left(n-1\right)}_n:=a_{n n}
=\sum_{l=1}^{n}\lambda^{\left(n\right)}_l-\sum_{l=1}^{n-1}\lambda^{\left(n-1\right)}_l$.   Moreover, the Stokes matrix $S_{-}\left(E_n,A_{n-1}\right)$ of  \eqref{desAn-1} is lower triangular with the form
\begin{equation}\label{eq:stokes-}
    {S_{-}\left(E_n,A_{n-1}\right)=\left(\begin{array}{cc}
    \mathrm{e}^{{ -2\pi \mathrm{i}A^{\left(n-1\right)}_{n-1}}} &  0 \\
    b_{-}^{\left(n\right)} & \mathrm{e}^{-2\pi \mathrm{i}{a_{nn}}}
  \end{array}\right),}
\end{equation}
where $A^{\left(n-1\right)}_{n-1}=\mathrm{diag}\left(\lambda^{\left(n-1\right)}_1,...,\lambda^{\left(n-1\right)}_{n-1}\right)$,       $b^{(n)}_{-}=\left(\big(b^{(n)}_{-}\big)_1,...,\big(b^{(n)}_{-}\big)_{n-1}\right)$,  with  
\begin{align*}
    \big(b^{(n)}_{-}\big)_j=-\sum_{k=1}^{n}  
      \frac{\prod_{l=1,l \ne k}^{n} \Gamma(1+\lambda_{k}^{\left(n\right)}-\lambda_{l}^{\left(n\right)})\prod_{l=1,l\neq j}^{n-1}\Gamma(  \lambda^{(n-1)}_l-\lambda^{(n-1)}_j+1 )  \prod_{l=1,l\neq k}^n  \Gamma( \lambda^{(n)}_l   -\lambda^{(n)}_k  )  }{\prod_{l=1}^{n-1}\Gamma(1+\lambda_{k}^{\left(n\right)}-\lambda_{l}^{\left(n-1\right)})\prod_{l=1,l\neq k}^n\Gamma( \lambda^{(n)}_l   -\lambda^{(n-1)}_{j}+1  )  \prod_{l=1,l\neq j}^{n-1} \Gamma( \lambda^{(n-1)}_l- \lambda^{(n)}_k      ) }\frac{b^{(n-1)}_j}{\mathrm{e}^{ 2\pi \mathrm{i}\lambda^{(n)}_k  }} .
\end{align*}
\end{prop}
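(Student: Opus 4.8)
The plan is to establish Proposition~\ref{propUpmS+-} by the same entry-by-entry scheme used in the proof of Theorem~\ref{qstokes matrix}, with the $q$-difference ingredients replaced by their classical counterparts; as an independent check, the three formulas are also the $q\to1$ limits of Theorems~\ref{qstokes matrix} and~\ref{thm:diagqstokes}, obtained through Lemma~\ref{basic lim} (which turns $q$-Pochhammer symbols into $\Gamma$'s, $e_q$ into $\mathrm{e}^{z}$, and ratios $\theta_q(q^{\beta}z)/\theta_q(q^{\alpha}z)$ into $z^{\alpha-\beta}$) together with the sign computations \eqref{pf:lim1}--\eqref{pf:lim2} from the proof of Proposition~\ref{prop:lim eq}.

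First I would treat the connection matrices $U_{\pm}(E_{nn},A_{n-1})$. Fix a column $j\in\{1,\dots,n\}$. By Proposition~\ref{diagFz}, the entry $F^{(0)}(z;E_{nn},A_{n-1})_{ij}$ is, up to the prefactor $a^{(n-1)}_i/(\lambda^{(n)}_j-\lambda^{(n-1)}_i)$ (or $1$ for $i=n$) and the power $z^{\lambda^{(n)}_j}$, a solution ${}_{n-1}F_{n-1}(\boldsymbol{\alpha};\boldsymbol{\beta};z)$ of a scalar confluent hypergeometric differential equation of the type \eqref{c h eq}, whose parameters are read off from that proposition; this scalar equation is the $q\to1$ limit of \eqref{Fij eq} when $1\le i\le n-1$ and of \eqref{Fnj eq} when $i=n$. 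By Proposition~\ref{prop:Fclopm}, the entries $F^{(\infty)}_{\pm}(z;E_{nn},A_{n-1})_{ik}$, $1\le k\le n$, are — after the same normalisation — the fundamental system $h^{(\infty)}_{\pm,k}z^{-\alpha_k}$ ($k\le n-1$), $h^{(\infty)}_{\pm,n}\mathrm{e}^{z}z^{\sum_{l}(\alpha_l-\beta_l)}$ of that scalar equation near infinity. I would then apply the classical connection formula \eqref{olbcconne} — itself the $q\to1$ limit of Theorem~\ref{our connection formula}, cf.\ Proposition~\ref{prop:lim eq} — to expand ${}_{n-1}F_{n-1}(\boldsymbol{\alpha};\boldsymbol{\beta};z)$ in this basis, multiply back by the normalisation, and collect $\Gamma$-factors using the reflection and shift identities. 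The branch of $z^{-\alpha_k}$ in \eqref{olbcconne} is precisely what produces the phases $\mathrm{e}^{\pm\pi\mathrm{i}\lambda^{(n)}_j}/\mathrm{e}^{\pm\pi\mathrm{i}\lambda^{(n-1)}_i}$ in $(U_{\pm})_{kj}$ for $k\le n-1$, with the sign determined by the branch; I would pin the branch down via Proposition~\ref{prop:hpm} and the inclusion $\mathrm{Sect}_{+}\cap\mathrm{Sect}_{-}\supset S(-\pi/2,\pi/2)$. The coefficients obtained must depend only on $(k,j)$, not on which $i$ is used to extract them: this well-definedness of $U_{\pm}$ is checked exactly as in Theorem~\ref{qstokes matrix}, using the product identity \eqref{eq:aibi} for $a^{(n-1)}_ib^{(n-1)}_i$.

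For the Stokes matrix, I would use the structural identity
\[
S_{-}(E_{nn},A_{n-1})=U_{+}(E_{nn},A_{n-1})\,\mathrm{e}^{-2\pi\mathrm{i}A_n}\,U_{-}(E_{nn},A_{n-1})^{-1},
\]
which follows by combining the two relations $F^{(0)}(z)=F^{(\infty)}_{\pm}(z;E_{nn},A_{n-1})\,U_{\pm}(E_{nn},A_{n-1})$ on $\mathrm{Sect}_{\pm}$, the triviality of the local monodromy of $H^{(0)}$ at the origin (so $F^{(0)}(z\mathrm{e}^{-2\pi\mathrm{i}})=F^{(0)}(z)\,\mathrm{e}^{-2\pi\mathrm{i}A_n}$), and the definition $F^{(\infty)}_{-}(z\mathrm{e}^{-2\pi\mathrm{i}};E_{nn},A_{n-1})=F^{(\infty)}_{+}(z;E_{nn},A_{n-1})\,S_{-}(E_{nn},A_{n-1})$ on $S(\pi/2,3\pi/2)$. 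Substituting the explicit $U_{\pm}$ from the previous step, the phases $\mathrm{e}^{\mp\pi\mathrm{i}\lambda^{(n)}_j}$ occurring in $(U_{\pm})_{kj}$ for $k\le n-1$ recombine with $\mathrm{e}^{-2\pi\mathrm{i}A_n}$ and with the factors $\mathrm{e}^{\pm\pi\mathrm{i}\lambda^{(n-1)}_k}$, and after the $\Gamma$-function bookkeeping the product collapses to the asserted lower-triangular block form, with diagonal blocks $\mathrm{e}^{-2\pi\mathrm{i}A^{(n-1)}_{n-1}}$, $\mathrm{e}^{-2\pi\mathrm{i}a_{nn}}$ and last row $b^{(n)}_{-}$.

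I expect the main obstacle to be this last collapse: showing that the upper-left $(n-1)\times(n-1)$ block of $U_{+}\mathrm{e}^{-2\pi\mathrm{i}A_n}U_{-}^{-1}$ is the \emph{diagonal} matrix $\mathrm{e}^{-2\pi\mathrm{i}A^{(n-1)}_{n-1}}$ with no further entries (and that the $(n,n)$ entry is exactly $\mathrm{e}^{-2\pi\mathrm{i}a_{nn}}$). This is a genuine $\Gamma$-function identity — a telescoping of the sum over $k=1,\dots,n$ of products of $\Gamma$-quotients — reflecting the fact that $\mathrm{e}^{z}$ carries a single Stokes direction, so that $U_{-}^{-1}U_{+}$ (hence $S_{\pm}$) departs from a diagonal matrix only through one off-diagonal block; in the $q$-setting this was absorbed into the clean block form of $S_q$ in Theorem~\ref{thm:diagqstokes}, and here it has to be made explicit. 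A secondary, purely bookkeeping, difficulty is the consistent choice of branches of the power functions $z^{-\alpha}$ across $\mathrm{Sect}_{+}$ and $\mathrm{Sect}_{-}$, which is what separates $U_{+}$ from $U_{-}$ and fixes all the $\mathrm{e}^{\pm\pi\mathrm{i}(\cdot)}$ signs.
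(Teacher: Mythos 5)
Your overall strategy is sound and your key identity $S_{-}U_{-}=U_{+}\mathrm{e}^{-2\pi\mathrm{i}A_n}$ is exactly the one the paper uses, but you make the argument considerably harder than it needs to be, and the two steps you flag as difficult are precisely the ones the paper short-circuits. First, the paper does not reprove the formulas for $U_{\pm}$ from the classical connection formula \eqref{olbcconne}; it simply cites \cite{lin2024explicit} for them. Your entrywise derivation is a legitimate (and more self-contained) alternative, and the well-definedness check via \eqref{eq:aibi} that you describe is the right one. Second, and more importantly, the ``main obstacle'' you identify --- proving by a telescoping $\Gamma$-function identity that the upper-left block of $U_{+}\mathrm{e}^{-2\pi\mathrm{i}A_n}U_{-}^{-1}$ collapses to the diagonal matrix $\mathrm{e}^{-2\pi\mathrm{i}A^{(n-1)}_{n-1}}$ and that the $(n,n)$ entry is $\mathrm{e}^{-2\pi\mathrm{i}a_{nn}}$ --- is not attacked head-on in the paper at all. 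The paper invokes the general structure theory of Stokes matrices (\cite[Chapter 9.1]{balser2008formal}) to assert \emph{a priori} that $S_{-}$ is lower triangular with $i$-th diagonal entry $\mathrm{e}^{-2\pi\mathrm{i}\lambda^{(n-1)}_i}$; only the last row $b^{(n)}_{-}$ then needs to be computed from the monodromy identity, which is a single matrix product rather than a full collapse. Your plan leaves that $\Gamma$-identity unproven, so as written it has a genuine gap there.

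A second, related gap: your formula $S_{-}=U_{+}\mathrm{e}^{-2\pi\mathrm{i}A_n}U_{-}^{-1}$ requires the entries of $U_{-}^{-1}$, and you do not say how you would obtain them. Inverting the explicit $n\times n$ matrix $U_{-}$ directly is not tractable. The paper instead gets the needed entries $\bigl(U_{-}^{-1}\bigr)_{kj}$, $1\leq j\leq n-1$, in closed form from the \emph{reversed} connection formula, i.e.\ from Proposition \ref{prop:Fclopm}, which expands $F^{(\infty)}_{-}$ in the basis $F^{(0)}$ (this is formula \eqref{U-entry} in the paper's proof). If you adopt these two shortcuts --- Balser's triangularity theorem for the shape and diagonal of $S_{-}$, and Proposition \ref{prop:Fclopm} for $U_{-}^{-1}$ --- your argument reduces to the paper's and goes through; without them, the hardest parts of your proposal remain to be executed.
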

\begin{proof}
   The explicit expressions of $U_{ \pm  }(E_{nn}, A_{n-1})$  can  be  found in \cite{lin2024explicit}.   The fact that $S_{-}\left(E_n,A_{n-1}\right)$ is lower triangular with the $i$-th diagonal entry being   $\mathrm{e}^{-2\pi\mathrm{i}\lambda^{(n-1)}_i}$ can be found in \cite[Chapter 9.1]{balser2008formal}.     Other entries   of $S_{-}\left(E_n,A_{n-1}\right)$  are derived from 
\begin{equation*}
     S_{-}\left(E_n,A_{n-1}\right)  U_{-}\left(E_n,A_{n-1}\right) =  U_{+}\left(E_n,A_{n-1}\right) \mathrm{e}^{-2\pi \mathrm{i}A_{n}},
\end{equation*} 
 which is obtained  
 from   \eqref{U+}   and  the following formula: 
\begin{equation}\label{U-entry}
    \left(U_-\left(E_{nn}, A_{n-1}\right)^{-1}\right)_{kj}=-\frac{b^{(n-1)}_j\prod_{l=1,l\neq j}^{n-1}\Gamma(  \lambda^{(n-1)}_l-\lambda^{(n-1)}_j+1 )  \prod_{l=1,l\neq k}^n  \Gamma( \lambda^{(n)}_l   -\lambda^{(n)}_k  )  }{\prod_{l=1,l\neq k}^n\Gamma( \lambda^{(n)}_l   -\lambda^{(n-1)}_{j}+1  )  \prod_{l=1,l\neq j}^{n-1} \Gamma( \lambda^{(n-1)}_l- \lambda^{(n)}_k      ) },
\end{equation}
for  $1\leq k \leq n$, $1\leq j \leq n-1$. Here the formula \eqref{U-entry}     is obtained from  Proposition \ref{prop:Fclopm}.
\end{proof}

The $q$-Stokes matrix $S_q\left(z,\lambda,\mu;E_{nn}, A_{n-1}\right)$   recovers    the     Stokes matrix  $S_-\left(E_{nn}, A_{n-1}\right)$   in the following sense. That is,
\begin{prop}
    As $q\rightarrow 1$,  it follows that  
    \begin{itemize}
        \item[(1)] The convergent fundamental solution $F^{(0)}_q\left(z;E_{nn},A_{n-1}\right)$   (see \eqref{eq:F0q(z)}) of \eqref{diagqdes}  converges to the convergent fundamental solution $F^{(0)}\left(z\right)$ (see \eqref{eq:con sol}) of \eqref{desAn-1}:
        \begin{equation}\label{eq:limF0floque}
            \lim_{q\rightarrow 1}  F^{(0)}_q\left(z;E_{nn},A_{n-1}\right)=F^{(0)}\left(z;E_{nn},A_{n-1}\right).
        \end{equation}

        \item[(2)]  The fundamental   solution   $F_q^{(\infty)}(z,\lambda;E_{nn},A_{n-1})$  (see  \eqref{eq:Fqinf(z)})   of  \eqref{diagqdes}  converges to the fundamental solution ${F}^{(\infty)}_{\pm}\left(z;E_{nn},A_{n-1}\right)$        (see \eqref{eq:diag sol inf})  of  \eqref{desAn-1}:
        \begin{align}
            &\lim_{q\rightarrow 1} F_q^{(\infty)}(z,\lambda;E_{nn},A_{n-1})_{ij}={F}^{(\infty)}\left(z;E_{nn},A_{n-1}\right)_{ij},  \  \text{if}  \  \  \mathrm{arg}(z)\in  (-\mathrm{arg}(\lambda)-\pi,-\mathrm{arg}(\lambda)+\pi  ),  \label{eqFqijliminf}  \\
            &\lim_{q\rightarrow 1} F_q^{(\infty)}(z,\lambda;E_{nn},A_{n-1})_{in}=\left\{
          \begin{array}{lr}
          {F}^{(\infty)}_{+}\left(z;E_{nn},A_{n-1}\right)_{in},   & \text{if} \ \  \mathrm{arg}(z)\in (0,-\mathrm{arg}(\lambda)+\pi  ), \\
                {F}^{(\infty)}_{-}\left(z;E_{nn},A_{n-1}\right)_{in} , & \text{if} \ \ \mathrm{arg}(z)\in (-\mathrm{arg}(\lambda)-\pi, 0  ),
             \end{array}
\right.  \label{eqFqnjliminf}
        \end{align}
 for $1\leq i\leq n$, $1\leq j \leq n-1$.       Here in \eqref{eqFqijliminf},   \eqref{eqFqnjliminf}  we  take   $\mathrm{arg}(\lambda)\in (-\pi,\pi)$. 
        \item[(3)] The connection matrix $U_q(z,\lambda;E_{nn},A_{n-1})$    has the  limit:
        \begin{align}
         &   \lim_{q\rightarrow 1}U_q(z,\lambda;E_{nn},A_{n-1})_{i j}   =   \left\{
          \begin{array}{lr}
      (U_{ +  }(E_{nn}, A_{n-1}))_{i j}    ,   & \text{if} \ \  \mathrm{arg}(z)\in (0,-\mathrm{arg}(\lambda)+\pi  ), \\
        (U_{ -  }(E_{nn}, A_{n-1}))_{i j}         , & \text{if} \ \ \mathrm{arg}(z)\in (-\mathrm{arg}(\lambda)-\pi, 0  ),
             \end{array}
\right. \label{eqUqlim1}  \\
         &   \lim_{q\rightarrow 1}U_q(z,\lambda;E_{nn},A_{n-1})_{n j}=  \left(U_+\left(E_{nn}, A_{n-1}\right)\right)_{nj}=\left(U_-\left(E_{nn}, A_{n-1}\right)\right)_{nj},\label{eqUqlim2}
        \end{align}
with   $\mathrm{arg}(z)\in  (-\mathrm{arg}(\lambda)-\pi,-\mathrm{arg}(\lambda)+\pi)$  in  \eqref{eqUqlim2}.  Here         
        in \eqref{eqUqlim1},  \eqref{eqUqlim2} we take  $ \mathrm{arg}(\lambda)\in(-\pi,\pi)$.

        \item[(4)] Set nonzero numbers $\lambda, \mu$  with $\lambda/\mu \notin q^\mathbb{Z}$,   and  we  take  $-\pi<\mathrm{arg}(\lambda),\mathrm{arg}(\mu)<\pi$.  Here          
         $z$  is chosen specifically to    satisfy
\begin{align}
& z \in \mathbb{C^*}, \quad z\notin- {\lambda}^{-1}\mathbb{R}_{>0}, \quad z\notin -\mu^{-1}\mathbb{R}_{>0},  \quad  \mathrm{Re}(z)<0,\quad \pi/2<\mathrm{arg}(z)<3\pi/2,  \label{q->1z} \\
   & -\mathrm{arg}(\mu)-\pi<\mathrm{arg}(z)-2\pi<0<\mathrm{arg}(z)<-\mathrm{arg}(\lambda)+\pi.\label{argqstokesq->1}
\end{align}
Then the  $q$-Stokes matrix $S_q\left(z,\lambda,\mu;E_{nn}, A_{n-1}\right)$ (see \eqref{eq:qstokesdiag})
   recovers  the  Stokes matrix  $S_-\left(E_{nn}, A_{n-1}\right)$  (see   \eqref{eq:stokes-}) by
   \begin{equation}
       \lim_{q\rightarrow 1} S_q\left(z,\lambda,\mu;E_{nn}, A_{n-1}\right) \mathrm{e}^{ -2\pi \mathrm{i}\delta^{(n-1)}_q( A_{n-1} ) }=S_-\left(E_{nn}, A_{n-1}\right).\label{recoverqstokes}
   \end{equation}  
    \end{itemize}
\end{prop}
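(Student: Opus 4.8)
The plan is to pass to the limit $q\to 1$ directly inside the explicit formulas of Proposition \ref{qsolu zero}, Proposition \ref{FinflamAn-1} and Theorem \ref{qstokes matrix}, in close parallel with the equation-level argument of Proposition \ref{prop:lim eq}; although the four assertions are listed as (1)--(4), it is convenient to establish (1), then (3), deduce (2) from them, and finally assemble (4). For part (1) I would first record the elementary limits $\mu_i\to\lambda_i^{(n)}$, $\nu_i\to\lambda_i^{(n-1)}$, $\nu_n\to a_{nn}$ of Lemma \ref{basic lim}, so that $z^{\log_q(1-(1-q)A_n)}\to z^{A_n}$ for fixed $z\in\mathbb C^\ast$. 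In the entries of $F^{(0)}_q(z;E_{nn},A_{n-1})$ (Proposition \ref{qsolu zero}) the basic hypergeometric series with a vanishing parameter and argument $(1-q)z/q^{\nu_n}$ is a power series whose $k$-th coefficient, after absorbing the $(1-q)^k$ into the argument and using $(q^{a};q)_k/(1-q)^k\to(a)_k$ and $(q;q)_k/(1-q)^k\to k!$, tends to the $k$-th coefficient of the corresponding ${}_{n-1}F_{n-1}$-series of Proposition \ref{diagFz}; since for $q$ near $1$ these coefficients are dominated uniformly by a fixed summable sequence, Tannery's theorem gives \eqref{eq:limF0floque}.

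For part (3) I would take $q\to1$ directly in \eqref{Uij}--\eqref{Unj}: rewriting each $q$-Pochhammer quotient through $\Gamma_q$ by $(q^{a};q)_\infty=(q;q)_\infty(1-q)^{1-a}/\Gamma_q(a)$, the (infinitely many) factors $(q;q)_\infty$ cancel because the numbers of $q$-Pochhammer symbols in numerator and denominator agree, and by Lemma \ref{basic lim} the $\Gamma_q$'s produce the $\Gamma$-quotients of Proposition \ref{propUpmS+-} while a single residual power of $(1-q)$ survives; the $q$-theta quotient tends, by $\theta_q(q^\beta w)/\theta_q(q^\alpha w)\to w^{\alpha-\beta}$, to a power of $(1-q)z$ (respectively of $\lambda z$), and this power combines with the explicit $z$-power and the residual $(1-q)$-power to leave exactly the phase $\mathrm e^{\pm\pi\mathrm i(\cdot)}$ appearing in Proposition \ref{propUpmS+-}, the sign being dictated by the principal argument of the $\theta_q$-argument and hence by the position of $\mathrm{arg}(z)$ relative to $0$, the ray onto which the $\theta_q$-poles of $U_q$ accumulate. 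Together with $\mu_i\to\lambda_i^{(n)}$, $\nu_i\to\lambda_i^{(n-1)}$, $\nu_n\to a_{nn}$ this yields \eqref{eqUqlim1}--\eqref{eqUqlim2}. Part (2) then follows at once: from Definition \ref{UqSq}, $F^{(\infty)}_q(z,\lambda;E_{nn},A_{n-1})=F^{(0)}_q(z;E_{nn},A_{n-1})\,U_q(z,\lambda;E_{nn},A_{n-1})^{-1}$, and by parts (1), (3) and the continuity of matrix inversion (away from the moving singular loci) $\lim_{q\to1}F^{(\infty)}_q(z,\lambda;E_{nn},A_{n-1})=F^{(0)}(z;E_{nn},A_{n-1})\,U_{\pm}(E_{nn},A_{n-1})^{-1}=F^{(\infty)}_{\pm}(z;E_{nn},A_{n-1})$ by \eqref{U+}; the $\pm$ is inherited from part (3) and, since $F^{(\infty)}_{+}$ and $F^{(\infty)}_{-}$ coincide in their first $n-1$ columns (Proposition \ref{prop:Fclopm}), it affects only the last one, giving \eqref{eqFqijliminf}--\eqref{eqFqnjliminf}. (This is the matrix counterpart of Lemma \ref{lem:limf}.)

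For part (4) I would use that $F^{(\infty)}_q=H^{(\infty)}_q\,z^{\delta^{(n-1)}_q(A_{n-1})}\,e_q(\cdot)$ with $H^{(\infty)}_q$ and $e_q(\cdot)$ single-valued on $\mathbb C^\ast$ and $\delta^{(n-1)}_q(A_{n-1})=\mathrm{diag}(\nu_1,\dots,\nu_n)$, while $F^{(0)}_q=H^{(0)}_q\,z^{\mathrm{diag}(\mu_1,\dots,\mu_n)}$ with $H^{(0)}_q$ single-valued, so that the connection matrix satisfies the formal-monodromy identity $U_q(z\mathrm e^{2\pi\mathrm i},\lambda)=\mathrm e^{-2\pi\mathrm i\,\delta^{(n-1)}_q(A_{n-1})}\,U_q(z,\lambda)\,\mathrm e^{2\pi\mathrm i\,\mathrm{diag}(\mu_1,\dots,\mu_n)}$, whereas $S_q(z,\lambda,\mu;E_{nn},A_{n-1})=U_q(z,\lambda;E_{nn},A_{n-1})\,U_q(z,\mu;E_{nn},A_{n-1})^{-1}$ by Definition \ref{UqSq}. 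Under \eqref{q->1z}--\eqref{argqstokesq->1} one has $0<\mathrm{arg}(z)<-\mathrm{arg}(\lambda)+\pi$, hence $\lim_{q\to1}U_q(z,\lambda;E_{nn},A_{n-1})=U_+(E_{nn},A_{n-1})$ by part (3); moreover $-\mathrm{arg}(\mu)-\pi<\mathrm{arg}(z)-2\pi<0$, so applying part (3) to $U_q(z,\mu;E_{nn},A_{n-1})$ on the branch $\mathrm{arg}(z)-2\pi$ and then the monodromy identity gives $\lim_{q\to1}U_q(z,\mu;E_{nn},A_{n-1})=\mathrm e^{-2\pi\mathrm i\,\delta^{(n-1)}(A_{n-1})}\,U_-(E_{nn},A_{n-1})\,\mathrm e^{2\pi\mathrm i A_n}$, where $\delta^{(n-1)}(A_{n-1})=\lim_{q\to1}\delta^{(n-1)}_q(A_{n-1})=\mathrm{diag}(\lambda_1^{(n-1)},\dots,\lambda_{n-1}^{(n-1)},a_{nn})$ and $A_n=\lim_{q\to1}\mathrm{diag}(\mu_1,\dots,\mu_n)$. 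Hence $\lim_{q\to1}S_q(z,\lambda,\mu;E_{nn},A_{n-1})=U_+(E_{nn},A_{n-1})\,\mathrm e^{-2\pi\mathrm i A_n}\,U_-(E_{nn},A_{n-1})^{-1}\,\mathrm e^{2\pi\mathrm i\,\delta^{(n-1)}(A_{n-1})}=S_-(E_{nn},A_{n-1})\,\mathrm e^{2\pi\mathrm i\,\delta^{(n-1)}(A_{n-1})}$, the last equality being the classical Stokes relation $S_-(E_{nn},A_{n-1})\,U_-(E_{nn},A_{n-1})=U_+(E_{nn},A_{n-1})\,\mathrm e^{-2\pi\mathrm i A_n}$ recalled in the proof of Proposition \ref{propUpmS+-}; multiplying on the right by $\mathrm e^{-2\pi\mathrm i\,\delta^{(n-1)}_q(A_{n-1})}$ and letting $q\to1$ then gives \eqref{recoverqstokes}.

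The step I expect to be the main obstacle is the branch bookkeeping in parts (2)--(4): one must follow the arguments of the $q$-theta quotients through the confluence $q\to1$ and correlate them with $\mathrm{arg}(\lambda)$, $\mathrm{arg}(\mu)$ and with the sectors $\mathrm{Sect}_{\pm}$, in order to decide which half-plane's Borel sum (hence which of $U_{\pm}$ and $S_-$) is produced; this is precisely why the hypothesis \eqref{q->1z} must exclude the rays $-\lambda^{-1}\mathbb R_{>0}$ and $-\mu^{-1}\mathbb R_{>0}$, onto which the poles of the $q$-Laplace sums accumulate, and must confine $z$ to the left half-plane, where the exponential solution is recessive. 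A secondary, more routine point is to justify the interchange of $\lim_{q\to1}$ with the (finite, by Lemma \ref{Adachi}) sums and with the convergent $z$- and $z^{-1}$-power series by uniform domination, keeping $z$ uniformly away from the moving $q$-spirals of poles.
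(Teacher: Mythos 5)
Your overall strategy --- passing to the limit $q\to1$ inside the explicit formulas, with Lemma \ref{basic lim} supplying the elementary limits of $\Gamma_q$, of $\theta_q$-quotients and of $\log_q(1+(1-q)\alpha)$ --- is exactly the paper's, and your treatments of (1), (3) and (4) match it in substance. (For (4) you run the computation through the connection matrices, using the formal monodromy identity $U_q(z\mathrm e^{2\pi\mathrm i},\cdot)=\mathrm e^{-2\pi\mathrm i\,\delta^{(n-1)}_q(A_{n-1})}U_q(z,\cdot)\mathrm e^{2\pi\mathrm i\,\mathrm{diag}(\mu_1,\dots,\mu_n)}$ together with the classical relation $S_-U_-=U_+\mathrm e^{-2\pi\mathrm i A_n}$ recalled in the proof of Proposition \ref{propUpmS+-}; the paper runs the same computation through the limits of the fundamental solutions, namely $F_q^{(\infty)}(z,\lambda)\to F^{(\infty)}_{+}(z)$ and $F_q^{(\infty)}(z,\mu)\to F^{(\infty)}_{-}(z\mathrm e^{-2\pi\mathrm i})\mathrm e^{2\pi\mathrm i\,\delta^{(n-1)}(A_{n-1})}$. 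These are equivalent.) The one genuine divergence is part (2): the paper proves \eqref{eqFqijliminf} by the direct entrywise limit $\lim_{q\to1}H^{(\infty)}_q(z,\lambda;E_{nn},A_{n-1})_{ij}=H^{(\infty)}(z;E_{nn},A_{n-1})_{ij}$ of Lemma \ref{lem:limf2} (proved termwise through Lemma \ref{Adachi}, exactly as Lemma \ref{lem:limf}), and only deduces the last column \eqref{eqFqnjliminf} from (1), (3) and \eqref{eqFqijliminf}; you instead deduce all of (2) from (1) and (3) via $F^{(\infty)}_q=F^{(0)}_qU_q^{-1}$. That shortcut is legitimate where part (3) applies, but it leaves a gap on the ray $\mathrm{arg}(z)=0$: the two cases of \eqref{eqUqlim1} are open and exclude that ray (onto which the zeros of $\theta_q\bigl(-q^{-\nu_n}(1-q)z\bigr)$ accumulate as $q\to1$), whereas \eqref{eqFqijliminf} is asserted on the full interval $(-\mathrm{arg}(\lambda)-\pi,-\mathrm{arg}(\lambda)+\pi)$, which contains $0$. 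For the first $n-1$ columns the offending $\theta_q$-factors cancel between $F^{(0)}_q$ and $U_q^{-1}$, so the limit does extend across $\mathrm{arg}(z)=0$, but your argument as written does not see this; either invoke Lemma \ref{lem:limf2} for \eqref{eqFqijliminf} (as the paper does) or add a separate argument for that ray. Everything else, including the sign bookkeeping tied to $\mathrm{arg}(\lambda)$, $\mathrm{arg}(\mu)$ and the sectors $\mathrm{Sect}_{\pm}$, is consistent with the paper.
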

  To prove Proposition \ref{prop:lim eq},  we  need the following lemma, whose proof is similar to Lemma \ref{lem:limf}.
  \begin{lemma}\label{lem:limf2}
      For $1\leq i \leq n$, $1\leq j \leq n-1$,    as $q\rightarrow 1$, we have the limit:
      \begin{equation*}
          \lim_{q\rightarrow 1}H^{(\infty)}_q(z,\lambda;E_{nn},A_{n-1})_{ij}={H}^{(\infty)}(z;E_{nn},A_{n-1})_{ij}.
      \end{equation*}
  \end{lemma}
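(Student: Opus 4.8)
The plan is to mirror the proof of Lemma~\ref{lem:limf}, now starting from the explicit matrix-entry formula of Proposition~\ref{FinflamAn-1} rather than from a bare ${}_nf_{n-2}$. For $1\le i\le n$ and $1\le j\le n-1$, that proposition writes $H^{(\infty)}_q(z,\lambda;E_{nn},A_{n-1})_{ij}$ as the scalar prefactor $d_{q;ij}$ times one $[\lambda;q]$-sum ${}_nf_{n-2}$, whose upper and lower parameters are the $q$-powers $q^{1+\mu_l-\nu_j-\delta_{ij}}$ and $q^{1+\nu_l-\nu_j+\delta_{il}-\delta_{ij}}$, and whose direction $\Lambda=\lambda q^{\nu_j+\delta_{ij}-\delta_{ni}}/(1-q)$ and argument $Z=q^{\nu_j+\delta_{ij}-\delta_{ni}}/((1-q)z)$ satisfy $Z/\Lambda=1/(\lambda z)$. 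First I would record the parameter limits that drive everything: by the first line of Lemma~\ref{basic lim}, $\mu_l=\log_q(1-(1-q)\lambda^{(n)}_l)\to\lambda^{(n)}_l$ and $\nu_l\to\lambda^{(n-1)}_l$ for $l\le n-1$, so $\nu_n=\sum_l\mu_l-\sum_l\nu_l\to a_{nn}=\lambda^{(n-1)}_n$. A short separate computation, using $q^{\nu_j}=1-(1-q)\lambda^{(n-1)}_j$ and $q^{1+\nu_i}=q(1-(1-q)\lambda^{(n-1)}_i)$, then shows $d_{q;ij}\to d_{ij}$ in all three cases, matching the classical prefactor of Proposition~\ref{diagFz}.

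Next I would expand the ${}_nf_{n-2}$ by Lemma~\ref{Adachi}, producing a sum over $k=1,\dots,n$ of terms, each a product of $q$-Pochhammer ratios, the two theta quotients $\theta_q(qa'_kZ/\Lambda)/\theta_q(qZ/\Lambda)$ and $\theta_q(a'_k\Lambda)/\theta_q(\Lambda)$ (with $a'_k$ the $k$-th upper parameter), and a ${}_n\varphi_{n-1}$ with a vanishing upper entry, exactly as in the displayed computation inside the proof of Lemma~\ref{lem:limf}. I would then pass to the limit termwise: the $q$-Pochhammer ratios become ratios of $q$-Gamma functions via $(q^a;q)_\infty/(q^b;q)_\infty=(1-q)^{b-a}\Gamma_q(b)/\Gamma_q(a)$ and hence of ordinary $\Gamma$'s by $\Gamma_q\to\Gamma$; the ${}_n\varphi_{n-1}(\cdots;0;q,\cdot)$ converges to the ${}_{n-1}F_{n-1}$ appearing in Proposition~\ref{prop:Fclopm}; and the two theta quotients, together with the residual $(1-q)$- and $\lambda$-powers thrown off by the $q$-Gamma normalization, collapse to the single power $z^{\lambda^{(n)}_k-\lambda^{(n-1)}_j+1-\delta_{ij}}$ by the last line of Lemma~\ref{basic lim}. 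Substituting $\mu_l\to\lambda^{(n)}_l$ and $\nu_l\to\lambda^{(n-1)}_l$ into the resulting $\Gamma$-quotients and comparing with the displayed expression of Proposition~\ref{prop:Fclopm} gives the claim, with the branch of the power chosen as in Lemma~\ref{lem:limf} so that the limit is the analytic continuation ${H}^{(\infty)}(z;E_{nn},A_{n-1})_{ij}$ of the $+$-branch.

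I expect the main obstacle to be the same delicate point already present in Lemma~\ref{lem:limf}: the quotient $\theta_q(a'_k\Lambda)/\theta_q(\Lambda)$ has arguments $\Lambda\sim\lambda/(1-q)$ that diverge as $q\to1$, so Lemma~\ref{basic lim} cannot be applied to it with a fixed base. The remedy is to refuse to take its limit in isolation and instead combine it with the bounded quotient $\theta_q(qa'_kZ/\Lambda)/\theta_q(qZ/\Lambda)$ (whose base $Z/\Lambda=1/(\lambda z)$ is finite) and with the diverging $(1-q)$-powers emitted by the $q$-Gamma conversion; only the combined expression has a finite limit, and all apparent $\lambda$- and $(1-q)$-dependence must cancel because the target $H^{(\infty)}(z;E_{nn},A_{n-1})_{ij}$ is $\lambda$-independent. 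Since the parameter configuration here has exactly the shape treated in Lemma~\ref{lem:limf}, with the $q$-powered exponents $\mu_l,\nu_l$ playing the roles of the abstract $a_l,b_l$ there, this cancellation runs through verbatim; and the restriction $j\le n-1$ keeps us in the columns that carry no Stokes jump, where ${H}^{(\infty)}_+={H}^{(\infty)}_-$ by Proposition~\ref{prop:Fclopm}, so that the stated single-valued limit is unambiguous.
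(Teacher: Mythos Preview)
Your proposal is correct and is exactly the approach the paper has in mind: the paper's entire proof of Lemma~\ref{lem:limf2} is the single remark that it is ``similar to Lemma~\ref{lem:limf}'', and what you outline---feeding the explicit ${}_nf_{n-2}$ from Proposition~\ref{FinflamAn-1} into Lemma~\ref{Adachi}, passing to the limit termwise via Lemma~\ref{basic lim}, and matching the result against Proposition~\ref{prop:Fclopm}---is precisely that argument with the parameters specialised, together with the easy check $d_{q;ij}\to d_{ij}$. Your observation about the delicate theta quotient with diverging argument and the resulting need to combine factors before taking the limit is exactly the care already exercised in the proof of Lemma~\ref{lem:limf}, so nothing new is required.
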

\begin{proof}
    The proof contains the following parts:
    \begin{itemize}
        \item The formula \eqref{eq:limF0floque}  is  obtained from direct computations.
        \item The  formula \eqref{eqUqlim1}    is  derived from    Lemma \ref{basic lim}  and   the following  formula:
        \begin{align*}
           & \lim_{q\rightarrow 1}  (1-q)^{ \mu_j  - \nu_i   }  \frac{\theta_q\left(-q^{ {\mu_j -\nu_i   -\nu_n }} (1-q)z\right)}{ \theta_q\left(-q^{-\nu_n }  (1-q)z\right) }  z^{  \mu_j-\nu_i  }\\
            =&\lim_{q\rightarrow 1}(1-q)^{ \mu_j  - \nu_i   } \left(-q^{-\nu_n }  (1-q)z\right)^{ \nu_i-\mu_j  } z^{  \mu_j-\nu_i  }\\
            =&\mathrm{e}^{\pm\pi \mathrm{i}(\lambda^{(n-1)}_i-\lambda^{(n)}_j  )},  \  \text{for}  \   1\leq i \leq n-1,\  1\leq  j \leq n,
        \end{align*}
      where      the  positive    or   negative  sign   is   chosen based on whether      $\mathrm{arg}(z)\in ( -\mathrm{arg}(\lambda)-\pi, 0  )$  or  $\mathrm{arg}(z)\in (0, -\mathrm{arg}(\lambda)+\pi  )$.  And the  formula  \eqref{eqUqlim2}   is  derived from    Lemma \ref{basic lim}  and   the following  formula:
      \begin{align*}
     &\lim_{q\rightarrow 1}    (1-q)^{ \sum_{l=1}^{n-1}\nu_l-\sum_{l=1,l\neq j}^{n}\mu_l } \frac{\prod_{l=1,l\neq j}^{n}\theta_q((1-q)q^{1-\mu_l}/\lambda)}{\prod_{l=1}^{n-1}\theta_q((1-q)q^{1-\nu_l}/\lambda)}      	\frac{\theta_q\left(  q^{ \mu_j-\nu_n  }  \lambda z\right)}{\theta_q(\lambda  z )}z^{ \mu_j-\nu_n     }\\
     =&\lim_{q\rightarrow 1} (1-q)^{ \sum_{l=1}^{n-1}\nu_l-\sum_{l=1,l\neq j}^{n}\mu_l } \left( {(1-q)}/{\lambda}  \right)^{\sum_{l=1,l\neq j}^n\mu_l-\sum_{l=1}^{n-1}\nu_l} \left( \lambda z \right)^{\nu_n-\mu_j} z^{ \mu_j-\nu_n     }\\
     =&1,
      \end{align*}
      where we take  $ \mathrm{arg}(\lambda)\in(-\pi,\pi)$  and  $\mathrm{arg}(z)\in  (-\mathrm{arg}(\lambda)-\pi,-\mathrm{arg}(\lambda)+\pi)$.  Here recall that $\nu_n=\sum_{l=1}^n\mu_l-\sum_{l=1}^{n-1}\nu_l$ (see \eqref{eq:vn}).
            \item  The formula \eqref{eqFqijliminf} is derived from Lemma  \ref{lem:limf2}, and  the formula \eqref{eqFqnjliminf} is derived from Proposition  \ref{propUpmS+-},   \eqref{eq:limF0floque}-\eqref{eqFqijliminf} and \eqref{eqUqlim1}-\eqref{eqUqlim2}.
            \item             From  \eqref{eqFqijliminf},   \eqref{eqFqnjliminf}, since  $z$    satisfies \ \eqref{q->1z} and          $0<\mathrm{arg}(z)<-\mathrm{arg}(\lambda)+\pi$,       $F_q^{(\infty)}(z,\lambda;E_{nn},A_{n-1})$     converges to:
\begin{equation}
    \lim_{q\rightarrow 1} F_q^{(\infty)}(z,\lambda;E_{nn},A_{n-1})={F}^{(\infty)}_{+}\left(z;E_{nn},A_{n-1}\right).   \label{eqFqnjlim1}
\end{equation}
  And    since  $z$    satisfies \ \eqref{q->1z}    and  $-\mathrm{arg}(\mu)-\pi<\mathrm{arg}(z)-2\pi<0$,      $F_q^{(\infty)}(z,\mu;E_{nn},A_{n-1})$    converges to:
\begin{equation}
    \lim_{q\rightarrow 1} F_q^{(\infty)}(z,\mu;E_{nn},A_{n-1})={F}^{(\infty)}_{-}\left(z\mathrm{e}^{-2\pi \mathrm{i}};E_{nn},A_{n-1}\right)\mathrm{e}^{2\pi \mathrm{i}\delta^{(n-1)}(A_{n-1})}.   \label{eqFqnjlim2}
\end{equation} 
Since
\[
\lim_{q\rightarrow 1}\delta^{(n-1)}_q( A_{n-1} )=\delta^{(n-1)}( A_{n-1} ),
\]
if $z$ satisfies \eqref{q->1z} and \eqref{argqstokesq->1}, 
        \eqref{recoverqstokes}  is  obtained.  
    \end{itemize}
\end{proof}
\begin{rmk}
    The formula \eqref{recoverqstokes}  can be checked under the conditions \eqref{q->1z} and \eqref{argqstokesq->1}  from the explicit expression of $S_q\left(z,\lambda,\mu;E_{nn}, A_{n-1}\right)$ in  Theorem \ref{thm:diagqstokes}.
\end{rmk}

\bibliography{main.bib}
\bibliographystyle{abbrv}
\Addresses
\end{document}